\documentclass[A4]{amsart}
\usepackage[square,sort,comma,numbers]{natbib}
\usepackage{amsmath, amssymb, amstext, amsfonts, textcomp, amsxtra, amsbsy, amsgen, amsopn, amscd, mathrsfs, amsthm, latexsym, array}
\usepackage[textwidth=16cm,textheight=22cm,centering]{geometry}

\usepackage{enumerate}

\usepackage{mathdots, bbm, dsfont}
\usepackage{color}
\usepackage{graphicx}

\usepackage{xcolor}
\definecolor{cite}{rgb}{0.00,0.60,1.00}
\definecolor{url}{rgb}{1.00,0.10,0.80}
\definecolor{link}{rgb}{0.00,0.00,1.00}
\usepackage[colorlinks,linkcolor=link,urlcolor=url,citecolor=cite,pagebackref,breaklinks]{hyperref}

\def\leq{\leqslant}

\def\geq{\geqslant}

\allowdisplaybreaks

\usepackage[all]{xy}

\newtheorem{theorem}             {Theorem}  [section]
\newtheorem{definition} [theorem] {Definition}
\newtheorem{lemma}      [theorem]{Lemma}
\newtheorem{corollary}  [theorem]{Corollary}
\newtheorem{proposition}[theorem]{Proposition}

\numberwithin{equation}{section} 

\theoremstyle{remark}
\newtheorem{remark}{\bf Remark}

\newcommand{\Cont}{{\rm C}}

\newcommand{\Sob}{{\rm S}}
\newcommand{\Sch}{\mathcal{S}}
\newcommand{\SSch}{\mathcal{S}_{\mathrm{sis}}}
\newcommand{\SMel}{\mathfrak{M}_{\mathrm{sis}}}
\newcommand{\sgn}{{\rm sgn}}
\newcommand{\intL}{{\rm L}}

\newcommand{\Nr}{{\rm Nr}}
\newcommand{\Tr}{{\rm Tr}}

\newcommand{\gp}[1]{\mathbf{#1}}
\newcommand{\GL}{{\rm GL}}
\newcommand{\GSp}{{\rm GSp}}

\newcommand{\SL}{{\rm SL}}

\newcommand{\nG}{\mathfrak{G}}
\newcommand{\diag}{{\rm diag}}

\newcommand{\ud}{\mathrm{d}}

\newcommand{\Z}{\mathbb{Z}}

\newcommand{\Mat}{{\rm M}}
\newcommand{\id}{\mathbbm{1}}
\newcommand{\fF}{\mathbb{F}}

\makeatletter
\def\legendre@dash#1#2{\hb@xt@#1{%
  \kern-#2\p@
  \cleaders\hbox{\kern.5\p@
    \vrule\@height.2\p@\@depth.2\p@\@width\p@
    \kern.5\p@}\hfil
  \kern-#2\p@
  }}
\def\@legendre#1#2#3#4#5{\mathopen{}\left(
  \sbox\z@{$\genfrac{}{}{0pt}{#1}{#3#4}{#3#5}$}%
  \dimen@=\wd\z@
  \kern-\p@\vcenter{\box0}\kern-\dimen@\vcenter{\legendre@dash\dimen@{#2}}\kern-\p@
  \right)\mathclose{}}
\newcommand\legendre[2]{\mathchoice
  {\@legendre{0}{1}{}{#1}{#2}}
  {\@legendre{1}{.5}{\vphantom{1}}{#1}{#2}}
  {\@legendre{2}{0}{\vphantom{1}}{#1}{#2}}
  {\@legendre{3}{0}{\vphantom{1}}{#1}{#2}}
}
\def\dlegendre{\@legendre{0}{1}{}}
\def\tlegendre{\@legendre{1}{0.5}{\vphantom{1}}}
\makeatother

\newcommand{\R}{\mathbb{R}}
\newcommand{\C}{\mathbb{C}}
\newcommand{\E}{\mathbf{E}}
\newcommand{\F}{\mathbf{F}}

\newcommand{\A}{\mathbb{A}}

\newcommand{\vO}{\mathcal{O}}

\newcommand{\vP}{\mathcal{P}}
\newcommand{\VP}{\mathfrak{P}}

\newcommand{\oA}{\mathfrak{A}}

\newcommand{\oM}{\mathfrak{M}}

\newcommand{\norm}[1][\cdot]{\lvert #1 \rvert}
\newcommand{\extnorm}[1]{\left\lvert #1 \right\rvert}
\newcommand{\Norm}[1][\cdot]{\lVert #1 \rVert}
\newcommand{\extNorm}[1]{\left\lVert #1 \right\rVert}

\newcommand{\Pairing}[2]{\langle #1, #2 \rangle}
\newcommand{\extPairing}[2]{\left\langle #1, #2 \right\rangle}

\newcommand{\OFour}{\mathfrak{F}}
\newcommand{\invOFour}{\overline{\mathfrak{F}}}

\newcommand{\RMellin}[2][]{\mathfrak{M}^{\mathrm{pv}}_{#1}(#2)}
\newcommand{\Rem}{\mathrm{r}}
\newcommand{\Ext}{\mathrm{e}}
\newcommand{\Inv}{\mathrm{i}}
\newcommand{\Mult}{\mathfrak{m}}
\newcommand{\Vor}{\mathcal{V}}
\newcommand{\VorH}{\mathcal{VH}}
\newcommand{\VHF}{\mathfrak{vh}}
\newcommand{\MS}{\mathcal{MS}}
\newcommand{\Trans}{\mathfrak{t}}

\newcommand{\rpL}{{\rm L}}
\newcommand{\rpR}{{\rm R}}

\newcommand{\Ind}{{\rm Ind}}
\newcommand{\cInd}{{\rm c}\scalebox{0.5}[1.0]{\( - \)}{\rm Ind}}

\newcommand{\Whi}{\mathcal{W}}

\newcommand{\Cond}{\mathbf{C}}
\newcommand{\cond}{\mathfrak{c}}

\newcommand{\RamCst}{\vartheta}

\newcommand{\SuS}{\mathit{Ss}}

\newcommand{\AutR}{\mathcal{A}}

\newcommand{\Sph}{\mathbf{S}}

\newcommand{\Vol}{{\rm Vol}}
\makeatletter
\newcommand{\rmnum}[1]{\romannumeral #1}
\newcommand{\Rmnum}[1]{\expandafter\@slowromancap\romannumeral #1@}
\makeatother

\title{Voronoi--Hankel Transforms}

\author{Zhefeng Shen}
\address{Mathematisches Institut, Universit\"at Bonn, Endenicher Allee 60, 53115 Bonn, Germany}
\email{shen\_zhefeng@mail.ustc.edu.cn}

\author{Han Wu}
\address{Hangzhou International Innovation Institute, Beihang University, Hangzhou 311115, P.R.China}
\email{forrestwu@buaa.edu.cn}

\date{\today}

\begin{document}

\subjclass[2020]{43A32, 11F70, 46S10, 22E50}

\keywords{$\pi$-Fourier transform, $\pi$-Fourier kernel, stability of gamma factors, Voronoi Summation}

\begin{abstract}
	We prove that the $\pi$-Fourier transform introduced by Jiang--Luo and the Voronoi--Hankel transform for $\pi$ introduced by the last author are essentially the same one. We give a systematical study of the relevant kernel function in the non-archimedean case, including the asymptotic behavior at $0$ and $\infty$ and some simple integral representation based on the local Langlands correspondences for the essentially tame supercuspidals. As an application, we give an effective version of the stability theorem for the local gamma factors.
\end{abstract}

	\maketitle
	
	\tableofcontents

\section{Introduction}

	\subsection{Results over General Local Fields}

	Let $\F$ be a local field with modular character $\norm_{\F}$ and a fixed non-trivial additive character $\psi$. Let $\pi$ be a generic irreducible representation of $\GL_n(\F)$. As $\chi$ varies over the set $\widehat{\F^{\times}}$ of unitary characters of $\F^{\times}$, two sets of \emph{twisted} gamma factors and associated transforms on some subspaces of smooth functions on $\F^{\times}$ have been introduced in the literature.
	
	One set is $\left\{ \gamma(s, \pi \times \chi, \psi) \ \middle| \ \chi \in \widehat{\F^{\times}} \right\}$, fitting in the Rankin--Selberg theory mainly developped by Jacquet \& Piatetski-Shapiro \& Shalika \cite{JPS79, JPS83, JS90, J09}. The relevant space of functions is $\VorH(\pi)$. Let $\widetilde{\pi}$ be the contragredient representation of $\pi$. The associated transform is the \emph{Voronoi--Hankel transform} $\VorH_{\pi}: \VorH(\pi) \to \norm_{\F} \cdot \VorH(\widetilde{\pi})$ determined by the functional equation
\begin{equation} \label{eq: VorHTviaMellin}
	\int_{\F^{\times}} \VorH_{\pi}(h)(t) \chi^{-1}(t) \norm[t]_{\F}^{-s} \ud^{\times} t = \gamma(s, \pi \times \chi, \psi) \cdot \int_{\F^{\times}} h(t) \chi(t) \norm[t]_{\F}^s \ud^{\times} t. 
\end{equation}
	Both $\VorH(\pi)$ and $\VorH_{\pi}$ are named by the third author in \cite[Definition 1.2]{Wu24+}. But they are extensively studied by many others such as Miller--Schmid \cite{MS04} and Qi \cite{Qi20} for archimedean $\F$. In fact, the transform $\VorH_{\pi}$ is the local part of a global one, which links the test functions on the two sides of the \emph{Voronoi summation formula} by Miller--Schmid \cite{MS06}. This summation formula has numerous applications in analytic number theory.
	
	The other set is $\left\{ \gamma(s, \pi \otimes \chi, \psi) \ \middle| \ \chi \in \widehat{\F^{\times}} \right\}$, fitting in the Godement--Jacquet theory mainly developed in \cite{GoJa72}. The relevant space of functions is $\Sch_{\pi}(\F^{\times})$. The associated transform is the \emph{$\pi$-Fourier transform} $\OFour_{\pi}: \Sch_{\pi}(\F^{\times}) \to \Sch_{\widetilde{\pi}}(\F^{\times})$ determined by the functional equation
\begin{equation} \label{eq: PiFourTviaMellin}
	\int_{\F^{\times}} \OFour_{\pi}(h)(t) \chi^{-1}(t) \norm[t]_{\F}^{1/2-s} \ud^{\times} t = \gamma(s, \pi \otimes \chi, \psi) \cdot \int_{\F^{\times}} h(t) \chi(t) \norm[t]_{\F}^{s-1/2} \ud^{\times} t. 
\end{equation}
	Both $\Sch_{\pi}(\F^{\times})$ and $\OFour_{\pi}$ are named by Jiang--Luo in \cite[\S 2]{JiL22}. Their motivation is a type of summation formula attached to any pair $(\pi, \rho)$, where $\pi$ is an irreducible cuspidal representation of $\gp{G}(\A)$ for a split reductive group $\gp{G}$, and $\rho$ is a finite dimensional complex representation of the dual group $\gp{G}^{\vee}(\C)$. When $\gp{G} = \GL_1$ and $\rho$ is the standard representation, this type summation formula coincides with the usual Poisson summation formula. In general such summation formula is hoped by Braverman--Kazhdan to prove the Langlands conjecture on the meromorphic continuation and global functional equation for the automorphic $L$-function $L(s, \pi, \rho)$.
	
	The local gamma factors in \eqref{eq: VorHTviaMellin} and \eqref{eq: PiFourTviaMellin} are equal, namely 
\begin{equation} \label{eq: EquivGamma}
	\gamma(s, \pi \times \chi, \psi) = \gamma(s, \pi \otimes \chi, \psi),
\end{equation} 
	by the compatibility of the Rankin--Selberg theory and the Godement--Jacquet theory proved in \cite{JPS79, JS90, J09}. Therefore it is natural to ask if their associated transforms are the same. Our first result shows that it indeed is the case, hence that the Voronoi summation formula for $\pi$ of $\GL_n(\A)$ is the summation formula attached to $(\pi, \rho)$ for the standard representation $\rho = \id: \GL_n(\C) \to \GL_n(\C)$. Note that this also establishes an unproved claim in \cite[Remark 4]{Wu24+}. To formulate our result we recall some elementary operators on the space of functions on $\F^{\times}$:
\begin{itemize}
	\item For functions $\phi$ on $\F^{\times}$, its \emph{extension} by $0$ to $\F$ is denoted by $\Ext(\phi)$, and its \emph{inverse} is $\mathrm{Inv}(\phi)(t) := \phi(t^{-1})$; for functions $\phi$ on $\F$, its \emph{restriction} to $\F^{\times}$ is denoted by $\Rem(\phi)$, and the operator $\Inv$ is 
	$$ \Inv = \Ext \circ \mathrm{Inv} \circ \Rem. $$
	\item For $s \in \C$, $\mu \in \widehat{\F^{\times}}$ and functions $\phi$ on $\F$, we introduce the operator $\Mult_s(\mu)$ by
	$$ \Mult_s(\mu)(\phi)(t) = \phi(t) \mu(t) \norm[t]_{\F}^s; $$
	if $\mu = \id$ is the trivial character we write $\Mult_s$ instead of $\Mult_s(\id)$.
	\item For $\delta \in \F^{\times}$ we introduce the operator $\Trans(\delta)$ by
	$$ \Trans(\delta)(\phi)(y) = \phi(y \delta). $$
\end{itemize}
\begin{theorem} \label{thm: EquivDefs}
	(1) The two spaces $\VorH(\pi)$ and $\Sch_{\pi}(\F^{\times})$ are related by $\Sch_{\pi}(\F^{\times}) = \norm_{\F}^{1/2} \cdot \VorH(\pi)$.
	
\noindent (2) The two transforms $\VorH_{\pi}$ and $\OFour_{\pi}$ are related by $\OFour_{\pi} = \Mult_{-1/2} \circ \VorH_{\pi} \circ \Mult_{-1/2}$.
\end{theorem}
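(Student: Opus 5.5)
The plan is to recall the concrete definitions of both spaces and reduce both parts to the equality of gamma factors \eqref{eq: EquivGamma} together with Mellin inversion on $\F^{\times}$.

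First, the spaces. In Jiang--Luo, $\Sch_{\pi}(\F^{\times})$ is the span of functions of the form
\[ t \mapsto \norm[t]_{\F}^{n/2}\int_{\GL_n(\F)} f(g)\,\beta(g)\,\id_{\det g = t}\,\ud g, \]
where $f \in \Sch(\Mat_n(\F))$ and $\beta$ is a matrix coefficient of $\pi$. In \cite[Definition 1.2]{Wu24+}, $\VorH(\pi)$ is given by the analogous fibre integrals over $\det g = t$ with Whittaker-type or matrix-coefficient data and a different power of $\norm[t]_{\F}$. A cleaner route avoids comparing generators directly and characterizes each space by its Mellin transforms. For $\Sch_{\pi}$, Godement--Jacquet theory says that the Mellin transforms $\int h(t)\chi(t)\norm[t]^{s-1/2}\ud^{\times}t$, as $h$ ranges over $\Sch_{\pi}$ and $\chi$ over $\widehat{\F^{\times}}$, are exactly the zeta integrals $Z(s,f,\beta\otimes\chi)$. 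These span $L(s,\pi\otimes\chi)\C[q^{\pm s}]$ non-archimedeanly, with each $h$ supported on finitely many $\chi|_{\vO^{\times}}$. For $\VorH(\pi)$, Rankin--Selberg $\GL_n\times\GL_1$ theory gives $L(s,\pi\times\chi)\C[q^{\pm s}]$ for $\int h\chi\norm^{s}\ud^{\times}t$, with the same finiteness. Since $L(s,\pi\times\chi)=L(s,\pi\otimes\chi)$ (the same compatibility that gives \eqref{eq: EquivGamma}), the shift $\norm^{1/2}$ matches the two Mellin normalizations exactly.

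Concretely, for (1) I take $h\in\VorH(\pi)$ and set $h'=\norm^{1/2}h$. Then $\int h'\chi\norm^{s-1/2}=\int h\chi\norm^{s}$. By Mellin inversion on $\F^{\times}$, together with the fact that both spaces are exactly the functions whose Mellin transforms satisfy the stated support and $L$-factor conditions, I get $h'\in\Sch_\pi$, and the converse by symmetry. The step I expect to be the main obstacle is proving that each space is \emph{exactly} characterized by its Mellin transforms, and not merely contained in the inverse Mellin image. For this I would use that both spaces are stable under $\Mult_0(\mu)$, $\Trans(\delta)$ and multiplication by characteristic functions of $\vO^{\times}$-cosets, and that both contain the inverse Mellin transforms of $L(s,\pi\otimes\chi)q^{-ks}$. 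In the archimedean case I would instead use the Schwartz-type growth characterization of Miller--Schmid and Jiang--Luo: the asymptotic expansions at $0$ are governed by the poles of the $L$-factor, and the decay at $\infty$ matches.

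For (2), let $h\in\Sch_\pi$, put $h_0=\Mult_{-1/2}h=\norm^{-1/2}h\in\VorH(\pi)$, and set $H=\Mult_{-1/2}\VorH_\pi(h_0)$. Then
\begin{align*}
\int H\chi^{-1}\norm^{1/2-s}\ud^{\times}t &= \int \VorH_\pi(h_0)\chi^{-1}\norm^{-s}\ud^{\times}t \\
&= \gamma(s,\pi\times\chi,\psi)\int h_0\chi\norm^{s}\ud^{\times}t \\
&= \gamma(s,\pi\otimes\chi,\psi)\int h\chi\norm^{s-1/2}\ud^{\times}t .
\end{align*}
The first equality uses \eqref{eq: VorHTviaMellin}, and the second uses \eqref{eq: EquivGamma}. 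By part (1) applied to $\widetilde\pi$, $H\in\norm^{-1/2}\cdot\norm\cdot\VorH(\widetilde\pi)=\Sch_{\widetilde\pi}$. Since a function in $\Sch_{\widetilde\pi}$ is determined by its Mellin transforms (Mellin injectivity), \eqref{eq: PiFourTviaMellin} forces $H=\OFour_\pi(h)$.
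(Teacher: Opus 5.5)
Your deduction of (2) from (1) and \eqref{eq: EquivGamma} is exactly the paper's. Your Mellin-theoretic route to (1) can be made to work in the non-archimedean case, but the archimedean case of (1) is not proved.

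\textbf{The archimedean gap.} The theorem covers $\F \in \{\R, \C\}$, and there you replace the whole argument by one sentence about a ``Schwartz-type growth characterization''. Your strategy needs an \emph{exact} equality: $\VorH(\pi)$, and likewise $\norm^{-1/2} \Sch_{\pi}(\F^{\times})$, must be the \emph{full} space of smooth functions on $\F^{\times}$ that decay rapidly at $\infty$ and have asymptotic expansions at $0$ dictated by the poles of $L(s, \pi \times \chi)$. Equivalently, you need a Paley--Wiener theorem: every meromorphic $M(\chi, s)$ with $M/L$ entire and rapid decay in vertical strips is the Mellin transform of an element of the space.

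You neither prove nor precisely cite such a characterization, and it does not follow from the archimedean zeta-integral theory. That theory gives essentially only that zeta integrals are $L$ times entire functions and that $L$ is attained by finite sums of zeta integrals. Without the $p$-adic structure (fractional ideals of $\C[q^{\pm s}]$, finite isotypic decompositions), this does not determine the space of functions. You would have to show, for instance, that the space contains every function of rapid decay at both $0$ and $\infty$, and that arbitrary asymptotic expansions can be realized inside it by a Borel-type construction; there are infinitely many exponents, since $L$ has infinitely many poles. These are nontrivial statements about the smooth Kirillov model and about the finite spans defining $\Sch_{\pi}(\F^{\times})$. They are at least as hard as what the paper actually does: it compares generators directly, using Wallach's algebraic irreducibility of $V^{\infty}$ over $\Sch(\GL_n(\F))$ (Lemma \ref{lem: DualByFunctGL}, Corollary \ref{cor: DualByFunctSL}), the measure $\rho_{\Phi}$ together with Lemmas \ref{lem: FuncExt} and \ref{lem: AuxMeasSLnArch}, and Dixmier--Malliavin.

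\textbf{The non-archimedean case.} Here your route is a genuine alternative to the paper's generator-by-generator proof of $S(\pi) = W(\pi) = \VorH(\pi, 0) = \VorH(\pi, j)$. However, one of your stated tools is false. $\VorH(\pi)$ is not stable under $\Mult_0(\mu)$; in fact $\Mult_0(\mu) \VorH(\pi) = \VorH(\pi \otimes \mu)$. For example, with $\pi$ unramified and $\mu$ ramified, $\mu$ times the spherical Whittaker function has Mellin poles at ramified $\chi$, where $L(s, \pi \times \chi) = 1$.

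What you actually need is the following.
\begin{itemize}
\item Both spaces consist of uniformly locally constant functions vanishing for $\norm[t]_{\F} \gg 1$, and both are stable under $\Trans(\delta)$ for all $\delta \in \F^{\times}$.
\item Hence each element is a finite sum of $\vO_{\F}^{\times}$-isotypic pieces lying in the same space, since the projections are finite sums of translates. Moreover $\Trans(\varpi_{\F})$ gives the $\C[q^{\pm s}]$-module structure.
\item The Mellin image of the $\xi$-piece at $\chi$, with $\chi|_{\vO_{\F}^{\times}} = \xi^{-1}$, is then exactly the span of the zeta integrals. This is $L(s, \pi \times \chi) \C[q^{\pm s}]$ by Jacquet--Piatetski-Shapiro--Shalika, resp. $L(s, \pi \otimes \chi) \C[q^{\pm s}]$ by Godement--Jacquet.
\end{itemize}
Equality of these $L$-factors, plus injectivity of the Mellin transform, then gives (1). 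Note that this uses more than \eqref{eq: EquivGamma}, which is consistent with the paper's remark that (1) does not follow from the $\gamma$-identity. The argument is shorter than the paper's, but it treats the $L$-factor identity as a black box; in the cited sources that identity comes from essentially the same zeta-integral comparison that the paper refines.
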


\begin{remark}
	Theorem \ref{thm: EquivDefs} (1) is \emph{not} a consequence of \eqref{eq: EquivGamma}, although the proof we will give is based on the proof of \eqref{eq: EquivGamma} scattered in \cite{JPS79, JS90, J09}. In fact, the intermediate arguments for Theorem \ref{thm: EquivDefs} (1) are always slight \emph{refinements} of those for \eqref{eq: EquivGamma}. Theorem \ref{thm: EquivDefs} (1) and \eqref{eq: EquivGamma} readily imply Theorem \ref{thm: EquivDefs} (2).
\end{remark}

\begin{remark}
	In the archimedean case, the proof of Theorem \ref{thm: EquivDefs} (1) uses (more) ``difficult results on smooth representations'' due to Wallach \cite{Wal92}, in the spirit of Jacquet \cite[\S 1]{J09}. See Lemma \ref{lem: DualByFunctGL} for details.
\end{remark}

	After recalling the precise definition of $\VorH(\pi)$ and $\Sch_{\pi}(\F^{\times})$ and introducing some auxiliary spaces in \S \ref{sec: FucSps}, the proof of Theorem \ref{thm: EquivDefs} (1) takes the subsequent \S \ref{sec: EquivDefsNA} \& \S \ref{sec: EquivDefsA}.
\bigskip

	The next result concerns the special case where $\pi$ is assumed to be \emph{tempered unitary}. In this case, an extension $\widetilde{\VorH}_{\pi}$ of $\VorH_{\pi}$ is given in \cite[Theorem 1.3]{Wu24+}. We prove the following \emph{multiplicativity} property, which is a reflection (but not a consequence) of the multiplicativity of the local gamma factors with respect to the parabolic induction.
	
\begin{theorem} \label{thm: Mult}
	Suppose $\pi = \pi_1 \boxplus \pi_2$ is parabolically induced from some tempered representations $\pi_j$ of $\GL_{n_j}(\F)$ with $n_1+n_2=n$. Then on the space $\VorH(\pi)$ the following relation holds
	$$ \VorH_{\pi} = \widetilde{\VorH}_{\pi_1} \circ \Inv \circ \widetilde{\VorH}_{\pi_2}. $$
\end{theorem}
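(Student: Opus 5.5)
The plan is to compare the two sides through their Mellin transforms, i.e.\ through the functional equation \eqref{eq: VorHTviaMellin}, and then use the multiplicativity of the Rankin--Selberg gamma factors
$$ \gamma(s, \pi \times \chi, \psi) = \gamma(s, \pi_1 \times \chi, \psi)\, \gamma(s, \pi_2 \times \chi, \psi). $$
Fix $h \in \VorH(\pi)$. The left-hand side $\VorH_{\pi}(h)$ is characterized by \eqref{eq: VorHTviaMellin}. For the right-hand side, first recall from \cite[Theorem 1.3]{Wu24+} in what sense $\widetilde{\VorH}_{\pi_j}$ extends $\VorH_{\pi_j}$. For tempered unitary $\pi_j$ the extension is presumably defined on a space of functions on $\F^{\times}$ whose Mellin transforms make sense on the unitary axis (an $L^2$-type or weighted space), via the same functional equation on $\Re s = 1/2$ with $|\gamma(1/2+i\tau,\pi_j\times\chi,\psi)| = 1$. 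So the key steps are these.

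(i) Show that $\widetilde{\VorH}_{\pi_2}(h)$ lies in the domain where its Mellin transform is defined along a suitable vertical line. Its Mellin transform is $\gamma(s,\pi_2\times\chi,\psi)\cdot \mathfrak{M}(h)(\chi,s)$ after the change of variables $t\mapsto t^{-1}$, $\chi \mapsto \chi^{-1}$.

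(ii) Compute the effect of $\Inv$ on Mellin transforms. The map $t\mapsto t^{-1}$ converts $\int f(t)\chi^{-1}(t)|t|^{-s}\,\ud^\times t$ into $\int f(t)\chi(t)|t|^{s}\,\ud^\times t$. This is exactly what matches the output variable $(\chi^{-1}, -s)$ of \eqref{eq: VorHTviaMellin} for $\pi_2$ to the input variable $(\chi, s)$ needed for $\pi_1$. Note that the output space $|\cdot|\VorH(\widetilde{\pi}_2)$ carries a shift by $|\cdot|_\F$. So I will check that the normalizations in $\widetilde{\VorH}_{\pi_1}$ absorb the factor $|t|$ (up to $s \mapsto s-1$, compensated by the conventions in \cite{Wu24+}). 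If not, $\Inv$ must be read as including that twist.

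(iii) Apply the functional equation for $\widetilde{\VorH}_{\pi_1}$ to $\Inv\circ\widetilde{\VorH}_{\pi_2}(h)$. The Mellin transform of the composite is then $\gamma(s,\pi_1\times\chi,\psi)\gamma(s,\pi_2\times\chi,\psi)\,\mathfrak{M}(h)(\chi,s)$, which equals the Mellin transform of $\VorH_{\pi}(h)$ by multiplicativity.

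(iv) Conclude by Mellin inversion on $\F^{\times}$ that the two functions agree. In the non-archimedean case this is Fourier analysis over $\widehat{\F^{\times}}$ on the unitary line. In the archimedean case it is Mellin--Plancherel on the critical line, valid for $L^2$ functions.

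The main obstacle is the analytic justification in steps (i)--(iii). The intermediate function $\widetilde{\VorH}_{\pi_2}(h)$ need not lie in $\VorH(\pi_1)$, or even in any space where the Mellin integral converges absolutely away from the unitary axis. So the functional equations can only be used on $\Re s$ fixed, as $L^2$ identities in $(\chi,\tau)$. I would first verify that $h\in\VorH(\pi)$ has Mellin transform square integrable on the line $\Re s = 1/2$. Temperedness of $\pi$ makes $L(s,\pi\times\chi)$ holomorphic for $\Re s>0$, so this should follow from the description of $\VorH(\pi)$ by Whittaker-type functions with controlled asymptotics at $0$ and $\infty$. Then the extended transforms act there as unitary multipliers $\gamma(1/2+i\tau,\pi_j\times\chi,\psi)$, and all three steps become identities of multiplication operators on $L^2(\widehat{\F^\times}\times\R)$. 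If the normalization of $\widetilde{\VorH}_{\pi_j}$ places it on a different line, I would shift by a power of $|\cdot|$ and use that $\gamma$ remains bounded and unitary-like there by temperedness.
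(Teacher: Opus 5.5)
Your proposal has a genuine gap, and it sits where you wrote ``presumably''. In this paper $\widetilde{\VorH}_{\pi_j}$ is not defined through a Mellin functional equation on $\Re s = 1/2$. By Theorem~\ref{thm: ExtVorHTrans} (from \cite[Theorem 1.3]{Wu24+}), $H^* = \widetilde{\VorH}_{\pi_j}(H)$ is the continuous function such that
\[ \int_{\GL_{n_j}(\F)} \widehat{\Phi}(g)\, \beta_j^{\iota}(g)\, \lvert \det g \rvert^{\frac{n_j+1}{2}} H(\det g) \, \ud g = \int_{\GL_{n_j}(\F)} \Phi(g)\, \beta_j(g)\, \lvert \det g \rvert^{\frac{n_j-1}{2}} H^*(\det g) \, \ud g \]
holds for all $\Phi \in \Sch(\Mat_{n_j}(\F))$ and all $\beta_j \in C(\pi_j^{\infty})$. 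This is a distributional Fourier identity on $\Mat_{n_j}(\F)$. Your claim that on the relevant functions the $\widetilde{\VorH}_{\pi_j}$ ``act as unitary multipliers $\gamma(1/2+i\tau,\pi_j\times\chi,\psi)$'' is therefore not available; it is what has to be proved. You also need the domain statements: that $h \in \VorH(\pi)$ lies in the domain of $\widetilde{\VorH}_{\pi_2}$, and that $\Inv \circ \widetilde{\VorH}_{\pi_2}(h)$ lies in the domain of $\widetilde{\VorH}_{\pi_1}$. Once all this is granted, the theorem is the one-line consequence of multiplicativity you describe. That is why the paper calls it a reflection of multiplicativity but not a consequence of it. (The extra twist you worry about in (ii) does not occur. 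The Mellin transform of $\Inv(f)$ at $(\chi,s)$ is that of $f$ at $(\chi^{-1},-s)$, and $\Inv$ maps $\{ f : f\lvert\cdot\rvert^{-1/2} \in L^2(\F^{\times},\ud^{\times}t) \}$ onto $\{ f : f\lvert\cdot\rvert^{1/2} \in L^2(\F^{\times},\ud^{\times}t) \}$.)

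The paper works with the defining identity throughout. It writes $h$ via Theorem~\ref{thm: EquivDefs}~(1) as $\lvert t\rvert^{\frac{n-1}{2}}\int_{\det g = t}\Phi\beta\,\ud g$ and unfolds $\beta$ through the induced model into $\beta_{\kappa_1,\kappa_2}(g_1,g_2)$. It then applies Corollary~\ref{cor: VHTransBis} for $\pi_1$ fibrewise in $g_1$ and integrates over $(g_2,\kappa_1,\kappa_2)$, justified by the $\Xi$-bounds of Lemmas~\ref{lem: BdMCIndRep} and~\ref{lem: 1stExtVHAux}. This gives an explicit $h^{\sharp}$ satisfying the identity for $\pi_1$, and the second step uses \eqref{eq: TecFEIndRep} to land on $\VorH_{\pi}(h)$.

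If you want to keep your Mellin route, you need an identification lemma, which Parseval can supply. Set $h_0(t) := \lvert t\rvert^{\frac{n_j-1}{2}}\int_{\det g=t}\Phi\beta_j\,\ud g \in \VorH(\pi_j)$. Corollary~\ref{cor: VHTransBis} identifies $\lvert t\rvert^{\frac{n_j+1}{2}}\int_{\det g=t}\widehat{\Phi}\beta_j^{\iota}\,\ud g$ with $\VorH_{\pi_j}(h_0)(t)$. So the left side above is $\int_{\F^{\times}} H \cdot \VorH_{\pi_j}(h_0)\,\ud^{\times}t$. Mellin--Plancherel on $\F^{\times}$ with weights $\lvert t\rvert^{\pm 1/2}$ turns this into $\int_{\F^{\times}} H^* h_0\,\ud^{\times}t$, which is the right side. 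This works whenever $H\lvert\cdot\rvert^{1/2} \in L^2$ and $H^*$ is its unitary-multiplier image.

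Two further points must be checked. First, the two Fubini steps: use $\lvert\beta_j\rvert \ll \Xi_{n_j}$, a positive Schwartz majorant $\widetilde{\Phi}$, and the bound on $\lvert t\rvert^{\frac{n_j-1}{2}}\int_{\det g=t}\widetilde{\Phi}\,\Xi_{n_j}$ from the proof of Lemma~\ref{lem: 1stExtVHAux}. Second, $\widetilde{\VorH}_{\pi_2}(h)$ must have a continuous representative, since Theorem~\ref{thm: ExtVorHTrans} works in $\Cont(\F^{\times})$. With that lemma your argument becomes a valid alternative to the paper's unfolding; without it, steps (i) and (iii) are unsupported.
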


\begin{remark}
	Although the statement of Theorem \ref{thm: Mult} is entirely within the framework of Voronoi--Hankel transforms, its proof that we give here relies on Theorem \ref{thm: EquivDefs} in an essential way. This is due to the lack of a good generalization of the construction of Whittaker functions by \emph{Godement sections} to general parabolic induction, say from $n_1=1$ (as we developed in \cite[\S 3]{Wu24+}) to $n_1 \geq 2$.
\end{remark}
	
	The proof of Theorem \ref{thm: Mult} will be given in \S \ref{sec: Mult}.

	\subsection{Results over non-Archimedean Fields}
	
	Let $\F$ be a local non-archimedean field. Let $\pi$ be an irreducible, admissible and generic (not necessarily unitary) representation of $\GL_d(\F)$ for some $d \in \Z_{\geq 2}$. We shall prove that $\VorH_{\pi}$ is of convolution type in the following sense, and give some basic properties of the associated kernel function $\VHF_{\pi}$.

\begin{definition} \label{def: ConvTypeTrans}
	We say a transform $A$ on the space of functions on $\F^{\times}$ is of \emph{convolution type} if its domain contains $\Cont_c^{\infty}(\F^{\times})$ and if there is a locally integrable function $a$ on $\F^{\times}$ so that
	$$ A(h)(y) = \int_{\F^{\times}} a(xy) h(x) \ud^{\times} x = a*(\Inv(h))(y), \quad \forall \ h \in \Cont_c^{\infty}(\F^{\times}). $$
	We call $a(y)$ the \emph{convolution kernel} of $A$.
\end{definition}
	
\noindent In other words, we shall prove that $\VorH_{\pi}$ is represented by a kernel function $\VHF_{\pi}$ via convolution as
\begin{equation} \label{eq: VorHTviaConv}
	\VorH_{\pi}(h)(y) = \VHF_{\pi} * \Inv(h)(y) = \int_{\F^{\times}} \VHF_{\pi}(xy) h(x) \ud^{\times} x, \quad \forall \ h \in \Cont_c^{\infty}(\F^{\times}).
\end{equation}
	In \cite{Wu24+} we did this for some special representations $\pi$ of $\GL_2(\F)$. The proof is based on Weil's representation which makes essential use of the exceptional isomorphism $\GL_2 \simeq \GSp_2$. Although that approach is uniform for both the archimedean and non-archimedean fields, it is not generalizable to the current setting of $\GL_d(\F)$. We will develop a new method irrelevant with Weil's representation. We shall actually do something more. Namely we will find a convolution algebra $(\SSch(\F, \widetilde{\F^{\times}}), *_{\mathrm{pv}})$ which extends $(\Cont_c^{\infty}(\F^{\times}),*)$ and contains the space $\SSch(\F)$, introduced in \cite[\S 4.1]{Wu24+} while developing the non-archimedean analogue of Miller--Schmid's extension $\MS_{\pi}$ of $\VorH_{\pi}$, as an \emph{ideal}. It turns out that $\SSch(\F, \widetilde{\F^{\times}})$ contains all kernel functions $\VHF_{\pi}$. Hence the convolutional interpretation by \eqref{eq: VorHTviaConv} applies to $h \in \Inv(\SSch(\F, \widetilde{\F^{\times}}))$ and provides an extension of $\MS_{\pi}$. The whole \S \ref{sec: FANA} is devoted to the theory of $(\SSch(\F, \widetilde{\F^{\times}}), *_{\mathrm{pv}})$, with main results summarized in Proposition \ref{prop: SSchAlg} or \ref{prop: SSchAlgBis}. The main results of this part is stated as follows.
	
\begin{definition} \label{def: l0Ss}
	Let $\pi$ be a supercuspidal representation of $\GL_d(\F)$. If $d = 1$ we write $l_0(\pi) := \max(2\cond(\pi),2)$. If $d \geq 2$ we define $l_0(\pi) := \lceil (2\cond(\pi)+f(\pi))/d \rceil - 1$, where $f(\pi)$ is the number of unramified $\chi \in \widehat{\F^{\times}}$ satisfying $\pi \otimes \chi \simeq \pi$.
\end{definition}
	
\begin{theorem} \label{thm: VHKernNA}
	Let $\F$ be a local non-archimedean field. Let $\pi$ be a generic, admissible and irreducible representation of $\GL_d(\F)$. Let $\SuS(\pi) = \{ \pi_1, \dots, \pi_k \}$ be the \emph{supercuspidal support} of $\pi$.

\noindent (1) The transform $\VorH_{\pi}$ is represented by a kernel function $\VHF_{\pi} \in \SSch(\F, \widetilde{\F^{\times}})$ and we have
	$$ \VHF_{\pi} = \VHF_{\pi_1} *_{\mathrm{pv}} \cdots *_{\mathrm{pv}} \VHF_{\pi_k}. $$
	Consequently we have for $\Re(s) \gg 1$ the following equality
	$$ \int_{\F^{\times}}^{\mathrm{pv}} \VHF_{\pi}(t) \chi^{-1}(t) \norm[t]_{\F}^{-s} \ud^{\times} t = \gamma(s, \pi \times \chi, \psi). $$
	
\noindent (2) Let $l_0(\pi) := \max \left\{ l_0(\pi_i) \ \middle| \ 1 \leq i \leq k \right\}$. Let $\omega$ be the central character of $\pi$. If $v_{\F}(t) \leq -dl_0$ and $\VHF_{\pi}(t) \neq 0$, then we have $v_{\F}(t) = -dl$ for some $l \geq l_0$ and
	$$ \VHF_{\pi}(t) = \norm[t]_{\F} \cdot G_l(t; \omega, d) $$
depends only on $\omega$, where $G_l(t; \omega, d)$ is given in Definition \ref{def: GermParGI} below.
\end{theorem}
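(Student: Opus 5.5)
The plan is to reduce everything to the supercuspidal case through the multiplicativity of gamma factors, and to treat the supercuspidal case by a direct Mellin-inversion analysis of $\gamma(s, \pi \times \chi, \psi)$.

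\textbf{Step 1: the supercuspidal kernel.} For $\pi$ supercuspidal of $\GL_d(\F)$ we define $\VHF_{\pi}$ directly by Mellin inversion of the twisted gamma factors:
$$ \VHF_{\pi}(t) := \sum_{\chi \in \widehat{\vO^{\times}}} \mathrm{Res}\text{-free Fourier coefficient of } \gamma(s, \pi\times\chi,\psi) \text{ in } q^{-s}, $$
that is, $\VHF_{\pi}|_{\varpi^{n}\vO^{\times}}$ is determined by the coefficient of $q^{ns}$ in the Laurent polynomial $\gamma(s, \pi \times \chi, \psi)$.

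For $d \geq 2$ and $\pi$ supercuspidal, $L(s, \pi \times \chi) = 1$ unless $\pi \otimes \chi$ is unramified-twist-equivalent to the trivial representation, which is impossible. Hence $\gamma(s, \pi \times \chi, \psi) = \varepsilon(s, \pi \times \chi, \psi)$ is a monomial $c(\chi)\, q^{-\cond(\pi\times\chi)(s-1/2)}$ (with $\psi$ unramified). Consequently $\VHF_{\pi}$ is supported on the shells $v_{\F}(t) = -\cond(\pi \times \chi)$, and on each shell it is a finite sum of characters. Local integrability and membership in $\SSch(\F, \widetilde{\F^{\times}})$ then follow from the bound $\cond(\pi\times\chi) \leq d\max(\cond(\pi)/d,\cond(\chi))$ together with the equality $\cond(\pi \times \chi) = d\,\cond(\chi)$ once $\cond(\chi)$ is large. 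For $d=1$ this is the classical Tate-gamma kernel, involving a principal value near $0$ coming from the pole of $L(s,\chi)$ at unramified $\chi$.

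We then check \eqref{eq: VorHTviaConv} on $\Cont_c^{\infty}(\F^{\times})$ by comparing Mellin transforms against \eqref{eq: VorHTviaMellin}. Both sides are determined by their twisted Mellin transforms, and the Mellin transform of $\VHF * \Inv(h)$ factorizes.

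\textbf{Step 2: general $\pi$.} By Proposition \ref{prop: SSchAlg}, $\SSch(\F, \widetilde{\F^{\times}})$ is closed under $*_{\mathrm{pv}}$, and the principal-value Mellin transform converts $*_{\mathrm{pv}}$ into products for $\Re(s) \gg 1$. Applying multiplicativity of gamma factors,
$$ \gamma(s, \pi \times \chi, \psi) = \prod_{i} \gamma(s, \pi_i \times \chi, \psi), $$
the product $\VHF_{\pi_1} *_{\mathrm{pv}} \cdots *_{\mathrm{pv}} \VHF_{\pi_k}$ has the correct Mellin transform. Since $h \in \Cont_c^\infty$ has Mellin transforms that are Laurent polynomials, the Mellin injectivity built into the definition of $\VorH_{\pi}$ identifies this convolution with the kernel of $\VorH_{\pi}$. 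Alternatively, this could be argued via Theorem \ref{thm: Mult} in the tempered case and analytic continuation in general, but the direct Mellin argument avoids unitarity.

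\textbf{Step 3: part (2).} For large $|t|$ we use the stability of gamma factors. If $\cond(\chi) = l \geq l_0(\pi_i)$, then by the definition of $l_0$ (via Bushnell--Henniart type conductor bounds: $\cond(\pi_i \times \chi) = d_i l$ once $l > (2\cond(\pi_i)+f(\pi_i))/d_i - 1$), we have
$$ \varepsilon(s, \pi_i \times \chi, \psi) = \omega_{\pi_i}(c_\chi)^{-1}\, \varepsilon(s,\chi,\psi)^{d_i}, $$
where $c_\chi$ is the element with $\chi(1+x) = \psi(c_\chi^{-1} x)$ for deep $x$. Multiplying over $i$ gives the dependence on $\omega$ only. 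Inverting the Mellin transform on the shell $v(t) = -dl$ then yields the function $\norm[t]\,G_l(t;\omega,d)$ of Definition \ref{def: GermParGI}.

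One also needs that contributions from shells $v(t) = -dl$ only arise from characters $\chi$ with $\cond(\chi) = l$. This holds because characters with smaller conductor contribute to $v(t) \geq -d l_0 + $ something. The convolution in $*_{\mathrm{pv}}$ adds valuations, so the product of the individual shell-supports also lands on $-dl$.

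\textbf{Main obstacle.} I expect the main difficulty to be the precise stability step. It requires verifying that the threshold $l_0$ in Definition \ref{def: l0Ss}, including the term $f(\pi)$, gives exactly the range where $\varepsilon(s,\pi\times\chi,\psi)$ depends only on $\omega$. Here I would first try the explicit formula for $\varepsilon$ from the essentially tame local Langlands correspondence (the admissible-pair description), computing the Gauss sum over $E^\times$ and showing that its dependence on the character $\theta$ of $E^\times$ collapses to $\theta|_{\F^\times} = \omega$ once $l \geq l_0$.
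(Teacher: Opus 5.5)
Your reduction of part (1) to the supercuspidal case is essentially the paper's argument for general $\pi$: multiplicativity of $\gamma$, Proposition \ref{prop: SSchAlg}, and Mellin injectivity on $\SSch(\F)$. Defining the supercuspidal kernel by Mellin inversion of $\varepsilon(s,\pi\times\chi,\psi)$ is also acceptable for mere existence. The genuine gap is the supercuspidal input, above all in Step 3.

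You take as given that for every supercuspidal $\pi_i$ of $\GL_{d_i}(\F)$ with $d_i\geq 2$, and every $\chi$ with $\cond(\chi)\geq l_0(\pi_i)$, one has $\varepsilon(s,\pi_i\times\chi,\psi)=\varepsilon(s,\chi,\psi)^{d_i-1}\varepsilon(s,\chi\omega_i,\psi)$. No such effective statement is available to you. Jacquet--Shalika's stability gives no explicit barrier, and Bushnell--Henniart's explicit barrier covers only $\GL_2$. The exact statement with barrier $\lceil(2\cond(\pi)+f(\pi))/d\rceil-1$ is Corollary \ref{cor: StRangeSC}, which the paper deduces \emph{from} the asymptotics in part (2), so Step 3 is circular.

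Your fallback via the essentially tame correspondence cannot close this. It says nothing about supercuspidals with $p\mid e(\pi)$. Even in the tame case you would still have to convert the ramification data of $(\E/\F,\mu_\xi\xi)$ into $\cond(\pi)$ and $f(\pi)$ to land exactly on $l_0$.

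The same issue already affects Step 1. Conductor bounds only tell you on which shells $\VHF_\pi$ lives. Membership in $\SSch^{(d)}(\F;\omega)$ requires the $\mathrm{Kl}_\omega(d)$ shape at $\infty$, and that is a stability statement. For part (1) alone this can be patched by citing the non-effective theorem of Jacquet--Shalika, but part (2) cannot be patched that way. (For $d_i=1$ your argument is fine, since $\GL_1$ stability with barrier $\max(2\cond(\pi_i),2)$ is classical.)

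The missing idea is a representation-theoretic formula for the supercuspidal kernel that exposes its behaviour at $\infty$ without any input on $\varepsilon$-factors. Write $\pi$ as compactly induced from a cuspidal inducing datum $(\gp{J},\rho)$ with $\gp{J}=\nG(\oA)$. Consider the form $(f_1,f_2)\mapsto\int_{\gp{G}}\phi(\det g)\psi(\Tr g)\beta_{f_1,f_2}(g)\,\ud g$; it is $\gp{G}$-invariant, so Schur's lemma gives Lemma \ref{lem: VHKernScAux}. From this one gets $\VHF_\pi(t)=\norm[t]_{\F}^{(d+1)/2}d_{\widetilde\rho}^{-1}\int_{\gp{J}(t)}\psi(\Tr g)\chi_{\widetilde\rho}(g)\,\ud g$.

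Next, average $\psi(\Tr(g\kappa))$ over $U_{\oA}^n(1)$ using Proposition \ref{prop: POrderNbhdQ} and Lemma \ref{lem: AddCharAvg}. This forces $v_{\F}(t)=-dl$ and $g\in\varpi_{\F}^{-l}\widetilde{U}_{\oA}^m$. Once $m\geq\cond(\widetilde\rho)$, $\chi_{\widetilde\rho}$ acts on these cosets through the central character $\omega^{-1}$. The remaining character sum is then $G_l(t;\omega,d)$ by Lemma \ref{lem: GermParGI} (2). The barrier $l_0$ is precisely the condition $m\geq\cond(\widetilde\rho)$ rewritten via \eqref{eq: CondExpRel}, so effective stability comes out as a consequence rather than going in as an assumption.

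Once each supercuspidal kernel is known to be $\norm[t]_{\F}G_{\geq l_0}(t;\omega_i,d_i)+g_i$ with $g_i$ supported in $v_{\F}(t)>-d_il_0$ and invariant under $U_{\F}^{l_0-1}$ (Proposition \ref{prop: VHKernSc} (5)), your shell-and-valuation argument for the convolution in Step 3 goes through. It then matches the paper's orthogonality argument via \eqref{eq: 1stRegConvAux} and \eqref{eq: 2ndRegConvAux}.
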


\begin{remark}
	The definition of the \emph{supercuspidal support} of $\pi$ will be recalled in the beginning of the proof of Theorem \ref{thm: VHKernNA} (1), given in \S \ref{sec: VHKernNA}.
\end{remark}

\begin{remark}
	The core of the proof of Theorem \ref{thm: VHKernNA} (2) is the case of a single supercuspidal representation, i.e., Proposition \ref{prop: VHKernSc}. Our main discovery is that the theory of \emph{cuspidal inducing datum} (recalled in \S \ref{sec: CID}) is strong enough to determine the type of asymptotic behavior of $\VHF_{\pi}$ considered here. A crucial technical innovation is Proposition \ref{prop: POrderNbhdQ}, which puts some intuition gained from the Lie algebra computation at archimedean places into effect for \emph{principal orders}, the building blocks for supercuspidal representations. Its advantage over the naive analogue of the exponential map is its independence of the characteristic of $\F$. In particular, Theorem \ref{thm: VHKernNA} is valid for local function fields as well.
\end{remark}

\begin{remark}
	Readers familiar with the work of Jiang--Luo \cite{JiL22} may find some ``shortcuts'' to certain results give here, based on the theirs. For example, the integral representation of $\VHF_{\pi}$ for supercuspidal $\pi$ given in Proposition \ref{prop: VHKernSc} (1) can be easily deduced from their general integral representation of the $\pi$-Fourier kernel function \cite[(3-15)]{JiL22}, together with the consideration of matrix coefficients made from ``minimal vectors'' in the sense of Hu--Nelson--Saha \cite{HNS18}. It would be worth emphasizing that our treatment is self-contained, and in particular independent of these cited works.
\end{remark}

\begin{remark}
	It should be mentioned that the space $\SSch(\F, \widetilde{\F^{\times}})$ does not cover all interesting applications of our extended Voronoi--Hankel transform $\widetilde{\VorH}_{\pi}$. We intend to study special functions of the shape $\widetilde{\VorH}_{\pi_1}(\VHF_{\pi_2})$ in future, which is different from $\widetilde{\VorH}_{\pi_1}(\Inv(\VHF_{\pi_2})) = \VHF_{\pi_1} *_{\mathrm{pv}} \VHF_{\pi_2}$. This has applications to the analysis of the local weight transforms in some Motohashi-type formulas similar to \cite[(15.3) \& (15.6)]{BrM03}.
\end{remark}

	Further simplification of the integral representation of $\VHF_{\pi}(t)$ for supercuspidal $\pi$ given in Proposition \ref{prop: VHKernSc} (1) for small $\norm[t]_{\F}$ turns out to be subtler. Instead of the theory of cuspidal inducing datum, it seems necessary to involve some fine aspects of the \emph{local Langlands correspondences} (abbreviated as \emph{LLC}). We are able to give such simplification only for the \emph{essentially tame} supercuspidal representations, for which the corresponding \emph{LLC} are sufficiently explicit thanks to the works of Bushnell--Henniart \cite{BuH05_T1, BuH05_T2, BuH10_T3}. The relevant result is stated in Proposition \ref{prop: VHKernET}, which is a generalization of the non-archimedean dihedral case obtained in \cite[\S 7.1]{Wu24+}. It looks challenging to find analogous simple integral representation of $\VHF_{\pi}$ for general supercuspidal $\pi$, namely integral representation in terms of the \emph{Langlands parameter} of $\pi$.
\begin{remark}
	It would be interesting to find a direct computational proof of Proposition \ref{prop: VHKernET} without appealing to the works of Bushnell--Henniart. In this case, an analytic formula of the Langlands constants (``$\lambda(\E/\F, \psi)$'' in Proposition \ref{prop: VHKernET}) would follow, similar to the Weil's index for quadratic $\E/\F$.
\end{remark}

	\subsection{Application to Stability of Gamma Factors}
	
	We continue with the non-archimedean setting in the previous subsection. The stability of gamma factors is the phenomenon that $\gamma(s, \pi \times \chi, \psi)$ depends only the central character $\omega$ of $\pi$ (or becomes ``stable'') if the conductor exponent $\cond(\chi)$ is sufficiently large. This was initially studied by Jacquet--Shalika \cite{JS85}. For representations $\pi$ of groups different from $\GL_d$, analogues can be found in \cite{CPS98, Ki00, RS05, Br08}. The stability theorems have important applications to establish functorial lifts from other groups to $\GL_d$.  
	
	It should be mentioned that although the stability phenomenon is inspired by the stability of the Artin gamma factors, the strength of the above mentioned results is not superseded even if we knew the full \emph{LLC}. In fact the stability of the Artin gamma factors involves the Brauer induction theorem, which is not effective. Hence we have no control on the implied constant $l_0$, called the \emph{stability barrier}, in $\cond(\chi) \geq l_0$ for which $\gamma(s, \pi \times \chi, \psi)$ becomes stable. Whereas in the above-mentioned stability on the analytic side, $l_0$ can be quantified as some large multiple of the conductor exponent $\cond(\pi)$ of $\pi$, although it has never been explicated.
	
	In the literature, we find only Bushnell--Henniart \cite[25.7]{BuH06} in the case of $\GL_2$, where the stability barrier is explicitly given in terms of some invariants of $\pi$. Bushnell--Henniart's method involves the subtler theory of \emph{cuspidal types}. Our Theorem \ref{thm: VHKernNA} implies an effective version of the stability theorem for $\GL_d$, which can be viewed as both a generalization and simplification of the theirs.
	
\begin{corollary} \label{cor: StRangeGen}
	Let notation be as in Theorem \ref{thm: VHKernNA}. For any character $\chi$ with $\cond(\chi) \geq l_0(\pi)$ we have 
	$$ \gamma(s, \pi \times \chi, \psi) = \gamma(s, \chi, \psi)^{d-1} \gamma(s, \chi \omega, \psi), \quad \cond(\pi \times \chi) = d \cdot \cond(\chi). $$
\end{corollary}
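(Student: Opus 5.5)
We deduce Corollary \ref{cor: StRangeGen} from Theorem \ref{thm: VHKernNA} by computing the Mellin transform of $\VHF_{\pi}$ against $\chi^{-1}$. By Theorem \ref{thm: VHKernNA} (1), for $\Re(s) \gg 1$,
$$ \gamma(s, \pi \times \chi, \psi) = \int_{\F^{\times}}^{\mathrm{pv}} \VHF_{\pi}(t) \chi^{-1}(t) \norm[t]_{\F}^{-s} \ud^{\times} t, $$
where the principal value is a sum over the shells $v_{\F}(t) = m$ of the integrals over $\vO_{\F}^{\times}$-cosets. If $\cond(\chi) = c \geq l_0 := l_0(\pi)$, then $\chi^{-1}$ is non-trivial on $1+\mathfrak{p}^{c-1}$. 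In the region $v_{\F}(t) > -dl_0$ the kernel $\VHF_{\pi}$ should be invariant under multiplication by $1+\mathfrak{p}^{l_0}$, or at least by $1+\mathfrak{p}^{c-1}$ when $c > l_0$. This local constancy is the first thing to check. In the regime covered by Theorem \ref{thm: VHKernNA} (2) it is automatic, because the kernel depends only on $\omega$. For the complementary bounded region I would extract it from the integral representation in Proposition \ref{prop: VHKernSc} (1) and the convolution structure in (1). Given this invariance, every shell with $v_{\F}(t) > -dl_0$ contributes zero after integrating over the unit cosets. That leaves only the shells with $v_{\F}(t) \leq -dl_0$, and there Theorem \ref{thm: VHKernNA} (2) gives $\VHF_{\pi}(t) = \norm[t]_{\F} G_l(t;\omega,d)$ with $v_{\F}(t) = -dl$.

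The key step is to identify the universal germ. The expression $\norm[t]_{\F} G_l(t;\omega,d)$ depends only on $(\omega, d)$, so it coincides with the large-$\norm[t]$ part of the kernel of any convenient $\pi'$ with the same central character and $l_0(\pi') \leq l_0$. I would take $\pi' = \id \boxplus \cdots \boxplus \id \boxplus \omega$, or rather the supercuspidal-support version in which $\omega$ is replaced by a suitably ramified character of the same type. For this $\pi'$ I would use the kernel formula $\VHF_{\pi'} = \VHF_{\id} *_{\mathrm{pv}} \cdots *_{\mathrm{pv}} \VHF_{\omega}$ from (1). Taking Mellin transforms gives $\gamma(s,\pi'\times\chi,\psi) = \gamma(s,\chi,\psi)^{d-1}\gamma(s,\chi\omega,\psi)$. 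Applying the same vanishing argument to $\pi'$ then shows that the two gamma factors agree for $\cond(\chi)\geq l_0$. A cleaner route is to compute the Mellin transform of $\norm[t]_{\F} G_l$ directly from Definition \ref{def: GermParGI}. Presumably $G_l$ is a hyper-Kloosterman-type sum of $d-1$ additive characters and one $\omega$-twisted character on the shell $v_{\F}(t)=-dl$. Its twisted Mellin transform then factors, by substitution, into $d-1$ Gauss sums for $\chi$ and one for $\chi\omega$, supported exactly on $l = c$. Comparing with the Tate formula for $\gamma(s,\mu,\psi)$ as $\epsilon(s,\mu,\psi)$, valid since $L(s,\mu)=1$ for ramified $\mu$, would give the stated identity.

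The conductor statement follows from this identity. The right-hand side is a monomial in $q^{-s}$ whose exponent equals $(d-1)\cond(\chi) + \cond(\chi\omega)$, up to the conductor of $\psi$. Since $\cond(\chi) \geq l_0 > \cond(\omega)$, which follows from the definition of $l_0$ and $\cond(\omega)\leq\cond(\pi_i)/d_i$, we have $\cond(\chi\omega)=\cond(\chi)$. On the left, $L(s,\pi\times\chi)=L(s,\widetilde\pi\times\chi^{-1})=1$ because of the large twist, so the gamma factor is the epsilon factor, whose exponent is $\cond(\pi\times\chi)$. This gives $\cond(\pi\times\chi) = d\,\cond(\chi)$.

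The main obstacle is the vanishing of the contributions from $v_{\F}(t) > -dl_0$. It requires a precise local-constancy statement for $\VHF_{\pi}$ in the bounded region that is uniform across the convolution factors. The boundary case $\cond(\chi) = l_0$ is delicate, since only $1+\mathfrak{p}^{l_0-1}$ acts non-trivially. I would first try to derive it from the shell support of each $\VHF_{\pi_i}$ via the Mellin-multiplicativity in (1), reducing to the single supercuspidal case of Proposition \ref{prop: VHKernSc}.
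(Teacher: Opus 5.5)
Your outline is the paper's first argument. You split the regularized Mellin transform of $\VHF_{\pi}$ against $\chi^{-1}$ into two groups of shells.
On the shells $v_{\F}(t) \leq -dl_0$, Theorem \ref{thm: VHKernNA} (2) and Lemma \ref{lem: GermParGI} (1) leave only the shell $v_{\F}(t) = -d\cond(\chi)$; your ``cleaner route'' is exactly that lemma.
The shells $v_{\F}(t) > -dl_0$ must contribute nothing. You flag this vanishing as the main obstacle and do not establish it, yet it is the only content beyond Theorem \ref{thm: VHKernNA} (2). So the proposal has a genuine gap there.
The detour through $\pi' = \id \boxplus \cdots \boxplus \omega$ does not help, since it needs the same vanishing for $\pi'$.
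Note also what is actually needed: invariance of the bounded part under $U_{\F}^{l_0-1}$, not $U_{\F}^{l_0}$. That kills every $\chi$ with $\cond(\chi) \geq l_0$ at once, so there is no separate boundary case.

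The missing ingredients are these.

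For a supercuspidal $\pi_i$ of $\GL_{d_i}(\F)$ with $d_i \geq 2$, Proposition \ref{prop: VHKernSc} (5) gives $U_{\F}^{l_0(\pi_i)-1}$-invariance of $\VHF_{\pi_i}$ on $v_{\F}(t) > -d_i l_0(\pi_i)$. The paper derives this from the fact that $\gamma = \varepsilon$ is a monomial, so for each $\chi$ exactly one shell survives, by \eqref{eq: VHKernSCMellin}. By (3) and Lemma \ref{lem: GermParGI} (1), that shell is $-d_i \cond(\chi)$ when $\cond(\chi) \geq l_0(\pi_i)$.
Your idea of reading the invariance off Proposition \ref{prop: VHKernSc} (1) also works. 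For $\delta \in U_{\F}^{l_0-1}$ one has $\diag(\delta,1,\dots,1) \in U_{\oA}^{e(l_0-1)}$. Right multiplication by it preserves $\psi(\Tr g)$ and $\chi_{\widetilde{\rho}}(g)$ on $\gp{J}(t)$ once $v_{\F}(t) > -dl_0$, using \eqref{eq: CondExpRel} to check $e(l_0-1) \geq \cond(\widetilde{\rho})$.

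For general $\pi$, write $\VHF_{\pi_i} = \norm[t]_{\F} G_{\geq l_0}(t; \omega_i, d_i) + g_i$ as in \eqref{eq: VHFDecomp2}. Here $g_i$ is supported in $v_{\F}(t) > -d_i l_0$ and is $U_{\F}^{l_0-1}$-invariant, because the germs $G_l$ with $l_0(\pi_i) \leq l < l_0$ only involve characters of conductor $l \leq l_0-1$.
Then \eqref{eq: 1stRegConvAux} merges the germs, and \eqref{eq: 2ndRegConvAux} shows that the cross terms $G_{\geq l_0} *_{\mathrm{pv}} g_j$ vanish, giving \eqref{eq: VHFDecomp3}. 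This orthogonality of cross terms is exactly the ``uniformity across the convolution factors'' you ask for.
The remainder $g_1 *_{\mathrm{pv}} \cdots *_{\mathrm{pv}} g_k$ is $U_{\F}^{l_0-1}$-invariant, so its Mellin transform at $\chi$ is zero.

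Alternatively, as in the paper's second proof, you can avoid local constancy altogether:
\begin{itemize}
\item use \eqref{eq: MultGammaF} to factor $\gamma(s, \pi \times \chi, \psi)$ over the supercuspidal support;
\item apply Corollary \ref{cor: StRangeSC} (the single-shell argument) to each factor with $d_i \geq 2$, and Tate's formula to those with $d_i = 1$;
\item merge $\prod_i \varepsilon(s, \chi\omega_i, \psi)$ into $\varepsilon(s, \chi, \psi)^{k-1} \varepsilon(s, \chi\omega, \psi)$ by $\GL_1$ stability.
\end{itemize}
Your conductor argument is then fine.
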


	Two (simple) proofs of Corollary \ref{cor: StRangeGen} will be also given in \S \ref{sec: VHKernNA}.

\section{Preliminary}

	\subsection{Notation and Convention}
	
	For any measured space $(\Omega, \ud x)$ and a subset $A \subset \Omega$ with finite volume, we write for any measurable function $f: \Omega \to \C$
	$$ \oint_A f(x) \ud x := \Vol(A, \ud x)^{-1} \int_A f(x) \ud x. $$
	
	Throughout the paper $\F$ denotes a local field. A non-trivial additive character $\psi$ of $\F$ is chosen and fixed. The Haar measure $\ud x$ on $\F$ is the one self-dual with respect to $\psi$. For $d \in \Z_{\geq 1}$ we put the $d^2$-th tensor product $\ud X$ of $\ud x$ on $\Mat_d(\F)$, the algebra of $d \times d$ matrices over $\F$. Let $\ud g = \norm[\det X]_{\F}^{-d} \ud X$ be the chosen Haar measure on $\GL_d(\F)$. We denote by $\gp{B}_d$ the standard Borel subgroup of $\GL_d$ consisting of upper triangular matrices. The unipotent radical of $\gp{B}_d$ is denoted by $\gp{N}_d$, consisting of upper triangular matrices with diagonal entries equal to $1$.
	
	Let $\pi$ be a generic, admissible and irreducible representation of $\GL_d(\F)$. The local gamma factor has the following standard decomposition
	$$ \gamma(s, \pi, \psi) = \frac{L(1-s, \widetilde{\pi})}{L(s, \pi)} \varepsilon(s, \pi, \psi), \quad \varepsilon(s, \pi, \psi) = \varepsilon(1/2, \pi, \psi) \cdot \left( \Cond(\pi)\Cond(\psi) \right)^{1/2-s} $$
where $L(s, \pi)$ is the standard local $L$-factor of $\pi$, $\widetilde{\pi}$ is the contragredient representation of $\pi$, $\varepsilon(s, \pi, \psi)$ is the local epsilon-factor, $\Cond(\pi)$ (resp. $\Cond(\psi)$) is the conductor of $\pi$ (resp. of $\psi$) and $\varepsilon(1/2, \pi, \psi) \in \Sph^1 = \left\{ z \in \C^{\times} \ \middle| \ \norm[z]=1 \right\}$ is called the local root number.

	If $\F$ is a local non-archimedean field, write $\vO_{\F}$ (resp. $\vP_{\F}$, resp. $\varpi_{\F}$) for the valuation ring (resp. ideal of $\F$, resp. a uniformizer of $\F$). Assume $\psi$ is trivial on $\vO_{\F}$ but non-trivial on $\vP_{\F}^{-1}$. Let $v_{\F}$ be the normalized additive valuation. For $m \in \Z_{\geq 0}$ let $U_{\F}^m := (1+\vP_{\F}^m) \cap \vO_{\F}^{\times}$. The collection $\left\{ \vP_{\F}^n \ \middle| \ n \in \Z_{\geq 0} \right\}$, resp. $\left\{ U_{\F}^m \ \middle| \ m \in \Z_{\geq 0} \right\}$ forms a system of neighborhoods of $0$, resp. $1$, making $\F$, resp. $\F^{\times}$ an example of \emph{totally disconnected topological group}. In general, let $G$ be a such group (multiplication written as $x \cdot y$) with a collection $\mathfrak{U} = \left\{ U_i \ \middle| \ n \in I \right\}$ of \emph{directed} system of open normal subgroup-neighborhoods at the identity element $\id \in G$. A function $f: G \to \C$ is \emph{locally constant} or \emph{continuous}, if
\begin{equation} \label{eq: ContFDef}
	\forall \ x \in G, \ \exists \ U \in \mathfrak{U}, \textrm{ s.t. } \forall \ y \in U, \ f(x \cdot y) = f(x).
\end{equation}
\begin{definition} \label{def: SmoothFDef}
	A \emph{continuous} function $f: G \to \C$ is \emph{smooth}, if in \eqref{eq: ContFDef} some $U \in \mathfrak{U}$ can be chosen uniformly for all $x \in G$. The space of smooth (resp. smooth with compact support) complex-valued functions on $G$ is denoted by $\Cont^{\infty}(G)$ (resp. $\Cont_c^{\infty}(G)$). We also write $\Sch(G) = \Cont_c^{\infty}(G)$.
\end{definition}

\noindent We thus have $\Sch(\Mat_d(\F))$ for any local field $\F$. For $\Phi \in \Sch(\Mat_d(\F))$ (hence any subspace of $\Sch(\Mat_d(\F))$) and $g_1,g_2 \in \GL_d(\F)$, we write $g_1.\Phi.g_2(X) := \Phi(g_2^{-1}Xg_1)$.

	\subsection{Voronoi--Hankel Spaces and $\pi$-Fourier Spaces}
	\label{sec: FucSps}
	
	Let $n \in \Z_{\geq 2}$. Let $\pi$ be a unitary irreducible representation of $\GL_n(\F)$ which is generic and $\RamCst$-tempered for some constant $0 \leq \RamCst < 1/2$. Let $W \in \Whi(\pi^{\infty},\psi)$ be a function in the Whittaker model of $\pi^{\infty}$. Let $w_n$ be the longest Weyl element of $\GL_n$. Then the function
	$$ \widetilde{W}(h) := W(w_n h^{\iota}), \quad \forall \ h \in \GL_n(\F) $$
is in $\Whi(\widetilde{\pi}^{\infty}, \psi^{-1})$, the Whittaker model of the smooth contragredient representation $\widetilde{\pi}^{\infty}$. Recall the definition of $\VorH(\pi)$ \cite[Definition 1.2]{Wu24+} as follows.

\begin{definition} \label{def: VHSpaces}
	(1) If $n \geq 2$ and $0 \leq j \leq n-2$, the space of functions on $\F^{\times}$ (all containing $\Cont_c^{\infty}(\F^{\times})$)
\begin{equation} \label{eq: BasicVHS}
	\VorH(\pi) = \VorH(\pi; j) := \left\{ h(y) := \norm[y]^{-\frac{n-1}{2}} \int_{\F^j} W \begin{pmatrix} y & & \\ \vec{x} & \id_j & \\ & & \id_{n-1-j} \end{pmatrix} \ud \vec{x} \ \middle| \ W \in \Whi(\pi^{\infty}, \psi) \right\}
\end{equation}
	is independent of $j$ or $\psi$ (as long as $\psi$ is non-trivial).
	
\noindent (2) Let $\Sch(\F)$ be the space of Schwartz--Bruhat functions. Let $\chi$ be a (quasi-character) of $\F^{\times}$. We put $\VorH(\chi) := \chi \cdot \Sch(\F)$.
\end{definition}

	Let $C(\pi^{\infty})$ be the set of smooth matrix coefficients of $\pi$. The $\pi$-Schwartz space is introduced by Jiang--Luo \cite[(2.15)]{JiL23} as
\begin{equation} \label{eq: piFourSp}
	\Sch_{\pi}(\F^{\times}) := \mathrm{Span} \left\{ \norm[x]_{\F}^{\frac{n}{2}} \int_{\SL_n(\F)} (\Phi \cdot \beta) \left( \begin{pmatrix} x & \\ & \id_{n-1} \end{pmatrix} g \right) \ud g \ \middle| \ \Phi \in \Sch(\Mat_n(\F)), \beta \in C(\pi^{\infty}) \right\}.
\end{equation} 

	We introduce the following auxiliary spaces of functions on $\F^{\times}$
\begin{align}
	S(\pi) &:= \norm_{\F}^{-\frac{1}{2}} \cdot \Sch_{\pi}(\F^{\times}), \label{eq: piFourSpVar} \\
	W(\pi) &:= \mathrm{Span} \left\{ \norm[x]_{\F}^{\frac{n-1}{2}} \int_{\SL_n(\F)} (\Phi \cdot W) \left( \begin{pmatrix} x & \\ & \id_{n-1} \end{pmatrix} g \right) \ud g \ \middle| \ \Phi \in \Sch(\Mat_n(\F)), W \in \Whi(\pi^{\infty}, \psi) \right\}. \label{eq: AuxSchWhiSp}
\end{align}
	Theorem \ref{thm: EquivDefs} (1) will follow from $S(\pi) = W(\pi) = \VorH(\pi,0) = \VorH(\pi,j)$. 
\begin{remark}
	The proof of Theorem \ref{thm: EquivDefs} (1) turns out to be technically different in the archimedean and non-archimedean cases. One major obstruction for a unified proof is the fact that $\F^{\times}$ modulo its subgroup of $n$-th perfect powers can be infinite for a local function field.
\end{remark}

\section{Equivalence of Definitions: Non-archimedean Case}
\label{sec: EquivDefsNA}

	\subsection{$W(\pi) = S(\pi)$}
		
	Any $\Phi \in \Sch(\Mat_n(\F))$ is invariant by some compact open subgroup $\Omega < \GL_n(\vO_{\F})$, i.e., $\Phi(\omega X) = \Phi(X) = \Phi(X \omega)$ for all $\omega \in \Omega, X \in \Mat_n(\F)$. The following linear form
	$$ \ell_{\Omega}: \Whi(\pi^{\infty}, \psi) \to \C, \quad W \mapsto \oint_{\Omega} W(x) \ud x $$
is a smooth functional on $V_{\pi}^{\infty} \simeq \Whi(\pi^{\infty}, \psi)$, hence can be viewed as a smooth vector $\widetilde{v} \in V_{\widetilde{\pi}}^{\infty}$ in the contragredient representation $\widetilde{\pi}$. Given $W \in \Whi(\pi^{\infty}, \psi)$, understood as a vector $v \in V_{\pi}^{\infty}$, the associated matrix coefficient is given by
	$$ \beta(g) = \Pairing{\pi(g).v}{\widetilde{v}} = \widetilde{v} \left( \pi(g).v \right) = \oint_{\Omega} W(xg) \ud x. $$
	Replacing by an open subgroup if necessary, we may assume that $W$ is invariant by $\Omega$, i.e., $W(gx) = W(g)$ for all $g \in \GL_n(\F), x \in \Omega$. We have
\begin{multline*}
	\int_{\SL_n(\F)} (\Phi \cdot W) \left( \begin{pmatrix} x & \\ & \id_{n-1} \end{pmatrix} g \right) \ud g = \int_{\substack{\GL_n(\F) \\ \det g = x}} \left( \int_{\Omega} \Phi \left( \omega^{-1} g \omega \right) \ud \omega \right) W \left( g \right) \ud g \\
	= \int_{\substack{\GL_n(\F) \\ \det g = x}} \Phi(g) \left( \int_{\Omega} W \left( \omega g \omega^{-1} \right) \ud \omega \right) \ud g = \int_{\substack{\GL_n(\F) \\ \det g = x}} \Phi(g) \left( \int_{\Omega} W \left( \omega g \right) \ud \omega \right) \ud g = \int_{\substack{\GL_n(\F) \\ \det g = x}} \Phi(g) \beta(g) \ud g,
\end{multline*}
	which lies in $\norm[x]_{\F}^{-\frac{n-1}{2}} \cdot S(\pi)$. We conclude the inclusion $W(\pi) \subseteq S(\pi)$ since $\Phi$ and $W$ are arbitrary.
	
	Fix an open compact subgroup $\Omega_0 < \GL_n(\vO_{\F})$. Define for any open compact subgroup $\Omega < \Omega_0$ and any $h \in \GL_n(\F)$ a linear form
	$$ \mu_{\Omega,h}: \Whi(\pi^{\infty}, \psi) \to \C, \quad W \mapsto \int_{\Omega} W(hx) \ud x. $$
	It is $\Omega$-invariant, hence $\GL_n(\vO_{\F})$-finite. If $W \in \Whi(\pi^{\infty}, \psi)$ is annihilated by all such $\mu_{\Omega,h}$, then evidently $W = 0$; equivalently the set $\{ \mu_{\Omega,h} \}$ span the space of $\GL_n(\vO_{\F})$-finite linear forms on $\Whi(\pi^{\infty}, \psi) \simeq V_{\pi}^{\infty}$. In particular, any $\beta \in C(\pi^{\infty})$ is in the span of $g \mapsto \mu_{\Omega,h}(\pi(g).W)$ and $\Omega,h$ and $W$ vary. Given any $\Phi \in \Sch(\Mat_n(\F))$, we take $\Omega_0$ to so small that $\Phi$ is (left and right) invariant by any element in $\Omega_0$. We have
\begin{multline*}
	\int_{\SL_n(\F)} \Phi \left( \begin{pmatrix} x & \\ & \id_{n-1} \end{pmatrix} g \right) \mu_{\Omega,h} \left( \pi\left( \begin{pmatrix} x & \\ & \id_{n-1} \end{pmatrix} g \right).W \right) \ud g = \int_{\substack{\GL_n(\F) \\ \det g = x}} \Phi(g) \left( \int_{\Omega} W(h \omega g) \ud \omega \right) \ud g \\
	= \int_{\Omega} \int_{\substack{\GL_n(\F) \\ \det g = x}} \Phi(g) W(h g \omega) \ud g \ud \omega = \int_{\Omega} \int_{\substack{\GL_n(\F) \\ \det g = x}} \Phi(h^{-1} g h) W(g h \omega) \ud g \ud \omega.
\end{multline*}
	Let $\Omega' = \Omega \cap \mathrm{Stab}(W)$ and write $\Omega = \sideset{}{_{i \in I}} \bigsqcup \omega_i \Omega'$ for a finite set $I$. Writing $\Phi_1(g) = \Phi(h^{-1} g h)$, we have
	$$ \int_{\Omega} \int_{\substack{\GL_n(\F) \\ \det g = x}} \Phi(h^{-1} g h) W(g h \omega) \ud g \ud \omega = \Vol(\Omega') \sideset{}{_{i \in I}} \sum \int_{\substack{\GL_n(\F) \\ \det g = x}} \Phi_1(g) \left( \pi(h \omega_i).W \right)(g) \ud g, $$
	which lies in $\norm[x]_{\F}^{-\frac{n-1}{2}} \cdot W(\pi)$. We conclude the inclusion $S(\pi) \subseteq W(\pi)$ since $\Phi$ and $\beta$ are arbitrary.
	
\begin{remark}
	The above crucial properties of $\mu_{\Omega,h}$ were used in \cite[pp. 421]{JPS83}.
\end{remark}

	\subsection{$W(\pi) = \VorH(\pi,0)$}
	\label{sec: AuxMeasSLn}
		
	To every $\Phi \in \Sch(\Mat_n(\F))$, a complex measure $\rho_{\Phi}$ on $\SL_n(\F)$ was introduced in \cite[(4.2)]{JPS83}. We recall its construction as follows. For $u_i,v_j \in \F$ and $\kappa \in \GL_n(\vO_{\F})$ we set
\begin{multline} \label{eq: AuxMeasSLnKerBis}
	\theta_{\Phi}(u_1; u_2,\dots,u_n; v_2,\dots,v_n; \kappa) = \\
	\int_{\F^{\frac{n(n-1)}{2}}} \Phi \left( \begin{pmatrix} u_1 & x_{1,2} & \cdots & x_{1,n} \\ & u_2 & \cdots & x_{2,n} \\ & & \ddots & \vdots \\ & & & u_n \end{pmatrix} \kappa \right) \psi \left( \sum_{i=2}^n v_i x_{i-1,i} \right) \prod_{i<j} \ud x_{i,j}. 
\end{multline}
	It is Schwartz for any given $\kappa$. Denote by $K_{\Phi}$ its $\psi$-Fourier transform, i.e.,
\begin{equation} \label{eq: AuxMeasSLnKer}
	K_{\Phi}(u; u_2,\dots,u_n; v_2,\dots,v_n; \kappa) := \int_{\F} \theta_{\Phi}(x; u_2,\dots,u_n; v_2,\dots,v_n; \kappa) \psi(-xu) \ud x. 
\end{equation}
	The complex measure $\rho_{\Phi}$ is then defined by
\begin{multline} \label{eq: AuxMeasSLn}
	\int_{\SL_n(\F)} F(h) \ud \rho_{\Phi}(h) = \\
	\int_{v \in \F} \int_{u_i \in \F^{\times}} \int_{\kappa \in \SL_n(\vO_{\F})} F \left[ \begin{pmatrix} u_2 \cdots u_n & & & \\ & 1 & & \\ & & \ddots & \\ & & & 1 \end{pmatrix}^{-1} \begin{pmatrix} 1 & v & & \\ & 1 & & \\ & & \ddots & \\ & & & 1 \end{pmatrix} \begin{pmatrix} 1 & & & \\ & u_2 & & \\ & & \ddots & \\ & & & u_n \end{pmatrix} \kappa \right] \cdot \\
	K_{\Phi}(v; u_2,\dots,u_n; u_2^{-1},\dots,u_n^{-1}; \kappa) \ud \kappa \left( \prod_{i=2}^n \norm[u_i]_{\F}^{i-1} \ud^{\times} u_i \right) \ud v.
\end{multline}
	It follows easily from the $\GL_n(\vO_{\F})$-finiteness of $\Phi$ that $\rho_{\Phi}$ has compact support.
\begin{lemma} \label{lem: AuxMeasSLn}
	For any $\Phi \in \Sch(\Mat_n(\F))$ and $W \in \Whi(\pi^{\infty}, \psi)$ we have for any $t \in \F^{\times}$
\begin{equation*}
	\int_{\SL_n(\F)} \left( \Phi \cdot W \right) \left( \begin{pmatrix} t & \\ & \id_{n-1} \end{pmatrix} g \right) \ud g = \norm[t]_{\F}^{1-n} \int_{\SL_n(\F)} W \left( \begin{pmatrix} t & \\ & \id_{n-1} \end{pmatrix} h \right) \ud \rho_{\Phi}(h).
\end{equation*}
\end{lemma}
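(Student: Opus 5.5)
We will unfold the left-hand side using the Iwasawa decomposition of $\GL_n(\F)$ and then exploit the Whittaker equivariance of $W$. Writing $g' = \diag(t,\id_{n-1}) g$, the left-hand side is $\int_{\det g' = t} \Phi(g') W(g') \ud g'$. We parametrize $\{g' : \det g' = t\}$ as $g' = n a \kappa$, with $n \in \gp{N}_n(\F)$ having above-diagonal entries $x_{i,j}$, with $a = \diag(u_1,\dots,u_n)$ where $u_1 = t (u_2\cdots u_n)^{-1}\det\kappa^{-1}$, and with $\kappa \in \GL_n(\vO_{\F})$. The Haar measure becomes $\prod_{i<j}\ud x_{i,j} \, \delta_{\gp{B}}^{-1}(a) \prod_{i\ge 2}\ud^\times u_i \, \ud\kappa$, up to the normalization fixed by $\ud g$. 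Since $W(na\kappa) = \psi(\sum_{i} x_{i-1,i}) W(a\kappa)$, we first integrate $\Phi(na\kappa)$ against this character over $n$. We then substitute $x_{i,j} \mapsto$ (entries of $na$, namely $x_{i,j} u_j$), so that the upper-triangular matrix $na$ has diagonal $u_i$ and free off-diagonal entries. This change of variables produces a Jacobian $\prod_j \norm[u_j]^{-(j-1)}$ and turns the character into $\psi(\sum v_i x_{i-1,i})$ with $v_i = u_i^{-1}$. The inner integral is therefore exactly $\theta_\Phi(u_1; u_2,\dots,u_n; u_2^{-1},\dots,u_n^{-1};\kappa)$ from \eqref{eq: AuxMeasSLnKerBis}.

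At this point the left-hand side reads
$$\int \theta_\Phi(u_1;\underline u;\underline u^{-1};\kappa)\, W(a\kappa)\, (\text{modulus factors})\, \prod_{i \ge 2} \ud^\times u_i \, \ud\kappa .$$
The key remaining step is to replace the dependence on $u_1$, which is tied to $t$, by Fourier inversion. We write $\theta_\Phi(u_1;\dots) = \int_\F K_\Phi(v;\dots)\psi(v u_1)\ud v$, which uses \eqref{eq: AuxMeasSLnKer} and the self-duality of $\ud x$. We then need to realize $\psi(vu_1) W(a\kappa)$ as $W$ evaluated at the matrix appearing in \eqref{eq: AuxMeasSLn}. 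Indeed, $\diag(t,\id)\cdot \diag(u_2\cdots u_n,1,\dots)^{-1} \cdot n_{12}(v) \cdot \diag(1,u_2,\dots,u_n)\kappa$ equals $n_{12}(v')\, a\kappa$ for suitable $v'$. This is obtained by conjugating $n_{12}(v)$ past the diagonal elements, and it yields a character value $\psi(v \cdot u_1/u_2)$ or a similar expression. We then rescale $v$ to match $\psi(vu_1)$, which produces additional modulus factors.

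We must also reconcile $\kappa \in \GL_n(\vO_{\F})$ with $\kappa\in\SL_n(\vO_\F)$ in \eqref{eq: AuxMeasSLn}. We plan to absorb $\det\kappa$ into $u_2$ (or $u_1$) via $\GL_n(\vO_\F) = \SL_n(\vO_\F)\cdot\diag(\vO_\F^\times,1,\dots)$ and the invariance of $\ud^\times u_i$. The determinant-$t$ constraint is then carried entirely by the left factor $\diag(t,\id_{n-1})$. The compact support of $\rho_\Phi$ and the Schwartz nature of $\theta_\Phi$ justify exchanging the integrals.

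The main obstacle we expect is the bookkeeping of the modulus factors. The Iwasawa measure $\delta_{\gp B}^{-1}(a)$, the Jacobian of $x_{i,j}\mapsto x_{i,j}u_j$, and the rescaling of $v$ must combine to give exactly $\prod_{i\ge2}\norm[u_i]^{i-1}$ times the global factor $\norm[t]_{\F}^{1-n}$. This works because $u_1 = t/(u_2\cdots u_n)$ up to a unit, so that powers of $\norm[u_1]$ convert into $\norm[t]$ times powers of the $\norm[u_i]$. We will carry out this computation explicitly for $n=2$ first to fix the conventions, including the exact definition of $v$ in the unipotent factor, and then do the general case by the same exponent count. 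We will use that $\delta_{\gp B}(a)=\prod_i\norm[u_i]^{n+1-2i}$.
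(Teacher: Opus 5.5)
Your argument is correct, but it takes a different route from the paper's. The paper computes nothing here. It refers to the proof of Lemma \ref{lem: AuxMeasSLnId}, which uses the \emph{integrated} identity of \cite[Lemma (4.3.1)]{JPS83}, namely $\int_{\GL_n(\F)}\Phi H = \int_{\F^\times}\bigl(\int H(\diag(t,\id_{n-1})h)\,\ud\rho_\Phi(h)\bigr)\lvert t\rvert^{1-n}\,\ud^\times t$, as a black box. It applies this identity to $H(g)=W(g)\phi(\det g)$ with $\phi\in\Cont_c^\infty(\F^\times)$, after checking with bounds on Whittaker functions that such $H$ is admissible. It then gets the pointwise statement because both sides are continuous in $t$ and agree as distributions on $\F^\times$.

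You instead prove the identity fibre by fibre, by Iwasawa unfolding followed by Fourier inversion in the $(1,1)$-entry; this is essentially the computation behind \eqref{eq: GJParInt}, done for a general $\Phi$. Your route is self-contained, and in the non-archimedean setting it needs no analytic input. The set $\{g\in\mathrm{supp}\,\Phi:\det g=t\}$ is compact, and $\rho_\Phi$ has compact support, so every interchange of integrals is trivial. This compactness, rather than the Schwartz property of $\theta_\Phi$, is what justifies your unfolding of the left side. The paper's route is shorter and also works in the archimedean case. There the fibre is not compact, and your unfolding would need genuine decay estimates for $W$.

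When you write it out, note that your predicted rescaling of $v$ does not happen, and you must not introduce one. Put $P=u_2\cdots u_n$ and $u_1=t/P$. Then $\diag(t,\id_{n-1})\diag(P,\id_{n-1})^{-1}n_{12}(v)\diag(1,u_2,\dots,u_n)=n_{12}(u_1v)\diag(u_1,u_2,\dots,u_n)$. So the Whittaker character gives exactly $\psi(u_1v)$, which matches $\theta_\Phi(u_1;\dots)=\int_\F K_\Phi(v;\dots)\psi(vu_1)\,\ud v$.

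Likewise, $\delta_{\gp B}^{-1}(a)\prod_j\lvert u_j\rvert^{-(j-1)}=\lvert u_1\rvert^{1-n}\prod_{j\ge2}\lvert u_j\rvert^{j-n}=\lvert t\rvert^{1-n}\prod_{j\ge2}\lvert u_j\rvert^{j-1}$ already. There is no room for any further Jacobian.

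For the mismatch between $\GL_n(\vO_\F)$ and $\SL_n(\vO_\F)$, absorbing $\det\kappa$ into $u_1$ is the clean choice. Write $\kappa=\diag(\epsilon,1,\dots,1)\kappa_1$ with $\kappa_1\in\SL_n(\vO_\F)$. Right multiplication by $\diag(\epsilon,1,\dots,1)$ changes only the $(1,1)$-entry of an upper triangular matrix. Hence $\theta_\Phi(u_1;\dots;\kappa)=\theta_\Phi(\epsilon u_1;\dots;\kappa_1)$ and $W(a\kappa)=W(\diag(\epsilon u_1,u_2,\dots,u_n)\kappa_1)$, with $\epsilon u_1=t/P$. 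The integral over $\epsilon$ then contributes only a volume constant, which goes into the measure normalizations.
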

\begin{proof}
	See the proof of Lemma \ref{lem: AuxMeasSLnId} below, valid in both archimedean and non-archimedean cases.
\end{proof}
	
\noindent We rewrite the equality in Lemma \ref{lem: AuxMeasSLn} as
	$$ \int_{\SL_n(\F)} \left( \Phi \cdot W \right) \left( \begin{pmatrix} t & \\ & \id_{n-1} \end{pmatrix} g \right) \ud g = \norm[t]_{\F}^{1-n} \cdot W_1 \left( \begin{pmatrix} t & \\ & \id_{n-1} \end{pmatrix} \right) $$
	where $W_1 \in \Whi(\pi^{\infty}, \psi)$ is given by $W_1(g) := \int_{\SL_n(\F)} W(gh) \ud \rho_{\Phi}(h)$. This proves $W(\pi) \subseteq \VorH(\pi,0)$ since $\Phi$ and $W$ are arbitrary.
		
	Let $W \in \Whi(\pi^{\infty}, \psi)$. Suppose $m \in \Z_{\geq 1}$ is sufficiently large so that $W(g\kappa) = W(g)$ for any $g \in \GL_n(\F)$ and $\kappa \in \gp{K}_n(m)$, the level $m$ principal congruence subgroup of $\GL_n(\vO_{\F})$, i.e., the kernel of the mod $\vP_{\F}^m$ homomorphism $\GL_n(\vO_{\F}) \rightarrow \GL_n(\vO_{\F}/\vP_{\F}^m)$. Let $\gp{K}_n(m)^1 := \gp{K}_n(m) \cap \SL_n(\vO_{\F})$. We take $\Phi = \id_S$ for the set $S$ given in the proof of \cite[Lemma (4.3.4)]{JPS79}, namely
	$$ S = \begin{pmatrix} \vP_{\F}^{-m} & \vO_{\F} & \cdots & \vO_{\F} \\ 
	\vP_{\F}^m & U_{\F}^m & \ddots & \vdots \\
	\vdots & \ddots & \ddots & \vdots \\ \vP_{\F}^m & \cdots & \ddots & U_{\F}^m \end{pmatrix}. $$
	From the proof of \cite[Lemma (4.3.4)]{JPS79} we know that $S$ satisfies the following properties:
\begin{itemize}
	\item[(1)] $S \cdot \gp{K}_n(m) = S$;
	\item[(2)] For $b \in \gp{B}_n(\F), \kappa \in \gp{K}_n(0)$, the relation $b \kappa \in S$ implies $\kappa \in (\gp{B}_n(\F) \cap \gp{K}_n(0)) \gp{K}_n(m)^1$.
\end{itemize}
	Applying the above property (2) and noting that the $\gp{B}_n(\F) \cap \gp{K}_n(0)$-part is absorbed by the integral over $\gp{B}_n(\F)$, we get
\begin{multline} \label{eq: GJParInt}
	\int_{\SL_n(\F)} \left( \Phi \cdot W \right) \left( \begin{pmatrix} t & \\ & \id_{n-1} \end{pmatrix} g \right) \ud g = \Vol \left( (\gp{B}_n(\F) \cap \gp{K}_n(0)) \gp{K}_n(m)^1 \right) \int_{x_{i,j} \in \F} \int_{u_i \in \F^{\times}} \\
	\left( \Phi \cdot W \right) \left[ \begin{pmatrix} t & \\ & \id_{n-1} \end{pmatrix} \begin{pmatrix} u_1 & x_{1,2} & \cdots & x_{1,n} \\ & u_2 & \cdots & x_{2,n} \\ & & \ddots & \vdots \\ & & & u_n \end{pmatrix} \right] \norm[u_1]^{1-n} \left( \prod_{i=2}^n \norm[u_i]_{\F}^{i-n} \ud^{\times} u_i \right) \prod_{i<j} \ud x_{i,j} = \\
	\Vol \left( (\gp{B}_n(\F) \cap \gp{K}_n(0)) \gp{K}_n(m)^1 \right) \norm[t]_{\F}^{1-n} \int_{u_i \in \F^{\times}} \\
	W \left[ \begin{pmatrix} tu_1 & &  & \\ & u_2 & & \\ & & \ddots & \\ & & & u_n \end{pmatrix} \right] \theta_{\Phi}(tu_1, u_2, \dots, u_n; u_2^{-1}, \dots, u_n^{-1}; \id) \left( \prod_{i=2}^n \norm[u_i]_{\F}^{i-1} \ud^{\times} u_i \right).
\end{multline}
	But in the proof of \cite[Lemma (4.3.4)]{JPS79} we also know the formula
	$$ \theta_{\Phi}(u_1, u_2, \dots, u_n; v_2, \dots, v_n; \id) = \id_{\vP_{\F}^{-m}}(u_1) \prod_{j=2}^n \id_{U_{\F}^m}(u_j) \id_{\vO_{\F}}(v_j). $$
	It follows that
	$$ \int_{\SL_n(\F)} \left( \Phi \cdot W \right) \left( \begin{pmatrix} t & \\ & \id_{n-1} \end{pmatrix} g \right) \ud g = c_m \cdot \id_{\vP_{\F}^{-m}}(t) \norm[t]_{\F}^{1-n} W \begin{pmatrix} t & \\ & \id_{n-1} \end{pmatrix} $$
	for a constant $c_m > 0$ depending only on $m$. But $t \mapsto W \begin{pmatrix} t & \\ & \id_{n-1} \end{pmatrix}$ vanishes for $\norm[t]_{\F} \gg 1$. Taking $m$ sufficiently large so that $W \begin{pmatrix} t & \\ & \id_{n-1} \end{pmatrix} \neq 0$ implies $t \in \vP_{\F}^{-m}$, we see $\norm[t]_{\F}^{\frac{1-n}{2}} W \begin{pmatrix} t & \\ & \id_{n-1} \end{pmatrix} \in W(\pi)$, proving $\VorH(\pi,0) \subseteq W(\pi)$, since $W$ is arbitrary.

	\subsection{$\VorH(\pi,0) = \VorH(\pi,j)$}
		
	This independence of $j$ is stated in Definition \ref{def: VHSpaces} (1). The proof is quite similar with that of \cite[Proposition (4.1.4)]{JPS79}. We only outline the main steps. For $W \in \Whi(\pi^{\infty}, \psi)$ and $0 \leq j \leq n-2$, resp. $0 \leq j \leq n-3$ and $\Phi \in \Sch(\F)$, we introduce
	$$ h(t; W, j) := \norm[t]_{\F}^{-\frac{n-1}{2}} \int_{\F^j} W \begin{pmatrix} t & & \\ \vec{x} & \id_j & \\ & & \id_{n-1-j} \end{pmatrix} \ud \vec{x}, $$
	$$ h(t; W, j, \Phi) := \norm[t]_{\F}^{-\frac{n-1}{2}} \int_{\F} \int_{\F^j} W \begin{pmatrix} t & & & \\ \vec{x} & \id_j & & \\ z & & 1 & \\ & & & \id_{n-2-j} \end{pmatrix} \ud \vec{x} \Phi(z) \ud z. $$
	Let $[*]$ be the space of functions generated by those of the form $*$. We need to prove the inclusions
\begin{equation} \label{eq: AuxIncl} 
	[h(t; W, j)] \subset [h(t; W, j-1, \Phi)] \subset [h(t; W, j-1)] \subset [h(t; W, j-1, \Phi)] \subset [h(t; W, j)]. 
\end{equation}
	The first inclusion follows from \cite[Lemma (4.1.5)]{JPS79} by choosing $\Phi$ to be the characteristic function of a large compact open subgroup of $\F$. The second one follows from the smoothness of $W$. The third one follows by choosing $\Phi$ to be the characteristic function of a small compact open subgroup of $\F$. The last inclusion follows from the functional equation in \cite[Lemma (4.1.5)]{JPS79}.

\section{Equivalence of Definitions: Archimedean Case}
\label{sec: EquivDefsA}

	\subsection{Irreducible Representations of Casselman--Wallach Type}
	
	For $\F \in \{ \R, \C \}$, the relevant irreducible representations $\pi$ of $\GL_d(\F)$ are those considered in \cite{J09}. We shall call them \emph{of Casselman--Wallach type} and recall their construction and basic properties as follows.
	
	Let $d= \sum_{i=1}^r d_i$ be a partition of $d$ with $d_i \in \{ 1,2 \}$. Let $(\pi_i, V_i)$ be an irreducible unitary representation of $\GL_{d_i}(\F)$. In particular, $d_i=2$ forces $\F=\R$ and $\pi_i$ be a discrete series representation of $\GL_2(\R)$. Write $\vec{\pi} = (\pi_1, \dots, \pi_r)$. Let $\vec{u} = (u_1, \dots, u_r) \in \C^r$. Let $\gp{P}$ be the \emph{lower} parabolic subgroup of type $(d_1, \dots, d_r)$, with Levi-decomposition $\gp{P} = \gp{M} \gp{U}$ in $\GL_d$. Here $\gp{M}$ is the group of block diagonal matrices with sizes $d_1 \times d_1, \dots, d_r \times d_r$. We denote by $\delta_{\gp{P}}$ the module of the group $\gp{P}(\F)$. We denote by $(\pi_{\vec{\pi}, \vec{u}}, V_{\vec{\pi}, \vec{u}})$ the representation of $\GL_d(\F)$ induced by the representation $(\pi_1 \otimes \norm_{\F}^{u_1}, \dots, \pi_r \otimes \norm_{\F}^{u_r})$ of $\gp{P}(\F)$. Thus $V_{\vec{\pi}, \vec{u}}$ may be viewed as the space of functions $f$ on $\GL_d(\F)$ with values in the projective tensor product $V_1 \widehat{\otimes} V_2 \cdots \widehat{\otimes} V_r$ such that
\begin{itemize}
	\item for any $v \in \gp{U}(\F), m = \diag(m_1, \dots, m_r) \in \gp{M}(\F)$ and $g \in \GL_d(\F)$
	$$ f(vmg) = \delta_{\gp{P}}^{1/2}(m) \cdot \left( \pi_1(m_1) \norm[\det m_1]_{\F}^{u_1} \otimes \cdots \otimes \pi_r(m_r) \norm[\det m_r]_{\F}^{u_r} \right).f(g); $$
	\item $\int_{\gp{K}_d} \Norm[f(\kappa)]^2 \ud \kappa < \infty$ where $\gp{K}_d \in \{ \mathrm{O}_d(\R), \mathrm{U}_d(\C) \}$ is the standard maximal compact subgroup of $\GL_d(\F)$ and $\Norm[\cdot]$ is the norm on the Hilbert space $V_1 \widehat{\otimes} V_2 \cdots \widehat{\otimes} V_r$;
	\item $\pi_{\vec{\pi}, \vec{u}}(g)f(x) := f(xg)$ for any $x,g \in \GL_d(\F)$.
\end{itemize}
	Note that $(\pi_{\vec{\pi}, \vec{u}}, V_{\vec{\pi}, \vec{u}})$ is a Hilbert representation \cite[1.1.2]{Wal88} whose \emph{dual} representation is easily identified with $(\pi_{\vec{\pi}^{\vee}, -\vec{u}}, V_{\vec{\pi}^{\vee}, -\vec{u}})$, where $\vec{\pi}^{\vee} := (\pi_1^{\vee}, \dots, \pi_r^{\vee})$. Note also that the space of smooth vectors $V_{\vec{\pi}, \vec{u}}^{\infty}$ is identified with the space of smooth functions on $\gp{K}_d$. The representation $(\pi_{\vec{\pi}, \vec{u}}^{\infty}, V_{\vec{\pi}, \vec{u}}^{\infty})$ is a smooth Fr\'echet representation of $\GL_d(\F)$ having moderate growth \cite[11.5.2]{Wal92}, or a Casselman--Wallach representation \cite[\S 2]{J09}. Moreover, $\pi_{\vec{\pi}, \vec{u}}$ is irreducible if and only if $\pi_{\vec{\pi}^{\vee}, -\vec{u}}$ is irreducible. The relevant irreducible representations $\pi$ are precisely those irreducible $\pi_{\vec{\pi}, \vec{u}}$. They are generic.
	
	Wallach's Schwartz functions \cite[7.1.2]{Wal88} will serve us. We shall write $\Sch(\GL_d(\F))$ and $\Sch(\SL_d(\F))$ for the spaces of such functions on $\GL_d(\F)$ and $\SL_d(\F)$ respectively. Note that the ordinary Schwartz space $\Sch(\Mat_d(\F))$ is naturally a bi-module over $\Sch(\GL_d(\F))$ via the left $\rpL$ and right $\rpR$ multiplication, hence also over $\Sch(\SL_d(\F))$. Note also that $(\pi_{\vec{\pi}, \vec{u}}^{\infty}, V_{\vec{\pi}, \vec{u}}^{\infty})$ is a module over $\Sch(\GL_d(\F))$ by \cite[11.8.1]{Wal92}.

	\subsection{$W(\pi) = S(\pi)$}
	
	Our proof is based on the following results.
	
\begin{lemma} \label{lem: DualByFunctGL}
	For any $f \in \Sch(\GL_n(\F))$ the functional
	$$ \ell_f: \Whi(\pi^{\infty}, \psi) \to \C, \quad \ell_f(W) := \int_{\GL_n(\F)} f(x) W(x) \ud x $$
is well-defined and represented by some $\widetilde{v}_f \in V_{\pi^{\vee}}^{\infty}$. Conversely, every element in $V_{\pi^{\vee}}^{\infty}$ is of the form $\widetilde{v}_f$.
\end{lemma}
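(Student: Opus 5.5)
The plan has two parts. First, $\ell_f$ is well-defined and continuous on $V_\pi^\infty$. Second, $\ell_f$ is smooth, and every smooth vector of $\pi^\vee$ arises this way. Throughout, we identify $\Whi(\pi^{\infty},\psi)$ with the Casselman--Wallach space $V_{\pi}^{\infty}$ via a Whittaker functional $\lambda$, so that $W_v(g)=\lambda(\pi^\infty(g)v)$.

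\textbf{Well-definedness.} Since $\lambda$ is continuous on the Fr\'echet space $V_\pi^\infty$ and $\pi^\infty$ has moderate growth, there is a continuous seminorm $q$ with $|W_v(g)|\leq \Norm[g]^N q(v)$ for some $N$ (Wallach \cite[11.5.2]{Wal92}). The defining property of $f\in\Sch(\GL_n(\F))$ is rapid decay against every power of $\Norm[g]$. Hence the integral converges absolutely, and $|\ell_f(W_v)|\leq C_f\, q(v)$.

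\textbf{Representation by a smooth vector.} The integral can be moved inside $\lambda$:
$$\ell_f(W_v)=\lambda\bigl(\pi^\infty(f)v\bigr),\qquad \pi^\infty(f)v:=\int f(x)\pi^\infty(x)v\,\ud x,$$
which is legitimate since $V_\pi^\infty$ is a $\Sch(\GL_n(\F))$-module \cite[11.8.1]{Wal92}. Thus $\ell_f=\lambda\circ\pi^\infty(f)$. This is a continuous functional, i.e.\ an element of the continuous dual, and that dual is realized inside the Hilbert/distribution globalization of $\pi^\vee$.

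To show $\ell_f$ is actually smooth, compute $\pi^\vee(g)\ell_f=\ell_{\rpR(g)f}$, where $\rpR(g)f$ is the appropriate right translate of $f$. The map $g\mapsto \rpR(g)f$ is smooth into $\Sch(\GL_n(\F))$, and $f\mapsto\ell_f$ is continuous into the strong dual. Hence $g\mapsto\pi^\vee(g)\ell_f$ is smooth, so $\widetilde v_f:=\ell_f\in V_{\pi^\vee}^\infty$.

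\textbf{Converse.} Let $\widetilde v\in V_{\pi^\vee}^\infty$ be arbitrary. By the Dixmier--Malliavin theorem, applied in Wallach's Schwartz-algebra form for the Casselman--Wallach representation $\pi^\vee$, we can write $\widetilde v=\sum_i \pi^\vee(f_i)\widetilde w_i$ with $f_i\in\Sch(\GL_n(\F))$ and $\widetilde w_i\in V_{\pi^\vee}^\infty$. It then suffices to express each $\pi^\vee(f_i)\widetilde w_i$ as some $\ell_{f'}$, i.e.\ to find $f'$ with
$$\int f_i(x)\Pairing{\pi(x^{-1})v}{\widetilde w_i}\,\ud x=\int f'(x)\lambda(\pi(x)v)\,\ud x \quad\text{for all } v.$$

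This is the main obstacle: it requires writing a general smooth vector $\widetilde w_i$ in terms of the Whittaker functional $\lambda$. I would first try the following route, in the spirit of Jacquet \cite[\S 1]{J09}.
\begin{itemize}
\item Since $\pi$ is irreducible, the span of $\{\pi^\vee(g)\lambda\}$ is dense in the dual.
\item The $\Sch(\GL_n(\F))$-action maps the (tempered) distribution vector $\lambda$ into $V_{\pi^\vee}^\infty$. Concretely, $\pi^\vee(f)\lambda=\ell_{\check f}$ for the function $\check f(x)=f(x^{-1})$.
\item So the space $\{\ell_f\}$ contains $\pi^\vee(\Sch)\lambda$. This is a nonzero $\Sch(\GL_n(\F))$-submodule of $V_{\pi^\vee}^\infty$, and it is $\GL_n(\F)$-stable.
\end{itemize}

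Finally, Casselman--Wallach theory says that any nonzero $\Sch$-stable subspace of an irreducible Casselman--Wallach module that contains the $K$-finite vectors is everything. The subspace $\pi^\vee(\Sch)\lambda$ does contain all $K$-finite vectors: the $K$-finite vectors form an irreducible $(\mathfrak g,K)$-module, and the subspace contains an image under the Hecke algebra $\subset\Sch$, which is a nonzero $(\mathfrak g,K)$-submodule and hence the whole $K$-finite module. Combining this with the Dixmier--Malliavin factorization $V^\infty=\pi(\Sch)V^\infty$ (Wallach's result that $\Sch$ acting on $K$-finite vectors exhausts the smooth vectors) gives the converse.
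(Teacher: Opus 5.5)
\textbf{The gap: showing $\ell_f\in V_{\pi^\vee}^\infty$.} This is the step that fails: from ``$\ell_f$ is a continuous functional on $V_\pi^\infty$'' you cannot conclude ``$\ell_f\in V_{\pi^\vee}^\infty$'' by your argument.

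Your estimate $|\ell_f(W_v)|\le C_f\,q(v)$ uses a Fr\'echet seminorm $q$ involving derivatives. So it only places $\ell_f$ in the distribution globalization $(V_\pi^\infty)'$, which is strictly larger than the Hilbert dual $V_{\pi^\vee}$.

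You then show that $g\mapsto\pi^\vee(g)\ell_f$ is smooth for the strong dual topology. But this holds for \emph{every} $\mu\in(V_\pi^\infty)'$. Expand $\pi(e^{-tX})v$ by Taylor's formula in $V_\pi^\infty$ and use equicontinuity of $\{\pi(g)\}$ over compact sets. Then the difference quotients of $t\mapsto\mu\circ\pi(e^{-tX})$ converge uniformly on bounded subsets of $V_\pi^\infty$. So the same reasoning would ``prove'' that the Whittaker functional $\lambda$ itself is a smooth vector of $\pi^\vee$, which is false.

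The missing ingredient is the Hilbert-norm bound that the paper proves:
\begin{itemize}
\item write $\ell_f(W_v)=\lambda(\pi(f)v)$ and use $|\lambda(w)|\le\sum_i\|\ud\pi(X_i)w\|$;
\item move the derivatives onto $f$ via $\ud\pi(X)\pi(f)=\pi(\ud\rpL(X)f)$;
\item apply moderate growth of the Hilbert representation to get $|\ell_f(v)|\le\nu(f)\|v\|$ for a continuous seminorm $\nu$ on $\Sch(\GL_n(\F))$.
\end{itemize}
Hence $\ell_f$ extends to $V_\pi$ and is represented by some $\widetilde v_f\in V_{\pi^\vee}$, with $f\mapsto\widetilde v_f$ continuous into $V_{\pi^\vee}$ for the Hilbert norm. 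Only with this does your translation argument (equivalently $\ud\pi^\vee(X)\widetilde v_f=\widetilde v_{\ud\rpR(X)f}$) give $\widetilde v_f\in V_{\pi^\vee}^\infty$.

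This is not cosmetic. Your converse needs $\pi^\vee(\Sch)\lambda\subset V_{\pi^\vee}^\infty$. The later use of the lemma (Corollary~\ref{cor: DualByFunctSL} and $W(\pi)=S(\pi)$) needs $g\mapsto\lambda_f(\pi(g)W)$ to be a genuine smooth matrix coefficient.

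\textbf{The converse.} Granting the above, your converse is essentially the paper's argument. The set $\{\ell_f\}=\pi^\vee(\Sch)\lambda$ is a nonzero $\Sch$-submodule of $V_{\pi^\vee}^\infty$, because $\Sch$ is a convolution algebra. Wallach's theorem that $V_{\pi^\vee}^\infty$ is algebraically irreducible as an $\Sch(\GL_n(\F))$-module then finishes immediately.

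Your opening Dixmier--Malliavin decomposition and the detour through $K$-finite vectors are unnecessary. Also, ``Hecke algebra $\subset\Sch$'' is inaccurate at archimedean places, where the Hecke algebra consists of distributions supported on $K$. If you want that route, use the $K$-isotypic projectors $\pi^\vee(\overline{\chi_\tau}\,\ud k)$ instead. They preserve $\pi^\vee(\Sch)\lambda$, because convolving a Schwartz function with a smooth measure on $K$ stays in $\Sch$.
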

\begin{proof}
	Let $\ell: V_{\pi}^{\infty} \to \C$ be a Whittaker functional realizing the Whittaker model. There are elements $X_i$ in the universal enveloping algebra of the complexified Lie algebra of $\GL_n(\F)$ such that
	$$ \forall \ v \in V_{\pi}^{\infty}, \quad \norm[\ell(v)] \leq \sum_{i=1}^r \Norm[\ud \pi(X_i).v] $$
by definition. For any $f \in \Sch(\GL_n(\F))$ and $v \in V_{\pi}^{\infty}$ we have by the moderate growth of $\pi$
	$$ \extnorm{\int_{\GL_n(\F)} f(x) W_v(x) \ud x} = \extnorm{\ell \left( \pi(f).v \right)} \leq \sum_{i=1}^r \extNorm{\ud \pi(X_i)\pi(f).v} = \sum_{i=1}^r \extNorm{\pi(\ud \rpL(X_i).f).v} \leq \nu(f) \Norm[v] $$
for some semi-norm $\nu$ on $\Sch(\GL_n(\F))$. Therefore $\ell_f$ is represented by some vector $\widetilde{v}_f \in V_{\pi^{\vee}}$ as
	$$ \ell_f(v) = \extPairing{v}{\widetilde{v}_f}, \quad \forall \ v \in V_{\pi}^{\infty}. $$
It is easy to see $\widetilde{v}_f \in V_{\pi^{\vee}}^{\infty}$, and in fact $\ud \pi(X).\widetilde{v}_f = \widetilde{v}_{\ud \rpR(X).f}$. Moreover, since $\Sch(\GL_n(\F))$ is a convolution algebra by \cite[Theorem 7.1.1]{Wal92}, we have $\widetilde{v}_{f*h} = \pi^{\vee}(h^{\vee}).\widetilde{v}_f$ for any $f,h \in \Sch(\GL_n(\F))$, where $h^{\vee}(g) := h(g^{-1})$. The vector space spanned by $\widetilde{v}_f$ is thus a $\Sch(\GL_n(\F))$-submodule of $V_{\pi^{\vee}}^{\infty}$. The latter being an algebraically irreducible $\Sch(\GL_n(\F))$-module by \cite[Theorem 11.8.2]{Wal92}, we are done.
\end{proof}

\noindent Let $\GL_n(\R)_+$ be the subgroup of $\GL_n(\R)$ with positive determinant. We have a homomorphism of groups
	$$ \F^{\times} \times \SL_n(\F) \to \F^{\times} \SL_n(\F) = \begin{cases}
		\GL_n(\R)_+ & \text{if } \F=\R \ \& \ 2 \mid n \\
		\GL_n(\F) & \text{otherwise}
	\end{cases} $$
which is surjective with finite kernel isomorphic to $\mu_n(\F) := \left\{ \zeta \in \F^{\times} \ \middle| \ \zeta^n=1 \right\}$. The Lie algebra of $\GL_n(\F)$ clearly has a basis which is the union of bases of the Lie algebra of $\F^{\times}$ and $\SL_n(\F)$. Writing $\GL_n(\F) \ni g = z h$ with $z \in \F^{\times}$ and $h \in \SL_n(\F)$, the Harish-Chandra's norm \cite[\S 3.1]{J09} has
	$$ \Norm[g] = z\bar{z} \cdot \Tr(h {}^t\bar{h}) + (z\bar{z})^{-1} \cdot \Tr(h^{-1} {}^t\bar{h}^{-1}) \leq \Norm[z]_1 \cdot \Norm[h]_2, $$
where $\Norm_1$ and $\Norm_2$ are the Harish-Chandra's norms on $\F^{\times}$ and $\SL_n(\F)$ respectively. Since $\SL_n(\F)$ is a closed subset in $\Mat_n(\F)$ not containing $0$, we have $\Tr(h {}^t\bar{h}) \geq c_n$ for all $h \in \SL_n(\F)$ and some constant $c_n > 0$ depending only on $n$. We get $\Norm[g] \gg \Norm[z]_1$. The entries of $h^{-1}$ are polynomials of those of $h$ with degree $\leq n-1$. Therefore $\Tr(h^{-1} {}^t\bar{h}^{-1}) \ll \Tr(h {}^t\bar{h})^{n-1}$ with implied constant depending only on $n$. Now that $\Norm[g] \geq \Tr(h {}^t\bar{h})$ or $\Tr(h^{-1} {}^t\bar{h}^{-1})$ for any $z$, we get $\Norm[g] + \Norm[g]^{n-1} \gg \Norm[h]_2$. Hence the semi-norms defining $\Sch(\F^{\times} \SL_n(\F))$ are equivalent to those defining $\Sch(\F^{\times}) \widehat{\otimes} \Sch(\SL_n(\F)) = \Sch(\F^{\times} \times \SL_n(\F))$, yielding
	$$ \Sch(\F^{\times} \SL_n(\F)) \simeq \left\{ f \in \Sch(\F^{\times} \times \SL_n(\F)) \ \middle| \ f(\zeta z, \zeta^{-1} h) = f(z, h), \forall \zeta \in \mu_n(\F) \right\}. $$
	In other words any $f \in \Sch(\F^{\times} \SL_n(\F))$ is an average of some $\tilde{f} \in \Sch(\F^{\times} \times \SL_n(\F))$ in the sense
	$$ f(zh) = \sum_{\zeta \in \mu_n(\F)} \tilde{f}(\zeta z,\zeta^{-1} h). $$
	Therefore for any $\omega$ (quasi-)character of $\F^{\times}$ we have
\begin{equation} \label{eq: SchTrans}
	\int_{\F^{\times}} f(zh) \omega(z) \ud^{\times} z = \sum_{\zeta \in \mu_n(\F)} \omega(\zeta) \int_{\F^{\times}} \tilde{f}(z, \zeta h) \omega(z) \ud^{\times} z = \sum_{\zeta \in \mu_n(\F)} f_1(\zeta h) \omega(\zeta) 
\end{equation}
	for the function $f_1(h) := \int_{\F^{\times}} \tilde{f}(z, h) \omega(z) \ud^{\times} z \in \Sch(\SL_n(\F))$.

\begin{corollary} \label{cor: DualByFunctSL}
	For any $f \in \Sch(\SL_n(\F))$ the functional
	$$ \lambda_f: \Whi(\pi^{\infty}, \psi) \to \C, \quad \lambda_f(W) := \int_{\SL_n(\F)} f(x) W(x) \ud x $$
is well-defined and $\lambda_f \in V_{\pi^{\vee}}^{\infty}$. Conversely, every element in $V_{\pi^{\vee}}^{\infty}$ is of the form $\lambda_{f_1} + \lambda_{f_2} \circ \pi(\diag(-1,\id_{n-1}))$ if $2 \mid n$ and $\F=\R$, or else of the form $\lambda_f$.
\end{corollary}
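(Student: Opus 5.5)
We deduce Corollary \ref{cor: DualByFunctSL} from Lemma \ref{lem: DualByFunctGL} together with the description of $\Sch(\F^{\times} \SL_n(\F))$ just established and the averaging formula \eqref{eq: SchTrans}.

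\emph{First assertion.} Given $f \in \Sch(\SL_n(\F))$, we realize $\lambda_f$ as some $\ell_F$ with $F \in \Sch(\GL_n(\F))$. Fix $\phi \in \Cont_c^{\infty}(\F^{\times})$ supported in a small neighborhood of $1$, with $\int_{\F^{\times}} \phi(z) \omega_{\pi}(z) \ud^{\times} z = 1$, where $\omega_{\pi}$ is the central character. Put $\tilde{F}(z,h) := \phi(z) f(h)$, which lies in $\Sch(\F^{\times} \times \SL_n(\F))$. Symmetrizing over $\mu_n(\F)$ gives an element $F \in \Sch(\F^{\times} \SL_n(\F))$, which we extend by zero to $\GL_n(\F)$. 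This extension is Schwartz because $\F^{\times}\SL_n(\F)$ is open and closed in $\GL_n(\F)$.

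We then compute, using the central character and the measure decomposition $\ud g = \ud^{\times} z \, \ud h$ up to a constant:
$$ \ell_F(W) = c \int_{\SL_n(\F)} \int_{\F^{\times}} \tilde{F}(z,h)\, \omega_{\pi}(z)\, W(h) \ud^{\times} z \ud h = c\, \lambda_f(W). $$
Passing from the sum over $\mu_n(\F)$ back to $\tilde{F}$ is exactly \eqref{eq: SchTrans}. Lemma \ref{lem: DualByFunctGL} then gives both that $\lambda_f$ is well-defined and that $\lambda_f \in V_{\pi^{\vee}}^{\infty}$.

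\emph{Converse.} By Lemma \ref{lem: DualByFunctGL}, any $\widetilde{v} \in V_{\pi^{\vee}}^{\infty}$ equals $\widetilde{v}_F$ for some $F \in \Sch(\GL_n(\F))$.

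\begin{itemize}
\item \textbf{Case $\F^{\times}\SL_n(\F) = \GL_n(\F)$.} Integrate over the centre and apply \eqref{eq: SchTrans} with $\omega = \omega_{\pi}$. This gives
$$ \ell_F(W) = c\int_{\SL_n(\F)} \sum_{\zeta \in \mu_n(\F)} f_1(\zeta h)\, \omega_{\pi}(\zeta)\, W(h) \ud h. $$
The substitution $h \mapsto \zeta^{-1}h$ uses that $\zeta \in \mu_n(\F)$ lies in $\SL_n(\F)$ as a scalar matrix, and that $W(\zeta^{-1}h) = \omega_{\pi}(\zeta)^{-1}W(h)$. Hence this equals $c\,|\mu_n(\F)|\,\lambda_{f_1}(W)$ with $f_1 \in \Sch(\SL_n(\F))$.

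\item \textbf{Case $\F=\R$, $2\mid n$.} Write $\GL_n(\R) = \GL_n(\R)_+ \sqcup \GL_n(\R)_+\epsilon$ with $\epsilon = \diag(-1,\id_{n-1})$, and split $F = F_+ + F_-$ by support. The $F_+$ part is treated as in the previous case. For $F_-$, substitute $g = g'\epsilon$; this turns $\ell_{F_-}(W)$ into $\ell_{F_-(\cdot\,\epsilon)}(\pi(\epsilon)W)$ with $F_-(\cdot\,\epsilon)$ supported on $\GL_n(\R)_+$. It is then of the form $\lambda_{f_2} \circ \pi(\epsilon)$.
\end{itemize}

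\emph{Main obstacle.} The delicate points are analytic and sit in the first assertion. We must check that the extension by zero and the product $\phi \otimes f$ really land in Wallach's Schwartz spaces. This rests on the norm comparison $\Norm[g] \gg \Norm[z]_1$, $\Norm[g]+\Norm[g]^{n-1} \gg \Norm[h]_2$ established above, which identifies the semi-norms. We must also verify the Fubini and measure normalization in the decomposition $g = zh$ with its finite kernel $\mu_n(\F)$.
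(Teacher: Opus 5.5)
Your proposal is correct and follows the paper's proof. The paper also realizes $\lambda_f$ as $\ell_{f_1}$ with $f_1(zh)=\sum_{\zeta\in\mu_n(\F)}k(\zeta z)f(\zeta^{-1}h)$, where $\int_{\F^{\times}}k(z)\omega(z)\,\ud^{\times}z=1$; for the converse it applies \eqref{eq: SchTrans} to a representative $\ell_f$, and when $\F=\R$, $2\mid n$ it splits $f$ over the two cosets of $\GL_n(\R)_+$. Your explicit checks (the substitution $h\mapsto\zeta^{-1}h$ via the central character, and the right translation by $\diag(-1,\id_{n-1})$) supply the details the paper leaves as ``one easily verifies''.
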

\begin{proof}
	Let $\omega$ be the central character of $\pi$. Take $k \in \Sch(\F^{\times})$ with
	$$ \int_{\F^{\times}} k(z) \omega(z) \ud^{\times} z = 1. $$
	For $f \in \Sch(\SL_n(\F))$, the following formula defines a function $f_1 \in \Sch(\F^{\times} \SL_n(\F)) \subset \Sch(\GL_n(\F))$
	$$ f_1(zh) := \sum_{\zeta \in \mu_n(\F)} k(\zeta z) f(\zeta^{-1} h), \quad \forall \ z \in \F^{\times}, h \in \SL_n(\F). $$
	One easily verifies $\lambda_f = \ell_{f_1} \in V_{\pi^{\vee}}^{\infty}$ defined in Lemma \ref{lem: DualByFunctGL}. Conversely, represent the relevant element of $V_{\pi^{\vee}}^{\infty}$ in the form $\ell_f$ for some $f \in \Sch(\GL_n(\F))$. If we are not in the exceptional case ($2 \mid n$ and $\F=\R$), then \eqref{eq: SchTrans} shows that $\ell_f = \lambda_{f_1}$ for some $f_1 \in \Sch(\SL_n(\F))$. In the exceptional case, we argue for $f \mid_{\GL_n(\R)_+},\diag(-1,\id_{n-1}).f \mid_{\GL_n(\R)_+} \in \Sch(\GL_n(\R)_+)$ to find the required $f_1$ and $f_2$.
\end{proof}

	We now prove the equality $W(\pi) = S(\pi)$.
	
	Note $\Sch(\Mat_n(\F))$ is a smooth Fr\'echet representation of $\GL_n(\F) \times \GL_n(\F)$, hence also of $\SL_n(\F) \times \SL_n(\F)$. For any $\Phi \in \Sch(\Mat_n(\F))$ we can find finitely many $\Phi_j \in \Sch(\Mat_n(\F))$ and $f_j \in \Cont_c^{\infty}(\SL_n(\F))$ such that
	$$ \Phi(g) = \sum_j \int_{\SL_n(\F)} \Phi_j(x^{-1}g) f_j(x) \ud x. $$
	For any $W \in \Whi(\pi^{\infty}, \psi)$ we have
\begin{multline*}
	\int_{\substack{\GL_n(\F) \\ \det g = t}} \Phi(g) W(g) \ud g = \sum_j \int_{\substack{\GL_n(\F) \\ \det g = t}} \int_{\SL_n(\F)} \Phi_j(x^{-1}g) f_j(x) W(g) \ud x \ud g = \\
	 \sum_j \int_{\substack{\GL_n(\F) \\ \det g = t}} \int_{\SL_n(\F)} \Phi_j(g) f_j(x) W(xg) \ud x \ud g = \sum_j \int_{\substack{\GL_n(\F) \\ \det g = t}} \Phi_j(g) \lambda_{f_j}(\pi(g).W) \ud g.
\end{multline*}
	Since $g \mapsto \lambda_{f_j}(\pi(g).W)$ lies in $C(\pi^{\infty})$ by Corollary \ref{cor: DualByFunctSL}, we have proved $W(\pi) \subseteq S(\pi)$.
	
	Conversely for any $\beta \in C(\pi^{\infty})$ we can find at most two $f_j \in \Sch(\SL_n(\F))$ and $W \in \Whi(\pi^{\infty},\psi)$ with
	$$ \beta(g) = \sum_j \int_{\SL_n(\F)} f_j(x) W \left( x \begin{pmatrix} \delta_j & \\ & \id_{n-1} \end{pmatrix} g \right) \ud x $$
for $\delta_j \in \{ \pm 1 \}$ by Corollary \ref{cor: DualByFunctSL}. For any $\Phi \in \Sch(\Mat_n(\F))$ we have
\begin{multline*}
	\int_{\substack{\GL_n(\F) \\ \det g = t}} \Phi(g) \beta(g) \ud g = \sum_j \int_{\substack{\GL_n(\F) \\ \det g = t}} \int_{\SL_n(\F)} \Phi(g) f_j(x) W\left( x \begin{pmatrix} \delta_j & \\ & \id_{n-1} \end{pmatrix} g \right) \ud x \ud g \\
	= \sum_j \int_{\substack{\GL_n(\F) \\ \det g = t}} \left\{ \int_{\SL_n(\F)} \Phi \left( \begin{pmatrix} \delta_j^{-1} & \\ & \id_{n-1} \end{pmatrix} x^{-1}g \begin{pmatrix} \delta_j & \\ & \id_{n-1} \end{pmatrix} \right) f_j(x) \ud x \right\} W(g) \ud g.
\end{multline*}
	Since the function in $g$ defined by the inner integral lies in $\Sch(\Mat_n(\F))$, we have proved $S(\pi) \subseteq W(\pi)$.

	\subsection{$W(\pi) = \VorH(\pi,0)$}
	
	To every $\Phi \in \Sch(\Mat_n(\F))$, we define $\theta_{\Phi}$, $K_{\Phi}$ and $\rho_{\Phi}$ similarly to \eqref{eq: AuxMeasSLnKerBis}, \eqref{eq: AuxMeasSLnKer} and \eqref{eq: AuxMeasSLn}. Let $\gp{K}_n$ be the standard maximal compact subgroup of $\GL_n(\F)$. Write $\gp{K}_n^1 = \gp{K}_n \cap \SL_n(\F)$.
	
\begin{lemma} \label{lem: FuncExt}
	Define a subset $\Omega$ of $\Mat_n(\F)$ as
	$$ \Omega := \left\{ \begin{pmatrix} Z \\ Y \end{pmatrix} \ \middle| \ Z \in \Mat_{1 \times n}(\F), Y \in \Mat_{(n-1) \times n}(\F) \text{ with rank } n-1 \right\}. $$
\begin{itemize}
	\item[(1)] $\Omega$ is an open subset of $\Mat_n(\F)$ with an isomorphism as smooth manifolds
	$$ \sigma: \F^n \times \gp{B}_{n-1}^+(\F) \times \gp{K}_n^1 \simeq \Omega, \quad (Z, b, \kappa) \mapsto \begin{pmatrix} Z \\ \vec{0} \ b \end{pmatrix} \kappa, $$
	where $\gp{B}_{n-1}^+(\F)$ is the subgroup of $\gp{B}_{n-1}(\F)$ with positive diagonal entries.
	\item[(2)] For any $\phi \in \Sch(\F^n) \widehat{\otimes} \Cont_c^{\infty}(\gp{B}_{n-1}^+(\F) \times \gp{K}_n^1)$, the function $\Phi$ defined via $\sigma$ on $\Omega$ by
	$$ \Phi \left( \sigma(Z, b, \kappa) \right) = \phi(Z, b, \kappa) $$
	and equal to $0$ outside $\Omega$ lies in $\Sch(\Mat_n(\F))$.
\end{itemize}
\end{lemma}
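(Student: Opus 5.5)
For (1), I will first show $\Omega$ is open. The rank of $Y$ is at least $n-1$ exactly when some $(n-1)\times(n-1)$ minor of $Y$ is nonzero, which is an open condition. Since $Y$ has only $n-1$ rows, this says $\mathrm{rank}\, Y = n-1$.

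Next I construct the inverse of $\sigma$ by a Gram--Schmidt (Iwasawa-type) decomposition of the rows of $Y$. Given $Y$ of rank $n-1$, apply Gram--Schmidt to its rows taken from the bottom up. This is the order that produces an upper triangular $b$ acting on the left of the last $n-1$ coordinates. It gives a unique factorization $Y = (\vec 0 \ b)\kappa'$, where $b \in \gp{B}_{n-1}^+(\F)$ and $\kappa'$ has orthonormal last $n-1$ rows. These rows span the row space of $Y$.

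The first row of $\kappa'$ must be a unit vector orthogonal to that span, which fixes it up to a scalar of modulus one. Requiring $\det\kappa'=1$ fixes that scalar, so $\kappa := \kappa' \in \gp{K}_n^1$ is unique. Then set $Z := Z_0\kappa^{-1}$, where $Z_0$ is the first row of the original matrix.

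Uniqueness also uses that $\gp{B}_{n-1}^+(\F)\cap \gp{K}_{n-1}=\{1\}$. All the maps involved are smooth: Gram--Schmidt is smooth on full-rank tuples, and the unit normal is given by cofactors (a generalized cross product) divided by their norm. So $\sigma$ is a smooth bijection with smooth inverse, hence a diffeomorphism. Note that $Z$ ranges over all of $\F^n$ because $\kappa$ is invertible.

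For (2), the point is that $\phi$ has compact support in the $(b,\kappa)$ variables, and in particular in $b$. Pick a compact set $C\subset \gp{B}_{n-1}^+(\F)$ containing the $b$-support. Then on $\sigma(\F^n\times C\times\gp{K}_n^1)$ the block $Y$ stays in the compact set $(\vec 0\ C)\gp{K}_n^1$ of rank $n-1$ matrices. This set is closed in $\Mat_{(n-1)\times n}(\F)$ and bounded away from the complement of $\Omega$.

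It follows that $\Phi$, extended by zero, is smooth: any point outside $\Omega$ has a neighborhood disjoint from $\Mat_{1\times n}(\F)\times (\vec 0\ C)\gp{K}_n^1$, where $\Phi$ vanishes identically.

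For Schwartz decay, I write $\Phi(Z_0,Y)=\phi(Z_0\kappa(Y)^{-1}, b(Y),\kappa(Y))$. Here $b(Y)$ and $\kappa(Y)$ are smooth functions of $Y$ alone on the relevant compact set, with all derivatives bounded there. Differentiating with the chain rule, $\partial_{Z_0}$ derivatives pass to $\partial_Z$ derivatives of $\phi$ twisted by the orthogonal/unitary $\kappa^{-1}$. The $\partial_Y$ derivatives produce polynomials in $Z_0$ times derivatives of $\phi$, with coefficients bounded on the compact set. Since $|Z| = |Z_0|$ and $\phi$ is Schwartz in $Z$ uniformly over its compact support in $(b,\kappa)$, every term $Z_0^\alpha\partial^\beta\Phi$ is bounded. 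Since $Y$ ranges over a compact set, this gives $\Phi\in\Sch(\Mat_n(\F))$.

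The main obstacle is this uniformity in the completed tensor product $\widehat{\otimes}$. I will handle it by using the fact that the Fréchet seminorms of $\Sch(\F^n)\widehat{\otimes}\Cont_c^\infty(K)$, for a fixed compact $K$, are exactly the mixed sup-norms of $Z^\alpha\partial_Z^\beta\partial_{(b,\kappa)}^\gamma\phi$.
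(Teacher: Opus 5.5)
Your proof is correct. It is essentially the argument the paper delegates to \cite[Lemma (11.1.3)]{JPS79}: a bottom-up Gram--Schmidt (Iwasawa-type) factorization of the last $n-1$ rows gives the diffeomorphism $\sigma$, and compactness of the $b$-support confines $Y$ to a compact set of rank-$(n-1)$ matrices, which yields both the smooth extension by zero and the Schwartz bounds via $|Z|=|Z_0|$. Your description of the seminorms on $\Sch(\F^n)\widehat{\otimes}\Cont_c^{\infty}$, for a fixed compact support, as mixed sup-norms is what covers the ``slightly stronger'' tensor-product form of the statement that the paper mentions.
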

\begin{proof}
	This is essentially \cite[Lemma (11.1.3)]{JPS79}: although our statement is slightly stronger, the proof given there goes through.
\end{proof}

\begin{lemma} \label{lem: AuxMeasSLnArch}
	(1) The function $K_{\Phi}$ lies in the (complete) tensor product $\Sch(\F^{2n-1}) \widehat{\otimes} \Cont^{\infty}(\gp{K}_n)$. Moreover, for any $ h = \diag(\delta_1, \dots, \delta_n) \in \gp{B}_n(\F) \cap \gp{K}_n = \gp{T}_n^0(\F)$ we have the relation
	$$ K_{\Phi}(u; u_2,\dots,u_n; v_2,\dots,v_n; h\kappa) = K_{\Phi} \left( \frac{u}{\delta_1}; \delta_2 u_2,\dots, \delta_n u_n; \frac{v_2}{\delta_2},\dots,\frac{v_n}{\delta_n}; \kappa \right). $$
	
\noindent (2) The measure $\rho_{\Phi}$ is represented by a function $f_{\Phi}$ on $\gp{P}_n^1(\F) \cdot \gp{K}_n^1$ with 
	$$ \gp{P}_n^1(\F) := \left\{ p(v; u_2, \cdots, u_n) := \begin{pmatrix} \frac{1}{u_2 \cdots u_n} & v & & \\ & u_2 & & \\ & & \ddots & \\ & & & u_n  \end{pmatrix} \ \middle| \ v \in \F, \ u_j \in \F^{\times} \right\}, $$
	as $\rho_{\Phi} = f_{\Phi} \ud p \ud \kappa$, where
\begin{itemize}
	\item $\ud p$ and $\ud \kappa$ are left Haar measures on $\gp{P}_n^1(\F)$ and $\gp{K}_n^1$ respectively;
	\item $f_{\Phi}$ lies in the (complete) tensor product $\Sch(\F \times (\F^{\times})^{n-1}) \widehat{\otimes} \Cont^{\infty}(\gp{K}_n^1)$ with respect to the affine coordinates $p(\cdot)$ of $\gp{P}_n^1(\F)$ satisfying for any $h = \diag(\delta_1, \dots, \delta_n) \in \gp{P}_n^1(\F) \cap \gp{K}_n^1$
	$$ f_{\Phi}(p(v; u_2, \cdots, u_n); h\kappa) = f_{\Phi}(p(\delta_2 v; \delta_2 u_2, \cdots, \delta_n u_n); \kappa). $$
\end{itemize}
	
\noindent (3) For any $f_1 \in \Cont_c^{\infty}(\F \times (\F^{\times})^{n-1})$ and $f_2 \in \Cont^{\infty}(\gp{K}_n^1)$ we can find $\Phi \in \Sch(\Mat_n(\F))$ with
	$$ f_{\Phi}(p(v; u_2, \cdots, u_n); \kappa) = \oint_{\gp{T}_n^{0,1}(\F)} f_1(\delta_2 v; \delta_2 u_2, \cdots, \delta_n u_n) \cdot f_2(h^{-1} \kappa) \ud h, $$
	where $\gp{T}_n^{0,1}(\F) = \gp{P}_n^1(\F) \cap \gp{K}_n^1$ is the subgroup of elements in $\gp{T}_n^{0}(\F)$ with determinant $1$.
\end{lemma}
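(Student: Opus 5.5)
We compute $\theta_{\Phi}$, $K_{\Phi}$ and $\rho_{\Phi}$ directly in the archimedean coordinates, arguing part by part.

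For (1), the definition \eqref{eq: AuxMeasSLnKerBis} is a partial Fourier transform of $\Phi$. Write $X = b\kappa$ with $b$ upper triangular. The map $(u_1,\dots,u_n,x_{i,j},\kappa) \mapsto \Phi(b\kappa)$ lies in $\Sch(\F^{n(n+1)/2}) \widehat{\otimes} \Cont^{\infty}(\gp{K}_n)$. This holds because $\gp{K}_n$ is compact, $\Phi \in \Sch(\Mat_n(\F))$, and right multiplication by $\kappa$ is a smooth family of linear isometries. Integrating out the off-superdiagonal variables $x_{i,j}$ with $j \geq i+2$ preserves the Schwartz property. Taking the Fourier transform in the superdiagonal variables $x_{i-1,i}$ (dual variables $v_i$) and in $u_1$ (dual variable $u$) also preserves it, since partial Fourier transforms act continuously on $\Sch \widehat{\otimes} \Cont^{\infty}(\gp{K}_n)$. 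This gives $K_{\Phi} \in \Sch(\F^{2n-1}) \widehat{\otimes} \Cont^{\infty}(\gp{K}_n)$. For the equivariance, write $b h = h (h^{-1} b h)$ and set $b' := bh$. Then $b'$ has diagonal entries $u_i\delta_i$ and off-diagonal entries $x_{i,j}\delta_j$. Changing variables $x_{i,j} \mapsto x_{i,j}\delta_j^{-1}$ has unit Jacobian because $\norm[\delta_j]_{\F} = 1$. It turns $\psi(v_i x_{i-1,i})$ into $\psi(v_i\delta_i^{-1}x_{i-1,i})$ and the diagonal into $(\delta_1u_1,\delta_2u_2,\dots)$. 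After the Fourier transform in $u_1$, the substitution $u_1 \mapsto \delta_1^{-1}u_1$ converts the dual variable to $u/\delta_1$. This yields the stated relation.

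For (2), we rewrite \eqref{eq: AuxMeasSLn} in the coordinates $p(v;u_2,\dots,u_n)$. The product of the three matrices in \eqref{eq: AuxMeasSLn} equals $p(v/(u_2\cdots u_n); u_2,\dots,u_n)$, up to the order of the upper-left entry. The measure $\ud v \prod \norm[u_i]^{i-1}\ud^\times u_i$ then becomes $J(u)\, \ud p$, where $\ud p$ is left Haar measure on the solvable group $\gp{P}_n^1(\F)$ and $J$ is an explicit monomial in the $\norm[u_i]$. This is computed from the modular function. Hence $f_{\Phi}(p;\kappa) = J(u) K_{\Phi}(v';u;u^{-1};\kappa)$ after the reparametrization $v' = v\cdot(\text{monomial})$.

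The main obstacle in (2) is Schwartz decay as some $u_i \to 0$ or $\infty$. Here Schwartz-ness in $(u_i, v_i)$ with $v_i = u_i^{-1}$ is what gives decay at both ends of $\F^{\times}$, and it absorbs the polynomial weights. As $u_i \to 0$ the argument $v_i = u_i^{-1}$ tends to infinity, so rapid decay in $v_i$ kills $J$; as $u_i \to \infty$ decay in $u_i$ does. Derivatives are handled similarly, since differentiating $u_i^{-1}$ produces only polynomial factors in $u_i^{-1}$. The $\gp{T}_n^{0,1}$-equivariance follows from (1), using that $p\,h = h\,(h^{-1}ph)$ and that conjugation acts on the coordinates by the $\delta$'s. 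The non-uniqueness of the decomposition $p\kappa$ is exactly this $\gp{T}_n^{0,1}$ ambiguity.

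For (3), we invert the construction using Lemma \ref{lem: FuncExt}. Given $f_1, f_2$, we choose $K$ so that $J(u)K(v';u;u^{-1};\kappa) = f_1(v;u)f_2(\kappa)$ on the relevant locus. We then prescribe $\theta$ as the inverse Fourier transform in $v$. For $\theta$ we take a product with arbitrary compactly supported dependence on the superdiagonal/off-diagonal variables. Such dependence can be chosen so that it reproduces the prescribed dependence on the $v_i$ after partial Fourier transform, since $u_i$ ranges in a compact subset of $\F^{\times}$. This defines $\phi(Z,b,\kappa)$ with $Z$ Schwartz in the first row and compactly supported in $(b,\kappa) \in \gp{B}_{n-1}^+(\F)\times\gp{K}_n^1$. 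The lift must be done to $\gp{B}_{n-1}^+$ with positive diagonal, which forces averaging over the finite, or compact, group $\gp{T}_n^{0,1}(\F)$ for well-definedness. This averaging produces exactly the $\oint_{\gp{T}_n^{0,1}}$ in the statement. Lemma \ref{lem: FuncExt}(2) then gives $\Phi \in \Sch(\Mat_n(\F))$, and running the computation of (2) backwards recovers the formula.
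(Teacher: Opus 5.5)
Your proposal follows essentially the same route as the paper. Part (1) uses smoothness of $\kappa\mapsto\Phi(\cdot\,\kappa)$, partial Fourier transforms and the change of variables $x_{i,j}\mapsto x_{i,j}\delta_j^{-1}$; part (2) rewrites \eqref{eq: AuxMeasSLn} in the coordinates $p(\cdot)$ against the left Haar measure; and part (3) builds $\Psi$ as a product with phase-cancelling compactly supported factors in the superdiagonal variables, averages over $\gp{T}_n^{0,1}(\F)$, and extends via Lemma \ref{lem: FuncExt}. Two small corrections: the matrix product in \eqref{eq: AuxMeasSLn} is $p(v/(u_3\cdots u_n);u_2,\dots,u_n)$, not $p(v/(u_2\cdots u_n);\dots)$, so $J(u)=\prod_{i=3}^n\norm[u_i]_{\F}^{i-1}$; and since Lemma \ref{lem: FuncExt} already gives unique coordinates on $\Omega$, the $\gp{T}_n^{0,1}$-average is not needed for well-definedness but to produce the stated averaged formula for $f_{\Phi}$ at all sign/phase patterns of the $u_i$.
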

\begin{proof}
	(1) For any $\Phi \in \Sch(\Mat_n(\F))$ the following function lies in $\Sch(\Mat_n(\F)) \widehat{\otimes} \Cont^{\infty}(\gp{K}_n)$
	$$ \Mat_n(\F) \times \gp{K}_n \to \C, \quad (X, \kappa) \mapsto \Phi(X \kappa) $$
	since $\Sch(\Mat_n(\F))$ is a smooth Fr\'echet representation of $\GL_n(\F)$. Consequently we see $K_{\Phi}$ lies in the space $\Sch(\F^{2n-1}) \widehat{\otimes} \Cont^{\infty}(\gp{K}_n)$ by its formula \eqref{eq: AuxMeasSLnKer}. The stated equation also follows readily from \eqref{eq: AuxMeasSLnKer}.
	
\noindent (2) Choosing $\ud \kappa$ on $\gp{K}_n^1$ to be the restriction of $\ud \kappa$ on $\gp{K}_n$, and choosing
	$$ \ud p(v; u_2, \cdots, u_n) = \norm[u_2 \cdots u_n]_{\F} \ud v \ud^{\times} u_1 \cdots \ud^{\times} u_n $$
	we easily deduce the following formula from \eqref{eq: AuxMeasSLn}
	$$ f_{\Phi}(p(v; u_2, \cdots, u_n); \kappa) = K_{\Phi} \left( (u_3 \cdots u_n) v; u_2,\dots,u_n; u_2^{-1},\dots,u_n^{-1}; \kappa \right) \cdot \prod_{i=3}^n \norm[u_i]_{\F}^{i-1}. $$
	The desired property of $f_{\Phi}$ follows readily from the property of $K_{\Phi}$ established in (1).
	
\noindent (3) We first show that there is $\Psi \in \Sch(\F^n) \widehat{\otimes} \Cont_c^{\infty}(\gp{B}_{n-1}(\F))$ satisfying
\begin{multline} \label{eq: AuxF}
	\int_{\F} \int_{\F^{\frac{n(n-1)}{2}}} \Psi \begin{pmatrix} u_1 & x_{1,2} & \cdots & x_{1,n} \\ & u_2 & \cdots & x_{2,n} \\ & & \ddots & \vdots \\ & & & u_n \end{pmatrix} \psi \left( -u_1v + \sum_{i=2}^n u_i^{-1} x_{i-1,i} \right) \left( \prod_{i<j} \ud x_{i,j} \right) \ud u_1 \\
	= f_1(v; u_2, \dots, u_n).
\end{multline}
	We take an auxiliary function $\phi \in \Cont_c^{\infty}(\F)$ satisfying $\int_{\F} \phi(x) \ud x = 1$; define $\Psi$ in \eqref{eq: AuxF} by the formula
\begin{equation} \label{eq: AuxFF}
	\left( \prod_{j > i+1} \phi(x_{i,j}) \right) \cdot \left( \int_{\F} f_1(t; u_2, \dots, u_n) \psi(t u_1) \ud t \right) \cdot \left( \prod_{i=2}^n \phi(x_{i-1,i}) \psi(-u_i^{-1} x_{i-1,i}) \right); 
\end{equation}
	and readily verifies that $\Psi \in \Sch(\F) \widehat{\otimes} \Cont_c^{\infty}(\F^{n-1}) \widehat{\otimes} \Cont_c^{\infty}(\gp{B}_{n-1}(\F))$ and \eqref{eq: AuxF} holds true. We then define a function $\Phi$ on $\Omega$ (given in Lemma \ref{lem: FuncExt}) as
\begin{equation} \label{eq: PhiF}
	\Phi \left( \begin{pmatrix} u_1 & x_{1,2} & \cdots & x_{1,n} \\ & u_2 & \cdots & x_{2,n} \\ & & \ddots & \vdots \\ & & & u_n \end{pmatrix} \kappa \right) = \oint_{\gp{T}_n^{0,1}(\F)} \Psi \left( \begin{pmatrix} u_1 & x_{1,2} & \cdots & x_{1,n} \\ & u_2 & \cdots & x_{2,n} \\ & & \ddots & \vdots \\ & & & u_n \end{pmatrix} h \right) f_2(h^{-1} \kappa) \ud h, 
\end{equation}
	where $u_2, \dots, u_n > 0$ and $\kappa \in \gp{K}_n^1$. It is clear that $\Phi \circ \sigma \in \Sch(\F^n) \widehat{\otimes} \Cont_c^{\infty}(\gp{B}_{n-1}^+(\F) \times \gp{K}_n^1)$. Hence it extends to a function in $\Sch(\Mat_n(\F))$ by Lemma \ref{lem: FuncExt} (2). We denote this extension by $\Phi$. Since $\Cont_c^{\infty}(\gp{B}_{n-1}(\F)) \subset \Sch(\F^{\frac{n(n-1)}{2}})$, $\Psi$ is the restriction of a function in $\Sch(\Mat_n(\F))$, still denoted by $\Psi$. We rewrite \eqref{eq: AuxF} as
\begin{equation} \label{eq: AuxFBis}
	f_{\Psi}(p(v; u_2, \cdots, u_n); \id_n) = f_1(v; u_2, \cdots, u_n).
\end{equation}
	We can compute $f_{\Phi}$, first for $u_2, \dots, u_n > 0$ and $\kappa \in \gp{K}_n^1$, with the help of (2) as
\begin{multline*}
	f_{\Phi}(p(v; u_2, \cdots, u_n); \kappa) = \oint_{\gp{T}_n^{0,1}(\F)} f_{\Psi}(p(v; u_2, \cdots, u_n); h) f_2(h^{-1} \kappa) \ud h \\
	= \oint_{\gp{T}_n^{0,1}(\F)} f_{\Psi}(p(\delta_2 v; \delta_2 u_2, \cdots, \delta_n u_n); \id_n) f_2(h^{-1} \kappa) \ud h = \oint_{\gp{T}_n^{0,1}(\F)} f_1(\delta_2 v; \delta_2 u_2, \cdots, \delta_n u_n) \cdot f_2(h^{-1} \kappa) \ud h.
\end{multline*}
	For general $u_j, j \geq 2$, we write $u_j = u_j^0 \delta_j^0$ with $u_j^0 > 0$ and $\norm[\delta_j^0]_{\F}=1$. Writing $h_0 := \diag(\delta_1^0, \delta_2^0, \dots, \delta_n^0)$ with $\delta_1^0 \delta_2^0 \cdots \delta_n^0 = 1$, we obtain the desired equation by its above special case and (2) again
\begin{multline*} 
	f_{\Phi}(p(v; u_2, \cdots, u_n); \kappa) = f_{\Phi}(p((\delta_2^0)^{-1} v; u_2^0, \cdots, u_n^0); h_0\kappa) \\
	= \oint_{\gp{T}_n^{0,1}(\F)} f_1(\delta_2 (\delta_2^0)^{-1} v; \delta_2 u_2^0, \cdots, \delta_n u_n^0) \cdot f_2(h^{-1} h_0 \kappa) \ud h \\
	= \oint_{\gp{T}_n^{0,1}(\F)} f_1(\delta_2 v; \delta_2 \delta_2^0 u_2^0, \cdots, \delta_n \delta_n^0 u_n^0) \cdot f_2(h^{-1} \kappa) \ud h \\
	= \oint_{\gp{T}_n^{0,1}(\F)} f_1(\delta_2 v; \delta_2 u_2, \cdots, \delta_n u_n) \cdot f_2(h^{-1} \kappa) \ud h,
\end{multline*}
	where in the third line we made the change of variables $h \leadsto hh_0$.
\end{proof}

\begin{lemma} \label{lem: AuxMeasSLnId}
	For any $\Phi \in \Sch(\Mat_n(\F))$, $W \in \Whi(\pi^{\infty}, \psi)$ and $t \in \F^{\times}$, we have
\begin{equation*}
	\int_{\SL_n(\F)} \left( \Phi \cdot W \right) \left( \begin{pmatrix} t & \\ & \id_{n-1} \end{pmatrix} g \right) \ud g = \norm[t]_{\F}^{1-n} \int_{\SL_n(\F)} W \left( \begin{pmatrix} t & \\ & \id_{n-1} \end{pmatrix} h \right) \ud \rho_{\Phi}(h).
\end{equation*}
\end{lemma}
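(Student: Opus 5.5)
The plan is to unfold the left-hand side with an Iwasawa-type decomposition and then fold it back using the Whittaker equivariance of $W$. Write $g \in \SL_n(\F)$ as $g = b\kappa$, where $\kappa \in \gp{K}_n^1$ and $b$ is upper triangular with diagonal $(u_1,\dots,u_n)$, $u_1 \cdots u_n = 1$, and off-diagonal entries $x_{i,j}$. The Haar measure then becomes $\prod_{i} \norm[u_i]_{\F}^{n+1-2i}$ (a modular factor) times $\prod \ud x_{i,j}\, \prod \ud^{\times} u_i\, \ud\kappa$, with the usual redundancy over $\gp{T}_n^{0,1}$ absorbed into the $\kappa$-integral.

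Next substitute $X = \diag(t,\id_{n-1})\,b$. This is upper triangular with diagonal $(tu_1,u_2,\dots,u_n)$ and first row scaled by $t$. Use $W(nX\kappa) = \psi(\sum_i n_{i,i+1})\,W(X\kappa)$ to pull the unipotent part out of $W$. Writing $X = n\cdot \diag(tu_1,u_2,\dots,u_n)$, the superdiagonal entry of $n$ is $x_{i-1,i}/u_i$ up to the factor $t$ in the first row. The first row scaling is undone by a change of variables $x_{1,j} \mapsto x_{1,j}/t$, which produces a power of $\norm[t]_{\F}$; this should be the source of part of the factor $\norm[t]_{\F}^{1-n}$.

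After this step the $x_{i,j}$-integral of $\Phi(X\kappa)\psi(\sum x_{i-1,i}/u_i)$ is exactly $\theta_{\Phi}(tu_1;u_2,\dots,u_n;u_2^{-1},\dots,u_n^{-1};\kappa)$ from \eqref{eq: AuxMeasSLnKerBis}. The left side therefore becomes an integral over $u_2,\dots,u_n$ and $\kappa$ of
$$ W\left(\diag(t/(u_2\cdots u_n),u_2,\dots,u_n)\,\kappa\right)\,\theta_{\Phi}(\cdots)\,\times(\text{powers}). $$
This is the analogue of \eqref{eq: GJParInt}, but with a general $\kappa$ kept.

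The key step is to rewrite $\theta_{\Phi}$ as the inverse Fourier transform of $K_{\Phi}$ in its first variable:
$$ \theta_{\Phi}(tu_1;\dots) = \int_{\F} K_{\Phi}(v;\dots)\,\psi(tu_1 v)\,\ud v. $$
The character $\psi(tu_1 v)$ is then reabsorbed into $W$ via $\psi(tu_1 v) W(D\kappa) = W(\diag(t,1,\dots)\,p\,\kappa)$, where $p$ is the matrix in \eqref{eq: AuxMeasSLn}:
$$ p = \diag(u_2\cdots u_n,1,\dots,1)^{-1}\,\begin{pmatrix}1&v\\&1\end{pmatrix}\diag(1,u_2,\dots,u_n). $$
To verify this, note that $p = \begin{pmatrix}1&v'\\&1\end{pmatrix}\diag(u_1,u_2,\dots)$ with $v' = vu_1$, and conjugating by $\diag(t,\ldots)$ gives the entry $tvu_1$ in position $(1,2)$, whose $\psi$-value is $\psi(tu_1 v)$. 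Since $\Phi\in\Sch$ and $W$ has moderate growth, Fubini is justified, so one can swap the $v$-integral outside. Comparing with \eqref{eq: AuxMeasSLn} then yields the right-hand side.

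The main obstacle is the bookkeeping of the modular powers. One must check that the Haar measure factor, the Jacobian from $x_{1,j}\mapsto x_{1,j}/t$ (giving $\norm[t]^{-(n-1)}$), and the variable change between $u_1 = (u_2\cdots u_n)^{-1}$ and $tu_1$ combine to give exactly $\norm[t]_{\F}^{1-n}\prod_{i\ge2}\norm[u_i]_{\F}^{i-1}$. I would verify this by comparing with the $\kappa=\id$ computation in \eqref{eq: GJParInt}, which already displays these exponents. In the archimedean case, I would additionally justify absolute convergence using Lemma \ref{lem: AuxMeasSLnArch} (1), which gives that $K_{\Phi}$ is Schwartz.
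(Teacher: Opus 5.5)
Your proposal is correct, and it takes a different route from the paper. The paper never unfolds at a fixed $t$. It quotes the integrated identity \cite[Lemma (4.3.1)]{JPS83}, namely $\int_{\GL_n(\F)}(\Phi\cdot H)(g)\,\ud g=\int_{\F^{\times}}\bigl(\int_{\SL_n(\F)}H(\diag(t,\id_{n-1})h)\,\ud\rho_{\Phi}(h)\bigr)\norm[t]_{\F}^{1-n}\ud^{\times}t$ for ``nice'' $H$. It applies this to $H(g)=W(g)\phi(\det g)$ with $\phi\in\Cont_c^{\infty}(\F^{\times})$; niceness comes from the standard bounds on Whittaker functions and a positive Schwartz majorant of the $\gp{K}_n$-translates of $\Phi$. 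It then passes from equality as distributions in $t$ to pointwise equality by continuity.

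You instead carry out the Jacquet--Piatetski-Shapiro--Shalika computation directly on the fibre $\det g=t$, keeping $\kappa$ general. Your steps all check out and land exactly on \eqref{eq: AuxMeasSLn}:
\begin{itemize}
\item rescaling the first row and using Whittaker equivariance produces the character $\psi\bigl(\sum_{i\geq 2}x_{i-1,i}/u_i\bigr)$;
\item the $x_{i,j}$-integral is then recognised as $\theta_{\Phi}(tu_1;u_2,\dots,u_n;u_2^{-1},\dots,u_n^{-1};\kappa)$;
\item Fourier inversion in the first variable rewrites it in terms of $K_{\Phi}$;
\item the identity $W(\diag(t,\id_{n-1})p\kappa)=\psi(tu_1v)\,W(\diag(tu_1,u_2,\dots,u_n)\kappa)$ reabsorbs the character into $W$.
\end{itemize}
Your route gives a self-contained proof that explains where $\rho_{\Phi}$ comes from. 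It treats archimedean and non-archimedean $\F$ uniformly and avoids both the niceness check and the distribution-to-pointwise step. The paper's route is shorter only because it outsources exactly this computation.

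Your Fubini justification is terse but sound. On the fibre $\det g=t$ the entries of $g^{-1}=t^{-1}\mathrm{adj}(g)$ are polynomial in those of $g$. Hence moderate growth of $W$ is polynomial in the entries, and together with polynomial volume growth of balls in $\SL_n(\F)$ the left side converges absolutely. After unfolding, $K_{\Phi}(v;u_2,\dots,u_n;u_2^{-1},\dots,u_n^{-1};\kappa)$ decays rapidly in $v$ and in $\norm[u_j]_{\F}^{\pm1}$. This dominates the polynomial growth of $W(\diag(tu_1,u_2,\dots,u_n)\kappa)$.

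One bookkeeping slip should be corrected. The factor $\prod_i\norm[u_i]_{\F}^{n+1-2i}$ you attach to the Haar measure is the modulus $\delta_{\gp{B}}$, not the density you need. With $g=b\kappa$ and the entries $x_{i,j}$ of $b$ as coordinates, left Haar measure on the Borel subgroup is $\prod_j\norm[u_j]_{\F}^{j-n}\prod\ud x_{i,j}\prod\ud^{\times}u_j$, exactly as displayed in \eqref{eq: GJParInt}. This is $\delta_{\gp{B}}^{-1}$ times the Jacobian $\prod_j\norm[u_j]_{\F}^{1-j}$ coming from $n_{i,j}=x_{i,j}/u_j$. On $u_1=(u_2\cdots u_n)^{-1}$ it equals $\prod_{j\geq 2}\norm[u_j]_{\F}^{j-1}$.

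The entire factor $\norm[t]_{\F}^{1-n}$ comes from rescaling $x_{1,2},\dots,x_{1,n}$. No further change of variables between $u_1$ and $tu_1$ enters, since $u_1$ is not an integration variable. With this correction, your target $\norm[t]_{\F}^{1-n}\prod_{i\geq 2}\norm[u_i]_{\F}^{i-1}$ is exactly right.

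Finally, since \eqref{eq: AuxMeasSLn} lets all $u_i$ range over $\F^{\times}$ and $\kappa$ over all of $\gp{K}_n^1$, the identity holds exactly when $\ud g$ on $\SL_n(\F)$ is the image of (left Haar measure on the Borel) $\times\,\ud\kappa$ under $(b,\kappa)\mapsto b\kappa$. For any other normalisation it holds up to a harmless constant, a point the paper also leaves implicit.
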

\begin{proof}
	We shall deduce the result from \cite[Lemma (4.3.1)]{JPS83}, which states that
\begin{equation} \label{eq: AuxMeasSLnBis}
	\int_{\GL_n(\F)} \left( \Phi \cdot H \right) \left( g \right) \ud g = \int_{\F^{\times}} \left( \int_{\SL_n(\F)} H \left( \begin{pmatrix} t & \\ & \id_{n-1} \end{pmatrix} h \right) \ud \rho_{\Phi}(h) \right) \norm[t]_{\F}^{1-n} \ud^{\times} t, 
\end{equation}
	whenever $H$ is a right smooth function on $\GL_n(\F)$, left $(\gp{N}_n(\F), \psi)$-covariant and making the left hand side of \eqref{eq: AuxMeasSLnBis} absolutely convergent for all $\Phi \in \Sch(\Mat_n(\F))$. For simplicity we call such $H$ ``nice''. We claim that for any $\phi \in \Cont_c^{\infty}(\F^{\times})$ and any $W \in \Whi(\pi^{\infty}, \psi)$ the function $H(g) := W(g) \phi(\det g)$ is nice. The smoothness and covariance are easy to verify. By the bound of $W$ in \cite[Proposition 3.3]{J04} (archimedean) and \cite[Proposition (2.2)]{JPS79} (non-archimedean) we see that
	$$ \phi(a_1 \cdots a_n) W \left( u \begin{pmatrix} a_1 & & \\ & \ddots & \\ & & a_n \end{pmatrix} \kappa \right) $$
has rapid decay with respect to $a_j, a_j^{-1} \in \F^{\times}$ uniformly for $u \in \gp{N}_n(\F)$ and $\kappa \in \gp{K}_n$. Writing $\ud g$ in the Iwasawa decomposition and bounding the right translates of $\Phi$ by elements in $\gp{K}_n$ by a positive Schwartz function, whose existence is ensured by \cite[Corollary 2.3]{Wu24+}, we conclude the proof of the claim. Inserting such $H$ into \eqref{eq: AuxMeasSLnBis} we get
\begin{multline*}
	\int_{\F^{\times}} \phi(t) \left\{ \int_{\SL_n(\F)} \left( \Phi \cdot W \right) \left( \begin{pmatrix} t & \\ & \id_{n-1} \end{pmatrix} g \right) \ud g \right\} \ud^{\times} t = \\
	\int_{\F^{\times}} \phi(t) \norm[t]_{\F}^{1-n} \left\{ \int_{\SL_n(\F)} W \left( \begin{pmatrix} t & \\ & \id_{n-1} \end{pmatrix} h \right) \ud \rho_{\Phi}(h) \right\} \ud^{\times} t.
\end{multline*}
	The inner integrals on both sides being clearly continuous functions in $t$ and equal as distributions on $\F^{\times}$, they must be equal point-wise in $t$.
\end{proof}

	We now prove the equality $S(\pi) = \VorH(\pi,0)$.
	
	For any $\Phi \in \Sch(\Mat_n(\F))$ and $W \in \Whi(\pi^{\infty}, \psi)$, we rewrite the equality in Lemma \ref{lem: AuxMeasSLnId} as
	$$ \int_{\SL_n(\F)} \left( \Phi \cdot W \right) \left( \begin{pmatrix} t & \\ & \id_{n-1} \end{pmatrix} g \right) \ud g = \norm[t]_{\F}^{1-n} \cdot W_1 \left( \begin{pmatrix} t & \\ & \id_{n-1} \end{pmatrix} \right) $$
	where $W_1(g) := \int_{\SL_n(\F)} W(gh) \ud \rho_{\Phi}(h)$. But $\Whi(\pi^{\infty}, \psi)$ is a smooth representation with moderate growth for both $\gp{P}_n^1(\F)$ and $\gp{K}_n^1$, we deduce $W_1 \in \Whi(\pi^{\infty}, \psi)$ by Lemma \ref{lem: AuxMeasSLnArch} (2). This proves $W(\pi) \subseteq \VorH(\pi,0)$.
	
	Conversely, for any $W \in \Whi(\pi^{\infty}, \psi)$ we may apply Dixmier-Malliavin's theorem to $\gp{P}_n^1(\F)$ and $\gp{K}_n^1$ step-by-step, and get for finitely many $f_i \in \Cont_c^{\infty}(\gp{P}_n^1(\F))$, $g_i \in \Cont^{\infty}(\gp{K}_n^1)$ and $W_i \in \Whi(\pi^{\infty}, \psi)$ that
	$$ W(g) = \sum_i \int_{\gp{P}_n^1(\F)} \int_{\gp{K}_n^1} W_i(g x \kappa) f_i(x) g_i(\kappa) \ud \kappa \ud x. $$
	By Lemma \ref{lem: AuxMeasSLnArch} (3) we find $\Phi_i \in \Sch(\Mat_n(\F))$ such that 
	$$ f_{\Phi_i}(x; \kappa) = \int_{\gp{P}_n^1(\F) \cap \gp{K}_n^1} f_i(xh) g_i(h^{-1}\kappa) \ud h. $$
	By Lemma \ref{lem: AuxMeasSLnId} we get (up to a constant depending on the normalization of $\ud h$)
\begin{multline*} 
	\norm[t]_{\F}^{1-n} W \left( \begin{pmatrix} t & \\ & \id_{n-1} \end{pmatrix} \right) = \norm[t]_{\F}^{1-n} \sum_i \int_{\SL_n(\F)} W_i \left( \begin{pmatrix} t & \\ & \id_{n-1} \end{pmatrix} g \right) \ud \rho_{\Phi_i}(g) \\
	= \sum_i \int_{\SL_n(\F)} \left( \Phi_i \cdot W_i \right) \left( \begin{pmatrix} t & \\ & \id_{n-1} \end{pmatrix} g \right) \ud g.
\end{multline*}
	This proves $\VorH(\pi,0) \subseteq W(\pi)$.

	\subsection{$\VorH(\pi,0) = \VorH(\pi,j)$}
	
	The proof is quite similar to the non-archimedean case. We only indicate the differences in technique. Note that, the relevant main tool, i.e., the analogue of \cite[Lemma (4.1.5)]{JPS79} in the archimedean case is the specialization to $n'=1$ of \cite[Lemma 6.2]{J09}. We define $h(t; W, j)$ and $h(t; W, j, \Phi)$ the same way as in the non-archimedean case, and are reduced to proving the inclusions \eqref{eq: AuxIncl}. Only the first and the third inclusions need different argument, and the third one is simpler than the first. For the first inclusion, we apply Dixmier--Malliavin's theorem \cite[Lemma 6.1]{J09} to write
	$$ W(g) = \sum_i \int_{\F} W_i \left( g \begin{pmatrix} 1 & 0 & z & 0 \\ 0 & \id_j & 0 & 0 \\ 0 & 0 & 1 & 0 \\ 0 & 0 & 0 & \id_{n-j-2} \end{pmatrix} \right) \phi_j(z) \ud z $$
	for a finite collection of $W_i \in \Whi(\pi^{\infty}, \psi)$ and $\phi_i \in \Cont_c^{\infty}(\F)$. An application of \cite[Lemma 6.2]{J09} yields the desired $[h(t; W, j)] \subset [h(t; W, j-1, \Phi)]$.

\section{Multiplicativity in Tempered Case}
\label{sec: Mult}
	
	We recall the definition of the Voronoi--Hankel transforms.
	
\begin{definition} \label{def: VHTrans}
	(1) Let $n \in \Z_{\geq 2}$. The transform from the function $H$ to the function $H^*$ defined by
	$$ H(y) = \int_{\F^{n-2}} W \begin{pmatrix} y & & \\ \vec{x} & \id_{n-2} & \\ & & 1 \end{pmatrix} \ud \vec{x}, \quad H^*(y) = \widetilde{W} \begin{pmatrix} y & \\ & w_{n-1} \end{pmatrix} $$
	is the \emph{Voronoi transform} for $\pi$, written as $\Vor_{\pi}$, namely $H^*(y) := \Vor_{\pi}(H)(y)$. We also introduce the \emph{Voronoi--Hankel transform} as
	$$ \VorH_{\pi} := \Trans((-1)^{n-1}) \circ \Mult_{-\frac{n-3}{2}} \circ \Vor_{\pi} \circ \Mult_{\frac{n-1}{2}}: \ \VorH(\pi) \to \norm \VorH(\widetilde{\pi}) $$
	so that the local functional equation can be written as (independently of the rank $n$)
\begin{equation} \label{eq: LocFEGLnGL1}
	\int_{\F^{\times}} \VorH_{\pi}(H')(t) \chi^{-1}(t) \norm[t]^{-s} \ud^{\times} t = \gamma(s, \pi \times \chi, \psi) \int_{\F^{\times}} H'(t) \chi(t) \norm[t]^s \ud^{\times}t, 
\end{equation}
	where we have written $H'(t) := H(t) \norm[t]^{-\frac{n-1}{2}}$.
	
\noindent (2) Let $\Sch(\F)$ be the space of Schwartz--Bruhat functions. Let $\chi$ be a (quasi-character) of $\F^{\times}$. The Voronoi--Hankel transform $\VorH_{\chi}$ is the composition $\VorH_{\chi} = \Mult_1(\chi^{-1}) \circ \invOFour \circ \Mult_0(\chi^{-1})$ on
\begin{equation} \label{eq: LocFEGL1}
	\VorH_{\chi}: \chi \cdot \Sch(\F) \to \chi^{-1} \norm \cdot \Sch(\F). 
\end{equation}
\end{definition}

\begin{corollary} \label{cor: VHTransBis}
	Let $h$ be given via $\Phi \in \Sch(\Mat_n(\F))$ and $\beta \in C(\pi^{\infty})$ as
	$$ h(x) = \norm[x]_{\F}^{\frac{n-1}{2}} \int_{\SL_n(\F)} (\Phi \cdot \beta) \left( \begin{pmatrix} x & \\ & \id_{n-1} \end{pmatrix} g \right) \ud g. $$
	Then we have
	$$ \VorH_{\pi}(h)(y) = \norm[y]_{\F}^{\frac{n+1}{2}} \int_{\SL_n(\F)} (\widehat{\Phi} \cdot \beta^{\iota}) \left( \begin{pmatrix} y & \\ & \id_{n-1} \end{pmatrix} g \right) \ud g $$
	where $\beta^{\iota}(g) := \beta(g^{\iota})$ and the ``hat'' Fourier transform is defined by
\begin{equation} \label{eq: HatFourTrans}
	\widehat{\Psi}(X) = \invOFour_{\psi}(\Psi)(X) = \int_{\Mat(n \times m, \F)} \Psi(Y) \psi \left( \Tr(XY^T) \right) \ud Y. 
\end{equation}
\end{corollary}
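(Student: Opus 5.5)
The plan is to verify the formula through the local functional equation \eqref{eq: LocFEGLnGL1}, which determines $\VorH_{\pi}(h)$ uniquely by its Mellin transforms. Write $h = \norm_{\F}^{\frac{n-1}{2}} \cdot \int_{\SL_n(\F)} (\Phi\cdot\beta)(\diag(x,\id_{n-1})g)\ud g$. By \S\ref{sec: EquivDefsNA}--\S\ref{sec: EquivDefsA} this $h$ lies in $W(\pi)$, hence in $\VorH(\pi)$, so $\VorH_{\pi}(h)$ is defined. Since $\beta^{\iota}\in C(\widetilde\pi^{\infty})$ and $\widehat\Phi \in \Sch(\Mat_n(\F))$, the proposed right-hand side $k(y)$ lies in $\norm_{\F}\cdot\VorH(\widetilde\pi)$ by the same results applied to $\widetilde\pi$. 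It therefore suffices to check that the Mellin transforms of $k$ and $h$ satisfy \eqref{eq: LocFEGLnGL1}.

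First I compute the Mellin transform of $h$. For $\Re s\gg1$, with $h'(t)=h(t)\norm[t]^{-\frac{n-1}{2}}$, unfolding the fibration $\GL_n(\F)\to\F^{\times}$ via $\det$ gives
$$\int_{\F^{\times}} h'(t)\chi(t)\norm[t]^s\ud^{\times}t = \int_{\GL_n(\F)}\Phi(g)\beta(g)\chi(\det g)\norm[\det g]^{s}\ud^{\times}(\cdot),$$
up to a harmless normalisation. This is exactly the Godement--Jacquet zeta integral $Z(\Phi,\beta\otimes\chi, s+\frac{n-1}{2})$. I will be careful with the shift $\frac{n-1}{2}$, with the measure $\ud g=\norm[\det]^{-n}\ud X$, and with the normalisation of the $\SL_n$ fibre integral.

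Next I compute the Mellin transform of $k$. Similarly,
$$\int k(t)\chi^{-1}(t)\norm[t]^{-s}\ud^{\times}t = Z\bigl(\widehat\Phi,(\beta\otimes\chi)^{\iota},\,1-s+\tfrac{n-1}{2}\bigr),$$
where the exponent $\frac{n+1}{2}$ in $k$ is exactly what produces the dual variable. The Godement--Jacquet functional equation \cite{GoJa72} then equates this, by the factor $\gamma(s,\pi\otimes\chi,\psi)$, with the previous integral. For this one must match the Fourier transform in \eqref{eq: HatFourTrans} with the one used in Godement--Jacquet, namely $\psi(\Tr(XY))$ versus $\psi(\Tr(XY^T))$. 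The transpose discrepancy should be absorbed by the fact that $\beta^{\iota}$ uses $g^{\iota}={}^tg^{-1}$ rather than $g^{-1}$; the two conventions differ by transposition, which preserves $\ud g$ and $\det$. Finally \eqref{eq: EquivGamma} replaces $\gamma(s,\pi\otimes\chi,\psi)$ by $\gamma(s,\pi\times\chi,\psi)$, so $k$ satisfies \eqref{eq: LocFEGLnGL1}.

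To conclude uniqueness, note that both $\VorH_\pi(h)$ and $k$ have Mellin transforms agreeing for all $\chi$ on a common half-plane, after meromorphic continuation. Mellin inversion on $\F^{\times}$ then forces $\VorH_\pi(h)=k$; both functions are smooth with the growth controlled by their membership in $\norm\VorH(\widetilde\pi)$.

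I expect the main obstacle to be bookkeeping rather than any conceptual difficulty: getting the exact powers $\norm[y]^{\frac{n+1}{2}}$ versus the Godement--Jacquet shift $s\mapsto s+\frac{n-1}{2}$ right, and reconciling the transpose and $\iota$ conventions with $\widehat{\cdot}$. I would first test the normalisations in the case $n=1$, where the statement reduces to Tate's thesis and Definition \ref{def: VHTrans}(2).
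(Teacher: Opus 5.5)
Your proposal is correct and follows essentially the paper's route. You identify the Mellin transforms of $h$ and of the claimed right-hand side with the two sides of the Godement--Jacquet functional equation, absorbing the transpose in $\widehat{\Phi}$ into $\beta^{\iota}$ via $g \mapsto {}^t g$. You then pass from $\gamma(s,\pi\otimes\chi,\psi)$ to $\gamma(s,\pi\times\chi,\psi)$ by \eqref{eq: EquivGamma}, and conclude by Mellin inversion in the common region of absolute convergence $\Re s \ll -1$.

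There is one bookkeeping slip. In \eqref{eq: LocFEGLnGL1} the function playing the role of $H'$ is $h$ itself, so you should take the Mellin transform of $h$, not of $h\lvert t\rvert^{-\frac{n-1}{2}}$. Your label $Z(\Phi,\beta\otimes\chi,s+\frac{n-1}{2})$, read with the unnormalized exponent, is the correct one for $h$. Your displayed integral with $\lvert\det g\rvert^{s}$ is off by exactly this shift.
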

\begin{proof}
	By the local functional equation of Godement--Jacquet zeta-integrals we have
\begin{multline} \label{eq: GJLocFE} 
	\int_{\GL_n(\F)} (\widehat{\Phi} \cdot \beta^{\iota})(g) \chi^{-1}(\det g) \norm[\det g]_{\F}^{1-s+\frac{n-1}{2}} \ud g = \\
	\gamma(s, \pi \times \chi, \psi) \int_{\GL_n(\F)} (\Phi \cdot \beta)(g) \chi(\det g) \norm[\det g]_{\F}^{s+\frac{n-1}{2}} \ud g.
\end{multline}
	Comparing \eqref{eq: GJLocFE} with \eqref{eq: LocFEGLnGL1} and \eqref{eq: LocFEGL1} we get
	$$ \int_{\GL_n(\F)} (\widehat{\Phi} \cdot \beta^{\iota})(g) \chi^{-1}(\det g) \norm[\det g]_{\F}^{1-s+\frac{n-1}{2}} \ud g = \int_{\F^{\times}} \VorH_{\pi}(h)(t) \chi^{-1}(t) \norm[t]^{-s} \ud^{\times} t, $$
	both sides being absolutely convergent for $\Re(s) \ll -1$. We conclude the desired equality by Mellin inversion in this absolutely convergent region.
\end{proof}

\noindent We also recall the \emph{extended} Voronoi--Hankel transform given by \cite[Theorem 1.3]{Wu24+}.

\begin{theorem} \label{thm: ExtVorHTrans}
	Let $n \in \Z_{\geq 1}$. Let $\invOFour$ be the distributional inverse Fourier transform on $\Mat_n(\F)$. Let $I_n: \Cont(\F^{\times}) \to \Cont(\GL_n(\F))$ be given by $I_n(h)(g) := h(\det g)$. The following equation for a pair of functions $H, H^* \in \Cont(\F^{\times})$ and for the distributional $\psi$-Fourier transform on $\Mat_n(\F)$
	$$ \Mult_{-\frac{n+1}{2}}(\beta) \circ I_n(H^*) = \invOFour \circ \Mult_{-\frac{n-1}{2}}(\beta^{\iota}) \circ I_n(H), $$
	uniquely determines $\widetilde{\VorH}_{\pi}(H):=H^*$ as $\beta$ traverses the set $C(\pi^{\infty})$.
\end{theorem}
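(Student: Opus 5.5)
The plan is to obtain uniqueness by a translation argument and existence by Mellin analysis along the fibres of $\det$. Throughout, the identity is read as an equality of distributions on $\Mat_n(\F)$. Both sides are tested against $\Phi \in \Sch(\Mat_n(\F))$, and we use $\ud X = \norm[\det g]_{\F}^n \ud g$ on the open dense set $\GL_n(\F)$.

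\emph{Uniqueness.} Suppose $\Mult_{-\frac{n+1}{2}}(\beta) \circ I_n(H^*) = 0$ for every $\beta \in C(\pi^{\infty})$. Fix $g_0 \in \GL_n(\F)$ and take a matrix coefficient $\beta_0$ with $\beta_0(\id_n) \neq 0$, which exists since $\pi$ is nonzero. The right translate $\beta(g) := \beta_0(g g_0^{-1})$ is again in $C(\pi^{\infty})$ and satisfies $\beta(g_0) \neq 0$. The function $g \mapsto \beta(g) H^*(\det g) \norm[\det g]_{\F}^{-\frac{n+1}{2}}$ is continuous and vanishes as a distribution, hence vanishes pointwise. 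Therefore $H^*(\det g_0) = 0$. Since $\det$ is surjective onto $\F^{\times}$, we conclude $H^* = 0$.

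\emph{Reduction of existence to $\F^{\times}$.} Integrate both sides against $\Phi$ and disintegrate $\GL_n(\F)$ along $\det$. This reduces the identity to
$$ \int_{\F^{\times}} H^*(t)\, Z_t(\Phi,\beta) \ud^{\times} t = \int_{\F^{\times}} H(t)\, Z_t(\widehat{\Phi},\beta^{\iota}) \ud^{\times} t, $$
where $Z_t(\Phi,\beta) := \norm[t]^{a}\int_{\det g = t}(\Phi\cdot\beta)(g)\ud g$ for the appropriate power $a$, and similarly for $Z_t(\widehat{\Phi},\beta^{\iota})$. By Theorem \ref{thm: EquivDefs}(1) and its proof, the fibre integrals $t \mapsto Z_t(\Phi,\beta)$ run over exactly the (normalised) space $\VorH(\pi)$, and the $Z_t(\widehat{\Phi},\beta^{\iota})$ over $\VorH(\widetilde{\pi})$. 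Corollary \ref{cor: VHTransBis} identifies the second family with $\VorH_{\pi}$ applied to the first. So the required identity is a Parseval-type relation $\langle H^*, h\rangle = \langle H, \VorH_{\pi}(h)\rangle$ for all $h$ in $\VorH(\pi)$.

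\emph{Existence via Mellin transforms.} Define $H^*$ through Mellin transforms on the unitary axis by
$$ \mathfrak{M}(H^*)(\chi^{-1},-s) = \gamma(s,\pi\times\chi,\psi)\,\mathfrak{M}(H)(\chi,s), $$
in analogy with \eqref{eq: LocFEGLnGL1}. Then verify the Parseval relation by Mellin--Plancherel on $\F^{\times}$. The key input is the local functional equation \eqref{eq: GJLocFE}, used exactly as in the proof of Corollary \ref{cor: VHTransBis}. Temperedness (unitary and $\RamCst$-tempered with $\RamCst<1/2$) guarantees that the relevant $L$-factors have no poles near $\Re s = 1/2$ and that $\gamma$ is of moderate growth there. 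Hence the pairings converge absolutely and the contour can be placed on $\Re s = 1/2$.

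\emph{Main obstacle.} The hardest step is proving that this $H^*$ is a genuine continuous function, independent of $\beta$, when $H$ lies outside $\VorH(\pi)$. I would first prove the statement for $H \in \Cont_c^{\infty}(\F^{\times})$. Then I would extend by continuity: apply Theorem \ref{thm: ExtVorHTrans}'s defining equation to the translates $\pi$-smoothed by $\Cont_c^{\infty}(\GL_n(\F))$, and use the $\Sch(\GL_n(\F))$-module structure from Lemma \ref{lem: DualByFunctGL} to show the distribution on the right is locally a continuous function times $\beta$.
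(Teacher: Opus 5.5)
The paper gives no proof of this statement. It is imported from \cite[Theorem 1.3]{Wu24+}, and inside the paper it is used only as a uniqueness principle that \emph{defines} $\widetilde{\VorH}_{\pi}(H)$ for those $H$ that admit a solution $H^*$. In the proof of Theorem \ref{thm: Mult}, the distributional identity for the pair $(h,h^{\sharp})$ is established by direct computation, and the theorem is invoked only to conclude $\widetilde{\VorH}_{\pi_1}(h)=h^{\sharp}$. Read this way, your uniqueness paragraph is the whole content of the statement, and it is correct. The difference of two solutions is a continuous function on the open set $\GL_n(\F)$ that annihilates $\Cont_c^{\infty}(\GL_n(\F))$, so it vanishes pointwise. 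Right translates of matrix coefficients are again matrix coefficients, and $\det$ is onto $\F^{\times}$.

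The existence half is not asserted by the statement, and as you sketch it, it would not go through; you should drop it or restrict it.

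\begin{itemize}
\item For a general $H\in\Cont(\F^{\times})$ there is nothing to prove, and in fact no $H^*$ need exist. The function $\beta^{\iota}(g)\norm[\det g]_{\F}^{-\frac{n-1}{2}}H(\det g)$ need not define a tempered distribution on $\Mat_n(\F)$: it can fail to be locally integrable at singular matrices, or grow too fast.
\item Likewise, $H$ need not have a Mellin transform on any vertical line, so defining $H^*$ through Mellin transforms is unavailable in that generality.
\item Your ``extend by continuity'' step appeals to the defining equation of the very theorem being proved, which is circular.
\end{itemize}

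What your reduction does establish correctly is the extension property. For $H\in\VorH(\pi)$, Theorem \ref{thm: EquivDefs} (1) and Corollary \ref{cor: VHTransBis} reduce the equation to
\[
\int H^* h\,\ud^{\times}t=\int H\,\VorH_{\pi}(h)\,\ud^{\times}t \quad \text{for all } h\in\VorH(\pi).
\]
This holds for $H^*=\VorH_{\pi}(H)$ by Mellin--Parseval on a vertical line in the strip $\RamCst<\Re s<1-\RamCst$, which is nonempty because $\RamCst<1/2$. Hence $\VorH(\pi)$ lies in the domain of $\widetilde{\VorH}_{\pi}$, and $\widetilde{\VorH}_{\pi}$ extends $\VorH_{\pi}$. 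That is a worthwhile remark, but it goes beyond what the statement claims.
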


	We shall use the above alternative definition of $\VorH(\pi) = S(\pi)$ to prove Theorem \ref{thm: Mult}. As in \cite[(15.7.2)]{GoJ11} we can find smooth functions satisfying
	$$ f: \GL_n(\F) \to V_{\pi_1}^{\infty} \otimes V_{\pi_2}^{\infty}, \quad f \left( \begin{pmatrix} g_1 & X \\ & g_2 \end{pmatrix} g \right) = \norm[\det g_1]_{\F}^{\frac{n_2}{2}} \norm[\det g_2]_{\F}^{-\frac{n_1}{2}} (\pi_1 \otimes \pi_2)(g_1,g_2).f(g), $$
	$$ \widetilde{f}: \GL_n(\F) \to V_{\widetilde{\pi}_1}^{\infty} \otimes V_{\widetilde{\pi}_2}^{\infty}, \quad \widetilde{f} \left( \begin{pmatrix} g_1 & X \\ & g_2 \end{pmatrix} g \right) = \norm[\det g_1]_{\F}^{\frac{n_2}{2}} \norm[\det g_2]_{\F}^{-\frac{n_1}{2}} (\widetilde{\pi}_1 \otimes \widetilde{\pi}_2)(g_1,g_2).\widetilde{f}(g), $$
	so that the matrix coefficient $\beta \in C(\pi^{\infty})$ has the form
	$$ \beta(g) = \int_{\gp{K}_n} \Pairing{f(\kappa g)}{\widetilde{f}(\kappa)} \ud \kappa, $$
	where the group $\gp{K}_n$ is the standard maximal (connected) compact subgroup of $\GL_n(\F)$. For $\kappa_j \in \gp{K}_n$ we introduce
	$$ \kappa_2.\Phi.\kappa_1(X) := \Phi(\kappa_1 X \kappa_2), \quad \beta_{\kappa_1,\kappa_2}(g_1,g_2) := \Pairing{(\pi_1 \otimes \pi_2)(g_1,g_2).f(\kappa_2)}{\widetilde{f}(\kappa_1)}; $$
	$$ T(g_1,g_2; \Phi) := \int_{\Mat(n_1 \times n_2, \F)} \Phi \left( \begin{pmatrix} g_1 & X \\ & g_2 \end{pmatrix} \right) \ud X \in \Sch(\Mat_{n_1}(\F) \times \Mat_{n_2}(\F)). $$
\begin{lemma} \label{lem: BdMCIndRep}
	We have the following bound valid uniformly in $\kappa_1, \kappa_2 \in \gp{K}_n$
	$$ \extnorm{\beta_{\kappa_1,\kappa_2}(g_1,g_2)} \ll \Xi_{n_1}(g_1) \Xi_{n_2}(g_2), $$
	where $\Xi_n$ is the Harish-Chandra's function for $\GL_n(\F)$ (see \cite[\S 4.5.3]{Wal88}).
\end{lemma}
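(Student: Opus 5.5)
We aim to bound $\beta_{\kappa_1,\kappa_2}$ by a product of matrix coefficients of $\pi_1$ and $\pi_2$, and then invoke the standard Harish-Chandra bound for tempered matrix coefficients on each factor. By definition, $\beta_{\kappa_1,\kappa_2}(g_1,g_2)$ is the matrix coefficient of the tempered representation $\pi_1 \otimes \pi_2$ of $\GL_{n_1}(\F) \times \GL_{n_2}(\F)$. It is taken against the vectors $v = f(\kappa_2) \in V_{\pi_1}^{\infty} \otimes V_{\pi_2}^{\infty}$ and $\widetilde{v} = \widetilde{f}(\kappa_1)$. Since $\pi_1 \otimes \pi_2$ is tempered, its $\gp{K}_{n_1} \times \gp{K}_{n_2}$-finite matrix coefficients satisfy
$$ \extnorm{\Pairing{(\pi_1\otimes\pi_2)(g_1,g_2).v}{\widetilde{v}}} \leq C(v,\widetilde{v}) \, \Xi_{n_1}(g_1)\Xi_{n_2}(g_2). $$
This follows from the characterization of tempered representations \cite[\S 4.5]{Wal88}, together with the fact that $\Xi$ of a product group is the product of the $\Xi$'s. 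The whole task is therefore to make the constant $C(v,\widetilde{v})$ uniform as $v,\widetilde{v}$ range over the compact sets $f(\gp{K}_n)$ and $\widetilde{f}(\gp{K}_n)$.

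To get uniformity, we use the refined form of the Harish-Chandra bound valid for smooth vectors, not only $\gp{K}$-finite ones. For a tempered Casselman--Wallach representation $\sigma$ of a reductive group $G$ with maximal compact $\gp{K}$, there exist a continuous seminorm $q$ on $V_{\sigma}^{\infty}$ and a continuous seminorm $\widetilde{q}$ on $V_{\widetilde{\sigma}}^{\infty}$ such that
$$ \extnorm{\Pairing{\sigma(g).v}{\widetilde{v}}} \leq q(v)\,\widetilde{q}(\widetilde{v})\,\Xi_G(g). $$
One can take $q(v) = \sum_{i} \Norm[\ud\sigma(X_i).v]$ for suitable elements $X_i$ of the universal enveloping algebra of $\mathrm{Lie}(\gp{K})$; in the non-archimedean case one takes instead the dimension of the $\gp{K}$-span of $v$ times $\Norm[v]$, for $v$ fixed by a given open compact subgroup. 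This is proved by expanding $v$ and $\widetilde{v}$ into $\gp{K}$-types and applying the $\gp{K}$-finite bound with its explicit dependence on the dimensions of the $\gp{K}$-types, in the manner of \cite[Theorem 4.5.3 and \S 4.5]{Wal88} or Cowling--Haagerup--Howe. Granting this, apply it with $G = \GL_{n_1}(\F)\times\GL_{n_2}(\F)$ and $\sigma = \pi_1\otimes\pi_2$. Since $f$ and $\widetilde{f}$ are smooth, hence continuous, maps from the compact group $\gp{K}_n$ into the Fréchet spaces $V_{\pi_1}^{\infty}\otimes V_{\pi_2}^{\infty}$ and $V_{\widetilde{\pi}_1}^{\infty}\otimes V_{\widetilde{\pi}_2}^{\infty}$, the seminorms $q(f(\kappa_2))$ and $\widetilde{q}(\widetilde{f}(\kappa_1))$ are bounded uniformly in $\kappa_1,\kappa_2$. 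This gives the lemma.

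In the non-archimedean case the argument is simpler. The functions $f$ and $\widetilde{f}$ take only finitely many values, because they are right-invariant under some open compact subgroup and $\gp{K}_n$ is compact. The bound therefore follows from the $\gp{K}$-finite (here smooth) estimate applied to finitely many pairs of vectors.

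The main obstacle is the archimedean uniformity: we need the Harish-Chandra estimate with a constant controlled by continuous seminorms. We expect to take this from the literature rather than reprove it. If a clean reference is unavailable, the fallback is to decompose $f(\kappa_2)$ into $\gp{K}_{n_1}\times\gp{K}_{n_2}$-isotypic components. The coefficients of these components decay rapidly, uniformly in $\kappa_2$, by the smoothness of $f$. On the other hand, the $\gp{K}$-finite constant grows at most polynomially in the $\gp{K}$-type. Summing over $\gp{K}$-types then gives the uniform bound.
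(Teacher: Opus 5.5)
Your proposal is correct and follows essentially the same route as the paper. The paper applies the smooth-vector Harish-Chandra bound of Sun (\cite{Sun09}), whose constant is a product of Sobolev norms. It gets uniformity in the archimedean case from the continuity of $\kappa \mapsto f(\kappa)$ and $\kappa \mapsto \widetilde{f}(\kappa)$ on the compact group $\gp{K}_n$, and in the non-archimedean case from the fact that smoothness means only finitely many vectors occur. Your $\gp{K}$-type fallback is therefore unnecessary once that result is cited.
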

\begin{proof}
	The stated bound is valid for any fixed $\kappa_1, \kappa_2 \in \gp{K}_n$ by (the smooth version of) the decay of matrix coefficients \cite{Sun09}. Its uniform extension to all $\kappa_1, \kappa_2 \in \gp{K}_n$ is obvious in the non-archimedean case since smoothness of $f$ and $\widetilde{f}$ is the same as $\gp{K}_n$-finiteness. In the archimedean case, we note that the implied constant in \cite[Theorem 1.2]{Sun09} is some Sobolev norm $\Sob(\cdot)$. Hence we get $\extnorm{\beta_{\kappa_1,\kappa_2}(g_1,g_2)} \leq \Sob(f(\kappa_2)) \Sob(\widetilde{f}(\kappa_1)) \Xi_{n_1}(g_1) \Xi_{n_2}(g_2)$. By smoothness, the images of $\kappa_2 \mapsto \Sob(f(\kappa_2))$ and $\kappa_1 \to \Sob(\widetilde{f}(\kappa_1))$ are compact, hence bounded, proving the stated bound in the archimedean case.
\end{proof}
	
\noindent Computation similar to those in \cite[\S 15.7]{GoJ11} leads to
\begin{multline} \label{eq: MultTFDep}
	h(t) = \norm[t]_{\F}^{\frac{n-1}{2}} \cdot \int_{\substack{\GL_n(\F) \\ \det g = t}} \Phi(g) \beta(g) \ud g \\
	= \int_{\substack{\gp{K}_n^2 \times \GL_{n_1}(\F) \times \GL_{n_2}(\F) \\ \det(\kappa_1^{-1}\kappa_2) \det(g_1) \det(g_2) = t}} T(g_1,g_2; \kappa_2.\Phi.\kappa_1^{-1}) \beta_{\kappa_1,\kappa_2}(g_1,g_2) \norm[\det g_1]^{\frac{n_1-1}{2}} \norm[\det g_2]^{\frac{n_2-1}{2}} \ud g_1 \ud g_2 \ud \kappa_1 \ud \kappa_2.
\end{multline}
	Consider the function
	$$ h_1(t; g_2, \kappa_1, \kappa_2) := \int_{\substack{\GL_{n_1}(\F) \\ \det(g_1) = t}} T(g_1,g_2; \kappa_2.\Phi.\kappa_1^{-1}) \beta_{\kappa_1,\kappa_2}(g_1,g_2) \norm[\det g_1]^{\frac{n_1-1}{2}} \norm[\det g_2]^{\frac{n_2-1}{2}} \ud g_1. $$
	We have $t \mapsto h_1(t; g_2, \kappa_1, \kappa_2) \in \VorH(\pi_1)$ for any given $g_2, \kappa_1, \kappa_2$. By Corollary \ref{cor: VHTransBis} its image under $\VorH_{\pi_1}$ is
	$$ h_1^*(t; g_2, \kappa_1, \kappa_2) := \int_{\substack{\GL_{n_1}(\F) \\ \det(g_1) = t}} \invOFour_1(T)(g_1,g_2; \kappa_2.\Phi.\kappa_1^{-1}) \beta_{\kappa_1,\kappa_2}(g_1^{\iota},g_2) \norm[\det g_1]^{\frac{n_1+1}{2}} \norm[\det g_2]^{\frac{n_2-1}{2}} \ud g_1, $$
	where $\invOFour_1$ is the partial inverse $\psi$-Fourier transform with respect to the variable $g_1$. Introduce $\delta = \delta(g_2,\kappa_1,\kappa_2) := \det(g_2)^{-1} \det(\kappa_1\kappa_2^{-1})$. The image of the function $t \mapsto h_1(t \delta; g_2, \kappa_1, \kappa_2)$ under $\VorH_{\pi_1}$ is $t \mapsto h_1^*(t \delta^{-1}; g_2, \kappa_1, \kappa_2)$ by the local functional equation \eqref{eq: LocFEGLnGL1}. Consequently for any $\Phi_1 \in \Sch(\Mat_{n_1}(\F))$ and $\beta_1 \in C(\pi_1^{\infty})$ we have
\begin{multline} \label{eq: 1stExtVHId}
	\norm[\delta]_{\F}^{-\frac{n_1+1}{2}}\int_{\substack{\GL_{n_1}(\F)^{\times 2} \\ \delta \det g_1' = \det g_1}} \widehat{\Phi}_1(g_1') \beta_1^{\iota}(g_1') T(g_1,g_2; \kappa_2.\Phi.\kappa_1^{-1}) \beta_{\kappa_1,\kappa_2}(g_1,g_2) \norm[\det g_1]^{n_1} \norm[\det g_2]^{\frac{n_2-1}{2}} \ud g_1 \ud g_1' \\
	= \int_{\GL_{n_1}(\F)} \widehat{\Phi}_1(g_1') \beta_1^{\iota}(g_1') \norm[\det g_1']^{\frac{n_1+1}{2}} h_1(\delta \det g_1'; g_2, \kappa_1, \kappa_2) \ud g_1' \\
	= \int_{\GL_{n_1}(\F)} \Phi_1(g_1') \beta_1(g_1') \norm[\det g_1']^{\frac{n_1-1}{2}} h_1^*(\delta^{-1} \det g_1'; g_2, \kappa_1, \kappa_2) \ud g_1' \\
	= \norm[\delta]_{\F}^{\frac{n_1-1}{2}}\int_{\substack{\GL_{n_1}(\F)^{\times 2} \\ \delta^{-1} \det g_1' = \det g_1}} \Phi_1(g_1') \beta_1(g_1') \invOFour_1(T)(g_1,g_2; \kappa_2.\Phi.\kappa_1^{-1}) \beta_{\kappa_1,\kappa_2}(g_1^{\iota},g_2) \norm[\det g_1]^{n_1} \norm[\det g_2]^{\frac{n_2-1}{2}} \ud g_1 \ud g_1'.
\end{multline}

\begin{lemma} \label{lem: 1stExtVHAux}
	Both sides of \eqref{eq: 1stExtVHId} are integrable for $\ud g_2 \ud \kappa_1 \ud \kappa_2$ on $\GL_{n_2}(\F) \times \gp{K}_n \times \gp{K}_n$.
\end{lemma}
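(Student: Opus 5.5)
We bound the absolute values of the integrands on both sides and show the resulting majorants are integrable over $\GL_{n_1}(\F)^{\times 2} \times \GL_{n_2}(\F) \times \gp{K}_n^2$. Compactness of $\gp{K}_n$ lets us concentrate on $(g_1, g_1', g_2)$, provided every bound is uniform in $\kappa_1, \kappa_2$. Uniformity in the matrix coefficient comes from Lemma \ref{lem: BdMCIndRep}, which gives $\norm[\beta_{\kappa_1,\kappa_2}(g_1,g_2)] \ll \Xi_{n_1}(g_1)\Xi_{n_2}(g_2)$; since $\Xi_{n_1}(g_1^{\iota}) = \Xi_{n_1}(g_1)$, the same bound holds with $g_1^{\iota}$ in place of $g_1$. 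Uniformity in the Schwartz data comes from the fact that $\kappa_2.\Phi.\kappa_1^{-1}$ runs over a compact family in $\Sch(\Mat_n(\F))$. In the archimedean case we bound this family by a single positive Schwartz function via \cite[Corollary 2.3]{Wu24+}. In the non-archimedean case the family is finite modulo invariance. Then $T(\cdot,\cdot;\kappa_2.\Phi.\kappa_1^{-1})$ and its partial Fourier transform $\invOFour_1(T)$ are dominated by fixed positive Schwartz functions of product form $\Phi_1^+(g_1)\Phi_2^+(g_2)$ on $\Mat_{n_1}(\F)\times\Mat_{n_2}(\F)$. Similarly, $\norm[\widehat{\Phi}_1] \leq \Phi_1'^{+}$ and $\norm[\Phi_1] \leq \Phi_1''^{+}$, and $\beta_1$, $\beta_1^{\iota}$ are bounded by $\Xi_{n_1}$ because $\pi_1$ is tempered.

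Next we get rid of the coupling constraint. On the left side the constraint is $\delta\det g_1' = \det g_1$ with $\norm[\delta] = \norm[\det g_2]^{-1}$. Since every bound above depends only on $\norm$-values, the integral in $g_1'$ over the fibre $\det g_1' = \delta^{-1}\det g_1$ can be recombined with the $g_2$-integral. Concretely, we integrate $g_1$ and $g_1'$ freely over $\GL_{n_1}(\F)^{\times 2}$ and record the constraint as $\norm[\det g_2] = \norm[\det g_1']/\norm[\det g_1]$. After this, the majorant of the left side becomes a finite sum of products of Godement--Jacquet type integrals
$$ \int_{\GL_{n_1}(\F)} \Phi^+(g)\,\Xi_{n_1}(g)\,\norm[\det g]_{\F}^{a}\,\ud g, \qquad \int_{\GL_{n_2}(\F)} \Phi_2^+(g)\,\Xi_{n_2}(g)\,\norm[\det g]_{\F}^{b}\,\ud g. $$
The exponents $a$ and $b$ are read off from the powers $\norm[\det g_1]^{n_1}$, $\norm[\det g_2]^{\frac{n_2-1}{2}}$ and $\norm[\delta]^{-\frac{n_1+1}{2}}$, together with the exponent coming from the Haar measure on the fibres. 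The right side is handled in the same way, with $\delta$ replaced by $\delta^{-1}$ and the power $\norm[\delta]^{\frac{n_1-1}{2}}$.

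It is standard that for tempered $\pi_j$ the integrals $\int \Phi^+ \Xi_{n_j} \norm[\det]^{c}$ converge absolutely whenever $\Re c > -1$ in the present normalisation. This is the abscissa of convergence of Godement--Jacquet zeta integrals of tempered representations, shifted by $\frac{n_j-1}{2}$, so it suffices to check that every exponent produced is $> \frac{n_j-1}{2} - \frac{n_j+1}{2} \cdot 0 - 1$ in the appropriate shifted sense.

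The main obstacle is this exponent bookkeeping. The bounds do not factor cleanly: the dependence between $\norm[\det g_2]$ and the ratio $\norm[\det g_1']/\norm[\det g_1]$ mixes the exponents. If a naive choice fails, I would exploit the freedom in the Schwartz majorants. Inserting $\norm[\det g_2]^{\pm\epsilon}$ and splitting the $g_2$-range into $\norm[\det g_2] \leq 1$ and $\norm[\det g_2] > 1$ lets one move a small power between the $g_1$- and $g_2$-integrals, using the rapid decay of $\Phi^+$ at infinity on one piece and the margin $\frac{1}{2}$ coming from temperedness near $0$ on the other. This should put every factor inside its convergence range, with room to spare since $\frac{n_j-1}{2}+\frac12 > 0$.
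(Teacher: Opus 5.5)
\textbf{There is a genuine gap: the decoupling step fails.} The problem is the step where you integrate $g_1,g_1'$ freely over $\GL_{n_1}(\F)^{\times 2}$, record the constraint as $\norm[\det g_2]=\norm[\det g_1']/\norm[\det g_1]$, and declare that the majorant ``becomes a finite sum of products of Godement--Jacquet type integrals''.

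Three group variables tied by one determinant relation cannot all be integrated freely. If $g_1,g_1'$ are free, $g_2$ still runs over the fibre $\det g_2=\eta\det g_1'/\det g_1$, where $\eta:=\det(\kappa_1\kappa_2^{-1})$. Write $A(t)=\int_{\det g=t}\Phi^+(g)\Xi_{n}(g)\,\ud g$ for the fibre integrals of your majorants. What you actually have is then a trilinear multiplicative convolution
$$\iint A_1'(t')\,A_2(u)\,A_1(\eta t'/u)\,\ud^{\times}t'\,\ud^{\times}u.$$
Such a form is \emph{not} bounded by the product of the three (weighted) $L^1$-norms, and $L^1$-norms are all that convergence of Godement--Jacquet integrals gives you. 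Young's inequality needs one factor in $L^\infty$. For a counterexample, in $\log\norm$-coordinates three bumps of height $N$ and width $1/N$ have $L^1$-norm $O(1)$ but trilinear form $\asymp N$.

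Inserting $\norm[\det g_2]^{\pm\epsilon}$ and splitting ranges only redistributes powers; it does not remove the convolution. The missing ingredient is a \emph{pointwise} bound on one fibre integral, and that is the heart of the paper's proof. The paper works with the middle expression of \eqref{eq: 1stExtVHId}. It notes, via Theorem \ref{thm: EquivDefs} (1), that $t\mapsto \norm[t]^{\frac{n_1-1}{2}}\int_{\det g_1=t}\widetilde{\Phi}(g_1,g_2)\Xi_{n_1}(g_1)\,\ud g_1$ lies in $\VorH(\id^{\boxplus n_1})$. The tempered Whittaker bound \cite[Proposition 6.7]{Wu24+} then gives the pointwise estimate $\ll[1+(\log\norm[t])^2]^d\phi(t,g_2)$. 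Only after that does $\phi(t,g_2)\ll\norm[t]^{-\epsilon}\widetilde{\phi}(g_2)$, with $t=\delta\det g_1'$, split everything into a $\GL_{n_1}$-integral times a $\GL_{n_2}$-integral, which \cite[Proposition 6.6]{Wu24+} controls.

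\textbf{The exponent criterion is also off.} The one you state is garbled, and read literally (exponents $>\frac{n_j-3}{2}$) it is off by one. With $\ud g=\norm[\det X]_{\F}^{-n}\ud X$ one has $\int_{\GL_n(\F)}\Phi^+\Xi_n\norm[\det g]^{c}\,\ud g<\infty$ exactly when $c>\frac{n-1}{2}$; already for $n=1$, $\int\Phi(x)\,\ud^{\times}x$ diverges. On the left side the admissible redistributions of powers are $\norm[\det g_1']^{\frac{n_1+1}{2}-\alpha}\norm[\det g_1]^{\frac{n_1-1}{2}+\alpha}\norm[\det g_2]^{\frac{n_2-1}{2}+\alpha}$. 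All three factors converge only for $0<\alpha<1$, and at $\alpha=0$ two of them are exactly borderline. The room comes from the extra $\norm[\det g_1']^{1}$ on the Fourier-transformed side, not from temperedness, which gives no margin.

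\textbf{How far your route can be repaired.} In the non-archimedean case it can be fixed. Choose $\gp{K}_n$-bi-invariant majorants; then the fibre integrals depend only on $v_{\F}(\det g)$. Since $\ell^1(\Z)\subset\ell^\infty(\Z)$, the constrained sum is dominated by the product of three reweighted Godement--Jacquet sums, which converge for $\alpha\in(0,1)$. In the archimedean case you genuinely need a pointwise input, such as the Whittaker bound the paper uses or a substitute (for instance monotonicity of fibre integrals of radially decreasing majorants). Your proposal supplies none.
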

\begin{proof}
	Elementary estimation for Schwartz functions, such as \cite[\S 2]{Wu24+}, shows the existence of some positive $\widetilde{\Phi} \in \Sch(\Mat_{n_1}(\F) \times \Mat_{n_2}(\F))$ so that uniformly in $\kappa_1,\kappa_2$ we have
	$$ T(g_1,g_2; \kappa_2.\Phi.\kappa_1^{-1}) \ll \widetilde{\Phi}(g_1,g_2). $$
	Together with Lemma \ref{lem: BdMCIndRep} we deduce
	$$ h_1(t; g_2, \kappa_1, \kappa_2) \ll \norm[t]^{\frac{n_1-1}{2}} \int_{\substack{\GL_{n_1}(\F) \\ \det(g_1) = t}} \widetilde{\Phi}(g_1,g_2) \Xi_{n_1}(g_1) \ud g_1 \cdot \Xi_{n_2}(g_2) \norm[\det g_2]^{\frac{n_2-1}{2}}. $$
	By Theorem \ref{thm: EquivDefs} (1), the first component lies in $\VorH(\id^{\boxplus n_1})$ since $\Xi_{n_1}(\cdot)$ is a matrix coefficient of $\id^{\boxplus n_1}$. We can thus appeal to the general bound of Whittaker functions for tempered representations \cite[Proposition 6.7]{Wu24+} and deduce the existence of some positive $\phi \in \Sch(\F \times \Mat_{n_2}(\F))$ and $d \in \Z_{\geq 0}$ such that
	$$ \norm[t]^{\frac{n_1-1}{2}} \int_{\substack{\GL_{n_1}(\F) \\ \det(g_1) = t}} \widetilde{\Phi}(g_1,g_2) \Xi_{n_1}(g_1) \ud g_1 \ll \left[ 1 + \left( \log \norm[t] \right)^2 \right]^d \phi(t, g_2). $$
	We also have for any $0 < \epsilon < 1$ the existence of some $0<\widetilde{\phi} \in \Sch(\Mat_{n_2}(\F))$ such that
	$$ \phi(t, g_2) \ll \norm[t]^{-\epsilon} \widetilde{\phi}(g_2). $$
	Inserting these bounds we see
\begin{multline*}
	\int_{\GL_{n_1}(\F) \times \GL_{n_2}(\F) \times \gp{K}_n \times \gp{K}_n} \widehat{\Phi}_1(g_1') \beta_1^{\iota}(g_1') \norm[\det g_1']^{\frac{n_1+1}{2}} h_1(\delta \det g_1'; g_2, \kappa_1, \kappa_2) \ud g_1' \ud g_2 \ud \kappa_1 \ud \kappa_2 \\
	\ll \int_{\GL_{n_1}(\F) \times \GL_{n_2}(\F) \times \gp{K}_n \times \gp{K}_n} \extnorm{\widehat{\Phi}_1(g_1')} \Xi_{n_1}(g_1') \cdot \norm[\det g_1']^{\frac{n_1+1}{2}} \cdot \left[ 1 + \left( \log (\norm[\det g_1']) - \log (\norm[\det g_2]) \right)^2 \right]^d \cdot \\
	\phi \left( \det g_1' \cdot (\det g_2)^{-1} \cdot \det(\kappa_1 \kappa_2^{-1}), g_2 \right) \Xi_{n_2}(g_2) \norm[\det g_2]^{\frac{n_2-1}{2}} \ud g_1' \ud g_2 \ud \kappa_1 \ud \kappa_2 \\
	\ll \sum_{d_1,d_2} \int_{\GL_{n_1}(\F)} \extnorm{\widehat{\Phi}_1(g_1')} \Xi_{n_1}(g_1') \norm[\det g_1']^{\frac{n_1+1}{2}-\epsilon} \extnorm{ \log (\norm[\det g_1']) }^{d_1} \ud g_1 \cdot \\
	\int_{\GL_{n_2}(\F)} \widetilde{\phi}(g_2) \Xi_{n_2}(g_2) \norm[\det g_2]^{\frac{n_2-1}{2}+\epsilon} \extnorm{ \log (\norm[\det g_2]) }^{d_2} \ud g_2 < \infty,
\end{multline*}
	where $(d_1,d_2)$ runs through a finite subset of $\Z_{\geq 0} \times \Z_{\geq 0}$ depending on $d$, and we have applied \cite[Proposition 6.6]{Wu24+} to bound the integrals in the last two lines. The integrability of the other side of \eqref{eq: 1stExtVHId} is quite similar and left as an exercise to the reader.
\end{proof}

\noindent Integrating both sides of \eqref{eq: 1stExtVHId} we get
\begin{multline}
	\int_{\GL_{n_1}(\F)} \widehat{\Phi}_1(g_1') \beta_1^{\iota}(g_1') \norm[\det g_1']_{\F}^{\frac{n_1+1}{2}} \cdot h(\det g_1') \ud g_1' = \\
	\int_{\GL_{n_1}(\F)} \Phi_1(g_1') \beta_1(g_1') \norm[\det g_1']_{\F}^{\frac{n_1-1}{2}} \cdot h^{\sharp}(\det g_1') \ud g_1'
\end{multline}
	for the function $h^{\sharp}(\cdot)$ defined by (absolutely convergent integral)
\begin{multline*} 
	h^{\sharp}(t) := \\
	\int_{\substack{\gp{K}_n^2 \times \GL_{n_1}(\F) \times \GL_{n_2}(\F) \\ \det(\kappa_1\kappa_2^{-1}) \det(g_1) \det(g_2)^{-1} = t}} \invOFour_1(T)(g_1,g_2; \kappa_2.\Phi.\kappa_1^{-1}) \beta_{\kappa_1,\kappa_2}(g_1^{\iota},g_2) \norm[\det g_1]_{\F}^{\frac{n_1+1}{2}} \norm[\det g_2]_{\F}^{\frac{n_2-1}{2}} \ud g_1 \ud g_2 \ud \kappa_1 \ud \kappa_2. 
\end{multline*}
 	Therefore we get $\widetilde{\VorH}_{\pi_1}(h) = h^{\sharp}$ by Theorem \ref{thm: ExtVorHTrans}. We deduce
\begin{multline*}
	h^{\sharp}(t^{-1}) = \\
	\int_{\substack{\gp{K}_n^2 \times \GL_{n_1}(\F) \times \GL_{n_2}(\F) \\ \det(g_2) = t\det(\kappa_1\kappa_2^{-1}) \det(g_1)}} \invOFour_1(T)(g_1,g_2; \kappa_2.\Phi.\kappa_1^{-1}) \beta_{\kappa_1,\kappa_2}(g_1^{\iota},g_2) \norm[\det g_1]_{\F}^{\frac{n_1+1}{2}} \norm[\det g_2]_{\F}^{\frac{n_2-1}{2}} \ud g_1 \ud g_2 \ud \kappa_1 \ud \kappa_2.
\end{multline*}
	We exchange the roles of $g_2$ and $g_1$, and consider the function
	$$ h_2(t; g_1, \kappa_1, \kappa_2) := \int_{\substack{\GL_{n_2}(\F) \\ \det(g_2) = t}} \invOFour_1(T)(g_1,g_2; \kappa_2.\Phi.\kappa_1^{-1}) \beta_{\kappa_1,\kappa_2}(g_1^{\iota},g_2) \norm[\det g_1]_{\F}^{\frac{n_1+1}{2}} \norm[\det g_2]_{\F}^{\frac{n_2-1}{2}} \ud g_2. $$
	We have $t \mapsto h_2(t; g_1, \kappa_1, \kappa_2) \in \VorH(\pi_2)$ for any given $g_1, \kappa_1, \kappa_2$. By Corollary \ref{cor: VHTransBis} its image under $\VorH_{\pi_2}$ is
\begin{multline*} 
	h_2^*(t; g_2, \kappa_1, \kappa_2) := \int_{\substack{\GL_{n_2}(\F) \\ \det(g_2) = t}} \widehat{T}(g_1,g_2; \kappa_2.\Phi.\kappa_1^{-1}) \beta_{\kappa_1,\kappa_2}(g_1^{\iota},g_2^{\iota}) \norm[\det g_1]^{\frac{n_1+1}{2}} \norm[\det g_2]^{\frac{n_2+1}{2}} \ud g_2 \\
	= \int_{\substack{\GL_{n_2}(\F) \\ \det(g_2) = t}} T(g_1^T,g_2^T; \kappa_1.{}^t\widehat{\Phi}.\kappa_2^{-1}) \beta_{\kappa_1,\kappa_2}(g_1^{\iota},g_2^{\iota}) \norm[\det g_1]^{\frac{n_1+1}{2}} \norm[\det g_2]^{\frac{n_2+1}{2}} \ud g_2,
\end{multline*}
	where $\widehat{T} = \invOFour_2 \invOFour_1(T)$ is the inverse $\psi$-Fourier transform of $T$ in $(g_1,g_2) \in \Mat_{n_1}(\F) \times \Mat_{n_2}(\F)$, and we have applied the following functional equation (which is \cite[Lemma 15.7.12]{GoJ11} in the non-archimedean case, but the proof is valid for archimedean $\F$ as well; note the adaptation to our definition \eqref{eq: HatFourTrans}):
\begin{equation} \label{eq: TecFEIndRep}
	\widehat{T}(g_1,g_2; \Phi) = T(g_1^T,g_2^T; {}^t\widehat{\Phi}), \quad {}^t\widehat{\Phi}(X) := \widehat{\Phi}(X^T).
\end{equation}

\noindent Introduce $\delta = \delta(g_1,\kappa_1,\kappa_2) := \det(g_1) \det(\kappa_1\kappa_2^{-1})$. The image of the function $t \mapsto h_2(t \delta; g_1, \kappa_1, \kappa_2)$ under $\VorH_{\pi_2}$ is $t \mapsto h_2^*(t \delta^{-1}; g_2, \kappa_1, \kappa_2)$ by the local functional equation \eqref{eq: LocFEGLnGL1}. Consequently for any $\Phi_2 \in \Sch(\Mat_{n_2}(\F))$ and $\beta_2 \in C(\pi_2^{\infty})$ we have
\begin{multline} \label{eq: 2ndExtVHId}
	\norm[\delta]_{\F}^{-\frac{n_2+1}{2}}\int_{\substack{\GL_{n_2}(\F)^{\times 2} \\ \delta \det g_2' = \det g_2}} \invOFour_1(T)(g_1,g_2; \kappa_2.\Phi.\kappa_1^{-1}) \beta_{\kappa_1,\kappa_2}(g_1^{\iota},g_2) \norm[\det g_1]_{\F}^{\frac{n_1+1}{2}} \norm[\det g_2]_{\F}^{n_2} \ud g_2 \ud g_2' \\
	= \int_{\GL_{n_2}(\F)} \widehat{\Phi}_2(g_2') \beta_2^{\iota}(g_2') \norm[\det g_2']^{\frac{n_2+1}{2}} h_2(\delta \det g_2'; g_1, \kappa_1, \kappa_2) \ud g_2' \\
	= \int_{\GL_{n_2}(\F)} \Phi_2(g_2') \beta_2(g_2') \norm[\det g_2']^{\frac{n_2-1}{2}} h_2^*(\delta^{-1} \det g_2'; g_1, \kappa_1, \kappa_2) \ud g_2' \\
	= \norm[\delta]_{\F}^{\frac{n_2-1}{2}}\int_{\substack{\GL_{n_2}(\F)^{\times 2} \\ \delta^{-1} \det g_2' = \det g_2}} \Phi_2(g_2') \beta_2(g_2') T(g_1^T,g_2^T; \kappa_1.{}^t\widehat{\Phi}.\kappa_2^{-1}) \beta_{\kappa_1,\kappa_2}(g_1^{\iota},g_2^{\iota}) \norm[\det g_1]^{\frac{n_1+1}{2}} \norm[\det g_2]^{n_2} \ud g_2 \ud g_2'.
\end{multline}

\noindent The proof of the following lemma is quite similar to that of Lemma \ref{lem: 1stExtVHAux}, and left to the reader.
\begin{lemma} \label{lem: 2ndExtVHAux}
	Both sides of \eqref{eq: 2ndExtVHId} are integrable for $\ud g_1 \ud \kappa_1 \ud \kappa_2$ on $\GL_{n_1}(\F) \times \gp{K}_n \times \gp{K}_n$.
\end{lemma}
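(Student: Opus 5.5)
We follow the proof of Lemma \ref{lem: 1stExtVHAux}, with the roles of $g_1$ and $g_2$ exchanged, and treat the two sides of \eqref{eq: 2ndExtVHId} separately, starting with the second line. We first need a Schwartz majorant for the integrand in $h_2$. By \cite[\S 2]{Wu24+}, the function $\invOFour_1(T)(g_1,g_2;\kappa_2.\Phi.\kappa_1^{-1})$ is the partial Fourier transform of a family of Schwartz functions depending smoothly on $(\kappa_1,\kappa_2)$ in a compact set. Hence there is a positive $\widetilde{\Phi} \in \Sch(\Mat_{n_1}(\F) \times \Mat_{n_2}(\F))$ dominating it uniformly in $\kappa_1,\kappa_2$. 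Lemma \ref{lem: BdMCIndRep} and the invariance $\Xi_{n_1}(g_1^{\iota}) = \Xi_{n_1}(g_1)$ then bound $\beta_{\kappa_1,\kappa_2}(g_1^{\iota},g_2)$ by $\Xi_{n_1}(g_1)\Xi_{n_2}(g_2)$. We deduce
$$ h_2(t;g_1,\kappa_1,\kappa_2) \ll \norm[t]^{\frac{n_2-1}{2}} \int_{\det g_2 = t} \widetilde{\Phi}(g_1,g_2)\Xi_{n_2}(g_2) \ud g_2 \cdot \Xi_{n_1}(g_1)\norm[\det g_1]^{\frac{n_1+1}{2}}. $$

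The next step is to control the $g_2$-integral. By Theorem \ref{thm: EquivDefs} (1) it lies in $\VorH(\id^{\boxplus n_2})$, because $\Xi_{n_2}$ is a matrix coefficient of $\id^{\boxplus n_2}$. The tempered Whittaker bound \cite[Proposition 6.7]{Wu24+}, applied with $g_1$ as a Schwartz parameter, bounds it by $[1+(\log\norm[t])^2]^d \phi(t,g_1)$ for some positive $\phi \in \Sch(\F\times\Mat_{n_1}(\F))$. Choosing $0<\epsilon<1$, we further bound $\phi(t,g_1) \ll \norm[t]^{\mp\epsilon}\widetilde{\phi}(g_1)$. The sign of the exponent is chosen according to which of the factors $\norm[\det g_2']$ or $\norm[\det g_1]$ needs the extra decay.

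Now insert $t = \delta\det g_2' = \det g_1 \det(\kappa_1\kappa_2^{-1})\det g_2'$, and expand the logarithmic factor as a finite sum of products $\extnorm{\log\norm[\det g_2']}^{d_2}\extnorm{\log\norm[\det g_1]}^{d_1}$. The integral over $\kappa_1,\kappa_2$ is over a compact set and causes no difficulty. What remains is
$$ \int \extnorm{\widehat{\Phi}_2(g_2')}\Xi_{n_2}(g_2')\norm[\det g_2']^{\frac{n_2+1}{2}\pm\epsilon}\extnorm{\log\norm[\det g_2']}^{d_2}\ud g_2' \cdot \int \widetilde{\phi}(g_1)\Xi_{n_1}(g_1)\norm[\det g_1]^{\frac{n_1+1}{2}\pm\epsilon}\extnorm{\log\norm[\det g_1]}^{d_1}\ud g_1. $$
Each factor is a Godement--Jacquet type integral of a tempered coefficient $\Xi$, and both are finite by \cite[Proposition 6.6]{Wu24+}: the exponents exceed the critical value $\frac{n_j-1}{2}$ once $\epsilon$ is small.

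The third line of \eqref{eq: 2ndExtVHId} is handled in the same way. We use the expression of $h_2^*$ through $T(g_1^T,g_2^T;\kappa_1.{}^t\widehat{\Phi}.\kappa_2^{-1})$ given by \eqref{eq: TecFEIndRep}, bound $\beta_{\kappa_1,\kappa_2}(g_1^{\iota},g_2^{\iota})$ by $\Xi_{n_1}(g_1)\Xi_{n_2}(g_2)$, and note that transposition preserves the Schwartz class. The exponent of $\norm[\det g_2']$ is now $\frac{n_2-1}{2}$, attached to $\Phi_2$, which is still admissible. The main obstacle is the exponent bookkeeping. The shift $\pm\epsilon$ must be chosen so that neither $g_1$-integral falls below the convergence threshold of \cite[Proposition 6.6]{Wu24+}. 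On the dual side the power of $\norm[\det g_1]$ is $\frac{n_1+1}{2}$ and not $\frac{n_1-1}{2}$, so there is room on that side. On the $g_2'$ side we rely on taking $\epsilon$ small, with its sign chosen to increase the weaker exponent.
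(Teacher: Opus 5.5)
Your overall route is the one the paper intends, namely the proof of Lemma \ref{lem: 1stExtVHAux} with the roles of $g_1,g_2$ exchanged. Your treatment of the second line of \eqref{eq: 2ndExtVHId} is correct. There $t=\delta\det g_2'$, and the valid bound $\phi(t,g_1)\ll\norm[t]^{-\epsilon}\widetilde{\phi}(g_1)$ produces $\norm[\det g_1]^{-\epsilon}\norm[\det g_2']^{-\epsilon}$. Both exponents $\frac{n_j+1}{2}-\epsilon$ then stay above $\frac{n_j-1}{2}$.

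The third line is where the argument breaks. The inequality $\phi(t,g_1)\ll\norm[t]^{\mp\epsilon}\widetilde{\phi}(g_1)$ holds only with the exponent $-\epsilon$. A positive Schwartz majorant from \cite[Proposition 6.7]{Wu24+} need not vanish at $t=0$: already for $n_2=1$ the $g_2$-integral is just $\widetilde{\Phi}(g_1,t)$. So the sign is not at your disposal. In the third line $t=\delta^{-1}\det g_2'$, so the admissible choice $\norm[t]^{-\epsilon}$ lowers the $g_2'$-exponent to $\frac{n_2-1}{2}-\epsilon$. Contrary to your claim, even the bare exponent $\frac{n_2-1}{2}$ is not ``still admissible''. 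It is exactly the divergence threshold of \cite[Proposition 6.6]{Wu24+}: for $n_2=1$ it gives $\int\extnorm{\Phi_2(x)}\ud^{\times}x=\infty$ whenever $\Phi_2(0)\neq 0$. As written, your argument therefore does not prove integrability of the third line.

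The missing observation is the surplus factor carried by $h_2^*$. On the fibre $\det g_2=t$ it contains
\[
\norm[\det g_2]^{\frac{n_2+1}{2}}=\norm[t]\cdot\norm[t]^{\frac{n_2-1}{2}},
\]
not just $\norm[t]^{\frac{n_2-1}{2}}$. Apply \cite[Proposition 6.7]{Wu24+} only to the $\norm[t]^{\frac{n_2-1}{2}}$-normalized $g_2$-integral. This gives
\[
h_2^*(t;g_1,\kappa_1,\kappa_2)\ll\norm[t]\bigl[1+(\log\norm[t])^2\bigr]^d\phi(t,g_1)\,\Xi_{n_1}(g_1)\norm[\det g_1]^{\frac{n_1+1}{2}},
\]
and now $\norm[t]\phi(t,g_1)\ll\norm[t]^{\epsilon}\widetilde{\phi}(g_1)$ is legitimate for $0<\epsilon\leq 1$. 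Since $\norm[t]^{\epsilon}=\norm[\det g_2']^{\epsilon}\norm[\det g_1]^{-\epsilon}$, the exponents become $\frac{n_2-1}{2}+\epsilon$ for $g_2'$ and $\frac{n_1+1}{2}-\epsilon$ for $g_1$. Both lie above the threshold for $0<\epsilon<1$; this is precisely where the room on the $g_1$-side is used. With this correction, the rest of your argument goes through: the expansion of the logarithm, the compactness in $\kappa_1,\kappa_2$, and the appeal to \cite[Proposition 6.6]{Wu24+}.
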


\noindent Integrating both sides of \eqref{eq: 2ndExtVHId} we get
\begin{multline}
	\int_{\GL_{n_2}(\F)} \widehat{\Phi}_2(g_2') \beta_2^{\iota}(g_2') \norm[\det g_2']_{\F}^{\frac{n_2+1}{2}} \cdot h^{\sharp}((\det g_2')^{-1}) \ud g_2' = \\
	\int_{\GL_{n_2}(\F)} \Phi_2(g_2') \beta_2(g_2') \norm[\det g_2']_{\F}^{\frac{n_2-1}{2}} \cdot h^*(\det g_2') \ud g_2'
\end{multline}
	for the function $h^*(\cdot)$ defined by (absolutely convergent integral)
\begin{multline*}
	h^*(t) := \\
	\int_{\substack{\gp{K}_n^2 \times \GL_{n_1}(\F) \times \GL_{n_2}(\F) \\ \det(\kappa_1\kappa_2^{-1}) \det(g_1) \det(g_2) = t}}  T(g_1^T,g_2^T; \kappa_1.{}^t\widehat{\Phi}.\kappa_2^{-1}) \beta_{\kappa_1,\kappa_2}(g_1^{\iota},g_2^{\iota}) \norm[\det g_1]^{\frac{n_1+1}{2}} \norm[\det g_2]^{\frac{n_2+1}{2}} \ud g_1 \ud g_2 \ud \kappa_1 \ud \kappa_2 = \\
	\int_{\substack{\gp{K}_n^2 \times \GL_{n_1}(\F) \times \GL_{n_2}(\F) \\ \det(\kappa_1\kappa_2^{-1}) \det(g_1) \det(g_2) = t}}  T(g_1,g_2; \kappa_1.{}^t\widehat{\Phi}.\kappa_2^{-1}) \beta_{\kappa_1,\kappa_2}(g_1^{-1},g_2^{-1}) \norm[\det g_1]^{\frac{n_1+1}{2}} \norm[\det g_2]^{\frac{n_2+1}{2}} \ud g_1 \ud g_2 \ud \kappa_1 \ud \kappa_2 = \\
	\int_{\substack{\GL_n(\F) \\ \det g = t}} {}^t\widehat{\Phi}(g) \beta(g^{-1}) \ud g \cdot \norm[t]_{\F}^{\frac{n+1}{2}} = \int_{\substack{\GL_n(\F) \\ \det g = t}} \widehat{\Phi}(g) \beta^{\iota}(g) \ud g \cdot \norm[t]_{\F}^{\frac{n+1}{2}} = \VorH_{\pi}(h)(t),
\end{multline*}
	which concludes the proof of Theorem \ref{thm: Mult}, since $h^* = \widetilde{\VorH}_{\pi_2} \circ \Inv (h^{\sharp})$ by Theorem \ref{thm: ExtVorHTrans}.

\section{Some Functional Analysis: non-Archimedean Case}
\label{sec: FANA}

	From now on $\F$ is a non-archimedean local field with residual field $\fF_q$.

	\subsection{Germs of Partial Gauss Integrals}
	
\begin{definition} \label{def: GermParGI}
	Let $\omega$ be a (quasi-)character of $\F^{\times}$. For every integer $l > \max(\cond(\omega),1)$ we define a function $G_l(\cdot; \omega, d)$ supported in $\varpi_{\F}^{-ld} \vO_{\F}^{\times}$ given by
	$$ G_l(\varpi_{\F}^{-ld} y; \omega, d) := 
	\begin{cases} 
		\psi(\varpi_{\F}^{-l} y) \omega(\varpi_{\F}^{-l} y) & \text{if } d = 1 \\
		\int_{(\vO_{\F}^{\times})^{d-1}} \psi \left( \frac{y(t_2 \cdots t_d)^{-1} + t_2 + \cdots + t_d}{\varpi_{\F}^l} \right) \omega \left( \frac{t_d}{\varpi_{\F}^l} \right) \ud t_2 \cdots \ud t_d & \text{if } d > 1
	\end{cases}, \quad \forall \ y \in \vO_{\F}^{\times}. $$
\end{definition}

\begin{lemma} \label{lem: GermParGI}
	(1) For every (quasi-)character $\chi$ of $\F^{\times}$ we have
	$$ \int_{\F^{\times}} G_l(y; \omega, d) \chi(y) \frac{\ud y}{\norm[y]_{\F}} = \varepsilon(1, \chi^{-1}, \psi)^{d-1} \varepsilon(1, \chi^{-1} \omega^{-1}, \psi) \cdot \id_{\cond(\chi) = l}. $$

\noindent (2) Suppose $d \geq 2$ and $l \geq 2 \cond(\omega)$. For any $y \in \vO_{\F}^{\times}$ we have the formula
\begin{multline*} 
	G_l(\varpi_{\F}^{-ld}y; \omega, d) = \Vol(U_{\F}^{\lceil \frac{l}{2} \rceil})^{d-1} \cdot \sideset{}{_{\substack{z \in \vO_{\F}^{\times}/U_{\F}^{\lceil \frac{l}{2} \rceil} \\ z^d \in y U_{\F}^{\lfloor \frac{l}{2} \rfloor}}}} \sum \omega \left( \tfrac{z}{\varpi_{\F}^l} \right) \psi \left( \tfrac{z}{\varpi_{\F}^l} \right) \cdot \\
	\begin{cases}
		\psi \left( \frac{y}{\varpi_{\F}^l z} \right) & \text{if } d=2 \\
		\sum_{\delta_j \in U_{\F}^{\lfloor \frac{l}{2} \rfloor}/U_{\F}^{\lceil \frac{l}{2} \rceil}} \psi \left( \tfrac{z}{\varpi_{\F}^l} \left( \frac{y}{z^d \delta_2 \cdots \delta_{d-1}} + \delta_2 + \cdots + \delta_{d-1} \right) \right) & \text{if } d>2
	\end{cases}. 
\end{multline*}
	
\noindent (3) Let $\omega_j$ be two (quasi-)characters of $\F^{\times}$ and $d_j \in \Z_{\geq 1}$. Suppose $l_j \geq \max(2 \cond(\omega_j), 2)$. We have
	$$ G_{l_1}(\cdot; \omega_1, d_1) * G_{l_2}(\cdot; \omega_2, d_2) = 
	\begin{cases}
		G_l(\cdot; \omega_1 \omega_2, d_1+d_2) & \text{if } l_1 = l_2 = l \\
		0 & \text{if } l_1 \neq l_2
	\end{cases}, $$
	where the convolution is taken over $\F^{\times}$ in the sense $f_1*f_2(x) = \int_{\F^{\times}} f_1(xy^{-1}) f_2(y) \ud^{\times} y$.
\end{lemma}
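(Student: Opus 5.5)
The plan is to prove the three parts in the order (1), (3), (2). Part (1) is a direct computation with Gauss sums. Part (3) then follows from (1) by Mellin inversion on $\F^{\times}$. Part (2) is a $p$-adic stationary phase computation on $(\vO_{\F}^{\times})^{d-1}$.

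\emph{Part (1).} Since $G_l$ is supported on $\varpi_{\F}^{-ld}\vO_{\F}^{\times}$, write $y=\varpi_{\F}^{-ld}y_0$ with $y_0\in\vO_{\F}^{\times}$. The measure $\ud y/\norm[y]_{\F}$ restricted to that shell is $\ud y_0$ on $\vO_{\F}^{\times}$. For $d>1$, insert the defining integral and change variables $t_1:=y_0(t_2\cdots t_d)^{-1}$, so that $\ud y_0=\ud t_1$ for fixed $t_2,\dots,t_d$. Writing $\chi(\varpi_{\F}^{-ld}y_0)=\prod_{j=1}^d\chi(t_j/\varpi_{\F}^l)$, the integral factors as
$$ \prod_{j=1}^{d-1}\int_{\vO_{\F}^{\times}}\psi\bigl(\tfrac{t}{\varpi_{\F}^l}\bigr)\chi\bigl(\tfrac{t}{\varpi_{\F}^l}\bigr)\ud t\cdot\int_{\vO_{\F}^{\times}}\psi\bigl(\tfrac{t}{\varpi_{\F}^l}\bigr)(\chi\omega)\bigl(\tfrac{t}{\varpi_{\F}^l}\bigr)\ud t. $$
By Tate's formula for the local constant (with $\psi$ of conductor $\vO_{\F}$ and self-dual $\ud x$), each Gauss integral $\int_{\vO_{\F}^{\times}}\psi(t\varpi_{\F}^{-l})\mu(t\varpi_{\F}^{-l})\ud t$ equals $\varepsilon(1,\mu^{-1},\psi)\id_{\cond(\mu)=l}$ whenever $l\geq 2$. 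For $l=1$ one should double-check that the tame case also gives this; the hypothesis $l>1$ avoids the unramified case, where the integral is $-q^{-1}$ rather than $0$. Since $l>\cond(\omega)$, we have $\cond(\chi\omega)=l$ if and only if $\cond(\chi)=l$, so the indicators collapse to $\id_{\cond(\chi)=l}$. The case $d=1$ is the single Gauss integral.

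\emph{Part (3).} Both sides are compactly supported smooth functions on $\F^{\times}$, so it suffices to compare their Mellin transforms against all $\chi$ (Fourier inversion on $\F^{\times}$). The Mellin transform of a convolution is the product of the Mellin transforms. I must check the normalization here: if $\ud^{\times}y=c\,\ud y/\norm[y]_{\F}$, a power of $c$ appears, so I will verify that the paper's $\ud^{\times}$ makes this constant $1$, or else track it. By (1) the product is $\varepsilon(1,\chi^{-1},\psi)^{d_1+d_2-1}\varepsilon(1,\chi^{-1}\omega_1^{-1},\psi)\varepsilon(1,\chi^{-1}\omega_2^{-1},\psi)\,\id_{\cond(\chi)=l_1}\id_{\cond(\chi)=l_2}$. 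This vanishes unless $l_1=l_2=l$. When $l_1=l_2=l$, one needs $\varepsilon(1,\chi^{-1}\omega_1^{-1},\psi)\varepsilon(1,\chi^{-1}\omega_2^{-1},\psi)=\varepsilon(1,\chi^{-1},\psi)\varepsilon(1,\chi^{-1}\omega_1^{-1}\omega_2^{-1},\psi)$. This holds because $l\geq 2\cond(\omega_j)$ puts us in the stable range: $\omega_j$ restricted to $U_{\F}^{\lceil l/2\rceil}$ is trivial, so each $\omega_j$ just multiplies $\varepsilon(1,\chi^{-1},\psi)$ by $\omega_j^{-1}(c_\chi)$ for the standard element $c_\chi$ attached to $\chi$. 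I would prove this by the same half-level decomposition used in (2) below, applied to a one-variable Gauss sum. The result matches the Mellin transform of $G_l(\cdot;\omega_1\omega_2,d_1+d_2)$ given by (1).

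\emph{Part (2).} This is the main work. Set $m=\lceil l/2\rceil$. Put $z:=t_d$ and $t_j=z\delta_j$ for $2\leq j\leq d-1$. The phase becomes $\frac{z}{\varpi_{\F}^l}\bigl(\frac{y}{z^d\delta_2\cdots\delta_{d-1}}+\delta_2+\cdots+\delta_{d-1}+1\bigr)$, and the weight is $\omega(z/\varpi_{\F}^l)$. Decompose each variable into cosets of $U_{\F}^m$, writing $z\mapsto z(1+x)$ and $\delta_j\mapsto\delta_j(1+x_j)$ with $x,x_j\in\vP_{\F}^m$. Because $2m\geq l$, all quadratic terms lie in $\vP_{\F}^l$ and are killed by $\psi(\cdot/\varpi_{\F}^l)$. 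The character $\omega$ is trivial on $U_{\F}^m$ since $m\geq\cond(\omega)$. So the phase is linear in $(x,x_j)$, and integrating over $\vP_{\F}^m$ kills every coset unless each linear coefficient lies in $\vP_{\F}^{l-m}=\vP_{\F}^{\lfloor l/2\rfloor}$.

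Working out the coefficients gives two conditions. The $x_j$-equations give $\delta_j^2\equiv y/(z^d\prod\delta_i)$ modulo $\vP_{\F}^{\lfloor l/2\rfloor}$, which forces all $\delta_j$ to agree with a common value. The $x$-equation then forces $\delta_j\in U_{\F}^{\lfloor l/2\rfloor}$ and $z^d\in yU_{\F}^{\lfloor l/2\rfloor}$. Each surviving coset contributes its volume $\Vol(U_{\F}^m)^{d-1}$ times the value of the integrand at the coset representative. This produces exactly the displayed sum over $z\in\vO_{\F}^{\times}/U_{\F}^m$ and $\delta_j\in U_{\F}^{\lfloor l/2\rfloor}/U_{\F}^{\lceil l/2\rceil}$; for $d=2$ there are no $\delta_j$.

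The main obstacle is this bookkeeping. I must solve the simultaneous congruences cleanly, especially for odd $l$ where $\lfloor l/2\rfloor<\lceil l/2\rceil$. I also need to check that the summand is well defined on the cosets $U_{\F}^{\lfloor l/2\rfloor}/U_{\F}^{\lceil l/2\rceil}$, using that the residual first-order variation is then killed. I would first do $d=2$ explicitly and then induct the coefficient computation on $d$.
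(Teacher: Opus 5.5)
Your proposal is correct. Parts (1) and (2) follow the paper's argument. Part (1) is the same factorization into $d$ Tate--Gauss integrals over $\varpi_{\F}^{-l}\vO_{\F}^{\times}$. Part (2) is the same half-level stationary phase with translations by $U_{\F}^{\lceil l/2\rceil}$. The paper works directly in the $t_j$ and obtains $t_j-t_1\in\vP_{\F}^{\lfloor l/2\rfloor}$ for all $j$, which is your pair of conditions written in the coordinates $(z,\delta_j)$. One slip there: the $x_j$-condition is $\delta_j\equiv y/(z^d\delta_2\cdots\delta_{d-1})$, with no square unless $\delta_j$ is omitted from the product.

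Part (3) genuinely differs from the paper when $l_1=l_2=l$.
\begin{itemize}
\item The paper uses (1) and orthogonality only to dispose of $l_1\neq l_2$. For $l_1=l_2$ it writes the convolution as an integral over $t_1\cdots t_{d_1+d_2}=y$. It then runs the same stationary-phase trick on the pair $(t_{d_1},t_{d_1+d_2})$, multiplying them by $(1+u)^{-1}$ and $1+u$, to force $t_{d_1}\equiv t_{d_1+d_2}\pmod{\vP_{\F}^{\lfloor l/2\rfloor}}$. Hence $\omega_1(t_{d_1})=\omega_1(t_{d_1+d_2})$, and the integrand becomes that of $G_l(\cdot;\omega_1\omega_2,d_1+d_2)$.
\item You stay on the Mellin side and reduce to the $\GL_1$ stability identity $\varepsilon(1,\chi^{-1}\omega_1^{-1},\psi)\varepsilon(1,\chi^{-1}\omega_2^{-1},\psi)=\varepsilon(1,\chi^{-1},\psi)\varepsilon(1,\chi^{-1}\omega_1^{-1}\omega_2^{-1},\psi)$ for $\cond(\chi)=l$.
\end{itemize}
Your route treats both cases uniformly. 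It rests on exactly the identity the paper later invokes, via \cite[\S 23.8]{BuH06}, in proving Proposition~\ref{prop: SSchAlg}~(2). The paper's direct route is self-contained and, combined with (1), actually reproves that identity.

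Two small corrections to your part (3):
\begin{itemize}
\item The product of the Mellin transforms carries $\varepsilon(1,\chi^{-1},\psi)^{d_1+d_2-2}$, not exponent $d_1+d_2-1$. With the correct exponent, your identity lands exactly on (1) for $d_1+d_2$.
\item The stable range needs $\omega_j$ trivial on $U_{\F}^{\lfloor l/2\rfloor}$, not merely on $U_{\F}^{\lceil l/2\rceil}$. For odd $l$, the element $c_\chi$ with $\chi(1+x)=\psi(c_\chi x)$ on $\vP_{\F}^{\lceil l/2\rceil}$ is only determined up to $U_{\F}^{\lfloor l/2\rfloor}$, so $\omega_j(c_\chi)$ would otherwise be ill-defined. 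This condition is equivalent to the hypothesis $l\geq 2\cond(\omega_j)$, so nothing breaks.
\end{itemize}

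Your two side worries do not arise. First, $l>\max(\cond(\omega),1)$ already forces $l\geq 2$. Second, the paper implicitly takes $\ud^{\times}y=\ud y/\norm[y]_{\F}$: its own computation identifies the convolution measure with $\ud t$ on $\vO_{\F}^{\times}$. So your normalization constant is $1$.
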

\begin{proof}
	(1) A direct computation gives
	$$ \int_{\F^{\times}} G_l(y; \omega, d) \chi(y) \frac{\ud y}{\norm[y]_{\F}} = \left( \int_{\varpi_{\F}^{-l} \vO_{\F}^{\times}} \psi(t) \chi(t) \frac{\ud t}{\norm[t]_{\F}} \right)^{d-1} \cdot \int_{\varpi_{\F}^{-l} \vO_{\F}^{\times}} \psi(t) \chi\omega(t) \frac{\ud t}{\norm[t]_{\F}}. $$
	The desired formula then follows from \cite[23.5 Exercise]{BuH06} if $\cond(\chi), \cond(\chi \omega) > 0$; and from direct computation if either $\cond(\chi)=0$ or $\cond(\chi \omega)=0$.
	
\noindent (2) Write $t_1 := y(t_2 \cdots t_d)^{-1}$. Choose any $2 \leq j \leq d$. Multiplying $t_j$ by $1+u$ and integrating over $u \in \vP_{\F}^{\lceil l/2 \rceil}$ we rewrite and get
	$$ G_l(\varpi_{\F}^{-ld} y; \omega, d) = \int_{\substack{t_1 \cdots t_d = y \\ t_j \in \vO_{\F}^{\times}}} \psi \left( \frac{t_1 + \cdots + t_d}{\varpi_{\F}^l} \right) \omega \left( \frac{t_d}{\varpi_{\F}^l} \right) \cdot \left( \oint_{\vP_{\F}^{\lceil l/2 \rceil}} \psi \left( \frac{(t_j-t_1)u}{\varpi_{\F}^l} \right) \ud u \right) \ud t_1 \cdots \ud t_d. $$
	The non-vanishing of the inner integral implies $t_j-t_1 \in \vP_{\F}^{\lfloor l/2 \rfloor}$, under which conditions the integrand is constant on each coset of $U_{\F}^{\lceil l/2 \rceil}$ for every variable $t_j$; in particular $y=t_1 \cdots t_d \equiv t_d^d \pmod{\vP_{\F}^{\lfloor l/2 \rfloor}}$. Writing the integral as a discrete sum by integrating over $U_{\F}^{\lceil l/2 \rceil}$ for each $t_j$ we get
	$$ G_l(\varpi_{\F}^{-ld} y; \omega, d) = \Vol(U_{\F}^{\lceil \frac{l}{2} \rceil})^{d-1} \cdot \sideset{}{_{\substack{t_j \in \vO_{\F}^{\times} / U_{\F}^{\lceil l/2 \rceil}, t_j-t_d \in \vP_{\F}^{\lfloor l/2 \rfloor} \\ t_1 \cdots t_d = y \pmod{\vP_{\F}^{\lfloor l/2 \rfloor}}}}} \sum \psi \left( \frac{t_1 + \cdots + t_d}{\varpi_{\F}^l} \right) \omega \left( \frac{t_d}{\varpi_{\F}^l} \right). $$
	The desired formula follows by the change of variables $z=t_d$, $\delta_j = z^{-1}t_j$ for $2 \leq j \leq d-1$.
	
\noindent (3) From (1) we deduce that, for any $z \in \varpi_{\F}^{-l_j d_j} \vO_{\F}^{\times}$, the following function
	$$ \vO_{\F}^{\times} \to \C, \quad y \mapsto G_{l_j}(z y; \omega_j, d_j) $$
	is a linear combination of characters $\chi$ of $\vO_{\F}^{\times}$ with $\cond(\chi) = l_j$. If $l_1 \neq l_2$ we deduce the non-vanishing of the convolution by orthogonality of characters of $\vO_{\F}^{\times}$. If $l_1=l_2=l$, the convolution is clearly supported on $\varpi_{\F}^{-l(d_1+d_2)} \vO_{\F}^{\times}$. At $\varpi_{\F}^{-l(d_1+d_2)} y$ with $y \in \vO_{\F}^{\times}$ it is by definition
	$$ \int_{\substack{t_1 \cdots t_{d_1+d_2} = y \\ t_j \in \vO_{\F}^{\times}}} \psi \left( \frac{t_1 + \cdots + t_{d_1+d_2}}{\varpi_{\F}^l} \right) \omega_1 \left( \frac{t_{d_1}}{\varpi_{\F}^l} \right) \omega_2 \left( \frac{t_{d_1+d_2}}{\varpi_{\F}^l} \right) \ud t_1 \cdots \ud t_{d_1+d_2}. $$
	Multiplying $t_{d_1}$ and $t_{d_1+d_2}$ by $(1+u)^{-1}$ and $1+u$ respectively, and integrating over $u \in \vP_{\F}^{\lceil l/2 \rceil}$ we see that in the domain of integration we may impose the condition $t_{d_1}-t_{d_1+d_2} \in \vP_{\F}^{\lfloor l/2 \rfloor}$ as in the above proof of (2). Consequently we have $\omega_1(t_{d_1}) = \omega_1(t_{d_1+d_2})$ and identify the above integral with $G_l(\varpi_{\F}^{-l(d_1+d_2)} y; \omega_1 \omega_2, d_1+d_2)$, proving the desired equality.
\end{proof}

	\subsection{Some Paley--Wiener Results}
	
	We studied $\SSch(\F)$ in \cite[Definition 6.1]{Wu24+}, an analogue of the space for $\F=\R$ with the same notation in Miller--Schmidt's work \cite[Definition 6.4]{MS04}. We give an equivalent definition and study some extension of it.
	
\begin{definition} \label{def: SSchNA}
	(1) A \emph{finite function} on a locally compact group is a continuous function whose translates span a finite dimensional vector space.
	
\noindent (2) A continuous function $f \in \Cont(\F^{\times})$ has \emph{simple singularity} at $0$ if there is a constant $c > 0$ so that for $\norm[y]_{\F} \leq c$ the function $f(y)$ coincides with a finite function on $\F^{\times}$. We write $\SSch(\F)$ for the space of continuous functions with simple singularity at $0$ and vanishes for $\norm[y]_{\F} \gg 1$.

\noindent (3) A continuous function $f \in \Cont(\F^{\times})$ has \emph{$\mathrm{Kl}_{\omega}(d)$-type singularity} at $\infty$, where $\omega$ is a (quasi-)character of $\F^{\times}$ and $d \in \Z_{\geq 1}$, if there are constants $N \in \Z_{\geq 1}$ and $C \in \C$ so that
	$$ f(y) = C \cdot G_{\geq N}(y; \omega, d), \quad \forall \ y: v_{\F}(y) \leq -Nd, $$
	where we have written $G_{\geq N}(\cdot) =  \sideset{}{_{l \geq N}} \sum G_l(\cdot)$. We write $\SSch^{(d)}(\F; \omega)$ for the space of continuous functions with simple singularity at $0$ and $\mathrm{Kl}_{\omega}(d)$-type singularity at $\infty$. We also write $\SSch^{(0)}(\F; \omega) = \SSch(\F)$.
\end{definition}

\begin{remark}
	The space of finite functions is spanned by functions of the shape $\chi \cdot v_{\F}^k$, where $\chi$ is a quasi-character of $\F^{\times}$ and $k \in \Z_{\geq 0}$. We have for any $n \in \Z$ and any $N \in \Z_{\geq 2}$ sufficiently large
	$$ \SSch(\F) = \Cont_c^{\infty}(\F^{\times}) \bigoplus_{\chi, k} \C \cdot \chi v_{\F}^k \id_{\vP_{\F}^n}, \quad \SSch^{(d)}(\F; \omega) = \SSch(\F) \bigoplus \C \cdot G_{\geq N}(y; \omega, d). $$
\end{remark}

\begin{definition} \label{def: PVConv}
	For any locally integrable function $f \in \intL_{\mathrm{loc}}^1(\F^{\times})$ we introduce the \emph{principal value integral}
	$$ \int_{\F^{\times}}^{\mathrm{pv}} f(y) \ud^{\times} y := \lim_{N \to \infty} \sum_{n=-N}^N \int_{\varpi_{\F}^n \vO_{\F}^{\times}} f(y) \ud^{\times} y $$
	whenever the limit exists. For two functions $f_1,f_2 \in \intL_{\mathrm{loc}}^1(\F^{\times})$ we define the \emph{regularized convolution}
	$$ f_1 *_{\mathrm{pv}} f_2 (x) := \int_{\F^{\times}}^{\mathrm{pv}} f_1(xy^{-1}) f_2(y) \ud^{\times} y. $$
\end{definition}

	We introduced $\SMel(\F)$ in \cite[Definition 6.2]{Wu24+} as the image of $\SSch(\F)$ under the Mellin transform. We recall and extend it to $\SMel^{(d)}(\F; \omega)$ as the image of $\SSch^{(d)}(\F; \omega)$ under some \emph{regularized} Mellin transform.
	
\begin{definition} \label{def: RegMellinT}
	The regularized Mellin transform of $f \in \Cont(\F^{\times})$ at $\xi \in \widehat{\vO_{\F}^{\times}}$ and $s \in \C$ is given by
	$$ \RMellin{f}(\xi,s) = \int_{\F^{\times}}^{\mathrm{pv}} f(t) \xi(t) \norm[t]_{\F}^s \ud^{\times} t $$
whenever the limit exists.
\end{definition}

\begin{definition} \label{def: SSchMellin}
	(1) Write $\SMel(\F) = \Cont_c(\widehat{\vO_{\F}^{\times}}, \C(q^{-s}))$, where $\C(X)$ is the fractional field of $\C[X]$.
	
\noindent (2) For each $d \in \Z_{\geq 1}$, (quasi-)character $\omega$ and $N \in \Z_{\geq 1}$ we introduce function
 	$$ \widehat{\vO_{\F}^{\times}} \to \C(q^s), \quad \xi \mapsto E_{\geq N}(\xi,q^{-s}; \omega, d) := \varepsilon(1-s, \xi^{-1}, \psi)^{d-1} \varepsilon(1-s, \xi^{-1} \omega^{-1}, \psi) \id_{\cond(\xi) \geq N}, $$
	where $\xi$ is regarded as a character of $\F^{\times}$ via $\xi(\varpi_{\F})=1$. Write $\SMel^{(d)}(\F; \omega) := \SMel(\F) \bigoplus \C \cdot E_{\geq N}(\xi,q^s; \omega, d)$, which is independent of the choice of $N$.
\end{definition}

\begin{lemma} \label{lem: SSchPW}
	For any $f \in \SSch^{(d)}(\F; \omega)$ the regularized Mellin transform is well-define for $\Re s \gg 1$ uniformly with respect to $\xi$, and has meromorphic continuation to $s \in \C$. It establishes an isomorphism as vector spaces between $\SSch^{(d)}(\F; \omega)$ and $\SMel^{(d)}(\F; \omega)$.
\end{lemma}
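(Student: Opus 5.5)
The plan is to use the direct sum decomposition in the Remark after Definition \ref{def: SSchNA}. It reduces everything to computing the regularized Mellin transform of each building block and then matching the images with the corresponding summands of $\SMel^{(d)}(\F;\omega)$. Both $\SSch^{(d)}(\F;\omega)$ and $\SMel^{(d)}(\F;\omega)$ split as ``(part for $d=0$) $\oplus$ one extra line''. The isomorphism will follow from two facts: $\RMellin{\cdot}$ maps $\SSch(\F)$ isomorphically onto $\SMel(\F)$, and it maps $G_{\geq N}(\cdot;\omega,d)$ to $E_{\geq N}(\xi,q^{-s};\omega,d)$, possibly up to a harmless normalization. We also need that the extra line is not absorbed into $\SMel(\F)$.

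\emph{Step 1: the case $d=0$.} For $f\in\Cont_c^\infty(\F^\times)$, the Mellin transform is an ordinary finite integral. It is a Laurent polynomial in $q^{-s}$ for each $\xi$, and it vanishes for $\xi$ outside a finite set because of the smoothness level. For the singular summands $\chi v_\F^k\id_{\vP_\F^n}$, one sums the geometric series $\sum_{m\ge n}m^k(\chi(\varpi_\F)q^{-s})^m$ for $\Re s\gg1$. The result is a rational function in $q^{-s}$, supported at the single $\xi=\chi|_{\vO_\F^\times}^{-1}$. All of this uses only local constancy and convergence for $\Re s$ large, uniformly in $\xi$, since only finitely many $\xi$ contribute. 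For surjectivity onto $\Cont_c(\widehat{\vO_\F^\times},\C(q^{-s}))$, we use partial fractions in $X=q^{-s}$. Every rational function in $X$ is a Laurent polynomial plus a combination of $X^n/(1-aX)^{k+1}$. These come from compactly supported pieces and from $\chi v_\F^k\id_{\vP_\F^n}$ with $\chi(\varpi_\F)=a$ and $\chi|_{\vO^\times}=\xi^{-1}$. Injectivity follows from Mellin inversion on each shell $\varpi_\F^n\vO_\F^\times$: the function is recovered from the coefficients of the Laurent expansion at $X=0$ valid for $\Re s\gg1$. This is essentially \cite[\S 6]{Wu24+}, and I would recall it briefly.

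\emph{Step 2: the Kloosterman germ.} For $G_l(\cdot;\omega,d)$, supported on $\varpi_\F^{-ld}\vO_\F^\times$, I will apply Lemma \ref{lem: GermParGI} (1) with $\chi=\xi\norm_\F^{s-1}\cdot$. Since $\ud^\times t$ is proportional to $\ud t/\norm[t]_\F$, the Mellin transform of $G_l$ at $(\xi,s)$ equals, up to the measure constant, $\varepsilon(1,\xi^{-1}\norm^{1-s},\psi)^{d-1}\varepsilon(1,\xi^{-1}\omega^{-1}\norm^{1-s},\psi)\id_{\cond(\xi)=l}$, which is $\varepsilon(1-s,\xi^{-1},\psi)^{d-1}\varepsilon(1-s,\xi^{-1}\omega^{-1},\psi)\id_{\cond(\xi)=l}$. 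Summing over $l\ge N$ is the principal value sum over shells. For fixed $\xi$ at most one $l$ contributes, so the limit exists trivially for every $s$. The transform of $G_{\geq N}$ is therefore exactly $E_{\geq N}$, and it is entire in $s$ for each $\xi$. Here $N>\max(\cond(\omega),1)$, as required in Definition \ref{def: GermParGI}. Independence of $N$ on both sides is consistent: changing $N$ changes $G_{\ge N}$ by finitely many $G_l$, which lie in $\Cont_c^\infty(\F^\times)$, and it changes $E_{\ge N}$ by a finitely supported element of $\SMel(\F)$.

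\emph{Step 3: the extra line is independent.} The main point needing care is that $E_{\geq N}$ does not lie in $\SMel(\F)$, so that the sum on the Mellin side is genuinely direct and the map is injective on the extra line. I expect this to be the main obstacle, although it is mild. Elements of $\SMel(\F)$ have compact, hence finite, support in the discrete group $\widehat{\vO_\F^\times}$. $E_{\geq N}$ is nonzero at every $\xi$ with $\cond(\xi)\ge N$, because epsilon factors never vanish, and there are infinitely many such $\xi$. The same observation gives injectivity on the domain side, since a nonzero multiple of $G_{\geq N}$ plus an element of $\SSch(\F)$ has a Mellin transform of infinite support. Combining Steps 1 to 3 yields well-definedness for $\Re s\gg1$ uniformly in $\xi$, meromorphic continuation (rational in $q^{-s}$ on the $\SSch(\F)$ part and entire on the germ part), and the claimed linear isomorphism $\SSch^{(d)}(\F;\omega)\simeq\SMel^{(d)}(\F;\omega)$.
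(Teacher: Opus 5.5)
Your proposal is correct and follows the paper's own route. The paper's proof just cites \cite[Proposition 6.3]{Wu24+} for the isomorphism $\SSch(\F)\simeq\SMel(\F)$, which is your Step 1, and Lemma \ref{lem: GermParGI} (1) for the germ $G_{\geq N}$, which is your Step 2; it leaves the directness argument of your Step 3 implicit. One small slip in Step 2: the substitution should be $\chi=\xi\norm_{\F}^{s}$, since $\chi=\xi\norm_{\F}^{s-1}$ gives $\varepsilon(1,\xi^{-1}\norm_{\F}^{1-s},\psi)=\varepsilon(2-s,\xi^{-1},\psi)$ rather than the $\varepsilon(1-s,\xi^{-1},\psi)$ you correctly arrive at.
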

\begin{proof}
	In view of Lemma \ref{lem: GermParGI} this follows readily from \cite[Proposition 6.3]{Wu24+}. 
\end{proof}

\noindent For convenience of further reference, we record some regularized Mellin transform in a table:
\begin{table}[h!]
\centering
\begin{tabular}{| c | c |}
	\hline
	$f$ & $\RMellin{f}$ \\
	\hline
	$G_{\geq N}(y; \omega, d) \ (N \geq \max(2\cond(\omega),2))$ & $E_{\geq N}(\xi,X; \omega, d)$ \\
	\hline
	$\chi(y) \id_{\vP_{\F}^n}(y)$ & $\id_{\chi^{-1} \mid_{\vO_{\F}^{\times}}}(\xi) \cdot \chi(\varpi_{\F})^n X^{n} (1-\chi(\varpi_{\F}) X)^{-1}$ \\
	\hline
\end{tabular}
\caption{Some regularized Mellin transform.}
\label{table: RM}
\end{table}

	\subsection{Some Convolution Algebra}
	
\begin{proposition} \label{prop: SSchAlg}
	(1) For any $d_j \in \Z_{\geq 1}$ and characters $\omega_j$ the regularized convolution is a bilinear map 
	$$ \SSch^{(d_1)}(\F, \omega_1) \times \SSch^{(d_2)}(\F, \omega_2) \to \SSch^{(d_1+d_2)}(\F, \omega_1 \omega_2), \quad (f_1, f_2) \mapsto f_1 *_{\mathrm{pv}} f_2. $$
	
\noindent (2) The regularized Mellin transform convert $*_{\mathrm{pv}}$ to point-wise multiplication. In other words for any $f_j \in \SSch^{(d_j)}(\F, \omega_j)$ we have the equality as functions in $\SMel^{(d_1+d_2)}(\F; \omega_1 \omega_2)$
	$$ \RMellin{f_1 *_{\mathrm{pv}} f_2}(\xi, s) = \RMellin{f_1}(\xi,s) \cdot \RMellin{f_2}(\xi,s). $$
	
\noindent (3) Let $\SSch(\F, \widetilde{\F^{\times}})$ be the span of all $\SSch^{(d)}(\F, \omega)$ for $d \in \Z_{\geq 0}$ and (quasi-)characters $\omega$. Then $(\SSch(\F, \widetilde{\F^{\times}}), *_{\mathrm{pv}})$ is a commutative algebra, which contains $(\SSch(\F), *_{\mathrm{pv}})$ as an ideal.
\end{proposition}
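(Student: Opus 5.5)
The plan is to use the decomposition from the Remark after Definition \ref{def: SSchNA}. For $N$ large, every $f \in \SSch^{(d)}(\F;\omega)$ can be written as
$$f = \phi + \sum_{\chi,k} c_{\chi,k}\, \chi v_{\F}^k \id_{\vP_{\F}^n} + C\cdot G_{\geq N}(\cdot;\omega,d), \qquad \phi\in\Cont_c^{\infty}(\F^{\times}).$$
By bilinearity it suffices to treat $f_1 *_{\mathrm{pv}} f_2$ for each pair of such building blocks. The pairs split as follows.

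\begin{itemize}
	\item \emph{Compactly supported with anything.} If $\phi \in \Cont_c^{\infty}(\F^{\times})$, then $\phi *_{\mathrm{pv}} g$ is an honest convolution against a compact set of translates of $g$. Finite functions near $0$ stay finite under translation and averaging. Near $\infty$, $G_{\geq N}$ is a sum of $G_l$'s supported on the shells $\varpi_{\F}^{-ld}\vO_{\F}^{\times}$; averaging over a compact open set shifts the shell index only boundedly. Using Lemma \ref{lem: GermParGI} (1) (each $G_l$ restricted to a shell is a combination of characters of $\vO_{\F}^{\times}$ of conductor exactly $l$), a $\phi$ that is smooth of level $m$ kills all $G_l$ with $l>m$. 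So the result lies in $\SSch(\F)$ after enlarging $N$.
	\item \emph{Two truncated finite functions.} $\chi_1v^{k_1}\id_{\vP^{n_1}} *_{\mathrm{pv}} \chi_2 v^{k_2}\id_{\vP^{n_2}}$ is computed by a direct geometric-type sum. Its restriction to $\vO_{\F}^{\times}$-characters forces $\chi_1|_{\vO^{\times}}=\chi_2|_{\vO^{\times}}$ (otherwise it is $0$), and one gets a finite function supported in $\vP^{n_1+n_2}$. The principal value is unnecessary here, since the sum over $v(y)$ is finite.
	\item \emph{Truncated finite function with $G_{\geq N}$.} For each $x$, the integrand $\chi v^k\id_{\vP^n}(xy^{-1}) G_{\geq N}(y)$ is nonzero only on shells $v(y)\le v(x)-n$. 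On each shell, the $\vO^{\times}$-integral of $\chi\cdot G_l$ vanishes unless $\cond(\chi|_{\vO^{\times}})=l$, which holds for at most one $l$. Hence only finitely many shells contribute, the pv limit exists, and the result is a finite combination of $G_l$ restricted suitably. That combination is compactly supported away from $0$ and bounded near $\infty$, so it lies in $\SSch(\F)$. This is what gives the ideal property.
	\item \emph{$G_{\geq N_1}(\cdot;\omega_1,d_1) *_{\mathrm{pv}} G_{\geq N_2}(\cdot;\omega_2,d_2)$.} Expand both sums. By Lemma \ref{lem: GermParGI} (3), cross terms with $l_1\neq l_2$ vanish and diagonal terms give $G_l(\cdot;\omega_1\omega_2,d_1+d_2)$. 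For fixed $x$, only the diagonal $l$ with $x\in\varpi^{-l(d_1+d_2)}\vO^{\times}$ survives, and only finitely many shells in $y$ are involved. So the pv limit is exactly $G_{\geq \max(N_1,N_2)}(\cdot;\omega_1\omega_2,d_1+d_2)$, possibly plus finitely many lower $G_l$ when $N_1\neq N_2$. Those lower terms are compactly supported and lie in $\SSch(\F)$, provided $N_j\ge \max(2\cond(\omega_j),2)$, which we may assume.
\end{itemize}

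This yields (1).

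For (2), I would prove the multiplicativity of the regularized Mellin transform blockwise as well. For compactly supported blocks it is Fubini. In the remaining cases, I would compute both sides from Table \ref{table: RM} and Lemma \ref{lem: GermParGI} (1). For example, $E_{\geq N_1}E_{\geq N_2}$ equals $E_{\geq\max}$ for the product character, using $\varepsilon(1-s,\xi^{-1},\psi)^{d_1-1+d_2-1}\varepsilon(\cdot,\xi^{-1}\omega_1^{-1})\varepsilon(\cdot,\xi^{-1}\omega_2^{-1})$. Matching this with $E(\cdot;\omega_1\omega_2,d_1+d_2)$ requires the identity
$$\varepsilon(\xi^{-1}\omega_1^{-1})\varepsilon(\xi^{-1}\omega_2^{-1}) = \varepsilon(\xi^{-1})\varepsilon(\xi^{-1}\omega_1^{-1}\omega_2^{-1})$$
when $\cond(\xi)\ge 2\cond(\omega_j)$. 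This is the standard stability of abelian epsilon factors under twist by characters of small conductor: both sides equal $\varepsilon(\xi^{-1})^2$ times $\omega_j$ evaluated at the same Gauss-sum element. It is consistent with Lemma \ref{lem: GermParGI} (3) and can be derived from it.

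A cleaner route for (2) is to use Fubini on each pair of shells, where everything is a finite sum, and then pass to the pv limit. The main point to check there is that the order of pv summation may be exchanged. That follows because for $\Re s\gg1$ the sums converge absolutely at $0$, and at $\infty$ only finitely many shells contribute for each fixed $\xi$.

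For (3), commutativity follows from (2) and the injectivity in Lemma \ref{lem: SSchPW}, since the Mellin side is commutative. Associativity follows the same way, with pointwise multiplication on the Mellin side. The span is closed under $*_{\mathrm{pv}}$ by (1). The ideal property is the third and first cases above: $\SSch(\F)=\SSch^{(0)}$ convolved with anything lands in $\SSch(\F)$.

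The step I expect to be hardest is the third case, convolution of finite functions at $0$ against the oscillating $G_{\geq N}$. There one must show that the pv limit exists and that no $\mathrm{Kl}$-type tail survives. My first approach is the orthogonality of conductors from Lemma \ref{lem: GermParGI} (1), applied shell by shell.
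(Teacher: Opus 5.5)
Your route is essentially the paper's, and most of it is sound. The paper reduces to generators, uses the conductor orthogonality of Lemma~\ref{lem: GermParGI}~(1) and the identity of Lemma~\ref{lem: GermParGI}~(3), checks (2) via Table~\ref{table: RM} and the $\GL_1$ stability identity, and gets (3) from (2) plus injectivity of the regularized Mellin transform on each $\SSch^{(d)}(\F;\omega)$. Your variations are fine:
\begin{itemize}
\item You treat $\Cont_c^{\infty}(\F^{\times})$ by a level argument instead of writing $\chi\id_{\varpi_{\F}^n\vO_{\F}^{\times}}=\chi\id_{\vP_{\F}^n}-\chi\id_{\vP_{\F}^{n+1}}$.
\item You handle $v_{\F}^k$ directly instead of differentiating in $s$.
\item Your shell-by-shell Cauchy-product proof of (2) is a genuine alternative. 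For fixed $\xi$ and $\Re s\gg 1$, the shell Mellin coefficients of each factor vanish for $\norm[t]_{\F}$ large and decay geometrically as $t\to 0$. The $\GL_1$ epsilon identity then drops out of case (d) without citing stability.
\end{itemize}
In case (d) no lower $G_l$'s appear: the result is exactly $G_{\geq\max(N_1,N_2)}(\cdot;\omega_1\omega_2,d_1+d_2)$.

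The real problem is the third case, which you yourself flag as the crux. Your mechanism is right: only the shell $v_{\F}(y)=-d\cond(\chi)$ survives, and the pv limit exists. But your description of the output is wrong. It is not ``a finite combination of $G_l$'s, compactly supported away from $0$''; it is the $\chi$-isotypic piece. Evaluating the single surviving shell with Lemma~\ref{lem: GermParGI}~(1) gives
\[
\bigl(\chi v_{\F}^k\id_{\vP_{\F}^n}\bigr)*_{\mathrm{pv}}G_{\geq N}(\cdot;\omega,d)(x)=c\,\chi(x)\bigl(v_{\F}(x)+d\cond(\chi)\bigr)^k\id_{\vP_{\F}^{n-d\cond(\chi)}}(x)\cdot\id_{\cond(\chi)\geq N},
\]
where $c\neq 0$ is a product of Tate epsilon factors. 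This is the paper's formula for $k=0$. It is a truncated finite function supported on a whole neighbourhood of $0$. It does lie in $\SSch(\F)$, but because it has simple singularity at $0$ and vanishes near $\infty$, not because it is compactly supported in $\F^{\times}$. So the reason you give for the ideal property is false as stated, though the conclusion survives.

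The correct shape also matters for (2). Its regularized Mellin transform carries the factor $(1-\chi(\varpi_{\F})q^{-s})^{-1}$, matching Table~\ref{table: RM} times $E_{\geq N}$. A function compactly supported in $\F^{\times}$ would have a Laurent polynomial in $q^{-s}$ for each $\xi$, which is a quick check that your description cannot be right. You should carry out this shell integral explicitly rather than describe its output qualitatively.
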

\begin{proof}
	Note that all assertions concerning only elements in $\SSch(\F)$ are consequences of \cite[Proposition 6.3]{Wu24+}: the bijection $\SSch(\F) \to \SMel(\F)$ established there is easily seen to convert $*_{\mathrm{pv}}$ into point-wise multiplication. We can omit them in the rest of the proof. 
	
\noindent The regularized convolution is clearly bilinear by definition. To verify $f_1 *_{\mathrm{pv}} f_2 \in \SSch^{(d_1+d_2)}(\F, \omega_1 \omega_2)$ it suffices to verify it for some sets of generators of $\SSch^{(d_j)}(\F, \omega_j)$. By Fourier inversion on $\vO_{\F}^{\times}$ we see that $\Cont_c^{\infty}(\F^{\times})$ is generated by $\chi \id_{\varpi_{\F}^n \vO_{\F}^{\times}} = \chi \id_{\vP_{\F}^n} - \chi \id_{\vP_{\F}^{n+1}}$ for $\chi \in \widehat{\F^{\times}}$ and $n \in \Z$. Therefore $\SSch(\F)$ is spanned by $\chi v_{\F}^k \id_{\vP_{\F}^n}$. Lemma \ref{lem: GermParGI} implies for $N_j \geq \max(2\cond(\omega_j),2)$ and $N \geq \max(2\cond(\omega),2)$ the equations
\begin{equation} \label{eq: 1stRegConvAux}
	G_{\geq N_1}(\cdot; \omega_1, d_1) *_{\mathrm{pv}} G_{\geq N_2}(\cdot; \omega_2, d_2) = G_{\geq \max(N_1,N_2)}(\cdot; \omega_1 \omega_2, d_1+d_2);
\end{equation}
\begin{multline} \label{eq: 2ndRegConvAux}
	\left( G_{\geq N}(\cdot; \omega, d) *_{\mathrm{pv}} \chi \id_{\vP_{\F}^n} \right)(x) = \chi(x) \sum_{l \geq N, \lceil \frac{n-v_{\F}(x)}{d} \rceil} \int_{\varpi_{\F}^{-ld} \vO_{\F}^{\times}} G_l(y; \omega, d) \chi^{-1}(y) \ud^{\times} y \\
	= \chi(x) \id_{\vP_{\F}^{n-d\cond(\chi)}}(x) \cdot \id_{\cond(\chi) \geq N} \cdot \varepsilon(1, \chi, \psi)^{d-1} \varepsilon(1, \chi \omega^{-1}, \psi),
\end{multline}
	which indeed lies in $\SSch(\F)$. Replacing $\chi$ by $\chi \norm_{\F}^s$ in the above formula and differentiating $k$ times with respect to $s$ at $s=0$ we deduce $G_{\geq N}(\cdot; \omega, d) *_{\mathrm{pv}} \chi v_{\F}^k \id_{\vP_{\F}^n} \in \SSch(\F)$ and complete the proof of (1). Combining Table \ref{table: RM} with \eqref{eq: 1stRegConvAux} \& \eqref{eq: 2ndRegConvAux}, we verify (2) for the relevant special elements by:
\begin{multline*}
	E_{\geq N_1}(\xi,q^{-s}; \omega_1, d_1) \cdot E_{\geq N_2}(\xi,q^{-s}; \omega_2, d_2) = \\
	\varepsilon(1-s, \xi^{-1}, \psi)^{d_1+d_2-2} \varepsilon(1-s, \xi^{-1} \omega_1^{-1}, \psi) \varepsilon(1-s, \xi^{-1} \omega_2^{-1}, \psi) \id_{\cond(\xi) \geq \max(N_1,N_2)} = \\
	\varepsilon(1-s, \xi^{-1}, \psi)^{d_1+d_2} (\omega_1\omega_2)(c) \id_{\cond(\xi) \geq \max(N_1,N_2)} = E_{\geq \max(N_1,N_2)}(\xi, q^{-s}; \omega_1 \omega_2, d_1+d_2),
\end{multline*}
	where $\xi^{-1}(1+x) = \psi(cx)$ for all $x \in \vP_{\F}^{\lfloor \cond(\xi)/2 \rfloor}$ and some $c \in \F^{\times}$, and we have applied the stability theorem for $\GL_1$ \cite[\S 23.8]{BuH06}; and
\begin{multline*}
	E_{\geq N}(\xi,q^{-s}; \omega, d) \cdot \id_{\chi^{-1} \mid_{\vO_{\F}^{\times}}}(\xi) \cdot \chi(\varpi_{\F})^n q^{-ns} (1-\chi(\varpi_{\F}) q^{-s})^{-1} = \\
	\id_{\geq N}(\cond(\xi)) \id_{\chi^{-1} \mid_{\vO_{\F}^{\times}}}(\xi) \cdot \varepsilon(1-s, \xi^{-1}, \psi)^{d-1} \varepsilon(1-s, \xi^{-1}\omega^{-1}, \psi) \cdot \chi(\varpi_{\F})^n q^{-ns} (1-\chi(\varpi_{\F}) q^{-s})^{-1} = \\
	\id_{\cond(\chi) \geq N} \cdot \chi(\varpi_{\F})^{n-d\cond(\chi)} q^{-(n-d\cond(\chi))s} (1-\chi(\varpi_{\F}) q^{-s})^{-1} \cdot \varepsilon(1, \chi, \psi)^{d-1} \varepsilon(1, \chi \omega^{-1}, \psi),
\end{multline*}
	where we have applied $\varepsilon(s, \xi^{-1}, \psi) = \varepsilon(s, \chi, \psi) \cdot \chi(\varpi_{\F})^{-\cond(\chi)}$, $\varepsilon(s, \chi, \psi) = \varepsilon(1/2, \chi, \psi) q^{\cond(\chi)(1/2-s)}$ and $\cond(\chi \omega^{-1}) = \cond(\chi)$ under the condition $\cond(\chi) \geq N \geq \max(2\cond(\omega),2)$ (hence $> \cond(\omega)$). Applying the same deformation technique to $\chi$ as before, we complete the proof of (2). (3) follows readily from \eqref{eq: 2ndRegConvAux}. In particular, the (non-obvious) associativity (and commutativity) of $*_{\mathrm{pv}}$ follows from the one for the point-wise multiplication for functions in $(\xi, q^{-s})$.
\end{proof}

	It would be elegant to view the results in this section in terms of distributions. In general the convolution of two distributions need not exist, unless one has compact support (see \cite[Definition 6.36]{Ru91}). The space $\SSch(\F, \widetilde{\F^{\times}})$ provides an example where the convolution makes sense for distributions with non-compact support. Hence we may re-formulate Proposition \ref{prop: SSchAlg} in the following equivalent way.

\begin{proposition} \label{prop: SSchAlgBis}
	Functions in $\SSch(\F, \widetilde{\F^{\times}})$ are distributions on $\Cont_c^{\infty}(\F^{\times})$, and the regularized convolution on $\SSch(\F, \widetilde{\F^{\times}})$ extends the convolution in the sense of distributions.
\end{proposition}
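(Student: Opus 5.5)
Two things need proof: every $f\in\SSch(\F,\widetilde{\F^{\times}})$ defines a distribution on $\Cont_c^{\infty}(\F^{\times})$, and $*_{\mathrm{pv}}$ agrees with distributional convolution whenever the latter is defined. I will reduce both to statements about generators. By the Remark after Definition \ref{def: SSchNA}, $\SSch(\F,\widetilde{\F^{\times}})$ is spanned by three kinds of functions: $\Cont_c^{\infty}(\F^{\times})$, the functions $\chi v_{\F}^k\id_{\vP_{\F}^n}$, and the germs $G_{\geq N}(\cdot;\omega,d)$ with $N\geq \max(2\cond(\omega),2)$.

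For the first assertion, each generator is continuous on $\F^{\times}$, so it is locally integrable. Hence $\phi\mapsto\int_{\F^{\times}} f\phi\,\ud^{\times}y$ is well defined on $\Cont_c^{\infty}(\F^{\times})$, because the support of $\phi$ meets only finitely many shells $\varpi_{\F}^n\vO_{\F}^{\times}$. Injectivity of $f\mapsto$ distribution is immediate from continuity. So these functions are distributions on $\Cont_c^{\infty}(\F^{\times})$.

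For the second assertion, I will use the usual definition of convolution of a distribution $T$ with a test function: $(T*\phi)(x)=T(y\mapsto\phi(xy^{-1}))$. When one factor is compactly supported, I will pass to general pairs by pairing against test functions: $\langle T_1*T_2,\phi\rangle=\langle T_1\otimes T_2,\phi(xy)\rangle$.

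First step: let $f_2\in\Cont_c^{\infty}(\F^{\times})$. Then for fixed $x$ the pv-integral defining $f_1*_{\mathrm{pv}}f_2(x)$ is a finite sum of shell integrals. It therefore equals the ordinary absolutely convergent integral, which is exactly the distributional convolution. By commutativity, Proposition \ref{prop: SSchAlg} (3), the same holds when $f_1$ has compact support.

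Second step: for a general pair, I will show that for every $\phi\in\Cont_c^{\infty}(\F^{\times})$
$$\int_{\F^{\times}} (f_1*_{\mathrm{pv}}f_2)(x)\phi(x)\,\ud^{\times}x=\int^{\mathrm{pv}}_{\F^{\times}} f_2(y)\,(f_1*\phi)(y)\,\ud^{\times} y .$$
Here $f_1*\phi$ is understood via the first step, so the right side is the natural regularized distributional convolution. I will prove this identity by Mellin transform. The left side is computed by Proposition \ref{prop: SSchAlg} (2) together with Plancherel/Mellin inversion on $\F^{\times}$, since $\phi$ has a Mellin transform that is a Laurent polynomial in $q^{-s}$ supported on finitely many $\xi$. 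For the right side, $f_1*\phi$ lies in $\SSch(\F,\widetilde{\F^{\times}})$ because $\SSch(\F)$ is an ideal (Proposition \ref{prop: SSchAlg} (3)). Its regularized Mellin transform is $\RMellin{f_1}\cdot\RMellin{\phi}$ by Proposition \ref{prop: SSchAlg} (2). The right side is then the value at the trivial character and $s=0$ of the Mellin transform of the product, which I will evaluate by the same contour argument.

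Main obstacle: the interchange of the pv-limit with the $x$-integration. I expect to handle it by reducing to generators, using \eqref{eq: 1stRegConvAux} and \eqref{eq: 2ndRegConvAux} explicitly. These show that on each fixed shell the truncated sums stabilize after finitely many $l$. This holds because $G_l$ integrated against a character of conductor $\neq l$ vanishes, by Lemma \ref{lem: GermParGI} (1) and (3). Hence the limit is eventually constant, uniformly for $x$ in the compact support of $\phi$, and Fubini applies.
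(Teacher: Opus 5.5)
Your first assertion and your first step already form a complete proof, and they are the whole content of the paper's position. The paper gives no separate argument: it presents Proposition \ref{prop: SSchAlgBis} as a reformulation of Proposition \ref{prop: SSchAlg}. The reason this suffices is that convolution of distributions is only defined when one factor has compact support. The compactly supported elements of $\SSch(\F, \widetilde{\F^{\times}})$ are exactly those of $\Cont_c^{\infty}(\F^{\times})$, and for them the pv-integral is a finite sum of shell integrals equal to the absolutely convergent one. This holds for either order of the factors, so you do not even need commutativity.

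Your second step is therefore unnecessary. As written it also contains an inversion slip. With $\check{g}(y) := g(y^{-1})$, the correct identity is
$$ \int_{\F^{\times}} (f_1 *_{\mathrm{pv}} f_2)(x) \phi(x) \ud^{\times} x = \int_{\F^{\times}}^{\mathrm{pv}} f_2(y) \, (\check{f_1} * \phi)(y) \ud^{\times} y, \qquad (\check{f_1} * \phi)(y) = (f_1 * \check{\phi})(y^{-1}). $$
So the relevant function lies in the image of $\SSch(\F)$ under $y \mapsto y^{-1}$, not in $\SSch(\F, \widetilde{\F^{\times}})$.

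Once corrected, the identity follows directly. Evaluate $(f_1 *_{\mathrm{pv}} f_2) * \check{\phi} = f_2 *_{\mathrm{pv}} (f_1 * \check{\phi})$ at $1$, using associativity and commutativity from Proposition \ref{prop: SSchAlg} (3). Alternatively, your uniform-stabilization and Fubini argument proves it on its own. Either way, the Mellin and contour detour is not needed.
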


\section{Cuspidal Inducing Datum and Related Computation}
\label{sec: CID}

	\subsection{Supercuspidal Representations via Compact Induction}
	
	The supercuspidal representations of $\gp{G} := \GL_d(\F)$ are classified by Bushnell--Henniart \cite{BK93}. According to this classification, for every supercuspidal $\pi$ of $\GL_d(\F)$ we can find an open subgroup $\gp{J}$ which is compact modulo the center, and an irreducible admissible representation $(\rho, V_{\rho})$ of $\gp{J}$ such that
\begin{equation} \label{eq: ScBHModel}
	\pi = \cInd_{\gp{J}}^{\gp{G}} \rho, \quad \text{resp. its contragredient} \quad \widetilde{\pi} = \cInd_{\gp{J}}^{\gp{G}} \widetilde{\rho} 
\end{equation}
is compactly induced from $\rho$, resp. its contragredient $\widetilde{\rho}$. A matrix coefficient $\beta = \beta_{f_1,f_2}$ associated with a pair of vectors $(f_1, f_2) \in V_{\pi}^{\infty} \times V_{\widetilde{\pi}}^{\infty}$ is expressed in terms of the $\gp{J}$-invariant pairing $\extPairing{\cdot}{\cdot}_{\rho}$ on $V_{\rho} \times V_{\widetilde{\rho}}$ as
	$$ \beta_{f_1,f_2}(g) = \int_{\gp{J} \backslash \gp{G}} \extPairing{f_1(xg)}{f_2(x)}_{\rho} \ud x. $$
\begin{definition} \label{def: FibreOfDet}
	For any open subgroup $\gp{H}$ of $\gp{G}$ and any $t \in \F^{\times}$, let $\gp{H}(t)$ be the subset of elements $x \in \gp{H}$ with $\det x = t$. Note that $\gp{H}(t)$, if non-empty, is a coset of the subgroup $\gp{H}(1)$, and can be endowed with a Haar measure $\ud h_1$ of $\gp{H}(1)$ determined by
	$$ \int_{\gp{H}} f(h) \ud h = \int_{\F^{\times}} \left( \int_{\gp{H}(1)} f(a_t h_1) \ud h_1 \right) \ud^{\times} t, \quad \forall f \in \Cont_c^{\infty}(\gp{H}), $$
	where $a_t \in \gp{H}$ is any element with $\det(a_t) = t$.
\end{definition}
\begin{lemma} \label{lem: NoTrivialRp}
	The restriction of $\rho$ to $\gp{J}(1)$ does not contain the trivial representation.
\end{lemma}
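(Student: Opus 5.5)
The plan is to argue by contradiction, using the supercuspidality of $\pi$ and the structure of $\gp{J}$ as an open subgroup that is compact modulo the center $Z = \F^{\times}$. Suppose $V_{\rho}$ contains a nonzero vector $v$ fixed by $\gp{J}(1)$. Since $\rho$ is irreducible and $\gp{J}(1)$ is normal in $\gp{J}$ (it is the kernel of $\det$ restricted to $\gp{J}$), the subspace $V_{\rho}^{\gp{J}(1)}$ is $\gp{J}$-stable and nonzero. It therefore equals $V_{\rho}$. Hence $\rho$ factors through $\gp{J}/\gp{J}(1) \simeq \det(\gp{J}) \subset \F^{\times}$, which is abelian. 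So $\rho$ is one-dimensional, of the form $\rho = \eta \circ \det$ for a character $\eta$ of the open subgroup $\det(\gp{J})$ of $\F^{\times}$.

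Next, extend $\eta$ to a character $\tilde\eta$ of $\F^{\times}$. This is possible because $\det(\gp{J})$ is an open subgroup of the abelian locally compact group $\F^{\times}$, so characters extend by Pontryagin duality. Then $\tilde\eta \circ \det$ is a character of $\gp{G}$ whose restriction to $\gp{J}$ is $\rho$. By Frobenius reciprocity for compact induction from an open subgroup,
$$ \mathrm{Hom}_{\gp{G}}(\cInd_{\gp{J}}^{\gp{G}} \rho, \tilde\eta\circ\det) \simeq \mathrm{Hom}_{\gp{J}}(\rho, \tilde\eta\circ\det|_{\gp{J}}) \neq 0. $$
Since $\pi = \cInd_{\gp{J}}^{\gp{G}} \rho$ is irreducible, this nonzero map must be an isomorphism $\pi \simeq \tilde\eta\circ\det$. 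For $d \geq 2$ this is impossible: a supercuspidal representation of $\GL_d(\F)$ is infinite-dimensional (equivalently, generic), while $\tilde\eta\circ\det$ is one-dimensional. Alternatively, its matrix coefficients have constant absolute value, so they are not compactly supported modulo the center.

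The only delicate points are the following. First, we need the normality of $\gp{J}(1)$ in $\gp{J}$, which is immediate from its description as a kernel. Second, we need the extension of $\eta$. Third, the case $d = 1$ must be excluded, or else interpreted trivially. For $d=1$ we have $\gp{J} = \F^{\times}$ and $\gp{J}(1)$ is trivial, so the statement is false. The lemma is therefore implicitly for $d \geq 2$, consistent with the standing assumption $d \in \Z_{\geq 2}$, and I would state this explicitly.

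I expect the main (mild) obstacle to be justifying the Frobenius reciprocity direction used, namely $\mathrm{Hom}_{\gp{G}}(\cInd \rho, \sigma) \simeq \mathrm{Hom}_{\gp{J}}(\rho, \sigma|_{\gp{J}})$. This holds for compact induction from an open subgroup. If one prefers to avoid it, one can argue directly: the function supported on $\gp{J}$ with value $v$ generates $\pi$, and $\pi(g)$ acts on it compatibly with the character $\tilde\eta\circ\det$ modulo functions supported off $\gp{J}$. The Frobenius route is cleaner, so I would use it.
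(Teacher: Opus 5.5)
Your proof is correct and follows the paper's argument. The $\gp{J}(1)$-fixed space is $\gp{J}$-stable, so $\rho$ is a one-dimensional character $\chi\circ\det|_{\gp{J}}$. The nonzero intertwiner $\pi \to \chi\circ\det$ that you obtain from Frobenius reciprocity for $\cInd$ from an open subgroup is exactly the paper's explicit map $f \mapsto \int_{\gp{J}\backslash\gp{G}} f(g)\chi^{-1}(\det g)\,\ud g$. Its existence contradicts irreducibility because $\dim \pi > 1$.

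Your two refinements are both valid. The lemma does need $d \geq 2$, which the paper uses only implicitly. And when extending a possibly non-unitary character from the open subgroup $\det(\gp{J})$ to all of $\F^{\times}$, you should appeal to the divisibility of $\C^{\times}$ rather than to Pontryagin duality.
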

\begin{proof}
	Suppose this is not the case. Since $\gp{J}(1)$ is a normal subgroup of $\gp{J}$, the non-zero subspace of $\gp{J}(1)$-fixed vectors is stable by $\gp{J}$, hence has to be $V_{\rho}$. Therefore the action of $\rho$ factors through $\gp{J}(1)$, hence is an irreducible representation of the finite abelian group $\gp{J}/\gp{J}(1) < \gp{G}/\gp{G}(1) \simeq \F^{\times}$. Consequently $\rho$ has dimension $1$ and is the restriction of $\chi \circ \det$ for some (quasi)-character $\chi$ of $\F^{\times}$. The map
	$$ V_{\pi}^{\infty} \to \C, \quad f \mapsto \int_{\gp{J} \backslash \gp{G}} f(g) \chi^{-1}(\det g) \ud g $$
is a non-zero and $\gp{G}$-intertwining map from $\pi$ to $\chi \circ \det$, whose kernel is a non-trivial sub-representation of $\pi$, contradicting the irreducibility of $\pi$.
\end{proof}

	\subsection{Principal Orders and Related Properties}

	By induction in stages, we may take $\gp{J}$ to be \emph{maximal} in \eqref{eq: ScBHModel}. In this case, a such pair $(\gp{J}, \rho)$ is called a \emph{cuspidal inducing datum} by Bushnell--Henniart \cite[\S 15.8 Definition]{BuH06}. (While if $\gp{J}$ is taken to be minimal, a such pair $(\gp{J}, \rho)$ is called a \emph{cuspidal type}.) The maximal compact-modulo-center subgroups are classified in the Bruhat--Tits theory. In the case of $\GL_d(\F)$, they are realized as normalizers of the \emph{principal orders}. We recall their properties without proofs, referring the reader to \cite[\S 1]{BuF85} or \cite[\S 1]{Bu87} for details. Let $d = ef$ with $e,f \in \Z_{\geq 1}$. For any subset $S$ of $\F$ write $[S]_f$ for the set of $f \times f$ matrices with entries in $S$. The standard principal order $\oA$ with \emph{ramification index} $e(\oA) = e$ and \emph{block size} $f(\oA) = f$ is
\begin{equation} \label{eq: POrder}
	\oA = \begin{pmatrix} [\vO_{\F}]_f & [\vO_{\F}]_f & [\vO_{\F}]_f & \cdots & [\vO_{\F}]_f \\
	[\vP_{\F}]_f & [\vO_{\F}]_f & [\vO_{\F}]_f & \cdots & [\vO_{\F}]_f \\
	\vdots & \ddots & \ddots & \ddots & \vdots \\
	[\vP_{\F}]_f & \cdots & [\vP_{\F}]_f & [\vO_{\F}]_f & [\vO_{\F}]_f \\
	[\vP_{\F}]_f & \cdots & [\vP_{\F}]_f & [\vP_{\F}]_f & [\vO_{\F}]_f \end{pmatrix}.
\end{equation}
	Call it the \emph{standard order with parameter} $(e,f)$. Its Jacobson radical is given via a prime element by
\begin{equation} \label{eq: POrderRad}
	\VP = \Pi \oA = \oA \Pi, \quad
	\Pi = \begin{pmatrix} & \id_{d-f} \\
	\varpi_{\F} \id_f & \end{pmatrix}.
\end{equation}
	The normalizer subgroup of $\oA$ is
\begin{equation} \label{eq: POrderNmSubGp}
	\nG(\oA) = \left\{ g \in \gp{G} \ \middle| \ g \oA g^{-1} = \oA \right\} = \Pi^{\Z} \oA^{\times}.
\end{equation}
	A filtration of open normal subgroups at $\id_d$ is given by
\begin{equation} \label{eq: POrderNbhd1}
	U_{\oA}^n := (\id_d + \VP^n) \cap \oA^{\times}, \quad n \in \Z_{\geq 0}.
\end{equation}
	For any $\vO_{\F}$-lattice $L$ in $\Mat_d(\F)$, write $L^* := \left\{ x \in \Mat_d(\F) \ \middle| \ \Tr(x L) \subset \vO_{\F} \right\}$. We have
\begin{equation} \label{eq: POrderRadDual}
	(\VP^n)^* = \VP^{1-e-n}, \quad \forall n \in \Z.
\end{equation}
	If $\gp{J} = \nG(\oA)$, there is $\cond(\rho) \in \Z_{\geq 1}$, called the \emph{conductor exponent}, such that
\begin{equation} \label{eq: CIDCondExp}
	U_{\oA}^{\cond(\rho)} \subset \ker(\rho), \quad U_{\oA}^{\cond(\rho)-1} \not\subset \ker(\rho).
\end{equation}
	If $\cond(\pi)$ is the conductor exponent of $\pi$, then we have (see \cite[Theorem 3]{Bu87})
\begin{equation} \label{eq: CondExpRel}
	\cond(\rho) = \cond(\pi)/f + 1 - e(\oA).
\end{equation}
	Note that $\oA$ is uniquely determined by $\pi$, hence we may write $e(\oA) = e(\pi)$ and $f(\oA) = f(\pi)$. In fact, $f(\pi)$ is the number of unramified characters $\chi$ such that $\pi \simeq \pi \otimes (\chi \circ \det)$ by the last paragraph of the proof of \cite[(6.2.4) Theorem]{BK93}. Note also that $f(\pi) \mid \cond(\pi)$ by \cite[15.2 Corollary]{Bu17}. Moreover we have
\begin{equation} \label{eq: CChCondBd}
	\VP^n \cap \F = \vP_{\F}^{\lceil n/e \rceil} \quad \Rightarrow \quad U_{\oA}^n \supset U_{\F}^{\lceil n/e \rceil} \quad \Rightarrow \quad \lceil \cond(\rho)/e \rceil \geq \cond(\omega).
\end{equation}

	\subsection{Norm One Subgroups}
	
	We will also need for $n \in \Z_{\geq 0}$ some properties of the following subgroups
\begin{equation} \label{eq: POrderNbhd1Var}
	U_{\oA}^n(1) = U_{\oA}^n \cap \SL_d(\F), \quad \widetilde{U}_{\oA}^n := \vO_{\F}^{\times} U_{\oA}^n, \quad \widetilde{U}_{\oA}^n(1) = \widetilde{U}_{\oA}^n \cap \SL_d(\F).
\end{equation}
\begin{lemma} \label{lem: DetTrRel}
	Let $m \in \Z_{\geq 0}$. For every $X \in \VP^m$ we have $\det(\id_d + X) \equiv 1 + \Tr(X) \pmod{\vP_{\F}^{\lceil 2m/e \rceil}}$. In particular, we have $\det U_{\oA}^m = U_{\F}^{\lceil m/e \rceil}$ and $U_{\oA}^m = \diag(U_{\F}^{\lceil m/e \rceil},1,\dots,1) \times U_{\oA}^m(1)$.
\end{lemma}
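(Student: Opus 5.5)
The plan is to expand the determinant of $\id_d + X$ and show that every term other than $1 + \Tr(X)$ has valuation at least $\lceil 2m/e \rceil$. Write
$$ \det(\id_d + X) = 1 + \sum_{k=1}^d c_k(X), $$
where $c_k(X)$ is the sum of the principal $k \times k$ minors of $X$; equivalently, $c_k(X) = \Tr(\wedge^k X)$. Then $c_1(X) = \Tr(X)$, and it suffices to show that $c_k(X) \in \vP_{\F}^{\lceil 2m/e \rceil}$ for every $k \geq 2$.

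The cleanest route is through lattice chains. The order $\oA$ is the endomorphism order of the lattice chain $L_i = \Pi^i \vO_{\F}^d$, and $\VP^m$ consists of those $X$ with $X L_i \subset L_{i+m}$ for all $i$. A $k \times k$ minor, for $k \geq 2$, is a polynomial in the entries of $X$ in which each term is a product of $k$ entries. To make this concrete, I will attach a \emph{level} to each position, namely the block index $\lfloor (i-1)/f \rfloor$ of row $i$. Then $X \in \VP^m$ means the $(i,j)$ entry has valuation at least $\lceil (m + \mathrm{lev}(i) - \mathrm{lev}(j))/e \rceil$. A product $X_{i_1 j_1} \cdots X_{i_k j_k}$ attached to a permutation of the indices has level differences that sum to zero. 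Writing $a_r = m + \mathrm{lev}(i_r) - \mathrm{lev}(j_r)$, we get $\sum_r a_r = km$, and hence
$$ \sum_r \lceil a_r/e \rceil \geq \lceil km/e \rceil \geq \lceil 2m/e \rceil. $$
This gives $\det(\id_d + X) \equiv 1 + \Tr(X) \pmod{\vP_{\F}^{\lceil 2m/e \rceil}}$. For $m = 0$ the statement is vacuous, since the modulus is $\vO_{\F}$.

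For the claim $\det U_{\oA}^m = U_{\F}^{\lceil m/e \rceil}$, I treat $m \geq 1$ and $m = 0$ separately. Take $m \geq 1$. For the inclusion $\subseteq$, the trace of any $X \in \VP^m$ is a sum of diagonal entries, each in $\VP^m \cap \F$, which equals $\vP_{\F}^{\lceil m/e \rceil}$ by \eqref{eq: CChCondBd}. Since $\lceil 2m/e \rceil \geq \lceil m/e \rceil$, the congruence above gives $\det U_{\oA}^m \subseteq U_{\F}^{\lceil m/e \rceil}$. For the inclusion $\supseteq$, given $u \in U_{\F}^{\lceil m/e \rceil}$, the diagonal matrix $\diag(u, 1, \dots, 1)$ lies in $U_{\oA}^m$, because its $(1,1)$ entry $u - 1$ lies in $\vP_{\F}^{\lceil m/e \rceil} \subset \VP^m$. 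Its determinant is $u$. For $m = 0$, $\det \oA^{\times} = \vO_{\F}^{\times}$ is immediate.

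The product decomposition then follows formally. Given $g \in U_{\oA}^m$, set $u = \det g$ and $D = \diag(u, 1, \dots, 1)$, so that $D \in U_{\oA}^m$ by the previous paragraph. Then $D^{-1} g \in U_{\oA}^m(1)$, and the intersection of $\diag(U_{\F}^{\lceil m/e \rceil}, 1, \dots, 1)$ with $U_{\oA}^m(1)$ is trivial. Hence every element factors uniquely, and the set equality holds (as a product of subsets, i.e.\ a semidirect decomposition).

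The main obstacle I expect is the valuation bookkeeping for the higher minors. The condition on entries of $\VP^m$ depends on the block position through the ceiling function, and the inequality $\sum_r \lceil a_r/e \rceil \geq \lceil \sum_r a_r/e \rceil$ has to be applied with care, in particular when some $a_r$ are negative.
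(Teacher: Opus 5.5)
Your proof is correct, and it organizes the key valuation estimate differently from the paper.

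The paper writes $m=el+m_1$ with $0<m_1\leq e$. From the block shape \eqref{eq: PowerJRad} it extracts only coarse bounds: entries on or below the diagonal lie in $\vP_{\F}^{l+1}$, entries above it in $\vP_{\F}^{l}$. It then expands over all of $S_d$ and splits into the cases $m_1\leq e/2$ and $m_1>e/2$, counting the indices $j$ with $\sigma(j)<j$.

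You instead keep the finer entrywise bound $v_{\F}(X_{ij})\geq\lceil (m+\mathrm{lev}(i)-\mathrm{lev}(j))/e\rceil$, which is exactly the content of \eqref{eq: PowerJRad}. You then let the level differences telescope along each permutation of a principal minor, so a single inequality handles every non-identity term. Your worry about negative $a_r$ is unfounded, since $\sum_r\lceil x_r\rceil\geq\lceil\sum_r x_r\rceil$ holds for arbitrary reals.

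Your route buys three things:
\begin{itemize}
\item \emph{Uniformity:} there is no case analysis on $m_1$.
\item \emph{A stronger conclusion:} $c_k(X)\in\vP_{\F}^{\lceil km/e\rceil}$ for every $k$.
\item \emph{Robustness:} the paper's second case asserts that a non-identity $\sigma$ with a single $j$ satisfying $\sigma(j)<j$ must be a transposition. This overlooks cycles such as $1\mapsto 2\mapsto 3\mapsto 1$. When $l=0$ the coarse bounds only place such a term in $\vP_{\F}$, and reaching $\vP_{\F}^{2}$ requires exactly the level information your telescoping uses. The lemma itself is unaffected, but your argument covers this case where the paper's wording does not.
\end{itemize}
The paper's route, in turn, records the explicit block form \eqref{eq: PowerJRad}, which it reuses in Proposition \ref{prop: POrderNbhdQ} and Lemma \ref{lem: AddCharAvg}.

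Your deductions of $\det U_{\oA}^m=U_{\F}^{\lceil m/e\rceil}$ and of the product decomposition are fine. The only cosmetic point is that invoking $\VP^m\cap\F$ for individual diagonal entries conflates matrix entries with scalar matrices. It is cleaner to read off $v_{\F}(x_{ii})\geq\lceil m/e\rceil$, and the membership $\diag(u,1,\dots,1)-\id_d\in\VP^m$, directly from your entrywise description of $\VP^m$.
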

\begin{proof}
	The case of $m=0$ is obvious. Let $m \geq 1$. The case of $e=1$ is simple and left to the reader. Suppose $e \geq 2$. Write $m=el+m_1$ for some $l \in \Z_{\geq 0}$ and $0 < m_1 \leq e$. Introduce the standard order $\oA_1$, resp. $\oA_1'$ with parameter $(m_1,f)$, resp. $(e-m_1,f)$. Write the Jacobson radical as $\VP_1$, resp. $\VP_1'$. A direct computation gives, for $d_1 := m_1 f$ and $d_1' := (e-m_1)f$, the formula
\begin{equation} \label{eq: PowerJRad}
	\VP^m = \varpi_{\F}^l \begin{pmatrix} \Mat_{d_1' \times d_1}(\vP_{\F}) & \oA_1' \\ \varpi_{\F} \oA_1 & \Mat_{d_1 \times d_1'}(\vP_{\F}) \end{pmatrix}.
\end{equation}
	Write $X = (x_{ij})$ and $Y = (y_{ij})$ with $y_{ij} = \delta_{i,j}+x_{ij}$, where $\delta_{i,j} = 1$ if $i=j$ or $0$ if $i \neq j$. Observing \eqref{eq: PowerJRad} we deduce $x_{ii} \in \vP_{\F}^{l+1}$. Note that $\lceil 2m/e \rceil \leq 2l+2$. Let $S_d$ be the permutation group of $\left\{ 1, \dots, d \right\}$. Then
\begin{multline} \label{eq: DetMod}
	\det Y = \prod_{i=1}^d (1+x_{ii}) + \sum_{\id \neq \sigma \in S_d} \sgn(\sigma) \prod_{i=1}^d y_{i,\sigma(i)} \\
	\equiv 1+\Tr(X) + \sum_{\id \neq \sigma \in S_d} \sgn(\sigma) \prod_{i=1}^d y_{i,\sigma(i)} \pmod{\vP_{\F}^{\lceil 2m/e \rceil}}.
\end{multline}
	Consider any $\id \neq \sigma \in S_d$. It has at most $d-2$ fixed points. 
\begin{itemize}
	\item If $m_1 \leq e/2$, then $\lceil 2m/e \rceil = 2l+1$. There exists $j$ with $\sigma(j) < j$. By \eqref{eq: PowerJRad}, we have $y_{j,\sigma(j)} \in \vP_{\F}^{l+1}$ and $y_{i,\sigma(i)} \in \vP_{\F}^l$ for any $i \neq j$ with $\sigma(i) \neq i$. 
	\item If $e/2 < m_1 \leq e$, then $\lceil 2m/e \rceil = 2l+2$. If at least two $j$ has $\sigma(j) < j$, then each such $y_{j,\sigma(j)}$ lies in $\vP_{\F}^{l+1}$. Otherwise $\sigma$ is the transposition of $j$ and $\sigma(j)$, yielding $y_{j,\sigma(j)}y_{\sigma(j),j} \in \vP_{\F}^{2l+2}$ by \eqref{eq: PowerJRad}.
\end{itemize}
	The summand for $\id \neq \sigma \in S_d$ in \eqref{eq: DetMod} lies in $\vP_{\F}^{\lceil 2m/e \rceil}$ in both cases and we conclude.
\end{proof}
\begin{proposition} \label{prop: POrderNbhdQ}
	Let $m,n \in \Z_{\geq 1}$ satisfy $m < n \leq 2m$. We have
	$$ U_{\oA}^m / U_{\oA}^n \simeq \VP^m/\VP^n, \quad U_{\oA}^m(1) / U_{\oA}^n(1) \simeq \VP^m[0]/\VP^n[0], $$
where for any $\vO_{\F}$-lattice $L$ in $\Mat_d(\F)$, $L[0]$ denotes the sub-$\vO_{\F}$-module of trace $0$ elements in $L$.
\end{proposition}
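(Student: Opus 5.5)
The plan is to use the map $\phi: U_{\oA}^m \to \VP^m/\VP^n$, $\id_d + X \mapsto X + \VP^n$, and show it is a surjective group homomorphism with kernel $U_{\oA}^n$. It is well defined because $U_{\oA}^m = \id_d + \VP^m$; note that $m \geq 1$ forces $\id_d + \VP^m \subset \oA^{\times}$, since $\VP$ is the Jacobson radical. For the homomorphism property take $X, Y \in \VP^m$. Then $(\id_d+X)(\id_d+Y) = \id_d + X + Y + XY$ and $XY \in \VP^{2m} \subset \VP^n$, because $n \leq 2m$ and $\VP^a\VP^b = \VP^{a+b}$ (from $\VP = \Pi\oA = \oA\Pi$). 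Surjectivity is clear. The kernel is $\{\id_d + X : X \in \VP^n\} = U_{\oA}^n$. This gives the first isomorphism; the quotient is abelian and $U_{\oA}^n$ is normal in $U_{\oA}^m$.

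For the norm-one version, restrict $\phi$ to $U_{\oA}^m(1)$. Its kernel is $U_{\oA}^n \cap \SL_d(\F) = U_{\oA}^n(1)$, so $U_{\oA}^m(1)/U_{\oA}^n(1)$ embeds in $\VP^m/\VP^n$. It then remains to identify the image with $\VP^m[0]/\VP^n[0]$, i.e.\ with $(\VP^m[0] + \VP^n)/\VP^n$. There is a small point of interpretation here: the natural map $\VP^m[0]/\VP^n[0] \to \VP^m/\VP^n$ is injective because $\VP^m[0] \cap \VP^n = \VP^n[0]$, so the two descriptions agree.

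\emph{Image contained in traceless part.} Let $\id_d + X \in U_{\oA}^m(1)$. Lemma \ref{lem: DetTrRel} gives $1 = \det(\id_d+X) \equiv 1 + \Tr X \pmod{\vP_{\F}^{\lceil 2m/e\rceil}}$, so $\Tr X \in \vP_{\F}^{\lceil 2m/e \rceil} \subset \vP_{\F}^{\lceil n/e \rceil}$. By \eqref{eq: CChCondBd} there is a scalar $c \in \vP_{\F}^{\lceil n/e\rceil} = \VP^n \cap \F$ with $dc = \Tr X$. However, $d$ may not be invertible (residue characteristic dividing $d$), so I would instead use a diagonal element $E = \diag(\Tr X, 0, \dots, 0)$. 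I expect $E \in \VP^n$ because diagonal entries of $\VP^n$ are governed by $\vP_{\F}^{\lceil n/e\rceil}$, as in \eqref{eq: PowerJRad}: the diagonal blocks of $\VP^n$ are $\varpi^{l}\Mat(\vP)$ or $\varpi^{l+1}[\vO]$ patterns, and in each case every diagonal entry can be any element of $\vP_{\F}^{\lceil n/e\rceil}$. Then $X - E \in \VP^m[0]$ and $X \equiv X - E \pmod{\VP^n}$.

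\emph{Surjectivity onto traceless part.} Given $X \in \VP^m[0]$, consider $\id_d + X \in U_{\oA}^m$ with determinant $\det(\id_d+X) \in 1 + \vP_{\F}^{\lceil 2m/e \rceil} \subset U_{\F}^{\lceil n/e\rceil}$, again by Lemma \ref{lem: DetTrRel}. By that lemma's decomposition $U_{\oA}^n = \diag(U_{\F}^{\lceil n/e\rceil},1,\dots,1) \times U_{\oA}^n(1)$, pick $u = \diag(\det(\id_d+X)^{-1},1,\dots,1) \in U_{\oA}^n$. Then $(\id_d+X)u \in U_{\oA}^m(1)$ and has the same image $X + \VP^n$ under $\phi$.

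The main obstacle I expect is the careful verification that $\diag(U_{\F}^{\lceil n/e\rceil},1,\dots,1) \subset U_{\oA}^n$, equivalently $\diag(\vP_{\F}^{\lceil n/e \rceil},0,\dots,0) \subset \VP^n$. The lemma's decomposition already asserts this, so it can be quoted, but the same fact is what the trace-correction step uses. I would confirm it directly from \eqref{eq: PowerJRad} by checking that the $(1,1)$ entry of $\VP^{n}$ ranges over $\vP_{\F}^{\lceil n/e\rceil}$ in both cases $n_1 < e$ and $n_1 = e$.
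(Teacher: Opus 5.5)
Your proposal is correct and is essentially the paper's proof. You use the same correspondence $\id_d+X \leftrightarrow X$, and the paper's two claims appear as your two steps: the trace correction by $\diag(-\Tr(X),0,\dots,0)$, and the determinant correction by an element of $\diag(U_{\F}^{\lceil 2m/e\rceil},1,\dots,1)$, both resting on Lemma \ref{lem: DetTrRel} and \eqref{eq: PowerJRad}. The only difference is that you treat general $n$ directly, whereas the paper first reduces to $n=2m$.
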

\begin{proof}
	It suffices to treat the case $n=2m$, as the general case follows easily from this special one. The first equation is easily verified by the following explicit realization of the isomorphism
\begin{equation} \label{eq: POrderNbhdQExp}
	\VP^m/\VP^{2m} \to U_{\oA}^m / U_{\oA}^{2m}, \quad X + \VP^{2m} \mapsto (\id_d + X)U_{\oA}^{2m}. 
\end{equation}
	We shall prove that the restriction of the isomorphism \eqref{eq: POrderNbhdQExp} to $\VP^m[0]/\VP^{2m}[0]$ gives the desired second equation. This is equivalent to proving the following two claims:
\begin{itemize}
	\item[(1)] For any $X \in \VP^m[0]$ we have $(\id_d + X)U_{\oA}^{2m} \cap U_{\oA}^m(1) \neq \emptyset$;
	\item[(2)] For any $\id_d + X \in U_{\oA}^m(1)$ we have $(X + \VP^{2m}) \cap \VP^m[0] \neq \emptyset$.
\end{itemize}
	To prove (1), consider $X \in \VP^m[0]$. By Lemma \ref{lem: DetTrRel} we have $\det (\id_d + X) \in U_{\F}^{\lceil 2m/e \rceil} = \det U_{\oA}^{2m}$. Therefore we can find $\kappa \in U_{\oA}^{2m}$ with $\det (\id_d + X) = \det \kappa$, and $(\id_d + X) \kappa^{-1} \in (\id_d + X)U_{\oA}^{2m} \cap U_{\oA}^m(1)$ as required. To prove (2), take any $\id_d + X \in U_{\oA}^m(1)$. We have $X \in \VP^m$, and Lemma \ref{lem: DetTrRel} implies $1 = \det (\id_d + X) \equiv 1 + \Tr(X) \pmod{\vP_{\F}^{\lceil 2m/e \rceil}}$, or $\Tr(X) \in \vP_{\F}^{\lceil 2m/e \rceil}$. Replacing $m$ by $2m$ in \eqref{eq: PowerJRad} shows that $\diag(\vP_{\F}^{\lceil 2m/e \rceil}, \dots, \vP_{\F}^{\lceil 2m/e \rceil}) \subset \VP^{2m}$. In particular, we have $Y:=\diag(-\Tr(X),0,\dots,0) \in \VP^{2m}$ and $X+Y \in \VP^m[0]$ as required.
\end{proof}
\begin{remark}
	Carefully chasing the proof , we see that an explicit formula can be given as
\begin{equation} \label{eq: POrderNbhdQExpBis}
	\VP^m[0]/\VP^n[0] \simeq U_{\oA}^m(1) / U_{\oA}^n(1), \quad X+\VP^n[0] \mapsto (\id_d+X) \diag(\det(\id_d+X)^{-1},1,\dots,1) U_{\oA}^n(1) 
\end{equation}
	for the second group isomorphism in Proposition \ref{prop: POrderNbhdQ}. In particular, we have $\diag(\det(\id_d+X),1,\dots,1) \in U_{\oA}^{2m}$ for any $X \in \VP^m[0]$ by \eqref{eq: PowerJRad}.
\end{remark}

	\subsection{Some Technical Computation}

\begin{lemma} \label{lem: AddCharAvg}
	Suppose $d \geq 2$. Let $\oA$ be given in \eqref{eq: POrder}. Suppose $g \in \Pi^{-k} \oA^{\times}$ for some $k \in \Z_{\geq 1}$ satisfying
	$$ n := \lceil (k+1-e)/2 \rceil \geq m := \lfloor (k+1-e)/2 \rfloor \geq 1. $$
\begin{itemize}
	\item[(0)] If $k=el$ for some $l \in \Z_{\geq 1}$ then we have $\lceil m/e \rceil = \lfloor l/2 \rfloor$ and $\lceil n/e \rceil = \lceil l/2 \rceil$.
	\item[(1)] The non-vanishing of $I(g) := \oint_{U_{\oA}^n(1)} \psi(\Tr(g \kappa)) \ud \kappa \neq 0$ implies
	\begin{itemize}
		\item $k = el$ for some $l \in \Z_{\geq 1}$,
		\item $g \in \varpi_{\F}^{-l} \widetilde{U}_{\oA}^m$,
	\end{itemize}
	Under these conditions $I(g) = \psi(\Tr(g))$ is constant on left cosets of $U_{\oA}^n(1)$.
	\item[(2)] Write $g \in \varpi_{\F}^{-l} \widetilde{U}_{\oA}^m$ as $g = z \delta$ with $z \in \varpi_{\F}^{-l} \vO_{\F}^{\times}$ and $\delta \in U_{\oA}^m$. We have
\begin{multline*} 
	\frac{1}{[U_{\oA}^m(1) : U_{\oA}^n(1)]} \sum_{\kappa \in U_{\oA}^m(1) / U_{\oA}^n(1)} \psi \left( \Tr(g \kappa) \right) = \sum_{\delta_j \in U_{\F}^{\lfloor l/2 \rfloor} / U_{\F}^{\lceil l/2 \rceil}} \psi \left( z \left( \frac{\det \delta}{\delta_2 \cdots \delta_d} + \sideset{}{_{j=2}^d} \sum \delta_j \right) \right) \cdot I(e,f;l),
\end{multline*}
	$$ \text{with} \quad I(e,f;l) :=
	\begin{cases}
		1 & \text{if } 2 \mid l, 2 \nmid e \\
		q^{-\frac{ef^2}{2}} & \text{if } 2 \mid l, 2 \mid e \\
		q^{1-\frac{ef(f+1)}{2}} & \text{if } 2 \nmid l
	\end{cases}. $$
\end{itemize}
\end{lemma}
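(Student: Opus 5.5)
The plan is to linearise the function $\kappa \mapsto \psi(\Tr(g\kappa))$ on $U_{\oA}^n(1)$ using Proposition \ref{prop: POrderNbhdQ}, and then to use the duality \eqref{eq: POrderRadDual} to read off when the resulting character is trivial. Part (0) is pure arithmetic and comes first. For $k = el$ we have $k+1-e = e(l-1)+1$. Splitting according to the parities of $e$ and $l$ and evaluating $m = \lfloor (e(l-1)+1)/2 \rfloor$ and $n = \lceil (e(l-1)+1)/2 \rceil$ explicitly, one checks $\lceil m/e \rceil = \lfloor l/2 \rfloor$ and $\lceil n/e \rceil = \lceil l/2 \rceil$ directly. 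This also records that $\det U_{\oA}^m = U_{\F}^{\lfloor l/2\rfloor}$ and $\det U_{\oA}^n = U_{\F}^{\lceil l/2\rceil}$, by Lemma \ref{lem: DetTrRel}.

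For (1), write $\kappa = \id_d + X$ with $X \in \VP^n$. For two such elements,
$$ \Tr\big(g(\id_d+X)(\id_d+X')\big) = \Tr(g) + \Tr(gX) + \Tr(gX') + \Tr(gXX'). $$
Here $gXX' \in \VP^{2n-k} \subset \VP^{1-e}$ because $2n \geq k+1-e$. Since $\Tr(\VP^{1-e}) \subset \vO_{\F}$ by \eqref{eq: POrderRadDual} with $n=0$, the map $\kappa \mapsto \psi(\Tr(g(\kappa - \id_d)))$ is a character of $U_{\oA}^n(1)$. Hence $I(g) \neq 0$ if and only if this character is trivial, and in that case $I(g) = \psi(\Tr g)$, which is left $U_{\oA}^n(1)$-invariant by the same computation. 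By Proposition \ref{prop: POrderNbhdQ} (applied with the pair $(n,2n)$), triviality of the character means $\Tr(gX) \in \vO_{\F}$ for all $X \in \VP^n[0]$. Dualising this condition gives
$$ g \in (\VP^n[0])^* = \VP^{1-e-n} + \F \cdot \id_d. $$
So we can write $g = z + y$ with $z \in \F$ and $y \in \VP^{1-e-n}$. Since $m \geq 1$ forces $n < k+1-e$, we get $1-e-n > -k$. Because $g \in \Pi^{-k}\oA^{\times}$ has exact level $-k$, the scalar $z$ must carry that level. Scalars only occur at levels divisible by $e$, so $k = el$ and $z \in \varpi_{\F}^{-l}\vO_{\F}^{\times}$. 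Finally $z^{-1}y \in \VP^{k+1-e-n} = \VP^m$, which gives $g \in \varpi_{\F}^{-l}\widetilde U_{\oA}^m$.

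For (2), by Proposition \ref{prop: POrderNbhdQ} (now with $m < n \leq 2m$) the cosets $\kappa \in U_{\oA}^m(1)/U_{\oA}^n(1)$ are parametrised by $X \in \VP^m[0]/\VP^n[0]$ via the explicit formula \eqref{eq: POrderNbhdQExpBis}. Write $\delta = \diag(\det\delta,1,\dots,1)\,\delta_1$ with $\delta_1 \in U_{\oA}^m(1)$ by Lemma \ref{lem: DetTrRel}, and absorb $\delta_1$ into $\kappa$. The phase $\psi(\Tr(z\delta\kappa))$ then becomes an exponential of $z$ times a polynomial of degree $\leq 2$ in $X$ modulo $\vO_{\F}$. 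When $2m = n$, i.e. $k+1-e$ is even, the quadratic part is integral; the sum is then a plain character sum, which collapses to the diagonal classes $\delta_j$ and produces the factor $I(e,f;l)=1$. When $2m = n-1$, the quadratic term $z\Tr(X^2)$ (and $z\Tr(\diag(\det\delta,1,\dots,1)X^2)$) on $\VP^m/\VP^n$ survives, and we get a genuine finite quadratic Gauss sum over the $\fF_q$-space $\VP^m[0]/\VP^n[0]$. In both cases I would decompose this space into its diagonal-block part and its off-diagonal part. The off-diagonal entries pair up through $X \mapsto \Tr(X^2)$, which gives hyperbolic planes whose Gauss sums have absolute value $q^{\dim/2}$ with trivial phase. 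The diagonal part is reduced, after the trace-zero constraint, to sums over $d-1$ free diagonal units $\delta_j \in U_{\F}^{\lfloor l/2\rfloor}/U_{\F}^{\lceil l/2\rceil}$ together with the constraint coming from $\det\delta$. This yields the displayed expression with $\frac{\det\delta}{\delta_2\cdots\delta_d}$.

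The main obstacle is this last step: the exact normalising constant $I(e,f;l)$, which depends on whether the level $\VP^m/\VP^n$ meets the block-diagonal $[\vO_{\F}]_f$ pieces or only the off-diagonal ones (according to the parities of $l$ and $e$). I would compute it by counting dimensions of the $\fF_q$-spaces $\VP^m[0]/\VP^n[0]$ and of the radical of the quadratic form, using \eqref{eq: PowerJRad}, and checking that the $\vO_{\F}^{\times}$-normalisation matches the index $[U_{\oA}^m(1):U_{\oA}^n(1)]$.
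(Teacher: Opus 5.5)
Your parts (0) and (1) are correct and follow the paper's argument. The paper also uses that $\kappa\mapsto\psi(\Tr(g\kappa))$ is right $U_{\oA}^{2n}$-invariant. It then reduces, via \eqref{eq: POrderNbhdQExpBis}, to $\oint_{\VP^n[0]}\psi(\Tr(gx))\,\ud x=\id_{\F+\VP^{1-e-n}}(g)$, and extracts $e\mid k$ and $g\in\varpi_{\F}^{-l}\widetilde{U}_{\oA}^m$ just as you do.

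\textbf{The gap is in (2).} This is the actual content of the lemma. You defer the computation, and the one concrete formula you give for it is wrong. In your own parametrisation \eqref{eq: POrderNbhdQExpBis}, the second-order term comes from the correction factor $\diag(\det(\id_d+X)^{-1},1,\dots,1)$. For $\Tr X=0$ one has $\det(\id_d+X)\equiv 1+\sum_{i<j}(x_{ii}x_{jj}-x_{ij}x_{ji})$ up to higher-order terms. These are negligible because $3l_1\geq l$ when $l=2l_1+1$, resp.\ $2(l-1)\geq l$ when $2\mid l$; you never check this. So after reducing to $\delta=\diag(1+u,1,\dots,1)$ the phase is, modulo $\vO_{\F}$, $z\bigl\{(1+u)(1+x_{11})+\sum_{i\geq 2}(1+x_{ii})-\sum_{i<j}x_{ii}x_{jj}+\sum_{i<j}x_{ij}x_{ji}\bigr\}$.

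Since $\Tr(X^2)=-2\sum_{i<j}(x_{ii}x_{jj}-x_{ij}x_{ji})$ for trace-zero $X$, your quadratic term $z\Tr(X^2)$ is off by a factor $2$. This matters:
\begin{itemize}
\item For $d=2$, $(e,f)=(2,1)$, $l$ odd, one has exactly $\Tr(g\kappa)=z\bigl(\frac{1+u}{1-x}+1-x\bigr)\equiv z(2+u+ux+x^2)$. Your form gives $2zx^2$ instead, and the resulting Gauss sums differ; for odd $q$ and $u=0$ they differ by the quadratic character of $2$ in $\fF_q^{\times}$.
\item In residual characteristic $2$ your form is identically integral; in characteristic $2$ even $\Tr(X^2)=(\Tr X)^2=0$. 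So the hyperbolic-plane cancellation disappears and you get the wrong $I(e,f;l)$. This is exactly the exponential-map pitfall that Proposition \ref{prop: POrderNbhdQ} is designed to avoid, and the lemma is needed for all $\F$.
\end{itemize}

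With the correct form, dimension counting does give $I(e,f;l)$ from the off-diagonal part:
\begin{itemize}
\item For $2\mid l$, $2\mid e$, the quotient has no diagonal part at all. It is $\oA_1/\VP_1\times\oA_1/\VP_1$, perfectly paired by $\varpi_{\F}^{l-1}z\Tr(X_1'X_1)$ because $(\varpi_{\F}^{-1}\oA_1)^*=\VP_1$.
\item For $2\nmid l$ there are $ef(f-1)/2$ hyperbolic pairs inside the diagonal blocks, against the index $q^{ef^2-1}$.
\end{itemize}

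The diagonal part, however, must be matched \emph{exactly} with the $\delta_j$-sum, and your proposal gives no mechanism for this. The missing step is the substitution $\delta_i=1+x_{ii}$ for $i<d$ and $\delta_d=1+x_{dd}-\sum_{i<j}x_{ii}x_{jj}$. As $X$ runs over trace-zero diagonal classes, $(\delta_2,\dots,\delta_d)$ runs over $(U_{\F}^{l_1}/U_{\F}^{l_1+1})^{d-1}$. Moreover $\prod_i\delta_i\equiv 1\pmod{\vP_{\F}^{3l_1}}$, so you may replace $\delta_1$ by $(\delta_2\cdots\delta_d)^{-1}$. This turns the diagonal phase into $z\bigl(\frac{\det\delta}{\delta_2\cdots\delta_d}+\sum_{j\geq 2}\delta_j\bigr)$.

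Finally, ``$2m=n$'' and ``$2m=n-1$'' should read $m=n$ and $n=m+1$. When $m=n$ there is no sum at all. The claim is then just $z\Tr\delta\equiv z(\det\delta+d-1)\pmod{\vO_{\F}}$, which follows from Lemma \ref{lem: DetTrRel} since $\lceil 2m/e\rceil=l$.
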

\begin{proof}
	(0) We give details for the first equation, the second being similar. We compute by definition
	$$ \tfrac{m}{e} = \begin{cases}
		\lfloor \tfrac{l}{2} \rfloor & \text{if } e=1 \\
		\tfrac{l}{2} - \tfrac{e-1}{2e} & \text{if } 2 \nmid e > 1, 2 \mid l \\
		\tfrac{l-1}{2} & \text{if } 2 \mid e \text{ or } 2 \nmid e > 1, 2 \nmid l	
	\end{cases} \quad \Rightarrow \quad \lceil \tfrac{m}{e} \rceil = \lfloor \tfrac{l}{2} \rfloor. $$
	
\noindent (1) The function $U_{\oA}^n \to \C, \kappa \mapsto \psi(\Tr(g \kappa))$ is right invariant by $U_{\oA}^{2n}$, since we have $g \kappa \VP^{2n} \subseteq \VP^{1-e} = \VP^*$ by \eqref{eq: POrderRadDual} and $\psi$ is trivial on $\vO_{\F}$. Therefore the integral can be regarded as taken over $\VP^n[0] / \VP^{2n}[0]$ by Proposition \ref{prop: POrderNbhdQ}, especially \eqref{eq: POrderNbhdQExpBis}. Together with \eqref{eq: POrderRadDual} we get
	$$ \oint_{U_{\oA}^n(1)} \psi(\Tr(g \kappa)) \ud \kappa = \psi(\Tr(g)) \oint_{\VP^n[0]} \psi(\Tr(g x)) \ud x = \psi(\Tr(g)) \cdot \id_{\F + \VP^{1-e-n}}(g). $$
	Writing $g = \Pi^{-k} u$ with $u \in \oA^{\times}$ we have
	$$ \Pi^{-k} u \in \F + \VP^{1-e-n} \ \Rightarrow \ u \in \F \Pi^k + \VP^{m} \ \Rightarrow \ \F \Pi^k \cap \oA^{\times} \neq \emptyset \ \Rightarrow \ e \mid k. $$
	Writing $k = el$ and observing the formula \eqref{eq: PowerJRad} we conclude by
	$$ \varpi_{\F}^{-l} u = \Pi^{-k} u \in \F + \VP^{1-e-n} \ \Leftrightarrow \ u \in \left( \F + \VP^{m} \right) \cap \oA^{\times} = \vO_{\F}^{\times} U_{\oA}^m = \widetilde{U}_{m}(\oA). $$
	
\noindent (2) Multiplying $\delta$ from right by some element in $U_{\oA}^m(1)$, we may always assume $\delta = \diag(1+u,1,\dots,1)$ for some $u \in \vP_{\F}^{\lceil m/e \rceil} = \vP_{\F}^{\lfloor l/2 \rfloor}$ by Lemma \ref{lem: DetTrRel}.

\noindent (\rmnum{1}) The case for $m=n$, which is equivalent to $2 \mid l \ \& \ 2 \nmid e$, is easy. 

\noindent (\rmnum{2}) In the case $2 \mid l \ \& \ 2 \mid e$ we write $l=2l_1 \ \& \ e=2e_1$ and $d_1=e_1f$. Let $\oA_1$ be the standard order with parameter $(e_1,f)$, and $\VP_1$ its Jacobson radical. We have $m=e(l_1-1)+e_1$ and by \eqref{eq: PowerJRad} the formula
	$$ \VP^m = \varpi_{\F}^{l_1-1} \begin{pmatrix} \Mat_{d_1}(\vP_{\F}) & \oA_1 \\ \varpi_{\F} \oA_1 & \Mat_{d_1}(\vP_{\F}) \end{pmatrix}, \quad \VP^n = \VP^{m+1} = \varpi_{\F}^{l_1-1} \begin{pmatrix} \Mat_{d_1}(\vP_{\F}) & \VP_1 \\ \varpi_{\F} \VP_1 & \Mat_{d_1}(\vP_{\F}) \end{pmatrix}. $$
	Consequently a system of representatives for $\VP^m[0]/\VP^n[0]$ can be chosen as
	$$ X = \varpi_{\F}^{l_1-1} \begin{pmatrix} & X_1' \\ \varpi_{\F}X_1 & \end{pmatrix} $$
where $X_1$ and $X_1'$ run through a system of representative for $\oA_1/\VP_1$. In particular we have $[U_{\oA}^m(1) : U_{\oA}^n(1)] = \norm[\oA_1/\VP_1]^2 = q^{e f^2}$. Applying Lemma \ref{lem: DetTrRel} to $\oA_1$ we have
\begin{multline*} 
	(\id_d+X) \begin{pmatrix} \id_{d_1} & \\ -\varpi_{\F}^{l_1}X_1 & \id_{d_1} \end{pmatrix} = \begin{pmatrix} \id_{d_1} - \varpi_{\F}^{l-1} X_1' X_1 & \varpi_{\F}^{l_1-1} X_1' \\ & \id_{d_1} \end{pmatrix} \Rightarrow \\
	\det(\id_d+X) = \det(\id_{d_1}-\varpi_{\F}^{l-1} X_1' X_1) \equiv 1-\varpi_{\F}^{l-1}\Tr(X_1' X_1) \pmod{\vP_{\F}^{2(l-1)}}.
\end{multline*}
	Since $2(l-1) \geq l$ in the present case, we deduce from \eqref{eq: POrderNbhdQExpBis} that the left hand side is equal to
\begin{multline*}
	\sum_{X_1,X_1'} \psi \left( z \Tr \left( \begin{pmatrix} 1+u & \\ & \id_{d-1} \end{pmatrix} \begin{pmatrix} \id_{d_1} & \varpi_{\F}^{l_1-1} X_1' \\ \varpi_{\F}^{l_1} X_1 & \id_{d_1} \end{pmatrix} \begin{pmatrix} 1 + \varpi_{\F}^{l-1} \Tr (X_1'X_1) & \\ & \id_{d-1} \end{pmatrix} \right) \right) \\
	= \psi(z(u+d)) \sum_{X_1,X_1'} \psi(\varpi_{\F}^{l-1}z \Tr(X_1'X_1)).
\end{multline*}
	For a fixed $X_1'$ the sum over $X_1$ is non-vanishing only if $X_1' \in \VP_1$, since $(\varpi_{\F}^{-1}\oA_1)^* = \VP_1$ by \eqref{eq: POrderRadDual}; while in the case $X_1' \in \VP_1$ the sum over $X_1$ is equal to $\norm[\oA_1/\VP_1] = q^{e_1f^2}$, proving the stated formula.
	
\noindent (\rmnum{3}) In the case $2 \nmid l$ we write $l=2l_1+1$. We have $m=el_1 \geq 1$. Hence we have $l_1 \geq 1$ and by \eqref{eq: PowerJRad}
	$$ \VP^m = \varpi_{\F}^{l_1} \oA, \quad \VP^n = \VP^{m+1} = \varpi_{\F}^{l_1} \VP. $$
	A system of representatives for $\VP^m[0]/\VP^n[0]$ can be chosen as \emph{block} diagonal matrices, i.e.,
	$$ X = (x_{ij}) \in \begin{pmatrix} [\vP_{\F}^{l_1}]_f & & \\ & \ddots & \\ & & [\vP_{\F}^{l_1}]_f \end{pmatrix} / \begin{pmatrix} [\vP_{\F}^{l_1+1}]_f & & \\ & \ddots & \\ & & [\vP_{\F}^{l_1+1}]_f \end{pmatrix}, \quad \sum_{i=1}^d x_{ii} = 0. $$
	In particular we have $[U_{\oA}^m(1) : U_{\oA}^n(1)] = q^{e f^2-1}$. Then with $S_d$ as in \eqref{eq: DetMod} we have
\begin{multline*}
	\det(\id_d+X) = \prod_{i=1}^d(1+x_{ii}) + \sum_{\id \neq \sigma \in S_d} \sgn(\sigma) \prod_{i=1}^d (\delta_{i,\sigma(i)}+x_{i,\sigma(i)}) \\
	\equiv 1 + \sum_{1 \leq i < j \leq d} (x_{ii}x_{jj}-x_{ij}x_{ji}) \pmod{\vP_{\F}^{3l_1}}.
\end{multline*}
	In fact those $\id \neq \sigma$ with $< d-2$ fixed points contribute to $\vP_{\F}^{3l_1}$, while those $\id \neq \sigma$ with $d-2$ fixed points are precisely the transpositions. Since $3l_1 \geq l$ in the present case, we deduce from \eqref{eq: POrderNbhdQExpBis} that the left hand side is equal to
\begin{multline*}
	\sum_X \psi \left( z \Tr \left( \begin{pmatrix} 1+u & \\ & \id_{d-1} \end{pmatrix} \left( \id_d + X \right) \begin{pmatrix} 1 - \sum_{1 \leq i < j \leq d} (x_{ii}x_{jj}-x_{ij}x_{ji}) & \\ & \id_{d-1} \end{pmatrix} \right) \right) \\
	= \sum_X \psi \left( z \left( (1+u)(1+x_{11}) \left( 1 - \sum_{1 \leq i < j \leq d} (x_{ii}x_{jj}-x_{ij}x_{ji}) \right) + \sum_{i=2}^d (1+x_{ii}) \right) \right) \\
	= \sum_X \psi \left( z \left\{ (1+u)(1+x_{11}) + \sum_{i=2}^d (1+x_{ii}) - \sum_{1 \leq i < j \leq d} x_{ii}x_{jj} \right\} \right) \cdot \psi \left( z \sum_{1 \leq i < j \leq d} x_{ij}x_{ji} \right).
\end{multline*}
	The first factor in the summand involves only the diagonal entries of $X$ while the second involves only the off-diagonal ones. We can perform the summations separately. Obviously the second factor contributes a factor of $q^{ef(f-1)/2}$. To transform the first factor, we make the change of variables
	$$ \delta_i = 1+x_{ii}, \ \forall \ 1 \leq i \leq d-1; \quad \delta_d := 1+x_{dd} - \sum_{1 \leq i < j \leq d} x_{ii}x_{jj}. $$
	It is easy to see $\prod_{i=1}^{d} \delta_i \equiv 1 \pmod{\vP_{\F}^{3l_1}}$. Replacing $\delta_1$ by $\left( \prod_{i=2}^{d} \delta_i \right)^{-1}$ does not change the first factor, the sum of which is then identified with
	$$ \sum_{\delta_i \in U_{\F}^{l_1}/U_{\F}^{l_1+1}} \psi \left( z \left\{ \frac{1+u}{\prod_{i=2}^{d} \delta_i} + \sum_{i=2}^{d} \delta_i \right\} \right). $$
	The stated formula then follows readily.
\end{proof}

\section{Voronoi--Hankel Kernel Function: non-Archimedean Case}

	\subsection{Supercuspidal Case}
	
	Let $\pi$ be given by \eqref{eq: ScBHModel}. For any $\phi \in \Cont_c^{\infty}(\F^{\times})$, we define a bi-linear form
\begin{equation} \label{eq: AuxHermF}
	I_{\phi}: V_{\pi}^{\infty} \times V_{\widetilde{\pi}}^{\infty} \to \C, \quad (f_1,f_2) \mapsto \int_{\gp{G}} \phi(\det(g)) \psi(\Tr(g)) \beta_{f_1,f_2}(g) \ud g. 
\end{equation}
	Note that for any $y \in \gp{G}$ we have by definition and the change of variables $x \mapsto xy^{-1}$
	$$ \beta_{\pi(y).f_1,\widetilde{\pi}(y).f_2}(g) = \int_{\gp{J}} \extPairing{f_1(xgy)}{f_2(xy)}_{\rho} \ud x = \int_{\gp{J}} \extPairing{f_1(xy^{-1}gy)}{f_2(x)}_{\rho} \ud x = \beta_{f_1,f_2}(y^{-1}gy). $$
	It follows that $I_{\phi}(\pi(y).f_1,\widetilde{\pi}(y).f_2) = I_{\phi}(f_1,f_2)$ since $\Tr(\cdot)$ and $\det(\cdot)$ are invariant by conjugation in $\gp{G}$. Therefore $I_{\phi}$ is a $\gp{G}$-invariant bi-linear form on $V_{\pi}^{\infty} \times V_{\widetilde{\pi}}^{\infty}$, which must be a constant multiple of the standard pairing on $V_{\pi}^{\infty} \times V_{\widetilde{\pi}}^{\infty}$ by irreducibility. Namely for some $c_{\pi}(\phi) \in \C$ we have
\begin{equation} \label{eq: HermFComp}
	I_{\phi}(f_1,f_2) = c_{\pi}(\phi) \cdot \int_{\gp{J} \backslash \gp{G}} \extPairing{f_1(x)}{f_2(x)}_{\rho} \ud x = c_{\pi}(\phi) \cdot \beta_{f_1,f_2}(\id). 
\end{equation}
	On the other hand, inserting the definition of $\beta_{f_1,f_2}$ and changing the order of integraion we find
\begin{multline} \label{eq: AuxHermFBis}
	I_{\phi}(f_1,f_2) = \int_{\gp{G}} \phi(\det(g)) \psi(\Tr(g)) \left( \int_{\gp{J} \backslash \gp{G}} \extPairing{f_1(xg)}{f_2(x)}_{\rho} \ud x \right) \ud g \\
	= \int_{\gp{J} \backslash \gp{G}} \left( \int_{\gp{G}} \phi(\det(x^{-1}g)) \psi(\Tr(x^{-1}g)) \extPairing{f_1(g)}{f_2(x)}_{\rho} \ud g \right) \ud x \\
	= \int_{(\gp{J} \backslash \gp{G})^2} \left( \int_{\gp{J}} \phi(\det(x_2^{-1}xx_1)) \psi(\Tr(x_2^{-1}xx_1)) \extPairing{\rho(x).f_1(x_1)}{f_2(x_2)}_{\rho} \ud x \right) \ud x_1 \ud x_2.
\end{multline}
	Note that the integral over $(\gp{J} \backslash \gp{G})^2$ is a finite discrete sum, since $\gp{J}$ is open in $\gp{G}$ and $f_j$'s have compact support. Taking $f_1$ and $f_2$ to be supported in a single coset and comparing \eqref{eq: HermFComp} with \eqref{eq: AuxHermFBis} we deduce
	$$ \int_{\gp{J}} \phi(\det(x_2^{-1}xx_1)) \psi(\Tr(x_2^{-1}xx_1)) \extPairing{\rho(x).f_1(x_1)}{f_2(x_2)}_{\rho} \ud x = c_{\pi}(\phi) \cdot \id_{\gp{J}}(x_1x_2^{-1}) \cdot \extPairing{f_1(x_1)}{f_2(x_1)}_{\rho}. $$
	Equivalently, taking into account the invariance of the trace and the determinant we have
\begin{equation} \label{eq: OrthRel}
	\int_{\gp{J}} \phi(\det(xy)) \psi(\Tr(xy)) \extPairing{\rho(x).v_1}{v_2}_{\rho} \ud x = c_{\pi}(\phi) \cdot \id_{\gp{J}}(y) \cdot \extPairing{v_1}{\widetilde{\rho}(y).v_2}_{\rho}
\end{equation}
	for any $(v_1,v_2) \in V_{\rho} \times V_{\widetilde{\rho}}$. Summing \eqref{eq: OrthRel} for $y=\id_d$ and $(v_1,v_2)$ in dual bases we get
\begin{equation} \label{eq: OrthRelConst}
	c_{\pi}(\phi) = \frac{1}{d_{\rho}} \int_{\gp{J}} \phi(\det(x)) \psi(\Tr(x)) \chi_{\rho}(x) \ud x,
\end{equation}
	where $\chi_{\rho}$ is the character of $\rho$ and $d_{\rho} = \chi_{\rho}(\id_d)$ is the dimension/degree of $\rho$. We summarize the above discussion in the following lemma.
	
\begin{lemma} \label{lem: VHKernScAux}
	Let $\pi$ be a supercuspidal representation given by \eqref{eq: ScBHModel}. For any smooth matrix coefficient $\beta$ and any $\phi \in \Cont_c^{\infty}(\F^{\times})$ we have
	$$ \int_{\gp{G}} \phi(\det(g)) \psi(\Tr(g)) \beta(g) \ud g = \beta(\id) \cdot \frac{1}{d_{\rho}} \int_{\gp{J}} \phi(\det(x)) \psi(\Tr(x)) \chi_{\rho}(x) \ud x. $$
\end{lemma}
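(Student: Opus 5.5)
The lemma just packages the discussion preceding it, so I would reorganize that argument into a clean proof in three steps.

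\emph{Step 1: a $\gp{G}$-invariant bilinear form.} Fix $\phi \in \Cont_c^{\infty}(\F^{\times})$ and define $I_{\phi}$ on $V_{\pi}^{\infty} \times V_{\widetilde{\pi}}^{\infty}$ as in \eqref{eq: AuxHermF}. The integral converges absolutely. Indeed, $\phi(\det g)$ confines $\det g$ to a compact subset of $\F^{\times}$. Modulo the center, a supercuspidal matrix coefficient has compact support. Since the center meets $\{\det g \in \mathrm{supp}\,\phi\}$ in a compact set, the integrand is compactly supported on $\gp{G}$. I would then check that $\beta_{\pi(y).f_1,\widetilde{\pi}(y).f_2}(g) = \beta_{f_1,f_2}(y^{-1}gy)$. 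Because $\det$ and $\Tr$ are conjugation invariant and $\ud g$ is unimodular, this gives $I_{\phi}(\pi(y).f_1,\widetilde{\pi}(y).f_2) = I_{\phi}(f_1,f_2)$. Since $\pi$ is irreducible and admissible, Schur's lemma says every invariant pairing is a multiple of the canonical one. This produces a constant $c_{\pi}(\phi)$ with $I_{\phi}(f_1,f_2) = c_{\pi}(\phi)\,\beta_{f_1,f_2}(\id)$. Every smooth matrix coefficient is a finite sum of such $\beta_{f_1,f_2}$ and both sides are linear in $\beta$, so it only remains to identify $c_{\pi}(\phi)$.

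\emph{Step 2: unfolding over $\gp{J}$.} Substitute $\beta_{f_1,f_2}(g) = \int_{\gp{J}\backslash\gp{G}} \Pairing{f_1(xg)}{f_2(x)}_{\rho}\,\ud x$. Use Fubini, justified by compact supports, and decompose $\gp{G} = \gp{J} \cdot (\gp{J}\backslash\gp{G})$. This yields the double-coset expression \eqref{eq: AuxHermFBis}, which is a finite sum over $(\gp{J}\backslash\gp{G})^2$. Now take $f_1, f_2$ supported on single cosets $\gp{J}x_1$ and $\gp{J}x_2$, with arbitrary values $v_1, v_2$ there. Comparing with Step 1 gives the orthogonality relation \eqref{eq: OrthRel}. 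For $y = \id_d$ this reads
$$ \int_{\gp{J}} \phi(\det x)\psi(\Tr x)\Pairing{\rho(x).v_1}{v_2}_{\rho}\,\ud x = c_{\pi}(\phi)\Pairing{v_1}{v_2}_{\rho}. $$

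\emph{Step 3: taking the trace.} Since $\gp{J}$ is compact modulo center and $\rho$ is smooth irreducible, $\rho$ is finite dimensional. Sum the last identity over a basis $v_1 = e_i$ and its dual basis $v_2 = e_i^*$. The left side becomes $\int_{\gp{J}} \phi(\det x)\psi(\Tr x)\chi_{\rho}(x)\,\ud x$, and the right side becomes $d_{\rho}\,c_{\pi}(\phi)$. This is \eqref{eq: OrthRelConst}, and inserting it into Step 1 gives the lemma.

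The main point needing care is the Fubini and unfolding in Step 2, together with the normalization of measures. The measure on $\gp{J}\backslash\gp{G}$ is counting measure weighted compatibly with $\ud g$ restricted to $\gp{J}$. This normalization must match the one implicit in the formula for $\beta$, which it does since the same convention is used in both places. Absolute convergence follows from the compact support noted in Step 1.
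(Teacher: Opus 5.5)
Your proposal is correct and follows the paper's own argument essentially verbatim: the $\gp{G}$-invariant form $I_{\phi}$ plus Schur's lemma produces $c_{\pi}(\phi)$, unfolding over $\gp{J}$ with $f_1,f_2$ supported on single cosets gives \eqref{eq: OrthRel}, and summing over dual bases yields \eqref{eq: OrthRelConst}. Your normalization worry is milder than stated: any rescaling of the measure on $\gp{J}\backslash\gp{G}$ used in the formula for $\beta$ multiplies both sides of the comparison by the same factor, so the only convention that matters is that $\ud x$ on $\gp{J}$ is the restriction of $\ud g$.
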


\begin{proposition} \label{prop: VHKernSc}
	Let $\pi$ be a supercuspidal representation given by \eqref{eq: ScBHModel}, with central character $\omega$.
\begin{itemize}
	\item[(1)] The Voronoi--Hankel transform $\VorH_{\pi}$ is of convolution type with convolution kernel defined by
	$$ \VHF_{\pi}(t) = \norm[t]_{\F}^{\frac{d+1}{2}} \frac{1}{d_{\widetilde{\rho}}} \int_{\gp{J}(t)} \psi(\Tr(g)) \chi_{\widetilde{\rho}}(g) \ud g. $$
	\item[(2)] The function $\VHF_{\pi}(t)$ is locally constant (but not smooth in the sense of Definition \ref{def: SmoothFDef}) on $\F^{\times}$, and vanishes if $\norm[t]_{\F} \leq 1$. 
	\item[(3)] Write $l_0 = \lceil (2\cond(\pi)+f(\pi))/d \rceil - 1$. If $v_{\F}(t) \leq -dl_0$ and $\VHF_{\pi}(t) \neq 0$, then we have $v_{\F}(t) = -dl$ for some $l \geq l_0$ and
	$$ \VHF_{\pi}(t) = \norm[t]_{\F} \cdot G_l(t; \omega, d). $$
	\item[(4)] The function $\VHF_{\pi}(t)$ satisfies for $\Re(s) \gg 1$ and any $\chi \in \widehat{\F^{\times}}$
	$$ \int_{\F^{\times}} \VHF_{\pi}(t) \chi^{-1}(t) \norm[t]_{\F}^{-s} \ud^{\times} t = \varepsilon(s, \pi \times \chi, \psi). $$
	Moreover, the above equality holds for all $s \in \C$ by regularized Mellin transform in Definition \ref{def: RegMellinT}
	$$ \int_{\F^{\times}}^{\mathrm{pv}} \VHF_{\pi}(t) \chi^{-1}(t) \norm[t]_{\F}^{-s} \ud^{\times} t = \varepsilon(s, \pi \times \chi, \psi). $$
	\item[(5)] If $v_{\F}(t) > -dl_0$, then for any $\delta \in U_{\F}^{l_0-1}$ we have $\VHF_{\pi}(t \delta) = \VHF_{\pi}(t)$.
	
\end{itemize}
\end{proposition}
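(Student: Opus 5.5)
The plan is to derive all five assertions from the orthogonality relation of Lemma \ref{lem: VHKernScAux}, combined with Corollary \ref{cor: VHTransBis}. Part (1) carries the main computation. Parts (3) and (5) are local analyses of the fibre integral over $\gp{J}(t)$ using \S \ref{sec: CID}.

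\emph{Part (1).} Take $h$ given by $\Phi\in\Sch(\Mat_d(\F))$ and $\beta\in C(\pi^\infty)$ as in Corollary \ref{cor: VHTransBis}, which gives
$$\VorH_\pi(h)(y)=\norm[y]_{\F}^{\frac{d+1}{2}}\int_{\det g=y}\widehat{\Phi}(g)\beta^\iota(g)\,\ud g.$$
First, write $\widehat\Phi(g)=\int_{\GL_d(\F)}\Phi(x)\psi(\Tr(gx^T))\norm[\det x]_{\F}^{d}\,\ud x$. Then substitute $g\mapsto x^{-T}g$ and note that $\beta^\iota$ is a matrix coefficient of $\widetilde\pi$. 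The $y$-fibre integral then becomes an integral of $\psi(\Tr(g))$ against a translated matrix coefficient of $\widetilde\pi$, restricted to a fibre of $\det$.

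To make the fibre integral rigorous, smear it in the determinant variable by some $\phi\in\Cont_c^\infty(\F^\times)$. Apply Lemma \ref{lem: VHKernScAux} to $\widetilde\pi=\cInd_{\gp{J}}^{\gp{G}}\widetilde\rho$; this extracts the value of the translated coefficient at the identity, which is $\beta(x)$ up to normalization. Afterwards, let $\phi$ shrink to a point, using Definition \ref{def: FibreOfDet} to disintegrate $\int_{\gp{J}}$ along the fibres $\gp{J}(t)$. The result should be
$$\VorH_\pi(h)(y)=\int\VHF_\pi(y\det x)\,h(\det x)\,\ud^\times(\det x)$$
with $\VHF_\pi$ as stated. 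Since $\VorH(\pi)\supset\Cont_c^\infty(\F^\times)$, the transform is then of convolution type.

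\emph{Parts (2) and (4).} Local constancy in (2) holds because $\gp{J}$ is open and $\chi_{\widetilde\rho}$ is locally constant. For the vanishing, suppose $\norm[t]_{\F}\le1$ and $g\in\gp{J}(t)=\Pi^{k}\oA^\times\cap\gp{J}(t)$, with $k\ge0$ forced by $v_{\F}(t)\ge0$. Then $g(\kappa-1)\in\VP^{1}\subset\VP^{1-e}=\VP^*$ for $\kappa\in U^1_{\oA}(1)$, so $\psi(\Tr(g\kappa))=\psi(\Tr(g))$. Hence averaging over $U_{\oA}^{1}(1)$, or over the larger $\gp{J}(1)\cap\oA^\times$ after a similar check, replaces $\chi_{\widetilde\rho}$ by the trace of the projection onto invariant vectors. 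I would like Lemma \ref{lem: NoTrivialRp} to make this vanish. Its hypothesis concerns all of $\gp{J}(1)$, so I may need to refine it to the relevant normal subgroup; I expect this to be a minor adjustment.

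For (4), supercuspidality gives $L=1$, so the $\gamma$-factor is the $\varepsilon$-factor. The Mellin identity then follows by inserting $h\in\Cont_c^\infty(\F^\times)$ into \eqref{eq: LocFEGLnGL1} and the convolution formula. The regularized version for all $s$ follows once (3) shows $\VHF_\pi\in\SSch^{(d)}(\F;\omega)$, using Lemma \ref{lem: SSchPW} and Table \ref{table: RM}.

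\emph{Parts (3) and (5), the main obstacle.} Take $g\in\gp{J}(t)$ with $g\in\Pi^{-k}\oA^\times$ and $k$ large. Average the integrand over right translation by $U_{\oA}^n(1)$, where $n=\lceil(k+1-e)/2\rceil$. If $n\ge\cond(\rho)$, then $\chi_{\widetilde\rho}$ is invariant under this translation, and Lemma \ref{lem: AddCharAvg}(1) forces $k=el$ and $g\in\varpi_{\F}^{-l}\widetilde U_{\oA}^m$. If moreover $m\ge\cond(\rho)$, then $\chi_{\widetilde\rho}(z\delta)=d_\rho\,\omega^{-1}(z)$, where $\omega^{-1}$ is the central character of $\widetilde\rho$.

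Summing over $U_{\oA}^m(1)/U_{\oA}^n(1)$ with Lemma \ref{lem: AddCharAvg}(2), and comparing with Lemma \ref{lem: GermParGI}(2), should identify the integral with $\norm[t]_{\F}^{\frac{d+1}{2}}\norm[t]_{\F}^{-\frac{d-1}{2}}G_l(t;\omega,d)$. This step needs the bookkeeping of volumes, the constants $I(e,f;l)$, and the possible sign/inverse conventions for $\omega$. The translation of $m\ge\cond(\rho)$ into $l\ge l_0$ goes through \eqref{eq: CondExpRel}, $d=ef$ and Lemma \ref{lem: AddCharAvg}(0). I expect this threshold check, together with the normalization match, to be the delicate part.

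For (5), suppose $v_{\F}(t)>-dl_0$ and $\delta\in U_{\F}^{l_0-1}$. Multiply $g$ by $\diag(\delta,1,\dots,1)$, which lies in $U_{\oA}^{e(l_0-1)}$ by Lemma \ref{lem: DetTrRel}. This element is in $\ker\rho$ by \eqref{eq: CondExpRel}, and it preserves $\psi(\Tr(g))$ because $g$ has bounded $\Pi$-valuation. Changing variables in the fibre integral over $\gp{J}(t)$ then gives $\VHF_\pi(t\delta)=\VHF_\pi(t)$.
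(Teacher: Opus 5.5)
Your proposal is correct. Parts (1)--(3) essentially follow the paper, with three remarks:
\begin{itemize}
\item In (1) you write $\widehat{\Phi}$ directly as an integral over $\GL_d(\F)$. This is legitimate because singular matrices form a null set; the paper instead kills the inner integral at singular $X$ by cuspidality.
\item Also in (1), you start from Corollary~\ref{cor: VHTransBis} rather than \cite[Theorem 1.3]{Wu24+}. You then need Theorem~\ref{thm: EquivDefs}~(1) to know that such $h$ span $\VorH(\pi)=\Cont_c^{\infty}(\F^{\times})$.
\item In (3), do not forget the hypothesis $l\geq 2\cond(\omega)$ of Lemma~\ref{lem: GermParGI}~(2). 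It comes from Lemma~\ref{lem: AddCharAvg}~(0) and \eqref{eq: CChCondBd}.
\end{itemize}

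The genuine difference is in (5). The paper derives (5) from (3) and (4). Since $\varepsilon(s,\pi\times\chi,\psi)$ is a monomial in $q^{-s}$, the Mellin transform of $\VHF_{\pi}$ against $\chi^{-1}$ lives on a single shell. By (3) and Lemma~\ref{lem: GermParGI}~(1), that shell is at $v_{\F}=-d\cond(\chi)\leq -dl_0$ whenever $\cond(\chi)\geq l_0$. Hence on shells with $v_{\F}>-dl_0$ only characters of conductor $\leq l_0-1$ occur.

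You instead right-translate the fibre integral by $h_{\delta}=\diag(\delta,1,\dots,1)\in\gp{J}$, which carries $\gp{J}(t)$ onto $\gp{J}(t\delta)$ measure-preservingly. This is valid once you check the two inequalities you only gesture at:
\begin{itemize}
\item $e(l_0-1)\geq\cond(\rho)$. This follows from $e(l_0+1)\geq 2\cond(\pi)/f+1$ together with $\cond(\pi)/f=\cond(\rho)+e-1\geq e$, by \eqref{eq: CondExpRel}.
\item For $g\in\Pi^{-k}\oA^{\times}$ with $-kf=v_{\F}(t)>-dl_0$, i.e.\ $k\leq el_0-1$, you have $g(h_{\delta}-\id_d)\in\Pi^{-k}\VP^{e(l_0-1)}\subset\VP^{1-e}$, whose traces lie in $\vO_{\F}$.
\end{itemize}
Your argument is elementary and independent of (3), (4) and of any $\varepsilon$-factor input, and it even gives a finer, shell-dependent invariance. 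The paper's argument instead exhibits the mechanism $\cond(\pi\times\chi)=d\cond(\chi)$ for $\cond(\chi)\geq l_0$, which is reused in Corollary~\ref{cor: StRangeSC} and Theorem~\ref{thm: VHKernNA}.

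Two smaller points.

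In (2), the first option you offer would fail: averaging over $U_{\oA}^1(1)$ only projects onto $U_{\oA}^1(1)$-invariants, and for depth-zero $\rho$ that is the whole space. Your hesitation about Lemma~\ref{lem: NoTrivialRp} is unfounded, though. Since $v_{\F}(\det\Pi^j)=jf$, you have $\gp{J}(1)=\oA^{\times}\cap\SL_d(\F)$. For $\norm[t]_{\F}\leq 1$ one has $\gp{J}(t)\subset\oA\subset\Mat_d(\vO_{\F})$, so $\psi(\Tr(g))=1$ outright and the lemma applies verbatim. The non-smoothness claim, which you omitted, follows from (3) and Lemma~\ref{lem: GermParGI}~(1).

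In (4), what (3) puts in $\SSch^{(d)}(\F;\omega)$ is $\norm_{\F}^{-1}\cdot\VHF_{\pi}$, not $\VHF_{\pi}$. Also, the equality ``for all $s$'' needs the principal-value sum to be finite for each $\chi$. The paper's monomial argument gives this at once, which is shorter than going through Lemma~\ref{lem: SSchPW}.
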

\begin{proof}
	(1) Since $\pi$ is supercuspidal we have $\VorH(\pi) = \Cont_c^{\infty}(\F^{\times})$ (e.g. see the proof of \cite[Proposition (2.2)]{JPS79} taking into account that the Jacquet modules of $\pi$ are $0$). For $\phi \in \Cont_c^{\infty}(\F^{\times})$, recall $\VorH_{\pi}(\phi) \in \Cont_c^{\infty}(\F^{\times})$ is characterized by the following equation (for any $\Phi \in \Sch(\Mat_d(\F))$ and smooth matrix coefficient $\beta$ of $\pi$)
\begin{equation} \label{eq: VorHViaHDFour}
	\int_{\gp{G}} \widehat{\Phi}(g) \beta^{\iota}(g) \norm[\det g]_{\F}^{\frac{d+1}{2}} \cdot \phi(\det g) \ud g = \int_{\gp{G}} \Phi(g) \beta(g) \norm[\det g]_{\F}^{\frac{d-1}{2}} \cdot \VorH_{\pi}(\phi)(\det g) \ud g
\end{equation}
	by \cite[Theorem 1.3]{Wu24+}. Note that both $\beta$ and $\beta^{\iota}$ have compact support modulo the center. We can therefore replace $\widehat{\Phi}$ by its defining integral and change the order of integration to obtain
\begin{multline} \label{eq: InsertHatFour}
	\int_{\gp{G}} \widehat{\Phi}(g) \beta^{\iota}(g) \norm[\det g]_{\F}^{\frac{d+1}{2}} \cdot \phi(\det g) \ud g = \\
	\int_{\Mat_d(\F)} \Phi(X) \left( \int_{\gp{G}} \psi(\Tr(g X^T)) \beta^{\iota}(g) \norm[\det g]_{\F}^{\frac{d+1}{2}} \phi(\det g) \ud g \right) \ud X. 
\end{multline}
	If $0 \neq X \notin \GL_d(\F)$, then we can find $g_1,g_2 \in \GL_d(\F)$ and $1 \leq s \leq d-1$ such that
	$$ X = g_1 \begin{pmatrix} \id_s & \\ & 0_{d-s} \end{pmatrix} g_2 \quad \Rightarrow \quad g_2^T \begin{pmatrix} \id_s & U \\ & \id_{d-s} \end{pmatrix} g_2^{\iota} X^T = X^T, \ \forall \  U \in \Mat_{s \times (d-s)}(\F). $$
	But $g_2^T \begin{pmatrix} \id_s & U \\ & \id_{d-s} \end{pmatrix} g_2^{\iota}$ runs through the unipotent radical of a parabolic subgroup of $\gp{G}$, along a right coset of which the integral of $\beta^{\iota}$ is vanishing by cuspidality while $\det$ is constant. Hence the inner integral on the right hand side of \eqref{eq: InsertHatFour} is vanishing for $X \notin \GL_d(\F)$. Writing $X = x \in \GL_d(\F)$ we get
\begin{multline} \label{eq: ElimSingEle}
	\int_{\gp{G}} \widehat{\Phi}(g) \beta^{\iota}(g) \norm[\det g]_{\F}^{\frac{d+1}{2}} \cdot \phi(\det g) \ud g = \\
	\int_{\gp{G}} \Phi(x) \norm[\det x]_{\F}^d \left( \int_{\gp{G}} \psi(\Tr(g x^T)) \beta^{\iota}(g) \norm[\det g]_{\F}^{\frac{d+1}{2}} \phi(\det g) \ud g \right) \ud x \\
	= \int_{\gp{G}} \Phi(x) \norm[\det x]_{\F}^{\frac{d-1}{2}} \left( \int_{\gp{G}} \psi(\Tr(g)) \beta^{\iota}(g x^{\iota}) \norm[\det g]_{\F}^{\frac{d+1}{2}} \phi(\det g \cdot (\det x)^{-1}) \ud g \right) \ud x.
\end{multline}
	Note that $g \mapsto \beta^{\iota}(g x^{\iota})$ is a smooth matrix coefficient of $\widetilde{\pi}$, and $t \mapsto \norm[t]_{\F}^{\frac{d+1}{2}} \phi(t \cdot (\det x)^{-1})$ lies in $\Cont_c^{\infty}(\F^{\times})$, we can apply Lemma \ref{lem: VHKernScAux} to the inner integral on the right hand side of \eqref{eq: ElimSingEle} and get
\begin{multline} \label{eq: ScVHFInt}
	\int_{\gp{G}} \widehat{\Phi}(g) \beta^{\iota}(g) \norm[\det g]_{\F}^{\frac{d+1}{2}} \cdot \phi(\det g) \ud g = \\
	\int_{\gp{G}} \Phi(x) \beta^{\iota}(x^{\iota}) \norm[\det x]_{\F}^{\frac{d-1}{2}} \cdot \left( \frac{1}{d_{\widetilde{\rho}}} \int_{\gp{J}} \norm[\det g]_{\F}^{\frac{d+1}{2}} \phi(\det(g) \cdot (\det x)^{-1}) \psi(\Tr(g)) \chi_{\widetilde{\rho}}(g) \ud g \right) \ud x.
\end{multline}
	Comparing \eqref{eq: ScVHFInt} with \eqref{eq: VorHViaHDFour} we deduce
\begin{multline*} 
	\VorH_{\pi}(\phi)(t) = \frac{1}{d_{\widetilde{\rho}}} \int_{\gp{J}} \norm[\det g]_{\F}^{\frac{d+1}{2}} \phi(\det(g) t^{-1}) \psi(\Tr(g)) \chi_{\widetilde{\rho}}(g) \ud g \\
	= \int_{\F^{\times}} \phi(y t^{-1}) \cdot \left( \norm[y]_{\F}^{\frac{d+1}{2}} \frac{1}{d_{\widetilde{\rho}}} \int_{\gp{J}(y)} \psi(\Tr(g)) \chi_{\widetilde{\rho}}(g) \ud g \right) \ud^{\times} y,
\end{multline*}
	which shows that $\VorH_{\pi}$ is of convolution type with kernel given by the stated formula.
	
\noindent (2) It is clear that $\VHF_{\pi}(t)$ is locally constant. We may assume $\gp{J} = \nG(\oA)$ for a principal order $\oA$ with parameter $(e,f)$ given by \eqref{eq: POrder}. Observing \eqref{eq: POrderNmSubGp} we see that $\gp{J}(t) \subset \Mat_d(\vO_{\F})$ for $\norm[t]_{\F} \leq 1$, on which $\psi(\Tr(g)) = 1$. The stated vanishing of $\VHF_{\pi}(t)$ follows from the vanishing of the following integral
	$$ \int_{\gp{J}(1)} \chi_{\widetilde{\rho}}(x g) \ud g = 0, $$
a consequence of Lemma \ref{lem: NoTrivialRp}. The non-smoothness of $\VHF_{\pi}(t)$ obviously follows from its formula near $\infty$.

\noindent (3) Obviously we have $\gp{J}(t) = \emptyset$ if $f \nmid v_{\F}(t)$. If $v_{\F}(t) = -kf$ we have $\gp{J}(t) \subset \Pi^{-k} \oA^{\times}$. Write $c = \cond(\widetilde{\rho})$ for the conductor exponent of $\widetilde{\rho}$, defined by \eqref{eq: CIDCondExp}. Suppose $k \in \Z_{\geq 1}$ satisfies 
\begin{equation} \label{eq: LargeAsympCond1}
	n := \lceil (k+1-e)/2 \rceil \geq m := \lfloor (k+1-e)/2 \rfloor \geq c (\geq 1).
\end{equation}
	For such $t$ we have (recall the subgroups $U_{\oA}^n(1)$ given by \eqref{eq: POrderNbhd1Var})
	$$ \frac{1}{d_{\widetilde{\rho}}} \int_{\gp{J}(t)} \psi(\Tr(g)) \chi_{\widetilde{\rho}}(g) \ud g = \frac{1}{d_{\widetilde{\rho}}} \int_{\gp{J}(t)} \left( \oint_{U_{\oA}^n(1)} \psi(\Tr(g \kappa)) \ud \kappa \right) \chi_{\widetilde{\rho}}(g) \ud g. $$
	By Lemma \ref{lem: AddCharAvg} (1) the inner integral is non-vanishing only if for some $l \in \Z_{\geq 1}$ we have $k=el$ and $g \in \varpi_{\F}^{-l} \widetilde{U}_{\oA}^m$. Under these conditions it is equal to $\psi(\Tr(g))$ and is constant on left cosets of $U_{\oA}^n(1)$. Note that $\chi_{\widetilde{\rho}}(g)$ is constant on left cosets of $U_{\oA}^m$, we get
\begin{multline*} 
	\frac{1}{d_{\widetilde{\rho}}} \int_{\gp{J}(t)} \psi(\Tr(g)) \chi_{\widetilde{\rho}}(g) \ud g = \frac{1}{d_{\widetilde{\rho}}} \int_{\varpi_{\F}^{-l} \widetilde{U}_{\oA}^m(\varpi_{\F}^{ld}t)} \left( \oint_{U_{\oA}^m(1)} \psi(\Tr(g \kappa)) \ud \kappa \right) \chi_{\widetilde{\rho}}(g) \ud g \\
	= \frac{\Vol(U_{\oA}^m(1))}{d_{\widetilde{\rho}}} \sum_{g \in \varpi_{\F}^{-l} \widetilde{U}_{\oA}^m(\varpi_{\F}^{ld}t)/U_{\oA}^m(1)} \left\{ \frac{1}{[U_{\oA}^m(1) : U_{\oA}^n(1)]} \sum_{\kappa \in U_{\oA}^m(1)/U_{\oA}^n(1)} \psi(\Tr(g \kappa)) \right\} \chi_{\widetilde{\rho}}(g).
\end{multline*}
	Write $t = \varpi_{\F}^{-ld}y$ for some $y \in \vO_{\F}^{\times}$. By Lemma \ref{lem: DetTrRel} the above sum over $g$ can be taken over $g = \varpi_{\F}^{-l} z \diag(z^{-d}y, 1, \dots, 1)$ for $z \in \vO_{\F}^{\times}/U_{\F}^{\lceil m/e \rceil} = \vO_{\F}^{\times}/U_{\F}^{\lfloor l/2 \rfloor}$ under the condition $z^d \in y U_{\F}^{\lfloor l/2 \rfloor}$. With such choice of representatives we have $\chi_{\widetilde{\rho}}(g) = d_{\widetilde{\rho}} \omega^{-1}(\varpi_{\F}^{-l}z)$. We apply Lemma \ref{lem: AddCharAvg} to the sum over $\kappa$ and get
\begin{multline*}
	\frac{1}{d_{\widetilde{\rho}}} \int_{\gp{J}(t)} \psi(\Tr(g)) \chi_{\widetilde{\rho}}(g) \ud g = I(e,f;l) \cdot \\
	\Vol(U_{\oA}^m(1)) \sum_{z \in \vO_{\F}^{\times}/U_{\F}^{\lfloor l/2 \rfloor}, z^d \in y U_{\F}^{\lfloor l/2 \rfloor}} \left\{ \sum_{\delta_j \in U_{\F}^{\lfloor l/2 \rfloor} / U_{\F}^{\lceil l/2 \rceil}} \psi \left( z \left( \frac{y}{z^d \delta_2 \cdots \delta_d} + \sideset{}{_{j=2}^d} \sum \delta_j \right) \right) \right\} \omega^{-1}\left( \frac{z}{\varpi_{\F}^l} \right).
\end{multline*}
	Note that $\lceil m / e  \rceil = \lfloor l/2 \rfloor \geq \cond(\omega)$ by Lemma \ref{lem: AddCharAvg} (0) and \eqref{eq: CChCondBd}. With the change of variables $z \leadsto z\delta_d^{-1}$ we can apply Lemma \ref{lem: GermParGI} (2) to get
	$$ \frac{1}{d_{\widetilde{\rho}}} \int_{\gp{J}(t)} \psi(\Tr(g)) \chi_{\widetilde{\rho}}(g) \ud g = I(e,f;l) \cdot \frac{\Vol(U_{\oA}^m(1))}{\Vol(U_{\F}^{\lfloor l/2 \rfloor})^{d-1}} \cdot G_l(\varpi_{\F}^{-ld}y; \omega, d). $$
	Let $\oM = \Mat_d(\vO_{\F})$. Since $\det U_{\oA}^m = U_{\F}^{\lfloor l/2 \rfloor}$ by Lemma \ref{lem: DetTrRel} and \ref{lem: AddCharAvg} (0), we deduce
\begin{multline*} 
	\frac{\Vol(U_{\oA}^m(1))}{\Vol(U_{\F}^{\lfloor l/2 \rfloor})^{d-1}} = \frac{\Vol(U_{\oA}^m)}{\Vol(U_{\F}^{\lfloor l/2 \rfloor})^{d}} = \frac{\Vol(\oA^{\times})}{\Vol(U_{\F}(d; 0))} \cdot \frac{q^{d \lfloor \frac{l}{2} \rfloor} (1-q^{-1})^d}{q^{nef^2} \left( \prod_{i=1}^{f} (1-q^{-i}) \right)^e} \\
	= \frac{\Vol(\oA)}{\Vol(\vO_{\F})^d} \cdot \frac{q^{d \lfloor \frac{l}{2} \rfloor}}{q^{nef^2}} = \frac{\Vol(\oM)}{\Vol(\vO_{\F})^d} \cdot q^{d \lfloor \frac{l}{2} \rfloor - nef^2 - \frac{e(e-1)f^2}{2}}.
\end{multline*}
	Note the conductor exponent $\cond(\psi) = 0$, we have $\Vol(\oM) = \Vol(\vO_{\F}) = 1$. We leave the reader to check
	$$ q^{d \lfloor \frac{l}{2} \rfloor - nef^2 - \frac{e(e-1)f^2}{2}} \cdot I(e,f; l) = q^{-\frac{d(d-1)l}{2}} = \norm[t]_{\F}^{\frac{1-d}{2}} $$
 	and conclude the proof by rewriting \eqref{eq: LargeAsympCond1} in terms of $l$, taking into account the relation \eqref{eq: CondExpRel}.

\noindent (4) From (2) \& (3) we see that $\VHF_{\pi}$ is bounded by $O(\norm[t]_{\F}^M)$ for some $M>0$ as $\norm[t]_{\F} \to +\infty$, and vanishes as $\norm[t]_{\F} \ll 1$. The absolute convergence of its Mellin transform follows readily. The definition of $\VorH_{\pi}$ given by \cite[Definition 1.2]{Wu24+} together with (1) implies for any $h \in \Cont_c^{\infty}(\F^{\times})$
\begin{equation} \label{eq: MellinID}
	\int_{\F^{\times}} \left( \int_{\F^{\times}} h(t) \VHF_{\pi}(ty) \ud^{\times} t \right) \chi^{-1}(y) \norm[y]_{\F}^{-s} \ud^{\times} y = \varepsilon(s, \pi \times \chi, \psi) \cdot \int_{\F^{\times}} h(y) \chi(y) \norm[y]_{\F}^s \ud^{\times} y
\end{equation}
since $\pi \times \chi$ is supercuspidal for any $\chi$. For $\Re s \ll -1$ the left hand side of \eqref{eq: MellinID} is absolutely convergent by Fubini. Hence we can change the order of integration and rewrite it as
	$$ \int_{\F^{\times}} \int_{\F^{\times}} h(t) \VHF_{\pi}(ty) \chi^{-1}(y) \norm[y]_{\F}^{-s} \ud^{\times} t \ud^{\times} y = \int_{\F^{\times}} \VHF_{\pi}(y) \chi^{-1}(y) \norm[y]_{\F}^{-s} \ud^{\times} y \cdot \int_{\F^{\times}} h(t) \chi(t) \norm[t]_{\F}^s \ud^{\times} t. $$
	It suffices to take any $h$ whose Mellin transform is non-vanishing at $\chi \norm_{\F}^s$ to conclude the first equation. Recall the following basic property of $\varepsilon$-factors
	$$ \varepsilon(s, \pi \times \chi, \psi) = \varepsilon(1/2, \pi \times \chi, \psi) q^{\left( \frac{1}{2}-s \right) \cond(\pi \times \chi)}. $$
	It allows to rewrite the first equation as
\begin{equation} \label{eq: VHKernSCMellin}
	\int_{\varpi_{\F}^n \vO_{\F}^{\times}} \VHF_{\pi}(y) \chi^{-1}(y) \norm[y]_{\F}^{-s} \ud^{\times} y =
	\begin{cases}
		0 & \text{if } n \neq \cond(\pi \times \chi) \\
		\varepsilon(s, \pi \times \chi, \psi) & \text{if } n = \cond(\pi \times \chi)
	\end{cases}.
\end{equation}
	The above equation obviously extends to all $s \in \C$, and readily yields the second equation.
	
\noindent (5) From (3) and \eqref{eq: VHKernSCMellin} we deduce that the left hand side of \eqref{eq: VHKernSCMellin} is non-zero for some $\chi$ with $\cond(\chi) \geq l_0$ only if $n \leq -dl_0$. Therefore for $n > -dl_0$ the left hand side of \eqref{eq: VHKernSCMellin} is non-zero only if $\cond(\chi) \leq l_0-1$. Hence the function $\vO_{\F}^{\times} \to \C, \ \delta \mapsto \VHF_{\pi}(\varpi_{\F}^n \delta)$ is a linear combination of characters $\chi$ of $\vO_{\F}^{\times}$ with $\cond(\chi) \leq l_0-1$, and is consequently invariant by $U_{\F}^{l_0-1}$.
\end{proof}

\begin{corollary} \label{cor: StRangeSC}
	Let $\pi$ be a supercuspidal representation of $\GL_d(\F)$ with central character $\omega$. If $\chi$ is a character of $\F^{\times}$ with conductor exponent $\cond(\chi) \geq \lceil (2\cond(\pi)+f(\pi))/d \rceil - 1$, then
	$$ \varepsilon(s, \pi \times \chi, \psi) = \varepsilon(s, \chi, \psi)^{d-1} \varepsilon(s, \chi \omega, \psi), \quad \cond(\pi \times \chi) = d \cdot \cond(\chi) $$
for any additive character $\psi$ with $\cond(\psi) = 0$.
\end{corollary}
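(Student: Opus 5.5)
We derive the corollary by comparing Mellin transforms. Proposition \ref{prop: VHKernSc} (3) describes $\VHF_{\pi}$ near $\infty$, and Proposition \ref{prop: VHKernSc} (4) together with Lemma \ref{lem: GermParGI} (1) converts this description into epsilon factors. Fix $\chi$ with $\cond(\chi) =: l \geq l_0 := \lceil (2\cond(\pi)+f(\pi))/d \rceil - 1$. Note $l \geq 2$ automatically, since $\cond(\pi) \geq d-1+\dots$; more precisely $\cond(\pi) \geq d$ for supercuspidal $\pi$ with $d \geq 2$. The case $d=1$ is the classical $\GL_1$ statement with $f=1$, and we treat it separately by \cite[\S 23.8]{BuH06}.

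The main step is to localize the Mellin integral. By \eqref{eq: VHKernSCMellin}, the integral of $\VHF_{\pi}\chi^{-1}\norm_{\F}^{-s}$ over a shell $\varpi_{\F}^n \vO_{\F}^{\times}$ is nonzero only for $n = -\cond(\pi\times\chi)$, where it equals $\varepsilon(s,\pi\times\chi,\psi)$. (Care is needed with the sign convention of $n$ versus $\cond$; since $\VHF_{\pi}$ vanishes for $\norm[t]_{\F} \leq 1$, the relevant shell lies at negative valuation.) For shells with $n \leq -dl_0$, Proposition \ref{prop: VHKernSc} (3) gives $\VHF_{\pi}(t) = \norm[t]_{\F} G_{l'}(t;\omega,d)$ on $\varpi_{\F}^{-dl'}\vO_{\F}^{\times}$ with $l' \geq l_0$. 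The shell integral against $\chi^{-1}$ then equals, by Lemma \ref{lem: GermParGI} (1) applied to $\chi^{-1}\norm^{-s}$, the quantity
\[
\varepsilon(1-s,\chi,\psi)^{d-1}\varepsilon(1-s,\chi\omega^{-1},\psi)\,\id_{l'=l}.
\]
A Tate-type normalization should turn this into $\varepsilon(s,\chi,\psi)^{d-1}\varepsilon(s,\chi\omega,\psi)$. I expect some bookkeeping here: one must check the identity $\varepsilon(1-s,\chi^{-1}\ldots)$ vs.\ $\varepsilon(s,\chi\ldots)$ and possibly replace $\omega$ by $\omega^{-1}$ via the conventions of $\widetilde{\rho}$ in part (1). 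I would track this by testing the case $\omega = \id$, $d=1$ against Tate's thesis. The outcome is that the shell at $n=-dl$ contributes exactly this nonzero product, while all other shells with $n \leq -dl_0$ contribute zero.

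It remains to rule out a nonzero contribution from shells with $n > -dl_0$. By Proposition \ref{prop: VHKernSc} (5), on such shells $\VHF_{\pi}(\varpi_{\F}^n\,\cdot)$ is $U_{\F}^{l_0-1}$-invariant. Since $\cond(\chi) = l \geq l_0 > l_0-1$, orthogonality of characters of $\vO_{\F}^{\times}$ kills the integral against $\chi^{-1}$. Hence the unique nonvanishing shell is $n = -dl$. Comparing with \eqref{eq: VHKernSCMellin} gives $\cond(\pi\times\chi) = dl$, and the value of that shell integral gives the product formula for $\varepsilon(s,\pi\times\chi,\psi)$.

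The main obstacle is the orthogonality step at the boundary case $l = l_0$. Part (5) only grants invariance under $U_{\F}^{l_0-1}$, and a character of conductor exactly $l_0$ is nontrivial there, so the argument goes through. The same argument also confirms that the shells $n \leq -dl_0$ with $l' \neq l$ vanish, by Lemma \ref{lem: GermParGI} (1). Apart from this, the remaining work is the normalization of epsilon factors mentioned above.
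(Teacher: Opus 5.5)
Your argument is the paper's: evaluate the shell integral at $v_{\F}(t)=-dl$ using Proposition \ref{prop: VHKernSc} (3) and Lemma \ref{lem: GermParGI} (1), note that it is nonzero, and compare with \eqref{eq: VHKernSCMellin}. Your separate vanishing argument for the other shells (via part (5) and the case $l'\neq l$) is correct but redundant, since \eqref{eq: VHKernSCMellin} already says at most one shell is nonzero. Indeed, the paper proves part (5) by exactly this comparison.

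The one thing to fix is the value of that shell. The expression $\varepsilon(1-s,\chi,\psi)^{d-1}\varepsilon(1-s,\chi\omega^{-1},\psi)$ is wrong, and no normalization turns it into the target, because $\varepsilon(1-s,\chi,\psi)\neq\varepsilon(s,\chi,\psi)$ once $\cond(\chi)\geq 1$. Since the kernel is $\norm[t]_{\F}\,G_l$, the integrand is $G_l(t)\chi^{-1}(t)\norm[t]_{\F}^{1-s}$. Lemma \ref{lem: GermParGI} (1), applied to the character $\chi^{-1}\norm_{\F}^{1-s}$ (conductor $l$), then gives directly $\varepsilon(s,\chi,\psi)^{d-1}\varepsilon(s,\chi\omega_0^{-1},\psi)$, where $\omega_0$ is the character in $G_l(\cdot;\omega_0,d)$.

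Your suspicion about $\omega$ is right. In the proof of part (3) one has $\chi_{\widetilde{\rho}}(g)=d_{\widetilde{\rho}}\,\omega^{-1}(\varpi_{\F}^{-l}z)$, so on that shell the kernel is really $\norm[t]_{\F}\,G_l(t;\omega^{-1},d)$. This is consistent with the $\GL_1$ kernel $\norm[t]_{\F}\,\omega^{-1}(t)\psi(t)$, and it yields $\varepsilon(s,\chi\omega,\psi)$ as claimed. You are also right that the nonvanishing shell in \eqref{eq: VHKernSCMellin} is $n=-\cond(\pi\times\chi)$, which is what gives $\cond(\pi\times\chi)=dl$.
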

\begin{proof}
	Write $l = \cond(\chi)$. By (2) and Lemma \ref{lem: GermParGI} (1) we have
	$$ \int_{\varpi_{\F}^{-dl} \vO_{\F}^{\times}} \VHF_{\pi}(t) \chi^{-1}(t) \norm[t]_{\F}^{-s} \ud^{\times} t = \int_{\F^{\times}} G_l(t; \omega, d) \chi^{-1}(t) \norm[t]_{\F}^{1-s} \ud^{\times} t = \varepsilon(s, \chi, \psi)^{d-1} \varepsilon(s, \chi \omega, \psi). $$
	We conclude by comparing the above equation with \eqref{eq: VHKernSCMellin}.
\end{proof}

	\subsection{Essentially Tame Case}
	
	We consider a special family of supercuspidal representations, the \emph{essentially tame} ones, introduced and studied by Bushnell--Henniart in a series of papers \cite{BuH05_T1, BuH05_T2, BuH10_T3}. We first recall the basic results in these works. Let $p$ be the characteristic of the residual field $k_{\F}$ of $\F$. Suppose $\pi$ is supercuspidal of $\GL_d(\F)$. Recall $f(\pi)$ is the number of unramified characters $\chi$ of $\F^{\times}$ such that $\pi \otimes \chi \simeq \pi$. Necessarily we have $\chi^d = \id$ by comparison of the central characters. Hence $f(\pi) \mid d$.

\begin{definition} \label{def: EssTame}
	A supercuspidal $\pi$ of $\GL_d(\F)$ is called \emph{essentially tame} if $p$ does not divide $e(\pi) = d/f(\pi)$. The set of essentially tame supercuspidal representations of $\GL_d(\F)$ is denoted by $\AutR_d^{\mathrm{et}}(\F)$.
\end{definition}

\begin{definition} \label{def: AdmP}
	Let $\E/\F$ be a finite, tamely ramified field extension and let $\xi$ be a quasicharacter of $\E^{\times}$. The pair $(\E/\F, \xi)$ is called \emph{admissible} if it satisfies the following two conditions. Let $\gp{K}$ range over intermediate fields, $\F \subset \gp{K} \subset \E$.
\begin{itemize}
	\item[(1)] If $\xi$ factors through the relative norm $\Nr_{\E/\gp{K}}$, then $\gp{K} = \E$.
	\item[(2)] If $\xi \mid U_{\E}^1$ factors through $\Nr_{\E/\gp{K}}$, then $\E/\gp{K}$ is unramified.
\end{itemize}
	Let $P_d(\F)$ denote the set of $\F$-isomorphism classes of admissible pairs $(\E/\F, \xi)$ in which $[\E : \F] = d$. Let $P_d^u(\F)$ be the subset of $(\E/\F, \xi)$ with unitary $\xi$.
\end{definition}

\noindent The set $\AutR_d^{\mathrm{et}}(\F)$ is parametrized by the set $P_d(\F)$. The corresponding map \cite[(4)]{BuH05_T1} is written as
\begin{equation} \label{eq: EssTamePar}
	P_d(\F) \to \AutR_d^{\mathrm{et}}(\F), \quad (\E/\F, \xi) \mapsto \pi_{\xi}.
\end{equation}

\noindent A consequence of the main result \cite[Theorem A]{BuH05_T1} can be presented in the following form.

\begin{theorem} \label{thm: EssTameLocCst}
	For each $(\E/\F,\xi) \in P_d(\F)$ there is a \emph{tamely ramified unitary character} $\mu_{\xi}$ of $\E^{\times}$ so that:
\begin{itemize}
	\item[(1)] the map $P_d(\F) \to P_d(\F), (\E/\F,\xi) \mapsto (\E/\F,\mu_{\xi} \xi)$ is a bijection;
	\item[(2)] the local epsilon factors for $\AutR_d^{\mathrm{et}}(\F)$ are expressed in terms of those of Tate as
\begin{equation} \label{eq: GammaIdLLC}
	\varepsilon(s, \pi_{\xi} \times \chi, \psi) = \lambda(\E/\F, \psi) \cdot \varepsilon \left( s, \mu_{\xi} \xi \cdot (\chi \circ \Nr_{\E/\F}), \psi \circ \Tr_{\E/\F} \right),
\end{equation}
	where $\lambda(\E/\F, \psi)$ is the \emph{Langlands constant}, a root of unity in $\Sph^1 \subset \C^{\times}$.
\end{itemize}
\end{theorem}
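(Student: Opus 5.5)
The plan is to read the statement off the explicit form of the essentially tame local Langlands correspondence in \cite[Theorem A]{BuH05_T1}, combined with two standard facts: the correspondence preserves twisted epsilon factors, and Artin--Deligne epsilon factors are inductive in degree zero. Throughout, write $\mathcal{G}_d^{\mathrm{et}}(\F)$ for the set of isomorphism classes of irreducible $d$-dimensional representations of the Weil group $\mathcal{W}_{\F}$ that are essentially tame, i.e.\ induced from a character of $\mathcal{W}_{\E}$ with $\E/\F$ tamely ramified of degree $d$.

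First I would recall the Galois side. By the tame parametrization of Moy and Bushnell--Henniart, the map
$$ (\E/\F,\xi) \mapsto \sigma_{\xi} := \mathrm{Ind}_{\E/\F}\, \xi $$
is a bijection $P_d(\F) \to \mathcal{G}_d^{\mathrm{et}}(\F)$. Here $\xi$ is viewed as a character of $\mathcal{W}_{\E}$ via class field theory. Admissibility of $(\E/\F,\xi)$ is exactly what makes $\mathrm{Ind}_{\E/\F}\,\xi$ irreducible and pins down $(\E,\xi)$ up to $\F$-isomorphism.

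Next I would invoke \cite[Theorem A]{BuH05_T1}. For each $(\E/\F,\xi) \in P_d(\F)$ it produces a tamely ramified character $\mu_{\xi} = {}_{\F}\mu_{\xi}$ of $\E^{\times}$ (the ``rectifier''), depending only on the restriction of $\xi$ to $U_{\E}^1$ and on $\E/\F$. It then asserts that $\pi_{\xi}$ corresponds under the Langlands correspondence to $\mathrm{Ind}_{\E/\F}(\mu_{\xi}\xi)$. I would define $\mu_{\xi}$ to be this rectifier, and check from its explicit construction in \cite{BuH05_T1, BuH05_T2} that it is unitary: it takes values in roots of unity, being built from signs and Gauss-sum quotients.

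Part (1) then follows as described next. Since $\mu_{\xi}$ is tamely ramified, $\mu_{\xi}\xi$ and $\xi$ agree on $U_{\E}^1$. Hence condition (2) of Definition \ref{def: AdmP} holds for $\mu_{\xi}\xi$ if and only if it holds for $\xi$. Condition (1) holds for $\mu_{\xi}\xi$ because $\mathrm{Ind}_{\E/\F}(\mu_{\xi}\xi)$ is irreducible, being the parameter of a supercuspidal representation. Thus $(\E/\F,\mu_{\xi}\xi) \in P_d(\F)$. The map in (1) is then the composite of three bijections: the map $P_d(\F) \to \AutR_d^{\mathrm{et}}(\F)$ of \eqref{eq: EssTamePar}, the Langlands correspondence $\AutR_d^{\mathrm{et}}(\F) \to \mathcal{G}_d^{\mathrm{et}}(\F)$, and the inverse of the tame parametrization $\mathcal{G}_d^{\mathrm{et}}(\F) \to P_d(\F)$. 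Hence it is a bijection.

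For part (2) I would proceed in two steps.

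\emph{Transfer to the Galois side.} The local Langlands correspondence preserves epsilon factors of pairs, so
$$ \varepsilon(s, \pi_{\xi} \times \chi, \psi) = \varepsilon\bigl(s, \mathrm{Ind}_{\E/\F}(\mu_{\xi}\xi) \otimes \chi, \psi\bigr). $$
The projection formula gives
$$ \mathrm{Ind}_{\E/\F}(\mu_{\xi}\xi) \otimes \chi \simeq \mathrm{Ind}_{\E/\F}\bigl(\mu_{\xi}\xi \cdot (\chi\circ\Nr_{\E/\F})\bigr), $$
because $\chi|_{\mathcal{W}_{\E}}$ corresponds to $\chi\circ\Nr_{\E/\F}$ under class field theory.

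\emph{Inductivity in degree zero.} This yields
$$ \varepsilon(s, \mathrm{Ind}_{\E/\F}\eta, \psi) = \lambda(\E/\F,\psi)\, \varepsilon(s, \eta, \psi\circ\Tr_{\E/\F}) $$
for any character $\eta$ of $\E^{\times}$, where
$$ \lambda(\E/\F,\psi) = \frac{\varepsilon(s,\mathrm{Ind}_{\E/\F}\mathbbm{1},\psi)}{\varepsilon(s,\mathbbm{1}_{\E},\psi\circ\Tr_{\E/\F})}. $$
Taking $\eta = \mu_{\xi}\xi\cdot(\chi\circ\Nr_{\E/\F})$ gives \eqref{eq: GammaIdLLC}. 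That $\lambda(\E/\F,\psi)$ is independent of $s$ and is a root of unity is standard: $\mathrm{Ind}\,\mathbbm{1}$ and $\mathbbm{1}_{\E}$ have Artin conductors accounting for the $s$-dependence, and the constant is a product of Gauss-sum-type quantities of absolute value $1$ whose power is $\pm 1$.

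The main obstacle is bookkeeping of normalizations rather than a new argument. The versions of Theorem A in \cite{BuH05_T1, BuH05_T2, BuH10_T3} state the correspondence with possibly different conventions (e.g.\ $\pi_{\xi}$ versus its contragredient, or the rectifier applied on the automorphic versus the Galois side). One must verify that, with the parametrization \eqref{eq: EssTamePar} used here, the rectifier multiplies $\xi$ as written, and that the additive character on the right-hand side is exactly $\psi\circ\Tr_{\E/\F}$. I would settle this by comparing central characters and the $\GL_1$ case $\E=\F$, where $\mu_{\xi}=\mathbbm{1}$ and $\lambda=1$.
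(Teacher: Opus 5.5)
Your proposal is correct and follows the same route as the paper, which gives no separate argument and simply records the statement as a consequence of \cite[Theorem A]{BuH05_T1}. Your unpacking of that citation ($\pi_{\xi} \leftrightarrow \mathrm{Ind}_{\E/\F}(\mu_{\xi}\xi)$, preservation of twisted $\varepsilon$-factors, the projection formula, and inductivity in degree zero defining $\lambda(\E/\F,\psi)$) is exactly the implicit deduction. The normalization caveat you raise is harmless, since replacing $\mu_{\xi}$ by $\mu_{\xi}^{-1}$ switches between the two conventions; this is legitimate because $\mu_{\xi}$ depends only on $\xi|_{U_{\E}^1}$.
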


\noindent The character $\mu_{\xi}$ is uniquely determined by \eqref{eq: GammaIdLLC} only in some very special cases. The existence and uniqueness of $\mu_{\xi}$, compatible with the local Langlands correspondence, is the full content of \cite[Theorem A]{BuH05_T1}. An explicit description of $\mu_{\xi}$ is the subject of \cite[Theorem B]{BuH05_T1} and the subsequent papers \cite{BuH05_T2, BuH10_T3}.

	We then simplify the integral representation of $\VHF_{\pi}$ given by Proposition \ref{prop: VHKernSc} (2) in the essentially tame case using Bushnell--Henniart results, especially \eqref{eq: GammaIdLLC}, as follows. For simplicity of notation, we write $\psi_{\E} := \psi \circ \Tr_{\E/\F}$, and $\chi_{\E} := \chi \circ \Nr_{\E/\F}$. For any $t \in \F^{\times}$ we introduce the fibre $\E^{\times}(t)$ of $\Nr_{\E/\F}$ and the measure on it in a similar way as Definition \ref{def: FibreOfDet}. For any character $\xi$ of $\E^{\times}$ we introduce the \emph{partial Gauss integral/sum}
\begin{equation} \label{eq: ParGaussInt}
	H_{\xi}(t) := \int_{\E^{\times}(t)} \psi_{\E}(u) \xi(u) \ud u.
\end{equation}
	Arguing in a way similar to the proof of Proposition \ref{prop: VHKernSc} (2), we see that $H_{\xi}(t)$ is a locally constant function on $\F^{\times}$, vanishes as $\norm[t]_{\F} \ll 1$ since $\xi$ is not trivial on $\E^{\times}(1)$, and has polynomial growth as $\norm[t]_{\F} \to +\infty$. For any character $\chi$ of $\F^{\times}$ and any $s \in \C$, the character $\xi \cdot \chi_{\E} \norm_{\E}^s$ is not trivial on $\vO_{\E}^{\times}$, since it is not trivial on $\E^{\times}(1) < \vO_{\E}^{\times}$. We can apply \cite[\S 23.5 Exercise]{BuH06} to get for $\Re s \gg 1$ and any $\chi$
\begin{equation} \label{eq: ParGaussMellin}
	\int_{\F^{\times}} H_{\xi^{-1}}(t) \chi^{-1}(t) \norm[t]_{\F}^{1-s} \ud^{\times} t = \lim_{c \to - \infty} \int_{\substack{\E^{\times} \\ \norm[y]_{\E} \geq c}} \psi_{\E}(y) (\xi \cdot \chi_{\E})^{-1}(y) \norm[y]_{\E}^{1-s} \ud^{\times} y = \zeta_{\E}(1) \cdot \varepsilon(s, \xi \cdot \chi_{\E}, \psi_{\E}).
\end{equation}
	Comparing \eqref{eq: ParGaussMellin} with Proposition \ref{prop: VHKernSc} (2), we \emph{formally} deduce:

\begin{proposition} \label{prop: VHKernET}
	For essentially tame unitary supercuspidal representation $\pi = \pi_{\xi}$ associated with an admissible pair $(\E/\F, \xi)$ by \eqref{eq: EssTamePar}, we have
	$$ \VHF_{\pi}(t) = \zeta_{\E}(1)^{-1} \cdot \lambda(\E/\F, \psi) \cdot H_{(\mu_{\xi}\xi)^{-1}}(t) \cdot \norm[t]_{\F}. $$
\end{proposition}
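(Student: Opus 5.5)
The plan is to compare the two functions of $t$ through their Mellin transforms on each shell $\varpi_{\F}^n \vO_{\F}^{\times}$, and then use Fourier inversion on $\vO_{\F}^{\times}$. Put
$$ K(t) := \zeta_{\E}(1)^{-1} \lambda(\E/\F, \psi) H_{(\mu_{\xi}\xi)^{-1}}(t) \norm[t]_{\F}. $$
A locally constant function on $\F^{\times}$ is determined by its restrictions to the shells. On each shell, the restriction is determined by the integrals against all $\chi^{-1}$, where $\chi$ ranges over characters of $\vO_{\F}^{\times}$ extended to $\F^{\times}$. So it suffices to prove, for every $\chi \in \widehat{\F^{\times}}$, every $n \in \Z$ and every $s$, that
$$ \int_{\varpi_{\F}^n \vO_{\F}^{\times}} K(t) \chi^{-1}(t) \norm[t]_{\F}^{-s} \ud^{\times} t = \int_{\varpi_{\F}^n \vO_{\F}^{\times}} \VHF_{\pi}(t) \chi^{-1}(t) \norm[t]_{\F}^{-s} \ud^{\times} t. $$
The right-hand side has already been computed in \eqref{eq: VHKernSCMellin}. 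It equals $\varepsilon(s, \pi \times \chi, \psi)$ when $n = \cond(\pi \times \chi)$, and vanishes otherwise.

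For the left-hand side, start from \eqref{eq: ParGaussMellin} with $\xi$ replaced by $\mu_{\xi}\xi$. This is legitimate: $(\E/\F, \mu_{\xi}\xi)$ is again admissible by Theorem \ref{thm: EssTameLocCst} (1), so $\mu_{\xi}\xi$ is non-trivial on $\E^{\times}(1)$, which is the hypothesis used to derive \eqref{eq: ParGaussMellin}. Multiplying by $\zeta_{\E}(1)^{-1}\lambda(\E/\F,\psi)$ and applying \eqref{eq: GammaIdLLC} gives
$$ \int_{\F^{\times}} K(t) \chi^{-1}(t) \norm[t]_{\F}^{-s} \ud^{\times} t = \varepsilon(s, \pi \times \chi, \psi) \quad (\Re s \gg 1). $$
This is exactly the first identity of Proposition \ref{prop: VHKernSc} (4).

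To pass from this global identity to the shell-by-shell one, I would decompose the Mellin transform of $K$ over shells, exactly as in the proof of \eqref{eq: VHKernSCMellin}. The shell contributions are the following.
\begin{itemize}
	\item Since $H_{(\mu_\xi\xi)^{-1}}$ vanishes for $\norm[t]_{\F} \ll 1$, only finitely many negative powers of $q^{-s}$ can occur in the Mellin transform.
	\item The $n$-th shell contributes $q^{-n s}$ times a constant in $s$.
	\item Hence the Mellin transform, for fixed $\chi$, is a Laurent series in $q^{-s}$, convergent for $\Re s \gg 1$.
\end{itemize}
On the other hand, $\varepsilon(s, \pi\times\chi,\psi) = \varepsilon(1/2,\pi\times\chi,\psi)\, q^{(1/2-s)\cond(\pi\times\chi)}$ is a single monomial in $q^{-s}$. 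By uniqueness of Laurent coefficients, exactly one shell, namely $n = \cond(\pi\times\chi)$, contributes, and its contribution equals $\varepsilon(s, \pi \times \chi, \psi)$. This matches \eqref{eq: VHKernSCMellin} shell by shell, and Fourier inversion on $\vO_{\F}^{\times}$ then yields $K = \VHF_{\pi}$.

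The main obstacle is making the ``formal'' comparison rigorous. The issue is that \eqref{eq: ParGaussMellin} is stated as a limit over $\norm[y]_{\E} \geq c$, that is, as a conditionally convergent or principal-value integral. I need it to hold as a genuine Mellin integral shell by shell. I would first check on each shell $\norm[t]_{\F} = q^{-n}$ that the integral of $H_{(\mu_\xi\xi)^{-1}}(t)\chi^{-1}(t)$ equals
$$ \int_{\norm[y]_{\E} = q^{-n}} \psi_{\E}(y) (\mu_\xi\xi \chi_{\E})^{-1}(y) \ud y, $$
which is a finite Gauss sum. This follows by unfolding Definition \ref{def: FibreOfDet} for the norm map, since $\Nr_{\E/\F}$ maps the shell of $\E$ with $\norm[y]_{\E}=q^{-n}$ onto the shell of $\F$ with $\norm[t]_{\F}=q^{-n}$. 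The exercise \cite[\S 23.5 Exercise]{BuH06} then states precisely that this Gauss sum vanishes unless $n$ equals the conductor, and otherwise gives the epsilon factor. This makes the shell-wise identity direct, with no limit needed. The remaining points are bookkeeping: the normalization constants $\zeta_{\E}(1)$ from the measures $\ud y$ versus $\ud^{\times} y$, and the powers of $\norm[t]_{\F}$, which I expect to cancel as in \eqref{eq: ParGaussMellin}.
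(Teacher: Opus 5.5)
Your proof is correct and follows the paper's route. You obtain the Mellin transform of $K$ from \eqref{eq: ParGaussMellin} and \eqref{eq: GammaIdLLC}, match it with Proposition \ref{prop: VHKernSc} (4), and conclude via the Laurent-series-in-$q^{-s}$ plus Fourier-inversion-on-$\vO_{\F}^{\times}$ argument, which the paper packages as a uniqueness lemma applied to the difference $K-\VHF_{\pi}$. Two bookkeeping remarks: the $n$-th shell contributes $q^{ns}$, so the surviving shell is $n=-\cond(\pi\times\chi)$ (the same sign slip appears in \eqref{eq: VHKernSCMellin}). Your shell-wise Gauss-sum detour is also unnecessary, because $H$ vanishes for $\norm[t]_{\F}\leq 1$ and is bounded, so the $\F$-side integral already converges absolutely for $\Re s>1$.
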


\noindent Clearly the above formal deduction is justified by the following lemma.

\begin{lemma}
	Let $H(t)$ be a locally constant function on $\F^{\times}$, vanishes as $\norm[t]_{\F} \ll 1$ and has polynomial growth as $\norm[t]_{\F} \to +\infty$. If for $\Re s \gg 1$ and any unitary character $\chi$ of $\F^{\times}$ we have
	$$ \int_{\F^{\times}} H(t) \chi(t) \norm[t]_{\F}^{-s} \ud^{\times} t = 0. $$
	Then $H(t) = 0$ vanishes identically.
\end{lemma}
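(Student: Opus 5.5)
The plan is to decompose $H$ along the shells $\varpi_{\F}^n \vO_{\F}^{\times}$ and use Fourier analysis on the compact group $\vO_{\F}^{\times}$. Since $H$ vanishes for $\norm[t]_{\F} \ll 1$, there is $n_0 \in \Z$ with $H(\varpi_{\F}^n u) = 0$ for all $n > n_0$ and $u \in \vO_{\F}^{\times}$. For each $n$, the function $h_n(u) := H(\varpi_{\F}^n u)$ is locally constant on the compact group $\vO_{\F}^{\times}$. It is therefore smooth, and it is a finite linear combination of characters of $\vO_{\F}^{\times}$. Write $a_n(\xi) := \int_{\vO_{\F}^{\times}} h_n(u) \xi(u) \ud^{\times} u$ for $\xi \in \widehat{\vO_{\F}^{\times}}$. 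By Fourier inversion on $\vO_{\F}^{\times}$, it suffices to show $a_n(\xi) = 0$ for all $n$ and $\xi$.

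Fix $\xi$ and extend it to a unitary character $\chi$ of $\F^{\times}$ by setting $\chi(\varpi_{\F}) = 1$. For $\Re s \gg 1$, the polynomial growth of $H$ as $\norm[t]_{\F} \to +\infty$, that is as $n \to -\infty$, gives absolute convergence and justifies a shell-by-shell decomposition:
$$ 0 = \int_{\F^{\times}} H(t) \chi(t) \norm[t]_{\F}^{-s} \ud^{\times} t = \sum_{n \leq n_0} a_n(\xi) q^{ns}. $$
Concretely, $\norm[a_n(\xi)] \leq \sup_{\vO_{\F}^{\times}} \norm[h_n] \ll q^{-nM}$ for $n \to -\infty$, where $M$ is the growth exponent. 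With $X := q^{s}$, the right-hand side becomes $\sum_{m \geq -n_0} a_{-m}(\xi) X^{-m}$, a Laurent series in $X^{-1}$ with only finitely many positive powers of $X$. It converges absolutely for $\norm[X] > q^{M}$.

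This series vanishes identically on the region $\norm[X] > q^M$, which contains, for instance, a circle $\norm[X] = R$. Uniqueness of Laurent coefficients, obtained by integrating against $X^{m-1}$ over that circle, then forces every $a_n(\xi) = 0$. Hence every $h_n = 0$ and $H \equiv 0$.

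The only step needing care is the uniform domination used to interchange the sum over shells with the integral. It follows directly from the growth hypothesis, since local constancy together with the bound on each shell gives the summable majorant $q^{-n(M - \Re s)}$. I do not expect any genuine obstacle.
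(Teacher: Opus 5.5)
Your proof is correct and is essentially the paper's argument. The paper likewise reads the Mellin integral as a Laurent series in $q^{-s}$ whose coefficients are the shell integrals $\int_{\vO_{\F}^{\times}} H(\varpi_{\F}^n u)\chi(u)\,\ud^{\times}u$, deduces from this that they all vanish, and concludes by Fourier inversion on $\vO_{\F}^{\times}$; you simply make explicit the convergence region $\norm[X] > q^{M}$ and the uniqueness of Laurent coefficients, which the paper leaves implicit.
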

\begin{proof}
	It suffices to notice that $\int_{\F^{\times}} H(t) \chi(t) \norm[t]_{\F}^{-s} \ud^{\times} t \in \C[[q^{-s}]]$ is a Laurent series in $q^{-s}$. The vanishing condition is therefore equivalent with the vanishing of
	$$ \int_{\vO_{\F}^{\times}} H(\varpi_{\F}^n t) \chi(t) \ud^{\times} t = 0 $$
for any $n \in \Z$ and any character $\chi$ of $\vO_{\F}^{\times}$. We conclude by the Fourier inversion for locally constant functions on the compact group $\vO_{\F}^{\times}$, since locally constant is the same as smooth in this case.
\end{proof}

	\subsection{General Case}
	\label{sec: VHKernNA}
	
	We give a proof of Theorem \ref{thm: VHKernNA} and Corollary \ref{cor: StRangeGen} as follows.
	
	For simplicity of notation we write $\gp{G}_d = \GL_d(\F)$. Let $\gp{P}$ be a parabolic subgroup of $\gp{G}_d$, whose Levi-factor is isomorphic to $\gp{G}_{d_1} \times \cdots \times \gp{G}_{d_r}$ with $d_1 + \cdots + d_r = d$. Suppose $\sigma_j$ is an admissible representation of $\gp{G}_{d_j}$ for each $j$. We get the tensor product representation $\sigma_1 \boxtimes \cdots \boxtimes \sigma_r$ of $\gp{G}_{d_1} \times \cdots \times \gp{G}_{d_r}$, inflate it to $\gp{P}$ and induce it to $\gp{G}_d$. We denote the obtained representation as
	$$ \Ind(\gp{G}_d, \gp{P}; \sigma_1, \dots, \sigma_r). $$
	
	Let $\pi$ be a \emph{generic}, admissible and irreducible representation of $\GL_d(\F)$. By \cite[Theorem 9.7]{Ze80}, there exist a parabolic subgroup $\gp{Q}$ and \emph{generalized special representations} $\sigma_j$ as above such that
	$$ \pi \simeq \Ind(\gp{G}_d, \gp{Q}; \sigma_1, \dots, \sigma_r). $$
	By definition (see the paragraph between the equations \cite[(7) \& (8)]{JPS83}), for each $j$ there is a decomposition of $d_j = m_j a_j$, a parabolic subgroup $\gp{Q}_j$ of $\gp{G}_{d_j}$ whose Levi-factor is isomorphic to $\gp{G}_{a_j} \times \cdots \times \gp{G}_{a_j}$, and a supercuspidal representation $\xi_j$ of $\gp{G}_{a_j}$ such that $\sigma_j$ is the unique irreducible sub-representation of
	$$ \Ind(\gp{G}_{d_j}, \gp{Q}_j; \xi_j \otimes \norm_{\F}^{(m_j-1)/2}, \dots, \xi_j \otimes \norm_{\F}^{-(m_j-1)/2}). $$
As a result, $\pi$ is an irreducible \emph{component} of an induced representation of the form
	$$ \pi' \simeq \Ind(\gp{G}_d, \gp{P}; \pi_1, \dots, \pi_k), $$
where each $\pi_j$ is supercuspidal of some $\gp{G}_{k_j}$. The collection $\SuS(\pi) := \{ \pi_1, \dots, \pi_k \}$ is uniquely determined by $\pi$, and called the \emph{supercuspidal support} of $\pi$. Note that for any (quasi-)character $\chi$ of $\F^{\times}$, the representation $\pi \otimes \chi$ is an irreducible component of $\pi' \otimes \chi$ and we have $\SuS(\pi \otimes \chi) := \{ \pi_1 \otimes \chi, \dots, \pi_k \otimes \chi \}$. By the discussion on \cite[p.34]{GoJa72} and the multiplicativity of the gamma factors \cite[Theorem 3.4]{GoJa72} we have
\begin{equation} \label{eq: MultGammaF}
	\gamma(s, \pi \otimes \chi, \psi) = \gamma(s, \pi' \otimes \chi, \psi) = \prod_{j=1}^k \gamma(s, \pi_j \otimes \chi, \psi), 
\end{equation}
although $L(s, \pi \otimes \chi)/L(s, \pi' \otimes \chi)$ is in general a (non-constant) polynomial $P$ in $q^{-s}$ with $P(0)=1$. By Proposition \ref{prop: VHKernSc} and explicit formula for $\gp{G}_1$, we have $\VHF_{\pi_j} \in \SSch(\F; \widetilde{\F^{\times}})$. For any $h \in \Cont_c^{\infty}(\F^{\times})$ it makes sense to define $h_1 := \left( \VHF_{\pi_1} *_{\mathrm{pv}} \cdots *_{\mathrm{pv}} \VHF_{\pi_k} \right) * \Inv(h)$ by Proposition \ref{prop: SSchAlgBis}. Both $h_1$ and $h_2 := \VorH_{\pi}(h)$ lie in $\SSch(\F)$, and by Proposition \ref{prop: SSchAlg}, Proposition \ref{prop: VHKernSc} (4) and \cite[Definition 1.2]{Wu24+} satisfy
	$$ \int_{\F^{\times}} h_j(t) \chi^{-1}(t) \norm[t]_{\F}^{-s} \ud^{\times} t = \gamma(s, \pi \otimes \chi) \cdot \int_{\F^{\times}} h(t) \chi(t) \norm[t]_{\F}^s \ud^{\times} t. $$
	We get $h_1 = h_2$ by \cite[Proposition 6.3]{Wu24+}, proving the first equation in Theorem \ref{thm: VHKernNA} (1). The second equation follows readily from the first one, Proposition \ref{prop: SSchAlg} (2), Proposition \ref{prop: VHKernSc} (4) as well as its analogue for $\GL_1$ which can be verified by hand.
	
	Write $l_i := l_0(\pi_i)$ for simplicity. Let $\omega_i$ be the central character of $\pi_i$. Assume $\pi_i$ is a representation of $\GL_{d_i}(\F)$. We claim that each $\VHF_{\pi_i}$ can be decomposed as
\begin{equation} \label{eq: VHFDecomp1}
	\VHF_{\pi_i}(t) = \norm[t]_{\F} \cdot G_{\geq l_i}(t; \omega_i, d_i) + f_i(t)
\end{equation}
	where $f_i \in \SSch(\F)$ has support contained in $\left\{ t \ \middle| \ v_{\F}(t) > - d_i l_i  \right\}$, and is invariant by $U_{\F}^{l_i-1}$. In fact, if $d_i \geq 2$, the claim follows from Proposition \ref{prop: VHKernSc} (5); while if $d_i = 1$ then $\VHF_{\pi_i}(t) = \norm[t]_{\F} \cdot \pi_i^{-1}(t) \psi(t)$, as is easily verified by definition of $\VorH_{\pi_i} = \Mult_1(\pi_i^{-1}) \circ \invOFour \circ \Mult_0(\pi_i^{-1})$ in \cite[Definition 1.2 (2)]{Wu24+}, and the claim is obvious. By \eqref{lem: GermParGI} the equation \eqref{eq: VHFDecomp1} can also be written, with $l_0 := l_0(\pi) = \max \left\{ l_i \ \middle| \ 1 \leq i \leq k \right\}$, as
\begin{equation} \label{eq: VHFDecomp2}
	\VHF_{\pi_i}(t) = \norm[t]_{\F} \cdot G_{\geq l_0}(t; \omega_i, d_i) + g_i(t)
\end{equation}
	where $g_i \in \SSch(\F)$ has support contained in $\left\{ t \ \middle| \ v_{\F}(t) > - d_i l_0  \right\}$, and is invariant by $U_{\F}^{l_0-1}$. The equation \eqref{eq: 2ndRegConvAux} implies readily that $\norm[t]_{\F} \cdot G_{\geq l_0}(t; \omega_i, d_i)$ is ``orthogonal'' to $g_j(t)$ with respect to $*_{\mathrm{pv}}$ for any $i,j$. Therefore we get by \eqref{eq: 1stRegConvAux}
\begin{equation} \label{eq: VHFDecomp3}
	\VHF_{\pi}(t) = \norm[t]_{\F} \cdot G_{\geq l_0}(t; \omega, d) + \left( g_1 *_{\mathrm{pv}} \cdots *_{\mathrm{pv}} g_k \right)(t). 
\end{equation}
	Obviously the support of $g_1 *_{\mathrm{pv}} \cdots *_{\mathrm{pv}} g_k$ is contained in $\left\{ t \ \middle| \ v_{\F}(t) > - d l_0  \right\}$. Theorem \ref{thm: VHKernNA} (2) follows readily from \eqref{eq: VHFDecomp3}.
	
	Turning to Corollary \ref{cor: StRangeGen}, the regularized Mellin transform of $g_1 *_{\mathrm{pv}} \cdots *_{\mathrm{pv}} g_k$ related to $\chi$ with $\cond(\chi) \geq l_0(\pi)$ vanishes, since each $g_j$ and hence $g_1 *_{\mathrm{pv}} \cdots *_{\mathrm{pv}} g_k$ is invariant by $U_{\F}^{l_0-1}$. We conclude the desired formula by comparing Theorem \ref{thm: VHKernNA} (1) with Lemma \ref{lem: GermParGI} (1). Alternatively one may combine \eqref{eq: MultGammaF} with Corollary \ref{cor: StRangeSC} and its analogue for $\GL_1$, i.e., \cite[23.8]{BuH06} to conclude.

\section*{Acknowledgement}

	Han Wu would like to thank Yueke Hu for several advisory discussions with generous sharing of knowledge. Part of the work is done during Wu's visit at the IASM of Zhejiang University. Wu would like to thank Professor Binyong Sun and IASM for hospitality, and the support from the New Cornerstone Science Foundation.


\begin{thebibliography}{10}

\bibitem{Br08}
{\sc Brenner, E.}
\newblock Stability of the local gamma factor in the unitary case.
\newblock {\em J. Number Theory 128\/} (2008), 1358--1375.

\bibitem{BrM03}
{\sc Bruggeman, R.~W., and Motohashi, Y.}
\newblock Sum formula for {K}loosterman sums and fourth moment of the
  {D}edekind zeta-function over the {G}aussian number field.
\newblock {\em Funct. Approx. Comment. Math. \Rmnum{31}\/} (2003), 23--92.

\bibitem{Bu87}
{\sc Bushnell, C.~J.}
\newblock Hereditary orders, {Gauss} sums and supercuspidal representations of
  $\mathrm{GL}_n$.
\newblock {\em J. Reine Angew. Math. 375-376\/} (1987), 184--210.

\bibitem{Bu17}
{\sc Bushnell, C.~J.}
\newblock {\em Arithmetic of cuspidal representations}.
\newblock King's College London, December 2017.

\bibitem{BuF85}
{\sc Bushnell, C.~J., and Fr\"ohlich, A.}
\newblock Non-abelian congruence {Gauss} sums and {$p$-adic} simple algebras.
\newblock {\em Proc. London Math. Soc. 3}, 50 (1985), 207--264.

\bibitem{BuH05_T1}
{\sc Bushnell, C.~J., and Henniart, G.}
\newblock The essentially tame local {Langlands} correspondence, {I}.
\newblock {\em J. Amer. Math. Soc. 18}, 3 (2005), 685--710.

\bibitem{BuH05_T2}
{\sc Bushnell, C.~J., and Henniart, G.}
\newblock The essentially tame local {Langlands} correspondence, {II}: totally
  ramified representations.
\newblock {\em Compos. Math. 141\/} (2005), 979--1011.

\bibitem{BuH06}
{\sc Bushnell, C.~J., and Henniart, G.}
\newblock {\em The Local Langlands Conjecture for GL(2)}.
\newblock No.~335 in Grundlehren der mathematischen Wissenschaften.
  Springer-Verlag, 2006.

\bibitem{BuH10_T3}
{\sc Bushnell, C.~J., and Henniart, G.}
\newblock The essentially tame local {Langlands} correspondence, {III}: the
  general case.
\newblock {\em Proc. London Math. Soc. 101\/} (2010), 497--553.

\bibitem{BK93}
{\sc Bushnell, C.~J., and Kutzko, P.}
\newblock {\em The admissible dual of {$\mathrm{GL}(N)$} via compact open
  subgroups}, vol.~129 of {\em Annals of Mathematics Studies}.
\newblock Princeton University Press, 1993.

\bibitem{CPS98}
{\sc Cogdell, J.~W., and Piatetski-Shapiro, I.~I.}
\newblock Stability of gamma factors for {$\mathrm{SO}(2n+1)$}.
\newblock {\em Manuscripta Math. 95\/} (1998), 437--461.

\bibitem{GoJa72}
{\sc Godement, R., and Jacquet, H.}
\newblock {\em Zeta Functions of Simple Algebras}.
\newblock No.~260 in Lecture Notes in Mathematics. Springer-Verlag, 1972.

\bibitem{GoJ11}
{\sc Goldfeld, D., and Hundley, J.}
\newblock {\em Automorphic Representations and $L$-Functions for the General
  Linear Group}, vol.~129, 130 of {\em Cambridge studies in advanced
  mathematics}.
\newblock Cambridge University Press, 2011.

\bibitem{HNS18}
{\sc Hu, Y., Nelson, P., and Saha, A.}
\newblock Some analytic aspects of automorphic forms on {$\mathrm{GL}(2)$} of
  minimal type.
\newblock {\em Comment. Math. Helv. 94}, 4 (2019), 767--801.

\bibitem{J04}
{\sc Jacquet, H.}
\newblock Integral representation of {W}hittaker functions.
\newblock In {\em Contributions to Automorphic Forms, Geometry \& Number
  Theory}, H.~Hida, D.~Ramakrishnan, and F.~Shahidi, Eds. The Johns Hopkins
  University Press, 2004, ch.~15, pp.~373--419.

\bibitem{J09}
{\sc Jacquet, H.}
\newblock Archimedean {R}ankin-{S}elberg integrals.
\newblock In {\em Automorphic forms and $L$-functions \Rmnum{2}. Local
  aspects\/} (2009), vol.~489 of {\em Contemp. Math.}, Israel Math. Conf.
  Proc., Amer. Math. Soc., Providence, RI, pp.~57--172.

\bibitem{JPS79}
{\sc Jacquet, H., Piatetski-Shapiro, I.~I., and Shalika, J.}
\newblock Automorphic forms on {$\mathrm{GL}(3)$ \Rmnum{1}, \Rmnum{2}}.
\newblock {\em Ann. of Math. 109\/} (1979), 169--258.

\bibitem{JPS83}
{\sc Jacquet, H., Piatetskii-Shapiro, I.~I., and Shalika, J.}
\newblock {Rankin-Selberg} convolutions.
\newblock {\em Amer. J. Math. 105}, 2 (April 1983), 367--464.

\bibitem{JS85}
{\sc Jacquet, H., and Shalika, J.}
\newblock A lemma on highly ramified {$\varepsilon$-}factors.
\newblock {\em Math. Ann. 271\/} (1985), 319--332.

\bibitem{JS90}
{\sc Jacquet, H., and Shalika, J.}
\newblock {R}ankin-{S}elberg convolutions: {A}rchimedean theory.
\newblock In {\em Festschrift in honor of I. I. Piatetski-Shapiro on the
  occasion of his sixtieth birthday\/} (Jerusalem, 1990), vol.~Part I, Weizmann
  Science Press, pp.~125--207.

\bibitem{JiL22}
{\sc Jiang, D., and Luo, Z.}
\newblock Certain {Fourier} operators on {$\mathrm{GL}_1$} and local
  {Longlands} gamma functions.
\newblock {\em Pacific J. Math. 318}, 2 (2022), 339--374.

\bibitem{JiL23}
{\sc Jiang, D., and Luo, Z.}
\newblock Certain {Fourier} operators and their associated {Poisson} summation
  formulae on {$\mathrm{GL}_1$}.
\newblock {\em Pacific J. Math. 326}, 2 (2023), 301--372.

\bibitem{Ki00}
{\sc Kim, J.}
\newblock Gamma factors of certain supercuspidal representations.
\newblock {\em Math. Ann. 317\/} (2000), 751--781.

\bibitem{MS04}
{\sc Miller, S.~D., and Schmid, W.}
\newblock Distributions and analytic continuation of {Dirichlet} series.
\newblock {\em J. Functional Analysis 214\/} (2004), 155--220.

\bibitem{MS06}
{\sc Miller, S.~D., and Schmid, W.}
\newblock Automorphic distributions, {$L$-}functions, and {Voronoi} summation
  for {$\mathrm{GL}(3)$}.
\newblock {\em Ann. of Math. 164\/} (2006), 423--488.

\bibitem{Qi20}
{\sc Qi, Z.}
\newblock Theory of fundamental {Bessel} functions of high rank.
\newblock {\em Mem. Amer. Math. Soc. 267}, 1303 (2020).

\bibitem{RS05}
{\sc Rallis, S., and Soudry, D.}
\newblock Stability of the local gamma factor arising from the doubling method.
\newblock {\em Math. Ann. 333\/} (2005), 291--313.

\bibitem{Ru91}
{\sc Rudin, W.}
\newblock {\em Functional Analysis}, 2nd~ed.
\newblock McGraw-Hill Book Company, 1991.

\bibitem{Sun09}
{\sc Sun, B.}
\newblock Bounding matrix coefficients for smooth vectors of tempered
  representations.
\newblock {\em Proc. Amer. Math. Soc. 137}, 1 (2009), 353--357.

\bibitem{Wal88}
{\sc Wallach, N.~R.}
\newblock {\em Real Reductive Groups {\Rmnum{1}}}, vol.~132 of {\em Pure and
  applied mathematics}.
\newblock Academic Press, INC. Harcourt Brace Jovanovich, 1988.

\bibitem{Wal92}
{\sc Wallach, N.~R.}
\newblock {\em Real Reductive Groups {\Rmnum{2}}}, vol.~132-{\Rmnum{2}} of {\em
  Pure and applied mathematics}.
\newblock Academic Press, INC. Harcourt Brace Jovanovich, 1992.

\bibitem{Wu24+}
{\sc Wu, H.}
\newblock On a generalization of {Motohashi's} formula.
\newblock arXiv:2310.08236.

\bibitem{Ze80}
{\sc Zelevinsky, A.}
\newblock Induced representations of reductive {$p$-adic} groups \rmnum{2}:
  {On} irreducible representations of {$\mathrm{GL}(n)$}.
\newblock {\em Ann. scient. \'Ec. Norm. Sup. 4}, 13 (1980), 165--210.

\end{thebibliography}
\end{document}